\newtheorem{dummy}{dummy}[section]
\newtheorem{lemma}[dummy]{Lemma}
\newtheorem{theorem}[dummy]{Theorem}
\newtheorem*{theorem*}{Theorem}
\newtheorem{corollary}[dummy]{Corollary}
\newtheorem{proposition}[dummy]{Proposition}
\theoremstyle{definition}
\newtheorem{definition}[dummy]{Definition}
\newtheorem{example}[dummy]{Example}
\newtheorem{remark}[dummy]{Remark}
\newcommand{\bC}{\mathbb{C}}
\newcommand{\cA}{\mathcal{A}}
\newcommand{\cB}{\mathcal{B}}
\newcommand{\cD}{\mathcal{D}}
\newcommand{\cE}{\mathcal{E}}
\newcommand{\cF}{\mathcal{F}}
\newcommand{\cG}{\mathcal{G}}
\newcommand{\cI}{\mathcal{I}}
\newcommand{\cJ}{\mathcal{J}}
\newcommand{\cO}{\mathcal{O}}
\newcommand{\cP}{\mathcal{P}}
\newcommand{\cS}{\mathcal{S}}
\newcommand{\cU}{\mathcal{U}}
\newcommand{\cV}{\mathcal{V}}
\newcommand{\cX}{\mathcal{X}}
\newcommand{\cZ}{\mathcal{Z}}
\newcommand{\sC}{\mathscr{C}}
\newcommand{\sD}{\mathscr{D}}
\newcommand{\sE}{\mathscr{E}}
\newcommand{\Aff}{\mathbf{Aff}^{\mathit{LFT}}_{\mathbb{C}}}
\newcommand{\Affk}{\mathbf{Aff}^{\mathit{LFT}}_{k}}
\newcommand{\Affku}{\mathbf{Aff}^{\cU}_{k}}
\newcommand{\TopC}{\Top_\mathbb{C}}
\newcommand{\TopCs}{\TopC^s}
\newcommand{\Topi}{\mathfrak{Top}_\i}
\newcommand{\Spec}{\mathrm{Spec}\,}
\newcommand{\Sh}{\operatorname{Sh}}
\newcommand{\Hom}{\mathrm{Hom}}
\newcommand{\et}{\acute{e}t}
\def\Top{\mathbf{Top}}
\def\Sch{\mathbf{Sch}}
\renewcommand{\i}{\infty}
\def\iGpd{\operatorname{Gpd}_\i}
\def\Pro{\operatorname{Pro}}
\def\Fun{\operatorname{Fun}}
\DeclareMathOperator{\Psh}{Psh}
\def\Pshi{\operatorname{Psh}_\i}
\def\LPshi{\widehat{\operatorname{Psh}}_\i}
\def\Shi{\Sh_\i}
\def\Hshi{\mathbb{H}\mathrm{ypSh}_\i}
\def\colim{\underrightarrow{\mbox{colim}\vspace{0.5pt}}\mspace{4mu}}
\renewcommand{\lim}{\varprojlim\mspace{3mu}}
\def\blank{\mspace{3mu}\cdot\mspace{3mu}}
\def\Profs{\operatorname{Prof}\left(\cS\right)}
\def\Prof{\operatorname{Prof}}
\def\Shape{\mathit{Shape}}
\def\Lan{\operatorname{Lan}}
\def\Top{\mathbf{Top}}
\def\Set{\mathit{Set}}
\def\Htop{\Topi^{\mathit{Hens.}}}
\def\Algku{\mathbf{Alg}^{\cU}_{k}}
\def\Dmsch{\mathfrak{DM}\Sch^\cU_k}
\def\Dmschet{\mathfrak{DM}\Sch^{\cU,\et}_k}
\def\Aut{\mathbf{Aut}}
\def\Disc{\mbox{Disc}}
\def\longlongrightarrow{-\!\!\!-\!\!\!-\!\!\!-\!\!\!-\!\!\!-\!\!\!\longrightarrow}
\def\longlonglongrightarrow{-\!\!\!-\!\!\!-\!\!\!-\!\!\!-\!\!\!-\!\!\!\longlongrightarrow}
\newcommand*{\longhookrightarrow}{\ensuremath{\lhook\joinrel\relbar\joinrel\rightarrow}}
\newcommand*{\longlonghookrightarrow}{\ensuremath{\lhook\joinrel\relbar\joinrel\relbar\joinrel\rightarrow}}
\begin{document}

\title[On the \'etale homotopy type of higher stacks]
{On the \'etale homotopy type of higher stacks}

\author{David Carchedi}

\maketitle

\begin{abstract}
A new approach to \'etale homotopy theory is presented which applies to a much broader class of objects than previously existing approaches, namely it applies not only to all schemes (without any local Noetherian hypothesis), but also to arbitrary higher stacks on the \'etale site of such schemes, and in particular to all algebraic stacks. This approach also produces a more refined invariant, namely a pro-object in the infinity category of spaces, rather than in the homotopy category. We prove a profinite comparison theorem at this level of generality, which states that if $\cX$ is an arbitrary higher stack on the \'etale site of affine schemes of finite type over $\bC,$ then the \'etale homotopy type of $\cX$ agrees with the homotopy type of the underlying stack $\cX_{top}$ on the topological site, after profinite completion. In particular, if $\cX$ is an Artin stack locally of finite type over $\bC$, our definition of the \'etale homotopy type of $\cX$ agrees up to profinite completion with the homotopy type of the underlying topological stack $\cX_{top}$ of $\cX$ in the sense of Noohi \cite{No1}. In order to prove our comparison theorem, we provide a modern reformulation of the theory of local systems and their cohomology using the language of $\i$-categories which we believe to be of independent interest.
\end{abstract}

\section{Introduction}

Given a complex variety $V,$ one can associate topological invariants to this variety by computing invariants of its underlying topological space $V_{an},$ equipped with the complex analytic topology. However, for a variety over an arbitrary base ring, there is no good notion of underlying topological space which plays the same role. (It is well known that the Zariski topology is too coarse.) \'Etale cohomology gives a way of partly circumventing this problem, since it associates cohomology groups to a scheme, and it is well known that if $V$ is a complex variety, then its \'etale cohomology with coefficients in any finite abelian group $A$ agrees with the singular cohomology of its underlying space $V_{an}$ with the same coefficients. The \'etale homotopy type of a scheme takes things one giant step further. Although it does not associate a genuine topological space to a scheme, it associates a (pro-)homotopy type, and this allows one to associate to a scheme much more refined topological invariants, e.g. higher homotopy groups.

The original notion of \'etale homotopy type goes back to seminal work of Artin and Mazur \cite{ArtinMazur} in 1969. They give a way of associating to any locally Noetherian scheme a pro-object in the homotopy category of simplicial sets. From the \'etale homotopy type of a scheme, one can recover its \'etale cohomology, and also its \'etale fundamental group, and higher homotopy groups. \'Etale homotopy types have made many important impacts in mathematics, perhaps most famously in the proof of the Adams conjecture \cite{Quillen,sullivan,Friedlander2}. More recently, \'etale homotopy theory has been an important tool in studying the rational points of algebraic varieties \cite{harpaztomer,ambrus,torsors}, and also has an interesting connection with motivic homotopy theory \cite{realization,schmidt}.

Artin and Mazur also introduced the notion of \emph{profinite completion}, which was motivated by the notion of profinite completion of groups, and they proved the following celebrated comparison theorem:

\begin{theorem}\cite[Theorem 12.9]{ArtinMazur}
Let $X$ be a pointed connected scheme of finite type over $\bC$. Then there is a canonical map $$\left[X_{an}\right] \to \left[X_{\et}\right]$$ from the homotopy type of $X_{an}$ to the \'etale homotopy type of $X$ which induces an isomorphism on profinite completions.
\end{theorem}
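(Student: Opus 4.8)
The plan is to proceed in three stages: construct the canonical comparison map, identify its effect on the invariants that detect profinite homotopy type, and then invoke a profinite Whitehead-type theorem. Recall first (Artin--Mazur, Verdier) that $[X_{\et}]$ is the pro-object in $\mathrm{Ho}(\text{simplicial sets})$ obtained from the cofiltered system of (rigid) \'etale hypercoverings $U_\bullet \to X$ by sending each to its simplicial set of connected components $\pi_0(U_\bullet)$, while $[X_{an}]$, the homotopy type of the topological space $X_{an}$, may likewise be computed from hypercoverings of $X_{an}$ in the usual topology. The key geometric observation is that the analytic topology refines the \'etale one: an \'etale morphism $U \to X$ of finite-type $\bC$-schemes analytifies to a local homeomorphism $U_{an} \to X_{an}$, so an \'etale hypercovering of $X$ analytifies to an open hypercovering of $X_{an}$. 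This assignment of index categories induces the desired canonical map of pointed connected pro-objects (the base point lifting through the analytifications).

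Second, I would pin down what this map does on $\pi_1$ and on cohomology. By the Riemann existence theorem, $\pi_1^{\et}(X,x) \cong \widehat{\pi_1(X_{an},x)}$, so the map is an isomorphism on fundamental pro-groups after profinite completion. By the classical comparison theorem for \'etale cohomology (SGA 4, Exp.~XI and XVI), for every finite abelian group $A$ --- and more generally every locally constant constructible sheaf $\cF$ of finite abelian groups on $X_{\et}$, which corresponds to a finite $\pi_1^{\et}(X)$-module, hence via the above to a finite local system on $X_{an}$ --- the comparison map $H^*_{\et}(X,\cF) \to H^*(X_{an},\cF^{an})$ is an isomorphism. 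Equivalently, $[X_{an}] \to [X_{\et}]$ induces isomorphisms on cohomology with all finite, possibly twisted, coefficients. This is the hard geometric input, resting ultimately on resolution of singularities and the existence of good ``Artin neighborhoods''; it is a result one cites.

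Third, I would feed this into the homotopy-theoretic machinery of profinite completion developed in \cite{ArtinMazur}. Profinite completion of a pointed connected pro-homotopy type is characterized by a universal property relative to maps into spaces with finite homotopy groups, and can be built from the Postnikov tower by replacing each stage with its finite-coefficient approximation. A map of such pro-objects inducing an isomorphism on $\widehat{\pi_1}$ and on cohomology with all finite twisted coefficients induces an isomorphism on each stage of the completed Postnikov tower by an inductive obstruction argument: the discrepancy at stage $n$ is governed by cohomology with coefficients in the (finite) $n$-th homotopy group viewed as a $\pi_1$-module, together with the $k$-invariants, all of which are matched by the hypotheses. Passing to the limit yields $\widehat{[X_{an}]} \xrightarrow{\ \sim\ } \widehat{[X_{\et}]}$.

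Granting the \'etale cohomology comparison as a black box, the main obstacle is this last step: carrying out the obstruction-theoretic induction up the Postnikov tower \emph{in the pro-category}, keeping track of twisted finite coefficients and of the failure of profinite completion to be exact. One must ensure the fundamental pro-group is ``good'' enough that the cohomology of the space with finite local systems computes the cohomology of the relevant Postnikov sections; working pro-finitely throughout makes this automatic, which is precisely why the conclusion is an equivalence only after profinite completion rather than on the nose.
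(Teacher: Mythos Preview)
The statement you are proving is not actually proved in this paper: it is quoted in the introduction as a classical result of Artin and Mazur, with a citation to \cite[Theorem 12.9]{ArtinMazur}. Your outline is a faithful reconstruction of the original Artin--Mazur argument, and there is no genuine gap in it. The three ingredients you isolate --- the comparison map via analytification of hypercoverings, Riemann existence for $\widehat{\pi_1}$, the SGA4 comparison for cohomology with finite twisted coefficients, and a profinite Whitehead/Postnikov argument --- are exactly what goes into the classical proof.

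That said, it is worth noting how the paper's own proof of its \emph{refinement} (Proposition~\ref{prop: main theorem connected}, the $\infty$-categorical version) differs from yours in the third step. You invoke an obstruction-theoretic induction in the pro-homotopy category, climbing the Postnikov tower of the two objects and matching $k$-invariants. The paper instead reformulates ``profinite equivalence'' as the statement that $\Gamma_{\et}\Delta^{\et}(V)\to\Gamma_{an}\Delta^{an}(V)$ is an equivalence for every $\pi$-finite space $V$, and then shows directly that the class $\sC$ of such $V$ is closed under finite limits, contains $K(A,n)$ and $K(G,1)$ for finite $A,G$ (from the same SGA4 and Riemann existence inputs you cite), and contains $B\Aut(K(A,n))$ via the classifying-space interpretation of twisted cohomology developed in Section~\ref{sec: cohomology}. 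The Postnikov induction then becomes a one-line pullback argument using the universal $K(A,n)$-fibration (Proposition~\ref{prop: univ KAN fibration}). This buys two things: it avoids working in the pro-homotopy category altogether (so no delicate issues about exactness of profinite completion or ``goodness'' of $\pi_1$), and it yields a statement at the level of the $\infty$-category of profinite spaces rather than pro-objects in the homotopy category. Your approach, by contrast, stays closer to the original source and requires no new theory of local systems in $\infty$-topoi; it is the right proof of the theorem as stated, just not the one the present paper gives for its sharper version.
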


The above theorem is a vast generalization of the comparison theorem for \'etale cohomology.

Although \'etale homotopy theory, as developed by Artin and Mazur, has been quite a successful endeavor, there are limitations to their framework. The most serious limitation is that, although their notion of \'etale homotopy type naturally extends to Deligne-Mumford stacks, it does not easily extend to more general objects, such as Artin stacks. A more subtle limitation is related to the notion of homotopy coherence: the \'etale homotopy type of a scheme in the sense of Artin and Mazur produces a pro-object--- a diagram of a certain shape --- in the homotopy category of spaces (or simplicial sets);  it is well known that a diagram in the homotopy category need not lift to a diagram of actual spaces--- this is the issue of homotopy coherence. A third limitation is that 
the schemes in question are required to be locally Noetherian. This excludes many natural examples. One such example, is that Vakil and Wickelgren show in \cite{VW} that for a quasicompact and quasiseparated scheme $X,$ there exists a universal cover $\widetilde{X},$ which itself surprisingly is always a scheme, however it may fail to be locally Noetherian even when $X$ is, so one cannot apply the machinery of Artin and Mazur to it. The first two limitations were partly remedied by subsequent work of Friedlander \cite{Friedlander}, as he refined the construction of Artin and Mazur to define the \'etale homotopy type of a locally Noetherian scheme as a pro-object in the actual category of simplicial sets, rather than its homotopy category. He also extended the construction to simplicial schemes, and proved a comparison theorem similar to the above, but for pointed connected simplicial schemes of finite type over $\bC.$ Unfortunately, the profinite comparison result that Friedlander proves uses the same notion of profinite completion as Artin-Mazur, which happens at the level of pro-objects in the homotopy category of spaces, and also Friedlander's approach still has a locally Noetherian hypothesis.

In this paper, we present a new approach to \'etale homotopy theory which offers a refinement of the original construction which produces a pro-object
in the $\i$-category of spaces rather than its homotopy category and applies
to a much broader class of objects, namely to arbitrary higher stacks on the \'etale site of affine schemes over an arbitrary base, with no Noetherian hypothesis. Furthermore, our approach to profinite completion follows Lurie and produces a pro-object in the $\i$-category of $\pi$-finite spaces (Definition \ref{dfn:pi-finite}). We also prove a generalization of Artin and Mazur's comparison theorem which holds for any higher stack on the \'etale site of affine schemes of finite type over $\bC.$ In particular, the comparison result holds for any algebraic stack locally of finite type over $\bC,$ or more generally, any $n$-geometric stack locally of finite type over $\bC$, in the sense of \cite{TV2}.

As our machinery applies to much more general objects than previous frameworks, it is ripe for future applications. One application which has already been explored is related to the profinite homotopy type of log schemes, and is explained in Section 7 of \cite{knhom}. The results of \cite{knhom} imply that if $X$ is a (fine saturated) log scheme locally of finite type over $\bC,$ then the homotopy type of its Kato-Nakayama space \cite{KN} agrees after profinite completion with the homotopy type of the underlying topological stack of its infinite root stack--- which is a pro-algebraic stack \cite{TV}. Our comparison theorem, Theorem \ref{thm: main}, implies that both of these profinite homotopy types also agree with the profinite \'etale homotopy type of the infinite root stack. As the later makes sense for log schemes over a more general base, this gives a suitable replacement for the Kato-Nakayama space in positive characteristics. The previously existing comparison theorems were not robust enough to apply in this situation.

The machinery and formulation of our approach is quite different than the work of Artin-Mazur and Friedlander, however for locally Noetherian schemes, our definition of \'etale homotopy type turns out to be essentially the same after unwinding the definitions. The principal difference between the definition of the \'etale homotopy type of such a scheme as computed according to our machinery and its definition computed according to the machinery of Artin-Mazur is two-fold, namely our approach uses \v{C}ech covers and theirs uses hypercovers, and our definition yields a pro-object in the $\i$-category of spaces, and theirs yields a pro-object in the \emph{homotopy category} of spaces (see Remark \ref{rmk: closely related}). By a recent result of Hoyois \cite{Hoyois}, for locally Noetherian schemes, the former is the only real difference between the definition using our approach and the definition using the approach of Friedlander (see Proposition \ref{prop: Hoyois}). The difference between using \v{C}ech covers as opposed to using  hypercovers is moreover erased by passing to profinite homotopy types.

Our approach necessitates the use of the powerful framework of $\i$-categories. The use of this language makes our definition of \'etale homotopy type much more simple and transparent than previous definitions. This is partly due to the fact that the language of $\i$-categories allows for a simple definition of pro-spaces and profinite spaces. At the same time, this approach to pro-spaces is equivalent to the approach of Edwards-Hastings and Isaksen \cite{edwardshastings,Isak1} using model categories \cite{prohomotopy}, and moreover, the $\i$-categorical approach to profinite spaces is equivalent to Quick's model-categorical approach as in \cite{Qu,Qu2}. However, the $\i$-categorical approach to pro-spaces and profinite spaces is much easier to work with, e.g. (c.f. \cite{dagxiii}): If $\cS$ is the $\i$-category of spaces, succinctly, the $\i$-category $\Pro\left(\cS\right)$ of \emph{pro-spaces} is the full subcategory of $\Fun\left(\cS,\cS\right)^{op}$ - the opposite of the $\i$-category of functors from spaces to spaces - on those functors which preserve finite limits (and are accessible). 

Let $\cX$ be an arbitrary higher stack on the \'etale site of affine schemes over $k.$ Let $\cG$ be an arbitrary space in $\cS$ (i.e. an $\i$-groupoid). Denote by $\Delta^{\et}\left(\cG\right)$ the stackification of the constant presheaf with value $\cG.$ Then, as a pro-space, the \'etale homotopy type $\Pi^{\et}_\i\left(\cX\right)$ of $\cX$ as a functor from spaces to spaces sends the space $\cG$ to the space of maps $$\Hom\left(\cX,\Delta^{\et}\left(\cG\right)\right).$$
This assignment produces a functor
$$\Shi\left(\mathbf{Aff}_k\right) \stackrel{\Pi^{\et}_\i}{\longlongrightarrow} \Pro\left(\cS\right)$$
from higher stacks on the \'etale site to pro-spaces, sending a stack $\cX$ to its \'etale homotopy type.

Strictly speaking, the above description of the \'etale homotopy type of an arbitrary stack is not by definition; this description is the content of Theorem \ref{thm: concrete}. Our definition is of more geometric origin, as our approach has its roots in the philosophy  of Grothendieck which led to the inception of the concept of a topos; topoi were invented in order to associate to a scheme an underlying ``space'' whose cohomology is (by definition) the \emph{\'etale cohomology} of the scheme in question, and this was an important first step towards producing a Weil cohomology theory and proving the Weil conjectures. We take seriously the idea that the correct geometric ``space'' underlying a scheme $X$ is its small \'etale topos $\Sh\left(X_{\et}\right),$ and therefore all the topological invariants of a scheme $X$ should actually be invariants of the topos $\Sh\left(X_{\et}\right).$ To make this precise, one needs a way of associating to a topos a pro-homotopy type. This can be accomplished by using the theory of $\i$-topoi. Indeed, given an $\i$-topos, there is a simple construction, originally due to To\"{e}n and Vezzosi, which associates to an $\i$-topos $\cE$ a pro-space $\Shape\left(\cE\right)$ called its \emph{shape}, which is to be the thought of as the pro-homotopy type of the $\i$-topos in question, and as any topos can be in a natural way regarded as an $\i$-topos, this gives a way of associating to any scheme a pro-homotopy type.

The above discussion works well for schemes. It also works well for Deligne-Mumford stacks, as they can be modeled geometrically as ringed topoi. However, to extend the definition of \'etale homotopy type to an arbitrary higher stack, one needs a new idea. We accomplish this by formally extending the functor associating to a scheme its small \'etale $\i$-topos $\Shi\left(X_{\et}\right)$ (the $\i$-topos associated to its small \'etale topos) to a colimit preserving functor $$\Shi\left(\mathbf{Aff}_k,\et\right) \to \mathfrak{Top}_\i$$ from the $\i$-category of higher stacks on the \'etale site of affine schemes over $k$ to the $\i$-category of $\i$-topoi. The \'etale homotopy type of a higher stack is then defined to be the shape of its associated $\i$-topos via the above functor. Although this definition is not very tractable for stacks which are not Deligne-Mumford, this is rectified by Theorem \ref{thm: concrete}, and moreover, in light of our comparison theorem, Theorem \ref{thm: main}, it is still a reasonable definition for Artin stacks, which may a priori be non-obvious due to the use of the \'etale topology rather than say the smooth topology.

There is substantial overlap of our results with those of Chough, whose manuscript \cite{chough} is still in preparation. Although our overall approaches are quite different, we believe both approaches will prove valuable to the mathematical community.

\subsection{The comparison theorem}
We will now explain in detail the content of our comparison theorem:\\

In \cite{knhom}, we extend two important classical constructions for schemes and topological spaces to higher stacks, namely the analytification functor and the functor sending a topological space to its homotopy type.\\

\textbf{Analytification:}
Consider the classical analytification functor
$$\left(\blank\right)_{an}:\Sch^{LFT}_\bC \to \Top,$$
from schemes locally of finite type over $\bC$ to topological spaces. It sends a scheme $X$ to its space of $\bC$-points equipped with the complex analytic topology. Motivated by the desire to associate to an algebraic stack over $\bC$ a natural topological object from which one can extract topological invariants, Noohi extends this construction in \cite{No1} to a functor
$$\left(\blank\right)_{top}:\mathbf{A}\!\mathfrak{lgSt}^{LFT}_{\mathbb{C}} \to \mathfrak{TopSt}$$ from Artin stacks locally of finite type over $\mathbb{C}$ to topological stacks. In \cite{knhom}, we extend this further to a colimit preserving functor
$$\left(\blank\right)_{top}:\Shi\left(\Aff,\mbox{\'et}\right) \to \Hshi\left(\TopC\right)$$
from $\i$-sheaves on the \'etale site of affine schemes of finite type over $\bC,$ to hypersheaves on a suitable category $\TopC$ of topological spaces.\\

\textbf{The homotopy type of a stack:}
In \cite{No2}, Noohi defines a functor
$$ho:\mathfrak{TopSt} \to Ho\left(\Top\right)$$
from the $2$-category of topological stacks to the homotopy category of topological spaces, sending a topological stack $\cX$ to its weak homotopy type. Explicitly, if $\cG$ is a topological groupoid presentation for $\cX,$ $ho\left(\cX\right)$ has the weak homotopy type of the classifying space of $\cG.$ In \cite{No3} Noohi and Coyne refine this to a functor to the $\i$-category of spaces $\cS.$

In \cite{knhom}, we extend this further to a colimit preserving functor
$$\Pi_\i:\Hshi\left(\TopC\right) \to \cS$$ from the $\i$-category of hypersheaves on $\TopC$ to the $\i$-category of spaces.

The final important construction we need in order to explain our comparison result is\\

\textbf{Profinite completion:}
In \cite{dagxiii}, Lurie constructs the profinite completion functor
$$\widehat{\left(\blank\right)}:\cS \to \Prof\left(\cS\right)$$
from the $\i$-category of spaces to the $\i$-category of profinite spaces. In fact, this is the restriction of a profinite completion functor $$\Pro\left(\cS\right) \to \Prof\left(\cS\right)$$ from pro-spaces to profinite spaces, and composing this functor with our \'etale homotopy type functor $$\Shi\left(\mathbf{Aff}_k\right) \stackrel{\Pi^{\et}_\i}{\longlongrightarrow} \Pro\left(\cS\right)$$ produces a functor
$$\Shi\left(\mathbf{Aff}_k\right) \stackrel{\widehat{\Pi}^{\et}_\i}{\longlongrightarrow} \Prof\left(\cS\right)$$ which sends a stack $\cX$ to its \emph{profinite \'etale homotopy type.}\\

We now state our main result:\\

\begin{theorem}\label{thm: main intro}
Let $\Aff$ denote the category of affine schemes of finite type over $\bC.$ The following diagram commutes up to equivalence:
$$\xymatrix@C=2.5cm@R=2cm{\Shi\left(\Aff,\mbox{\'et}\right) \ar[r]^-{\widehat{\Pi}^{\et}_\i} \ar[d]_-{\left(\blank\right)_{top}} & \Profs\\
\Hshi\left(\TopC\right) \ar[r]^-{\Pi_\i} & \cS. \ar[u]_-{\widehat{\left(\blank\right)}}}$$
In particular, for any $\i$-sheaf $F$ on $\left(\Aff,\mbox{\'et}\right),$ there is an equivalence of profinite spaces $$\widehat{\Pi}^{\et}_\i\left(F\right) \simeq \widehat{\Pi}_\i\left(F_{top}\right),$$ between the profinite \'etale homotopy type of $F$ and the profinite completion of the homotopy type of the underlying stack $F_{top}$ on $\TopC$.
\end{theorem}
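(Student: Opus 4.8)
The plan is to reduce the statement to a comparison between two ways of building a pro-space (resp. profinite space) out of a scheme, and then to promote that comparison from affine schemes of finite type to all $\i$-sheaves by a formal colimit argument. Since all four functors in the square are colimit preserving — $\widehat{\Pi}^{\et}_\i$ by construction (it is the shape of a colimit-preserving $\i$-topos-valued functor, postcomposed with the colimit-preserving profinite completion), $(\blank)_{top}$ and $\Pi_\i$ by the results imported from \cite{knhom}, and $\widehat{(\blank)}$ as a left adjoint — and since $\Shi(\Aff,\et)$ is generated under colimits by the representables $y(X)$ for $X$ affine of finite type over $\bC$, it suffices to produce a natural equivalence of the two composites on representables, i.e. to show $\widehat{\Pi}^{\et}_\i(X) \simeq \widehat{\Pi}_\i(X_{an})$ naturally in $X\in\Aff$. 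For representable $X$, the left-hand composite unwinds, via Theorem \ref{thm: concrete} together with the identification of $\Pi^{\et}_\i(X)$ with the shape of the small \'etale $\i$-topos $\Shi(X_{\et})$, to the profinite completion of the Artin--Mazur \'etale homotopy type of the scheme $X$; and the right-hand composite is, by definition of $(\blank)_{top}$ on representables, the profinite completion of the singular homotopy type of $X_{an}$.

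The first key step is therefore to set up, naturally in $X$, a comparison map $X_{an}\to \Pi^{\et}_\i(X)$ in $\Pro(\cS)$ — the $\i$-categorical refinement of the classical Artin--Mazur map $[X_{an}]\to[X_{\et}]$. I would obtain this from a map of $\i$-topoi: the analytification $X_{an}$ carries a natural morphism of sites (or of $\i$-topoi) $\Sh_\i(X_{an})\to \Shi(X_{\et})$ coming from the fact that every \'etale map of schemes analytifies to a local homeomorphism, and shape is functorial, so applying $\Shape$ yields $\Shape(\Sh_\i(X_{an}))\to \Shape(\Shi(X_{\et}))=\Pi^{\et}_\i(X)$; since $X_{an}$ is a locally contractible (indeed paracompact, locally of CW-type) space, $\Shape(\Sh_\i(X_{an}))\simeq X_{an}$ in $\cS\subset\Pro(\cS)$. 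Naturality in $X$ is automatic from functoriality of analytification, of the small \'etale $\i$-topos, and of shape.

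The second, and I expect principal, step is to show that this comparison map becomes an equivalence after profinite completion. At the level of $\pi_0$ and of cohomology with finite coefficients this is exactly the classical \'etale comparison theorem; the point is to upgrade it to an equivalence of profinite spaces, i.e. to check it induces isomorphisms on all homotopy groups after $\pi$-finite localization and on the full Postnikov tower. The cleanest route is to invoke precisely Artin--Mazur's Theorem 12.9 (quoted above) — which gives the isomorphism on profinite completions at the level of pro-objects in the homotopy category — and then to argue that the $\i$-categorical profinite completions of source and target are detected on the homotopy category: Lurie's profinite completion $\cS\to\Prof(\cS)$ factors the homotopy-category-level profinite completion, and a map of profinite spaces is an equivalence iff it is so after passing to the homotopy category together with the $\pi_1$-equivariant homotopy group data, which is exactly what Artin--Mazur control. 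Concretely I would reduce to the pointed connected case (handling $\pi_0$ and the local systems separately, using the $\i$-categorical reformulation of local systems and their cohomology developed in the body of the paper), and there compare the two pro-spaces level-wise through their Postnikov towers: on each $\pi$-finite Postnikov stage the claim is a finite statement provable by the classical \'etale cohomology comparison with finite (possibly twisted) coefficients and the five lemma / obstruction theory for principal fibrations with $\pi$-finite fiber. The main obstacle is precisely this promotion from the homotopy-category-level statement of Artin--Mazur to an honest equivalence in $\Prof(\cS)$ — ensuring that no homotopy-coherence information is lost and that the comparison is natural in $X$ — and this is where the $\i$-categorical theory of local systems built earlier in the paper does the essential work, by giving a homotopy-coherent model for the cohomology of both sides against which the classical finite-coefficient comparison can be applied stage by stage. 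Once the equivalence $\widehat{X_{an}}\simeq\widehat{\Pi}^{\et}_\i(X)$ is established naturally in $X$, the colimit-extension argument of the first paragraph finishes the proof, and the ``in particular'' clause is just the evaluation of the commuting square on a general $\i$-sheaf $F$.
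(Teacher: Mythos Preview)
Your proposal is essentially correct and follows the same architecture as the paper: reduce to representables by colimit preservation, construct the comparison via the geometric morphism $\varepsilon_X:\Shi(X_{an})\to\Shi(X_{\et})$ coming from analytification of \'etale maps, identify the shape of the analytic side with $\Pi_\i(X_{an})$ via local contractibility, and then prove $\varepsilon_X$ is a profinite homotopy equivalence by a Postnikov induction fed by the SGA4 comparison theorems for finite covers and twisted finite-coefficient cohomology, using the classifying-space description of twisted cohomology from Section~\ref{sec: cohomology}.

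One point worth sharpening: the detour through Artin--Mazur's Theorem~12.9 and the attempt to ``promote'' a homotopy-category statement to $\Prof(\cS)$ is not how the paper proceeds, and your own worry about it is justified --- that promotion is exactly the kind of homotopy-coherence issue one wants to avoid. The paper bypasses it entirely by using the concrete criterion of Remark~\ref{rmk:concprof}: $\varepsilon_X$ is a profinite homotopy equivalence iff for every $\pi$-finite space $V$ the induced map $\Gamma_{\et}\Delta^{\et}(V)\to\Gamma_{an}\Delta^{an}(V)$ is an equivalence of \emph{spaces}. This criterion is what makes the Postnikov induction clean: one shows the class of $V$ for which it holds is closed under finite limits, contains $K(G,1)$ for finite $G$ (by the torsor comparison) and $K(A,n)$ for finite abelian $A$ (by the cohomology comparison plus Lemma~\ref{lem: K(A,n)}), and then contains $B\Aut(K(A,n))$ via the fibration analysis of Section~\ref{sec: cohomology}; the pullback square of Proposition~\ref{prop: univ KAN fibration} then gives the inductive step. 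So your ``concrete'' route is right and is the paper's route; you should drop the appeal to Artin--Mazur and go directly from the SGA4 inputs.
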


This theorem has the following immediate corollary:

\begin{corollary}
Let $\cX$ be an Artin stack locally of finite type over $\bC,$ then there is an equivalence of profinite spaces $$\widehat{\Pi}^{\et}_\i\left(\cX\right) \simeq \widehat{\Pi}_\i\left(\cX_{top}\right),$$ between the profinite \'etale homotopy type of $\cX$ and the profinite completion of the homotopy type of the underlying topological stack $\cX_{top}$.
\end{corollary}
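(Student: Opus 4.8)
The plan is to deduce the corollary as a formal consequence of Theorem~\ref{thm: main intro}, applied to the \'etale $\i$-sheaf represented by $\cX$. First I would recall that an Artin stack $\cX$ locally of finite type over $\bC$ is in particular a sheaf of $1$-groupoids for the \'etale topology on the category of $\bC$-schemes locally of finite type (a stack for a topology finer than the \'etale one, such as the smooth or fppf topology, is in particular an \'etale stack). Restricting along the inclusion $\Sch^{LFT}_\bC \supseteq \Aff$ produces an object $F \in \Shi\left(\Aff,\mbox{\'et}\right)$; since $\cX$ is $1$-truncated, so is $F$, and $F$ is therefore automatically hypercomplete, so that the distinction between $\i$-sheaves and hypersheaves plays no role here. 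Moreover, every scheme locally of finite type over $\bC$ admits a Zariski --- hence \'etale --- cover by affine schemes of finite type, so this restriction functor is fully faithful: $\cX$ and $F$ determine one another, and by construction the profinite \'etale homotopy type $\widehat{\Pi}^{\et}_\i\left(\cX\right)$, defined in this paper via the small \'etale $\i$-topos (which only sees \'etale-local data over finite-type affines), coincides with $\widehat{\Pi}^{\et}_\i\left(F\right)$.

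Next I would invoke Theorem~\ref{thm: main intro} for this particular $F$, obtaining a canonical equivalence of profinite spaces $\widehat{\Pi}^{\et}_\i\left(F\right) \simeq \widehat{\Pi}_\i\left(F_{top}\right)$. It then remains to identify the right-hand side with $\widehat{\Pi}_\i\left(\cX_{top}\right)$, where now $\cX_{top} \in \Topst$ denotes Noohi's underlying topological stack and $\Pi_\i$ the Noohi--Coyne homotopy-type functor. This is exactly the compatibility built into the constructions of \cite{knhom}: the colimit-preserving functor $\left(\blank\right)_{top}\colon \Shi\left(\Aff,\mbox{\'et}\right) \to \Hshi\left(\TopC\right)$ restricts, on the full subcategory of Artin stacks locally of finite type, to the composite of Noohi's analytification $\left(\blank\right)_{top}\colon \Algst \to \Topst$ with the canonical embedding $\Topst \hookrightarrow \Hshi\left(\TopC\right)$; and the colimit-preserving $\Pi_\i\colon \Hshi\left(\TopC\right) \to \cS$ restricts on $\Topst$ to the Noohi--Coyne homotopy-type functor. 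Chaining these two identifications gives $\widehat{\Pi}_\i\left(F_{top}\right) \simeq \widehat{\Pi}_\i\left(\cX_{top}\right)$, and combining with the previous equivalence proves the corollary.

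The main obstacle is not in this formal deduction but in justifying the two identifications of the previous paragraph, i.e. in checking that the two a priori different definitions of $\left(\blank\right)_{top}$ (Noohi's geometric one and the colimit-extended one) and of $\Pi_\i$ agree on Artin stacks. One reduces to the case of an affine scheme of finite type, where both sides are manifestly computed from the analytic topological space, and then bootstraps along a smooth groupoid presentation $\cX \simeq \left[X_1 \rightrightarrows X_0\right]$ of $\cX$, using that all of the functors involved preserve the colimits which present $\cX$ from its atlas. Since these compatibilities are precisely what is established in \cite{knhom}, in the present paper the corollary really is an immediate consequence of Theorem~\ref{thm: main intro} once those references are in place. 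The same argument applies verbatim to any $n$-geometric stack locally of finite type over $\bC$ in the sense of \cite{TV2}, since such stacks are again $\i$-sheaves on $\left(\Aff,\mbox{\'et}\right)$ built by iterated geometric colimits from affines.
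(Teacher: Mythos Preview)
Your proposal is correct and follows essentially the same approach as the paper: the paper gives no separate proof for this corollary, treating it as an immediate specialization of Theorem~\ref{thm: main} to the \'etale $\i$-sheaf represented by an Artin stack. Your additional remarks about the compatibility of Noohi's $\left(\blank\right)_{top}$ and $\Pi_\i$ with the colimit-extended versions from \cite{knhom} are precisely the justifications the paper tacitly assumes (and which are indeed handled in \cite{knhom}), so you have simply made explicit what the paper leaves implicit.
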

\subsection{Overview of our approach}
It turns out that all of the functors in the statement of Theorem \ref{thm: main intro} are colimit preserving, so, by the Yoneda lemma, the comparison result for higher stacks in fact follows formally from the comparison result for affine schemes. Therefore, in order to prove Theorem \ref{thm: main intro}, one must prove an analogue of Artin and Mazur's classical comparison theorem for (affine) schemes of finite type over $\bC$ (without the additional assumptions of being pointed or connected) and for our new $\i$-categorical definition of \'etale homotopy type.

Our strategy is close in spirit to the original strategy of Artin and Mazur, but uses more modern machinery. The key ideas are the following:

\begin{itemize}
\item[1)] For $X$ a separated scheme of finite type over $\bC,$ the shape of the $\i$-topos $\Shi\left(X_{an}\right)$ of $\i$-sheaves on its underlying space $X_{an},$ $\Shape\left(\Shi\left(X_{an}\right)\right)$ is canonically equivalent to the underlying homotopy type $\Pi_\i\left(X_{an}\right)$ of $X_{an}.$
\item[2)] Analytification induces a geometric morphism of topoi
$$\varepsilon:\Sh\left(X_{an}\right) \to \Sh\left(X_{\et}\right)$$ from the topos of sheaves on $X_{an}$ and the small \'etale topos of $X,$ which canonically extends to a geometric morphism of $\i$-topoi
$$\varepsilon:\Shi\left(X_{an}\right) \to \Shi\left(X_{\et}\right).$$
\item[3)] A \emph{$\pi$-finite space} is a space $V$ with only finitely many connected components and only finitely many homotopy groups all of which are finite. The geometric morphism $\varepsilon$ induces a profinite homotopy equivalence if and only if for every $\pi$-finite space $V,$ the induced map between global sections of the constant stack with value $V$ $$\Gamma_{\et}\Delta^{\et}\left(V\right) \to \Gamma_{an}\Delta^{an}\left(V\right)$$ is a homotopy equivalence.
\item[4)] By GAGA, $\varepsilon$ induces an isomorphism on profinite fundamental groups, and by results in \cite{SGA4}, it induces an isomorphism in cohomology with coefficients in any local system of finite abelian groups.
\end{itemize}

In order to deduce that $\varepsilon$ is a profinite homotopy equivalence from $3)$ and $4),$ one needs to understand the interpretation of cohomology classes of a space with coefficients in a local system in terms of classifying spaces, and one needs to know this interpretation is valid in any $\i$-topos. We therefore dedicate Section \ref{sec: cohomology} of this paper to carefully working this out. This allows us to prove the maps in $3)$ are homotopy equivalences by induction using Postnikov towers.

\subsection{Conventions and notation}
By an $\i$-category, we mean an $\left(\i,1\right)$-category. We will model these using quasicategories. We follow very closely the notational conventions and terminology from \cite{htt}, and refer the reader to the index and notational index op. cit. However, we do make a few small deviations from the notational conventions just mentioned:

\begin{itemize}
\item[1.] We shall interchangeably use the notation $\iGpd$ and $\cS$ for the $\i$-category of $\i$-groupoids, or the $\i$-category of spaces, since these are in fact the same $\i$-category. (We find it useful to use one terminology over another in certain instances to emphasize how we are viewing the objects in question.)
\item[2.] For $\sC$ an $\i$-category, we denote by $\Hom_{\sC}\left(C,D\right)$ the space of morphisms from $C$ to $D$ rather than using the notation $\mbox{Map}_{\sC}\left(C,D\right).$
\item[3.] For $\sC$ an $\i$-category, we denote by $\Pshi\left(\sC\right)$ the $\i$-category of $\i$-presheaves, i.e. the functor category $$\Fun\left(\sC^{op},\cS\right)=\Fun\left(\sC^{op},\iGpd\right).$$
\end{itemize}

\subsection*{Acknowledgments}
We would like to thank Anton Fetisov, Geoffroy Horel, Marc Hoyois, Achim Krause, Jacob Lurie, Thomas Nikolaus, Urs Schreiber, Mattia Talpo, and Kirsten Wickelgren for useful conversations, both electronically and in person. We are particularly indebted to Jacob Lurie, who provided key suggestions for the strategy of proof of our comparison theorem for the case of schemes, and to Achim Krause who was nice enough to carefully explain how to interpret twisted cohomology classes in terms of classifying spaces. Finally, we would also like to thank our co-authors of \cite{knhom}: Sarah Scherotzke, Nicol\`o Sibilla, and Mattia Talpo. Had it not been for our project together, we may never have turned our attention towards \'etale homotopy theory.

\newpage

\section{\'Etale Homotopy Theory}
In this section we will present a refinement of the construction of Artin and Mazur for the \'etale homotopy type of a scheme. Our construction is defined for an arbitrary higher stack on the \'etale site, and agrees with the definition of Lurie for Deligne-Mumford stacks.

\subsection{Pro-spaces and profinite spaces}
In this subsection, we give a brief recollection of the concepts of pro-objects, pro-spaces, and profinite spaces. For more detail, we refer the reader to \cite[Section 2]{knhom}.
\begin{definition}
Let $\sC$ be any $\i$-category. Then there is an $\i$-category $\Pro\left(\sC\right)$ together with a fully faithful functor $$j:\sC \hookrightarrow \Pro\left(\sC\right).$$ The $\i$-category $\Pro\left(\sC\right)$ is called the $\i$-category of \textbf{pro-objects} of $\sC,$ and it satisfies the following universal property:\\

$\Pro\left(\sC\right)$ admits small cofiltered limits, and if $\cD$ is any $\i$-category admitting small cofiltered limits, then composition with $j$ induces an equivalence of $\i$-categories

\begin{equation}\label{eq:pro}
\Fun_{\mathit{co-filt.}}\left(\Pro\left(\sE\right),\sD\right) \to \Fun\left(\sE,\sD\right),
\end{equation}
where $\Fun_{\mathit{co-filt.}}\left(\Pro\left(\sE\right),\sD\right)$ is the full subcategory of $\Fun\left(\Pro\left(\sE\right),\sD\right)$ spanned by those functors which preserve small cofiltered limits.
\end{definition}

In many cases, the $\i$-category $\Pro\left(\sC\right)$ can be described explicitly. When $\sC$ is small, then we can identify $\Pro\left(\sC\right)$ with the full subcategory of $\Fun\left(\sC,\cS\right)^{op}$ spanned by those functors which are cofiltered limits of co-representable functors (those of the form $\Hom_{\sC}\left(C,\blank\right),$ for $C$ an object of $\sC$). In this case, the functor $j$ is simply the Yoneda embedding (of $\sC^{op}$). In fact, this description persists for $\sC$ a large (but locally small) $\i$-category, provided we replace $\cS$ with the $\i$-category of large spaces, $\widehat{\cS},$ and we demand that the cofiltered limits we are considering are small. However, if $\sC$ is accessible and admits finite limits, then there is a more concrete description of $\Pro\left(\sC\right),$ namely it is the full subcategory of $\Fun\left(\sC,\cS\right)^{op}$ on those functors which are left exact and accessible \cite[Proposition 3.1.6]{dagxiii}.

\begin{remark}
In all the cases above, the functor $$j:\sC \to \Pro\left(\sC\right)$$ can be identified with a restriction of the opposite functor of the Yoneda embedding $$y:\sC^{op} \hookrightarrow \Pshi\left(\sC^{op}\right)=\Fun\left(\sC,\cS\right),$$ and since $y$ is fully faithful and preserves limits, $j$ is fully faithful and preserve colimits.
\end{remark}

\begin{definition}
The $\i$-category $\Pro\left(\cS\right)$ is the $\i$-category of \textbf{pro-spaces}.
\end{definition}

\begin{definition}\label{dfn:pi-finite}
A space $X$ in $\cS$ is \textbf{$\pi$-finite} if all its homotopy groups are finite, it has only finitely many non-trivial homotopy groups, and finitely many connected components.
\end{definition}

\begin{definition}
Let $\cS^{\pi}$ denote the full subcategory of the $\i$-category $\cS$ on the $\pi$-finite spaces. $\cS^{\pi}$ is essentially small and idempotent complete (and hence accessible). The $\i$-category of \textbf{profinite spaces} is defined to be the $\i$-category $$\Profs := \Pro\left(\cS^{\pi}\right).$$
\end{definition}

Denote by $i$ the canonical inclusion $i:\cS^{\pi} \hookrightarrow \cS.$
It induces a fully faithful embedding 
$$\Pro\left(i\right):\Prof\left(\cS\right) \hookrightarrow \Pro\left(\cS\right)$$
of profinite spaces into pro-spaces \cite[Remark 3.1.7]{dagxiii}. It is the functor corresponding under (\ref{eq:pro}) with the composite 
$$\cS^{\pi} \stackrel{i}{\hookrightarrow} \cS \stackrel{j}{\hookrightarrow} \Pro\left(\cS\right).$$
Moreover, $i$ is accessible and preserves finite limits, hence the above functor has a left adjoint $$i^*:\Pro\left(\cS\right) \to \Prof\left(\cS\right)$$ induced by composition with $i,$ by loc. cit.

\begin{definition}
We denote by $\widehat{\left(\blank\right)}$ the composite
$$\cS \stackrel{j}{\hookrightarrow} \Pro\left(\cS\right) \stackrel{i^*}{\longrightarrow} \Prof\left(\cS\right)$$ and call it the \textbf{profinite completion functor}. Concretely, if
$X$ is a space in $\cS,$ then $\widehat{X}$ corresponds to the composite $$\cS^\pi \stackrel{i}{\hookrightarrow} \cS \stackrel{\Hom\left(X,\blank\right)}{\longlonglongrightarrow} \cS.$$ This functor has a right adjoint given by the composite $$\Prof\left(\cS\right) \stackrel{\Pro\left(i\right)}{\longlonghookrightarrow} \Pro\left(\cS\right) \stackrel{T}{\longrightarrow} \cS,$$ where $T$ sends a functor $$F:\cS \to \cS$$ corresponding to a pro-space to $F\left(*\right),$ that is, the evaluation of $F$ on the one-point space \cite[Proposition 2.8]{knhom}. Concretely, $T$ sends a pro-space of the form $\underset{i} \lim j\left(X_i\right)$ to the actual limit in $\cS$ $$\underset{i} \lim X_i,$$ see \cite[Proposition 2.10]{knhom}.
\end{definition}

\subsection{Profinite shape theory}

We first begin by recalling how to associate to a space $X$ in $\cS,$ an $\i$-topos. To do this, it is conceptually simpler to view such an object $X$ as an $\i$-groupoid, as then we have a natural candidate for such an $\i$-topos, namely the $\i$-topos of $\i$-presheaves on $X,$ $\Pshi\left(X\right).$ Thinking more topos-theoretically, viewing $X$ as an object of the $\i$-topos $\cS$ of spaces, another natural candidate for such an $\i$-topos is the slice $\i$-topos $\cS/X,$ and these two natural choices agree by \cite[Corollary 5.3.5.4]{htt}. By \cite[Remark 6.3.5.10, Theorem 6.3.5.13, and Proposition 6.3.4.1]{htt}, it follows that there is a fully faithful colimit preserving functor
\begin{eqnarray*}
\cS/\left(\mspace{3mu}\cdot\mspace{3mu}\right): \cS &\to& \mathfrak{Top}_\i\\
X &\mapsto& \cS/X
\end{eqnarray*}
from the $\i$-category of spaces to the $\i$-category of $\i$-topoi.

\begin{remark}
The above functor is not to be confused with the functor
\begin{eqnarray*}
\Sh_\i\left(\mspace{3mu}\cdot\mspace{3mu}\right):\Top &\to& \mathfrak{Top}_\i\\
T &\mapsto& \Shi\left(T\right)
\end{eqnarray*}
sending a \emph{topological} space $T$ to its $\i$-topos of $\i$-sheaves. The above functor however is also fully faithful, once one restricts it to the full subcategory of sober topological spaces. If $T$ is a (sober) topological space and $\Pi_\i T$ is its associated $\i$-groupoid, then $\Shi\left(T\right)$ remembers the space $T$ up to homeomorphism, whereas $\cS/\left(\Pi_\i T\right)$ only captures the weak homotopy type of $T$. For nice spaces, one can recover $\cS/\left(\Pi_\i T\right)$ however as the \emph{shape} of $\Shi\left(T\right)$ (or its hypercompletion), see Proposition \ref{prop:shape1} and Proposition \ref{prop: locally contractible shape}.
\end{remark}

The functor $$\cS/\left(\mspace{3mu}\cdot\mspace{3mu}\right): \cS \to \mathfrak{Top}_\i,$$ by the equivalence (\ref{eq:pro}), induces a well-defined functor $$\Pro\left(\cS\right) \to \mathfrak{Top}_\i$$ which sends a representable pro-space $j\left(X\right)$ to $\cS/X,$ and sends a pro-space of the form $\underset{ i \in \cI} \lim X_i$ to the cofiltered limit of $\i$-topoi $\underset{ i \in \cI} \lim \cS/X_i.$ Denote this functor by $\cS^{pro}/\left(\mspace{3mu}\cdot\mspace{3mu}\right).$ By \cite[Remark 7.1.6.15]{htt}, this functor has a left adjoint $\Shape.$ We now will describe this construction, which originates from \cite{ToVe}:

Recall that a morphism in $\mathfrak{Top}_\i$ $$f:\cE \to \cF,$$ called a \emph{geometric morphism}, consists of an adjunction $f^* \dashv f_*,$ such that the left adjoint $f^*$ preserves finite limits. Let $\cE$ be an $\i$-topos. Consider the essentially unique geometric morphism $e:\cE \to \cS$ to the terminal $\i$-topos of spaces. Then the composite $$\cS \stackrel{e^*}{\longrightarrow} \cS \stackrel{e_*}{\longrightarrow} \cS$$ is a left-exact functor, i.e. a pro-space. We typically denote the inverse image functor $e^*$ as $\Delta$ and the direct image functor $e_*$ as $\Gamma.$ The functor $\Gamma$ is the global sections functor, i.e. it sends an object $E$ to the space $\Hom_{\cE}\left(1,E\right).$
We denote by $\Shape\left(\cE\right)$ the pro-space $\Gamma\circ \Delta.$

\begin{definition}\label{dfn:shape}
Let $\cE$ be an $\i$-topos. Then the pro-space $\Shape\left(\cE\right)$ is called the \textbf{shape} of the $\i$-topos $\cE.$
\end{definition}

We have the following useful proposition

\begin{proposition} (\cite[Remark A.1.4]{higheralgebra})\label{prop:shape1}
Let $T$ be a paracompact Hausdorff space homotopy equivalent to a $CW$-complex. Then $$\Shape\left(\Shi\left(T\right)\right) \simeq j\left(\Pi_\i T\right).$$
\end{proposition}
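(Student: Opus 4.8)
The plan is to unwind the definitions until the statement becomes a computation of the global sections of constant sheaves on $T$, and then to feed in the classical theory of paracompact spaces.

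Concretely, under the identification of $\Pro\left(\cS\right)$ with the full subcategory of $\Fun\left(\cS,\cS\right)^{op}$ spanned by the left-exact accessible functors, the pro-space $\Shape\left(\Shi\left(T\right)\right)$ \emph{is} the functor $V \mapsto \Gamma_T\left(\Delta_T\left(V\right)\right)$, where $\Delta_T \dashv \Gamma_T$ is the essentially unique geometric morphism $\Shi\left(T\right) \to \cS$; and $j\left(\Pi_\i T\right)$ \emph{is} the functor $V \mapsto \Hom_\cS\left(\Pi_\i T, V\right)$ corepresented by the space $\Pi_\i T$. So the first move is to observe that the claim amounts to producing an equivalence $\Gamma_T\left(\Delta_T\left(V\right)\right) \simeq \Hom_\cS\left(\Pi_\i T, V\right)$, natural in $V \in \cS$.

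The crux --- and the step I expect to be the main obstacle --- is to identify the left-hand side with an honest topological mapping space. Fixing, functorially in $V$, a $CW$-complex $\left|V\right|$ with $\Pi_\i\left|V\right| \simeq V$ (e.g. the geometric realization of a functorial Kan model), Lurie's analysis of sheaves of spaces on a paracompact Hausdorff space --- which hinges on partitions of unity and on the agreement of sheaf cohomology with \v{C}ech cohomology, and which underlies \cite[Remark A.1.4]{higheralgebra} (compare \cite[\S 7.1]{htt}) --- yields a natural equivalence $$\Gamma_T\left(\Delta_T\left(V\right)\right) \;\simeq\; \mathrm{Sing}\left(\operatorname{Map}_{\Top}\left(T,\left|V\right|\right)\right),$$ where $\operatorname{Map}_{\Top}$ is the space of continuous maps with the compact--open topology. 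One should note that this target need \emph{not} have the homotopy type of $\Hom_\cS\left(\Pi_\i T, V\right)$ when $T$ is only paracompact --- for $T$ the Warsaw circle and $V = S^1$ the former has $\pi_0 = \bZ$ while the latter is contractible --- so the $CW$-hypothesis on $T$ will be used in an essential way in the final step.

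Finally I would invoke that $T$ is homotopy equivalent to a $CW$-complex $T'$ and choose a homotopy equivalence $f\colon T \to T'$. Since the contravariant functor $\operatorname{Map}_{\Top}\left(-,\left|V\right|\right)$ carries homotopies to homotopies, $f$ induces a homotopy equivalence $\operatorname{Map}_{\Top}\left(T',\left|V\right|\right) \simeq \operatorname{Map}_{\Top}\left(T,\left|V\right|\right)$, natural in $V$; and for the $CW$-complex $T'$, which is cofibrant, standard point-set homotopy theory identifies $\mathrm{Sing}\left(\operatorname{Map}_{\Top}\left(T',\left|V\right|\right)\right)$ with the derived mapping space, i.e. (since $\mathrm{Sing}\left|V\right| \simeq V$) with $\Hom_\cS\left(\Pi_\i T', V\right)$. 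As $\mathrm{Sing}\,f$ is an equivalence $\Pi_\i T \simeq \Pi_\i T'$ in $\cS$, we get $\Hom_\cS\left(\Pi_\i T', V\right) \simeq \Hom_\cS\left(\Pi_\i T, V\right)$. Concatenating these equivalences --- all natural in $V$ --- produces $\Gamma_T\left(\Delta_T\left(V\right)\right) \simeq \Hom_\cS\left(\Pi_\i T, V\right)$, which by the first step is exactly the desired equivalence $\Shape\left(\Shi\left(T\right)\right) \simeq j\left(\Pi_\i T\right)$ in $\Pro\left(\cS\right)$. As a reassurance, one can check the case $V = K\left(A,n\right)$ directly: $\pi_0$ of the left-hand side is the sheaf cohomology $H^n\left(T;A\right)$, which coincides with singular cohomology --- hence with $\pi_0 \Hom_\cS\left(\Pi_\i T, K\left(A,n\right)\right)$ --- precisely because $T$ has the homotopy type of a $CW$-complex.
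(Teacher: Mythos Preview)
The paper does not give its own proof of this proposition; it is simply quoted from Lurie (\cite[Remark A.1.4]{higheralgebra}), which in turn packages the analysis of paracompact spaces carried out in \cite[\S 7.1]{htt}. Your sketch is a correct reconstruction of that argument: the identification $\Gamma_T\Delta_T\left(V\right)\simeq \mathrm{Sing}\left(\operatorname{Map}_{\Top}\left(T,\left|V\right|\right)\right)$ for $T$ paracompact is exactly the content of \cite[Corollary 7.1.4.4 and Proposition 7.1.5.1]{htt}, and the remaining step---using the CW hypothesis to pass from the topological mapping space to the derived mapping space in $\cS$---is standard and correctly isolated. Your Warsaw-circle remark is a nice sanity check showing why the CW hypothesis cannot be dropped. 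So there is nothing to compare against in the paper itself; your proposal is essentially the proof Lurie gives, spelled out.
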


\begin{definition}
An $\i$-topos $\cE$ is \textbf{locally $\i$-connected} if the inverse image functor $$\Delta:\cS \to \cE$$ has a left adjoint $\Pi^{\cE}_\i.$
\end{definition}

\begin{remark}\label{rmk: locally infinity connected}
If $\cE$ is a locally $\i$-connected $\i$-topos, then the pro-space $\Shape\left(\cE\right)$ is corepresented by the space $$\Pi^{\cE}_\i\left(1\right).$$ This follows from the fact that if $\cG$ is any space in $\cS,$ by adjunction we have the following natural equivalences
\begin{eqnarray*}
\Hom_{\cS}\left(\Pi^{\cE}_\i\left(1\right),\cG\right) &\simeq& \Hom_{\cE}\left(1,\Delta\left(\cG\right)\right)\\
&=& \Gamma\Delta\left(\cG\right)\\
&=& \Shape\left(\cE\right)\left(\cG\right).
\end{eqnarray*}
\end{remark}

We also have the following proposition:

\begin{proposition}\label{prop: locally contractible shape}
Let $T$ be a locally contractible topological space. Then the shape $$\Shape\left(\Hshi\left(T\right)\right)$$ of its $\i$-topos of hypersheaves is equivalent to $j\left(\Pi_\i T\right).$ Moreover, $\Hshi\left(T\right)$ is locally $\i$-connected.
\end{proposition}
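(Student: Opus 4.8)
The plan is to carry out the proof on the site $\cB$ of contractible open subsets of $T$, which is a basis for the topology of $T$ by local contractibility, and to derive both assertions from the single key claim that \emph{every constant presheaf on $\cB$ is already a hypersheaf}. First I would invoke the comparison lemma for dense subsites (cf.\ \cite{htt}) to identify $\Shi\left(T\right)$ with $\Shi\left(\cB\right)$, hence $\Hshi\left(T\right)$ with $\Hshi\left(\cB\right)$ since hypercompletion is intrinsic to an $\i$-topos; under this identification the constant hypersheaf functor $\Delta\colon\cS\to\Hshi\left(T\right)$ is the hypersheafification of the constant-presheaf functor $K\mapsto\underline{K}$ on $\Pshi\left(\cB\right)$. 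Granting the key claim, $\Delta\left(K\right)=\underline{K}$ for every space $K$, because hypersheafification fixes hypersheaves; as products of hypersheaves agree with products of presheaves and the latter are computed pointwise, $\Delta$ then preserves small products, and being already left exact it preserves all small limits, so it admits a left adjoint $\Pi^{\Hshi\left(T\right)}_\i$ and $\Hshi\left(T\right)$ is locally $\i$-connected. Explicitly this left adjoint sends a hypersheaf $F$ to $\colim_{\cB^{op}}F$, the colimit of $F$ viewed as a presheaf; in particular, for $V\in\cB$, $\Hom_\cS\!\left(\Pi^{\Hshi\left(T\right)}_\i\left(h_V\right),K\right)\simeq\Hom_{\Hshi\left(T\right)}\left(h_V,\underline{K}\right)\simeq\underline{K}\left(V\right)=K$ naturally in $K$, so $\Pi^{\Hshi\left(T\right)}_\i\left(h_V\right)\simeq *$.

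Given local $\i$-connectedness, Remark \ref{rmk: locally infinity connected} yields $\Shape\left(\Hshi\left(T\right)\right)\simeq j\!\left(\Pi^{\Hshi\left(T\right)}_\i\left(1\right)\right)$, so it remains to identify $\Pi^{\Hshi\left(T\right)}_\i\left(1\right)$ with $\Pi_\i T$. For this I would choose a good hypercover $U_\bullet\to T$ in the topological site, each $U_n$ a disjoint union of members of $\cB$; such hypercovers exist precisely because $T$ is locally contractible, since any open subset can be refined by a cover of contractibles at every stage of the inductive construction. In the hypercomplete $\i$-topos $\Hshi\left(T\right)$ the hypercover is effective, so $1\simeq\colim h_{U_\bullet}$ (\cite[Section 6.5.3]{htt}); applying the colimit-preserving functor $\Pi^{\Hshi\left(T\right)}_\i$ and using $\Pi^{\Hshi\left(T\right)}_\i\left(h_V\right)\simeq *$ for $V\in\cB$ gives $\Pi^{\Hshi\left(T\right)}_\i\left(1\right)\simeq\colim\,\pi_0\left(U_\bullet\right)$, the realization of the simplicial set of connected components of the hypercover. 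By the classical fact that a good hypercover computes the homotopy type of its base, this realization is equivalent to $\Pi_\i T$, completing the identification $\Shape\left(\Hshi\left(T\right)\right)\simeq j\left(\Pi_\i T\right)$.

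It remains to prove the key claim, which is the heart of the matter: a constant presheaf $\underline{K}$ on $\cB$ is a hypersheaf. By definition this is descent of $\underline{K}$ along hypercovers, and since $\cB$ generates the topology it suffices to treat hypercovers $W_\bullet\to h_V$ with $V\in\cB$ coming from topological hypercovers of $V$ by disjoint unions of members of $\cB$. For such a hypercover $\underline{K}\left(W_n\right)\simeq\Hom_\cS\left(\pi_0\left(W_n\right),K\right)$, so the descent condition $\underline{K}\left(V\right)\to\lim\,\underline{K}\left(W_\bullet\right)$ becomes the map $\Hom_\cS\left(*,K\right)\to\Hom_\cS\!\left(\colim\,\pi_0\left(W_\bullet\right),K\right)$ induced by $\colim\,\pi_0\left(W_\bullet\right)\to *$. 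Each component of each $W_n$ is contractible, so $\Pi_\i\left(W_n\right)\simeq\pi_0\left(W_n\right)$, and the classical hypercover computation gives $\colim\,\pi_0\left(W_\bullet\right)\simeq\Pi_\i\left(V\right)\simeq *$ since $V$ is contractible; hence the descent map is an equivalence. I expect the main obstacle to be exactly this claim: beyond the higher-descent bookkeeping, the essential point is that one is forced into the hypercomplete setting, because finite intersections of contractible opens need not be contractible, so there is no good \v{C}ech cover and ordinary \v{C}ech descent for $\Shi\left(T\right)$ would not suffice --- it is precisely the availability of good \emph{hypercovers} of a locally contractible space, together with hypercomplete descent, that makes the argument run. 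A secondary technical point is the careful passage between $\mathrm{Op}\left(T\right)$, the basis $\cB$, and the coproduct objects appearing at each level of a hypercover.
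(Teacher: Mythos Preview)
Your proposal is correct and follows essentially the same approach as the paper: both pass to the site of contractible opens via the comparison lemma, establish that constant presheaves there are already hypersheaves using the Dugger--Isaksen fact that hypercovers compute homotopy types, and deduce local $\i$-connectedness from this. The only difference is in the final computation of $\Pi_\i^{\Hshi(T)}(1)$: the paper builds the left adjoint on the full site $Op(T)$ as the left Kan extension of $U\mapsto\Pi_\i(U)$, so that the value on the terminal object is immediately $\Pi_\i(T)$ since $1$ is representable by $T$; you instead remain on $\cB$, where $1$ is not representable, and must invoke a good hypercover and the Dugger--Isaksen result a second time---both routes are valid, the paper's packaging just avoids this second appeal.
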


\begin{proof}
Denote by $\Pi_\i:\Top \to \cS$ the canonical functor sending a space to its weak homotopy type (as an $\i$-groupoid). Denote by $Op\left(T\right)$ the poset of open subsets of $T.$ Denote by $l$ the composite
$$Op\left(T\right) \to \Top \stackrel{\Pi_\i}{\longlongrightarrow} \cS,$$ where the functor $Op\left(T\right) \to \Top$ sends each open subset $U$ of $T$ to itself. Denote by $$L=\Lan_y l:\Pshi\left(Op\left(T\right)\right) \to \cS$$ the left Kan extension of $l$ along the Yoneda embedding, i.e. the unique colimit preserving functor which agrees with $l$ on representables. It follows from the Yoneda lemma that this functor has a right adjoint $R$ which sends an $\i$-groupoid $\cG$ to the $\i$-presheaf $$R\left(\cG\right):U \mapsto \Hom\left(l\left(U\right),\cG\right).$$ We claim that $R\left(\cG\right)$ is a hypersheaf. To see this, it suffices to observe that if $V^{\bullet}$ is a hypercover of $U$, then, regarding it in the natural way as a simplicial topological space, the colimit of the composite $$\Delta^{op} \stackrel{V^\bullet}{\longlongrightarrow} \Top \stackrel{\Pi_\i}{\longlongrightarrow} \cS$$ is $l\left(U\right),$ which follows from \cite[Theorem 1.3]{duggerisaksen}. It follows that $R$ and $L$ restrict to adjoint functors
$$\xymatrix@C=2cm{\Hshi\left(T\right) \ar@<-0.65ex>[r]_-{L} & \cS. \ar@<-0.65ex>[l]_-{R}}$$ Denote by $Op^c\left(T\right)$ the subposet of $Op\left(T\right)$ on those open subsets which are contractible. Then, since $T$ is locally contractible, by the Comparison Lemma \cite[III]{SGA4}, it follows that $$\Sh\left(Op\left(T\right)\right)\simeq \Sh\left(Op^c\left(T\right)\right),$$ where the latter topos is the topos of sheaves with respect to covers by contractible open subsets. It now follows from \cite[Theorem 5]{Jardine} and \cite[Proposition 6.5.2.14]{htt} that there is a canonical equivalence $$\Hshi\left(Op\left(T\right)\right)\simeq \Hshi\left(Op^c\left(T\right)\right).$$ The left adjoint $\Delta$ to global sections in $\Hshi\left(Op^c\left(T\right)\right)$ is defined so that $\Delta\left(\cG\right)$ can be computed as the hypersheafification of constant presheaf with value $\cG.$ Note however that for $U$ in $Op^c\left(T\right),$ $R\left(\cG\right)$ is a hypersheaf, and
\begin{eqnarray*}
R\left(\cG\right)\left(U\right) &\simeq& \Hom\left(l\left(U\right),\cG\right)\\
&\simeq& \Hom\left(*,\cG\right)\\
&\simeq& \cG,
\end{eqnarray*}
since $U$ is contractible, and hence the constant presheaf is already a hypersheaf on $Op^c\left(T\right).$ Hence we can identify $R$ with $\Delta.$ It follows that $\Hshi\left(T\right)$ is locally $\i$-connected with $$\Pi^{T}_\i=L \dashv \Delta.$$ By Remark \ref{rmk: locally infinity connected}, it follows that the shape of $\Hshi\left(T\right)$ is corepresented by $j\left(\Pi^{T}_\i\left(1\right)\right).$ But $1$ is the representable presheaf corresponding to the open subset $T,$ and hence $\Pi^{T}_\i\left(1\right)$ is canonically equivalent to $$l\left(T\right)=\Pi_\i\left(T\right).$$
\end{proof}

Consider the profinite completion functor from pro-space to profinite spaces $$i^*:\Pro\left(\cS\right) \to \Profs.$$
By composition we get a functor
$$\mathfrak{Top}_\i \stackrel{\Shape}{\longlongrightarrow} \Pro\left(\cS\right) \stackrel{i^*}{\longrightarrow} \Profs,$$ which we shall denote by $\Shape^{\Prof}$, whose right adjoint is given by the composition
$$\Profs \stackrel{\Pro\left(i\right)}{\longlongrightarrow} \Pro\left(\cS\right) \stackrel{\cS^{\Pro}/\left(\mspace{3mu}\cdot\mspace{3mu}\right)}{\longlongrightarrow} \mathfrak{Top}_\i,$$
which we shall denote by $\cS^{\Prof}/\left(\mspace{3mu}\cdot\mspace{3mu}\right).$

\begin{definition}\label{dfn:profiniteshape}
Let $\cE$ be an $\i$-topos. Then the profinite space $\Shape^{\Prof}\left(\cE\right)$ is called the \textbf{profinite shape} of the $\i$-topos $\cE.$
\end{definition}

\begin{definition}\label{dfn:profinite eql}
Let $\cE \to \cF$ be a geometric morphism of $\i$-topoi. Such a morphism is a \textbf{profinite homotopy equivalence} if the induced map $$\Shape^{\Prof}\left(\cE\right) \to \Shape^{\Prof}\left(\cF\right)$$ is an equivalence of profinite spaces.
\end{definition}

\begin{remark}\label{rmk:concprof}
Unraveling the definitions, we see that a geometric morphism $\varphi:\cE \to \cF$ is a profinite homotopy equivalence, if and only if for every $\pi$-finite space $V,$ the canonical morphism
$$f_*f^*\left(V\right)\to e_*e^*\left(V\right)$$ is an equivalence of spaces,
where $e:\cE \to \cS$ and $f:\cF \to \cS$ are the (essentially unique) maps to the terminal $\i$-topos, and the above map is induced by the unit of the adjunction $\varphi^* \dashv \varphi_*,$ using the equivalence $e \simeq f \circ \varphi.$
\end{remark}

\begin{proposition}\label{prop:hypersame}
Let $\cE$ be an $\i$-topos and let $a:\widehat{\cE} \to \cE$ be the canonical map from its hypercompletion. Then $a$ is a profinite homotopy equivalence.
\end{proposition}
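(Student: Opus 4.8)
The plan is to deduce this from the concrete criterion of Remark~\ref{rmk:concprof}, reducing everything to the two standard facts that every $\pi$-finite space is truncated and that every truncated object of an $\i$-topos is hypercomplete.

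First I would record the adjunctions attached to the hypercompletion. The hypercomplete objects of $\cE$ form a left exact localization of $\cE$, so the canonical geometric morphism $a:\widehat{\cE}\to\cE$ has the inclusion $a_*:\widehat{\cE}\hookrightarrow\cE$ of hypercomplete objects as its fully faithful right adjoint and the hypersheafification $a^*:\cE\to\widehat{\cE}$, which is left exact, as its left adjoint. Let $e:\widehat{\cE}\to\cS$ and $f:\cE\to\cS$ denote the essentially unique geometric morphisms to the terminal $\i$-topos, so that $e\simeq f\circ a$, and hence $e^*\simeq a^*f^*$ and $e_*\simeq f_*a_*$. Applying Remark~\ref{rmk:concprof} with $\varphi=a$, the morphism $a$ is a profinite homotopy equivalence if and only if for every $\pi$-finite space $V$ the canonical map $f_*f^*(V)\to e_*e^*(V)$ is an equivalence; under the identifications above this is the image under $f_*$ of the unit
$$f^*(V)\longrightarrow a_*a^*f^*(V)$$
of the adjunction $a^*\dashv a_*$. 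So it suffices to prove the stronger statement that this unit is already an equivalence, i.e. that $f^*(V)$ is a hypercomplete object of $\cE$.

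This last point is the crux, and it is the only place the hypothesis on $V$ is used. A $\pi$-finite space $V$ has finitely many connected components, on each of which only finitely many homotopy groups are nonzero, so $V$ is $n$-truncated for some $n$ (take the maximum of the truncation levels occurring on its components). Since $f^*$ is left exact it preserves $n$-truncated objects \cite[Proposition 5.5.6.16]{htt}, so $f^*(V)$ is $n$-truncated in $\cE$; and an $n$-truncated object of any $\i$-topos is hypercomplete \cite[Lemma 6.5.2.9]{htt}, being in particular local with respect to every $\i$-connective morphism. Hence the unit $f^*(V)\to a_*a^*f^*(V)$ is an equivalence, and therefore so is its image under $f_*$, which is exactly the criterion of Remark~\ref{rmk:concprof}.

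I do not expect a real obstacle; the only thing to be careful about is the direction of the comparison map in Remark~\ref{rmk:concprof}, which must be applied with $\widehat{\cE}$ as the source and $\cE$ as the target, so that the relevant map is the hypersheafification unit seen from the $\cE$-side. It is worth stressing, though, that the restriction to $\pi$-finite $V$ is genuinely essential: for a general space $V$ the constant object $f^*(V)$ need not be hypercomplete, which is precisely why the shape of $\cE$ and the shape of its hypercompletion can differ, even though --- as this proposition records --- their profinite completions always agree.
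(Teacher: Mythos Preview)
Your proof is correct and follows essentially the same route as the paper's: both invoke Remark~\ref{rmk:concprof}, reduce to showing the hypersheafification unit $f^*(V)\to a_*a^*f^*(V)$ is an equivalence, and conclude by noting that a $\pi$-finite $V$ is $n$-truncated, hence so is $f^*(V)$ by left exactness \cite[Proposition 5.5.6.16]{htt}, hence it is already hypercomplete. Your write-up is a bit more explicit about the adjunction bookkeeping and cites the hypercompleteness of truncated objects directly, but the argument is the same.
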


\begin{proof}
Using Remark \ref{rmk:concprof}, it suffices to show for any $\pi$-finite space $V$ that the canonical map $$e_*e^*\left(V\right) \to e_*\left(a_*a^*\left(e^*\left(V\right)\right)\right)$$ is an equivalence, where the map above is induced from the map $$e^*\left(V\right) \to a_*a^*e^*\left(V\right).$$ However, the latter map is the canonical map from $e^*\left(V\right)$ to its hypersheafification. Since $V$ is $\pi$-finite, it is $n$-truncated for some $n,$ and hence so is $e^*\left(V\right)$ by \cite[Proposition 5.5.6.16]{htt}, since $e^*$ is left exact. However, every $n$-truncated sheaf automatically satisfies hyperdescent, so this map is an equivalence.
\end{proof}

\subsection{The small \'etale $\i$-topos.}

For this subsection and the next, we will work over an arbitrary commutative ring $k$.

\begin{definition}\label{dfn:etaleclosed}
A subcategory $\Affku$ of the category of affine schemes over $k,$ $\mathbf{Aff}_k,$ is \textbf{\'etale closed} if for any object $X$ in $\Affku,$ if $Y \to X$ is an \'etale morphism from an affine scheme, then $Y$ is in $\Affku$. We denote the corresponding subcategory of commutative $k$-algebras by $\Algku.$
\end{definition}

\begin{example}
Since \'etale maps are of finite presentation, the subcategory $\Affk$ of affine $k$-schemes of finite type is \'etale closed.
\end{example}

\begin{remark}
Since \'etale maps between affine schemes are of finite presentation, any essentially small subcategory of $\mathbf{Aff}_k$ is contained in an essentially small \'etale closed subcategory.
\end{remark}

In the rest of this section, we will work over a fixed essentially small \'etale closed subcategory $\Affku$ of $\mathbf{Aff}_k,$ unless otherwise specified. The notion of \'etale closedeness was specifically chosen so that the \'etale pretopology on $\mathbf{Aff}_k$ naturally restricts.

Recall that for a scheme $X,$ its \textbf{small \'etale site} is the following Grothendieck site: As a category, $X_{\et}$ consists of \'etale morphisms $$U \to X,$$ with $U$ another scheme, and the morphisms are commutative triangles over $X.$ 
The Grothendieck pretopology on this category is given by \'etale covering families.
The small \'etale topos $\Sh\left(X_{\et}\right)$ is the topos of sheaves over this site. 

\begin{definition}
Let $X$ be a scheme. Its \textbf{small \'etale $\i$-topos} is the $\i$-topos $\Shi\left(X_{\et}\right).$
\end{definition}

\begin{remark}
Since \'etale maps are stable under pullback, the category $X_{\et}$ has finite limits. It follows then from \cite[Lemma 6.4.5.6]{htt} that $\Shi\left(X_{\et}\right)$ is the $1$-localic $\i$-topos corresponding to $\Sh\left(X_{\et}\right)$ under the equivalence of $\i$-categories between the $\left(2,1\right)$-category of topoi and the $\i$-category of $1$-localic $\i$-topoi.
\end{remark}

This definition naturally carries over for Deligne-Mumford stacks and their higher analogues. It will be technically convenient to work straightaway with higher Deligne-Mumford stacks. We start by briefly recalling some material from \cite{dagv} and \cite{higherdave}.

Let $A$ be a commutative $k$-algebra, where $k$ is our base ring. Denote by $A_{\et}$ the category whose objects consists of \'etale morphisms $U \to \Spec\left(A\right),$ with $U$ another \emph{affine} scheme. There is a canonical inclusion of sites $$A_{\et} \hookrightarrow \Spec\left(A\right)_{\et},$$ which satisfies the conditions of the Comparison Lemma of \cite{SGA4} III, hence one has $$\Sh\left(A_{\et}\right)\simeq \Sh\left(\Spec\left(A\right)_{\et}\right).$$ As both sites have finite limits, it follows from \cite[Proposition 6.4.5.4]{htt} that $$\Shi\left(A_{\et}\right) \simeq \Shi\left(\Spec\left(A\right)_{\et}\right).$$ Notice that there is a canonical sheaf of rings $\mathcal{O}_A$ on the site $A_{\et},$ which assigns an \'etale map $$\Spec\left(B\right) \to \Spec\left(A\right)$$ the ring $B.$ The stalks of this sheaf $\mathcal{O}_{A}$ along geometric points are not only local $k$-algebras, but in fact \emph{strictly Henselian}. This is important in order to get the correct notion of morphism between ringed topoi. Just as a map of ringed spaces need not be a map of \emph{locally} ringed spaces, since one must demand that the induced map along stalks is a map of local rings, i.e. preserves the maximal ideals, a map of strictly Henselian ringed topoi needs to respect the Henselian ring structure along stalks, i.e. be a Henselian map. Using this idea one can define an $\i$-category of strictly Henselian ringed $\i$-topoi. Let us denote this $\i$-category by $\Htop$. (To be precise, $\Htop$ is $\i$-category ${\mathcal L}{\mathcal T}op\left(\cG\right)^{op}$ defined in \cite[Definition 1.4.8]{dagv}, with $\cG$ the \'etale geometry in the sense of Section 2.6 of op. cit.) By \cite[Theorem 2.2.12]{dagv} with $\mathcal{G}$ the \'etale geometry as in Section 2.6 of op. cit., the construction $$A \mapsto \Shi\left(A_{\et}\right)$$ can be turned into a fully faithful functor
$$\Spec_{\et}:\Affku \hookrightarrow \Htop$$ from affine $k$-schemes of finite type over $k$ to $\i$-topoi locally ringed in strict Henselian rings.

The $\i$-category $\Htop$ carries a natural Grothendieck topology \cite[Definition 4.3.2]{higherdave}, also called the \emph{\'etale topology}, which is a natural extension of the classical \'etale topology on $\Affku$ with respect to the functor $\Spec_{\et}.$ We say that a strictly Henselian ringed $\i$-topos $\left(\cE,\cO_{\cE}\right)$ is ($\cU$-)\textbf{Deligne-Mumford} if there exists an \'etale covering family $$\left(\left(\cE_\alpha,\cO_{E_\alpha}\right) \to \left(\cE,\cO_{E}\right)\right)_\alpha$$ such that for each $\alpha,$ $$\left(\cE_\alpha,\cO_{E_\alpha}\right) \simeq \Spec_{\et}\left(A_\alpha\right),$$ for $A_\alpha \in \Algku.$ We will call a Deligne-Mumford strictly Henselian ringed $\i$-topos a \textbf{Deligne-Mumford scheme}, as these are precisely $\cG$-schemes in the sense of \cite[Definition 2.3.9]{dagv}, where $\cG$ is the \'etale geometry in the sense of Section 2.6 of op. cit. (and the \'etale cover is by affines in $\Affku$). We denote the $\i$-category of Deligne-Mumford schemes by $\Dmsch.$

Restriction along $\Spec_{\et}$ defines for each strictly Henselian ringed $\i$-topos $\left(\cE,\cO_{E}\right)$ a functor of points
\begin{eqnarray*}
\tilde y\left(\left(\cE,\cO_{E}\right)\right):\left(\Affku\right)^{op} &\to& \iGpd\\
\Spec\left(A\right) &\mapsto& \Hom_{\Htop}\left(\Spec_{\et}\left(A\right),\left(\cE,\cO_{E}\right)\right).
\end{eqnarray*}
Each functor $\tilde y\left(\left(\cE,\cO_{E}\right)\right)$ satisfies \'etale descent and the restriction of $\tilde y$ to Deligne-Mumford schemes defines a fully faithful functor $$\tilde y:\Dmsch \hookrightarrow \Shi\left(\Affku,\et\right)$$ \cite[Theorem 2.4.1, Lemma 2.4.13]{dagv}, \cite[Theorem 5.2.2, Remark 5.2.3]{higherdave}.

\begin{definition}
A \textbf{Deligne-Mumford $\i$-stack} is an $\i$-stack $\cX$ on the \'etale site of $\Affku,$ equivalent to the functor of points of a Deligne-Mumford scheme. We denote the $\i$-category of such stacks by $\mathfrak{DM}\left(k\right)_\i^{\cU}.$
\end{definition}

\begin{remark}
By \cite[Theorem 2.6.18]{dagv}, $\mathfrak{DM}\left(k\right)_\i^{\cU}$ contains the classical $\left(2,1\right)$-category of Deligne-Mumford stacks that can be modeled on affines in $\Affku$ as a full subcategory, but also contains more general objects as there are no separation conditions imposed. E.g., $B\left(\mathbb{Z}\right)$ we will be considered a Deligne-Mumford stack in this setting. This will cause no problems and will in fact simplify the proofs considerably.
\end{remark}

\begin{definition}\label{dfn:smalletaledm}
Let $\cX$ be a Deligne-Mumford ($\i$-)stack. Its \textbf{small \'etale $\i$-topos} is the $\i$-topos of $\i$-sheaves over $\cX_{\et},$ $\Shi\left(\cX_{\et}\right),$ where $\cX_{\et}$ is the ($\i$-)category of (not necessarily representable) \'etale maps $U \to \cX$ with $U$ a scheme, equipped with the Grothendieck topology generated by \'etale covering families. If $\cX\simeq \tilde y\left(\left(\cE,\cO_{\cE}\right)\right)$, by an \'etale morphism, we mean a morphism $U \to \cX$ which under the Yoneda lemma corresponds to a morphism $$\left(\Shi\left(U_{\et}\right),\cO_{U}\right) \to \left(\cE,\cO_{\cE}\right)$$ of Deligne-Mumford schemes which is \'etale in the sense of \cite[Definition 2.3.1]{dagv}.
\end{definition}

\begin{lemma}\label{lem:sametopos}
Let $\cX$ be a Deligne-Mumford $\i$-stack. By definition, $\cX$ is the functor of points of a Deligne-Mumford scheme $\left(\cE,\mathcal{O}_\cE\right)$. In this case, one has a canonical equivalence $$\Shi\left(\cX_{\et}\right) \simeq \cE.$$
\end{lemma}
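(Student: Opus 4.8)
The plan is to unwind the definitions and show that both $\i$-topoi in question are built from the same site. Recall that $\cX$ is the functor of points $\tilde y\left(\left(\cE,\cO_\cE\right)\right)$ of a Deligne-Mumford scheme $\left(\cE,\cO_\cE\right)$, and that by Definition \ref{dfn:smalletaledm}, $\Shi\left(\cX_{\et}\right)$ is the $\i$-topos of $\i$-sheaves on the category $\cX_{\et}$ of \'etale morphisms $U \to \cX$ with $U$ a scheme, equipped with the topology generated by \'etale covering families. First I would reduce to the case where the $U$ are affine schemes in $\Affku$: since $\left(\cE,\cO_\cE\right)$ is Deligne-Mumford, every object of $\cX_{\et}$ admits an \'etale cover by affines in $\Affku$, so the full subcategory $\cX_{\et}^{\mathit{aff}}$ on such objects is a basis for the topology, and by the Comparison Lemma \cite[III]{SGA4} (or rather its $\i$-categorical analogue, \cite[Proposition 6.4.5.4]{htt}, noting that $\cX_{\et}$ has finite limits since \'etale maps are stable under pullback) one has $\Shi\left(\cX_{\et}\right) \simeq \Shi\left(\cX_{\et}^{\mathit{aff}},\et\right)$.

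Next, I would identify $\cX_{\et}^{\mathit{aff}}$ with a category described purely in terms of $\left(\cE,\cO_\cE\right)$. By the last sentence of Definition \ref{dfn:smalletaledm}, an \'etale map $U \to \cX$ with $U$ affine corresponds, via the full faithfulness of $\tilde y$ on $\Dmsch$ \cite[Theorem 2.4.1]{dagv}, \cite[Theorem 5.2.2]{higherdave}, to an \'etale morphism of Deligne-Mumford schemes $\left(\Shi\left(U_{\et}\right),\cO_U\right) \to \left(\cE,\cO_\cE\right)$, i.e. to an object of the \'etale site of the \emph{structured} $\i$-topos $\left(\cE,\cO_\cE\right)$ in the sense of \cite{dagv}. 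So $\cX_{\et}^{\mathit{aff}}$ is equivalent to this \'etale site, and moreover the covering families match up by the very definition of the \'etale topology on $\Htop$ \cite[Definition 4.3.2]{higherdave}, which was arranged to extend the classical one along $\Spec_{\et}$. The final input is then the fundamental fact from \cite[Section 2.3]{dagv} (see \cite[Theorem 2.3.13]{dagv} or the discussion of $\cG$-schemes therein) that for a $\cG$-scheme $\left(\cE,\cO_\cE\right)$ with $\cG$ the \'etale geometry, the $\i$-topos $\cE$ is recovered as the $\i$-topos of $\i$-sheaves on its own \'etale site; concretely, $\left(\cE,\cO_\cE\right)$ has an \'etale atlas by affines $\Spec_{\et}\left(A_\alpha\right)$, the $\i$-topos of $\i$-sheaves on the Cartesian diagram of these atlases computes $\cE$ by descent for $\i$-topoi, and this Cartesian diagram is precisely a generating full subcategory of $\cX_{\et}^{\mathit{aff}}$.

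Assembling these three identifications gives the chain
$$\Shi\left(\cX_{\et}\right) \simeq \Shi\left(\cX_{\et}^{\mathit{aff}},\et\right) \simeq \Shi\left(\left(\text{\'etale site of }\left(\cE,\cO_\cE\right)\right)\right) \simeq \cE,$$
and one should check that all the equivalences are canonical, i.e. compatible with the structure sheaves and functorial in $\cX$, so that the statement holds at the level of the relevant functors and not merely objectwise. I expect the main obstacle to be the bookkeeping in the middle step: one must be careful that the notion of ``\'etale morphism $U \to \cX$'' appearing in Definition \ref{dfn:smalletaledm} genuinely agrees, as a category with a Grothendieck topology, with the \'etale site internal to $\Htop$, and in particular that no hidden size or separation issues arise from the fact that $\cX_{\et}$ allows arbitrary (non-affine, possibly non-separated) schemes $U$ while the structured-topos side is built from affines in $\Affku$. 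This is exactly where \'etale closedness of $\Affku$ (Definition \ref{dfn:etaleclosed}) and the remark that any object of $\cX_{\et}$ is covered by such affines do the work, reducing everything to the affine case where \cite{dagv} applies directly.
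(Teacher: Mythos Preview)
Your overall strategy is sound and reaches the goal, but it is genuinely different from the paper's argument. The paper does not pass through the affine subcategory $\cX_{\et}^{\mathit{aff}}$ or invoke the Comparison Lemma at all. Instead, it appeals to the machinery of \emph{strong \'etale blossoms} developed in \cite{higherdave}: one observes that $\Sch^{\cU}_k$ is such a blossom, and then the general structure theorems of that paper (Theorems 5.3.6, 5.3.7 and Proposition 5.3.2) identify $\cE$ directly with the slice $\Shi\left(\Sch^{\cU,\et}_k\right)/\tilde y^{\et}\left(\cX\right)$, which a further pair of results (Remark 2.2.4 and Proposition 2.2.1 of op.\ cit.) identifies with $\Shi\left(\cX_{\et}\right)$. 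So the paper's route is: schemes with only \'etale maps form a blossom $\Rightarrow$ $\cE$ is a slice of the big \'etale $\i$-topos $\Rightarrow$ that slice is the small \'etale $\i$-topos. Your route is: reduce to affines by the Comparison Lemma $\Rightarrow$ identify the affine small \'etale site with the \'etale site of the structured topos $\Rightarrow$ invoke the reconstruction of a $\cG$-scheme from its \'etale site in \cite{dagv}.

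A few points of care in your version. First, the citation of \cite[Proposition 6.4.5.4]{htt} is not the Comparison Lemma; that proposition concerns $1$-localic $\i$-topoi arising from sites with finite limits, and you would want an $\i$-categorical Comparison Lemma (e.g.\ via \cite[Theorem 5]{Jardine} and \cite[Proposition 6.5.2.14]{htt}, or Lurie's treatment of dense subsites). Second, $\cX_{\et}^{\mathit{aff}}$ need not have finite limits (fibered products of affines over a non-separated $\cX$ are not affine), so the parenthetical justification you give does not apply to the subcategory you actually use; the Comparison Lemma does not require finite limits, but you should not claim them. Third, your ``fundamental fact'' in step three is essentially \cite[Proposition 2.3.5]{dagv} (the equivalence $\cE \simeq \Dmschet/\left(\cE,\cO_\cE\right)$) combined with \cite[Remark 6.3.5.10]{htt}, rather than Theorem 2.3.13; the descent sketch you give is correct in spirit but the clean statement is already available. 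With these corrections your argument goes through; its advantage is that it stays closer to \cite{dagv} and avoids importing the blossom formalism, while the paper's argument is shorter once that formalism is granted and makes the functoriality in $\cX$ more transparent.
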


\begin{proof}
Let $\mathfrak{DM}\left(k\right)_\i^{\cU}$ denote the $\i$-category of Deligne-Mumford $\i$-stacks built out of affine schemes in $\Affku$, and define similarly $\Sch^{\cU}_k$ to be the analogously defined category of schemes.
By \cite[Remarks 5.19 and 5.23]{higherdave}, we see that $\Sch^{\cU}_k$ is a locally small strong \'etale blossom in the sense of \cite[Definition 5.1.7]{higherdave}, or more precisely, $\Sch^{\cU}_k$ is canonically equivalent to the strong \'etale blossom whose objects are the Deligne-Mumford schemes which are classical schemes built out of affines in $\Affku.$ By \cite[Theorems 5.3.6 and 5.3.7]{higherdave} combined with Proposition 5.3.2 of op. cit., it follows in fact that $$\cE\simeq \Shi\left(\Sch^{{\cU,\et}}_k,\right)/\tilde y^{\et}\left(\cX\right),$$ where $\tilde y^{\et}\left(\cX\right)$ is the stack assigning a scheme $X$ the $\i$-groupoid of \'etale maps $$X \to \cX.$$ By \cite[Remark 2.2.4 and Proposition 2.2.1]{higherdave}, we conclude that $\cE\simeq\Shi\left(\cX_{\et}\right).$
\end{proof}

\begin{lemma}\label{lem:smalletale}
There is a colimit preserving functor $$\Shi\left(\left(\blank\right)_{\et}\right):\Shi\left(\Affku,\et\right) \to \Topi$$ which sends an affine scheme $\Spec\left(A\right)$ to $\Shi\left(A_{\et}\right).$
\end{lemma}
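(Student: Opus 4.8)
The plan is to build the functor in two steps --- a colimit preserving extension from affine schemes to all $\i$-presheaves, followed by a descent to $\i$-sheaves. For the first step, observe that $\Spec(A)\mapsto\Shi(A_{\et})$ is already functorial: composing $\Spec_{\et}:\Affku\hookrightarrow\Htop$ with the forgetful functor $\Htop\to\Topi$ that discards the structure sheaf gives a functor $F:\Affku\to\Topi$ sending $\Spec(A)$ to $\Shi(A_{\et})$. Since $\Topi$ admits all small colimits \cite[\S 6.3]{htt}, the universal property of $\i$-presheaves \cite[Theorem 5.1.5.6]{htt} produces an essentially unique colimit preserving functor $\widetilde F:\Pshi(\Affku)\to\Topi$ restricting to $F$ along the Yoneda embedding $y$; concretely $\widetilde F$ sends a presheaf, written as a colimit of representables, to the corresponding colimit of small \'etale $\i$-topoi.

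For the second step I would descend $\widetilde F$ along the sheafification functor $a:\Pshi(\Affku)\to\Shi(\Affku,\et)$. Because $\Affku$ is essentially small, $\Pshi(\Affku)$ is presentable and $\Shi(\Affku,\et)$ is a topological (hence left exact) localization, so the class $S$ of $a$-local equivalences is strongly saturated of small generation; by \cite[Proposition 5.5.4.20]{htt} it then suffices to check that $\widetilde F$ inverts every element of $S$ in order to get the desired colimit preserving factorization $\widetilde F\simeq\Shi\left(\left(\blank\right)_{\et}\right)\circ a$. Now $S$ is generated by the $a$-local equivalences $\operatorname*{colim}_{[n]\in\Delta^{op}}\check{C}_n\to y(\Spec A)$, where $\{\Spec A_i\to\Spec A\}_{i\in I}$ ranges over \'etale covering families, $\check{C}_\bullet$ denotes the \v{C}ech nerve of $\coprod_{i}y(\Spec A_i)\to y(\Spec A)$, and $\check{C}_n=\coprod_{(i_0,\dots,i_n)\in I^{n+1}}y(\Spec(A_{i_0}\otimes_A\cdots\otimes_A A_{i_n}))$ (cf. \cite[\S 6.2.2]{htt}). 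As $\widetilde F$ preserves colimits and each of these tensor products is \'etale over $A$, hence lies in $\Algku$ by \'etale closedness, the problem reduces to proving \'etale descent for $X\mapsto\Shi(X_{\et})$, namely that the canonical geometric morphism $$\operatorname*{colim}_{[n]\in\Delta^{op}}\ \coprod_{(i_0,\dots,i_n)}\Shi\left(\left(A_{i_0}\otimes_A\cdots\otimes_A A_{i_n}\right)_{\et}\right)\longrightarrow\Shi(A_{\et})$$ is an equivalence, the coproducts being formed in $\Topi$.

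The heart of the argument is this descent statement, which I would prove working inside the $\i$-topos $\cE:=\Shi(A_{\et})$. The \'etale topology is subcanonical and $A_{\et}$ has finite limits (\'etale maps are stable under base change), so the Yoneda embedding $y:A_{\et}\hookrightarrow\cE$ preserves finite limits; put $U:=\coprod_{i}y(\Spec A_i)\in\cE$. Since $\{\Spec A_i\to\Spec A\}$ is a covering family, $U\to 1$ is an effective epimorphism, so $1\simeq\operatorname*{colim}_{[n]\in\Delta^{op}}U_n$ for its \v{C}ech nerve $U_\bullet$ \cite[\S 6.2.3]{htt}, and, using that $y$ preserves finite limits and that finite products distribute over coproducts in $\cE$, one gets $U_n\simeq\coprod_{(i_0,\dots,i_n)}y(\Spec(A_{i_0}\otimes_A\cdots\otimes_A A_{i_n}))$. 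For an \'etale morphism $\Spec B\to\Spec A$ there is a canonical equivalence of sites $(A_{\et})_{/\Spec B}\simeq B_{\et}$, which yields an equivalence $\cE/y(\Spec B)\simeq\Shi(B_{\et})$ compatible with the projections to $\cE$ \cite[\S 6.3.5]{htt}; combined with the identification $\cE/(\coprod_{\alpha}X_\alpha)\simeq\coprod_{\alpha}\cE/X_\alpha$ in $\Topi$, this gives $\cE/U_n\simeq\coprod_{(i_0,\dots,i_n)}\Shi((A_{i_0}\otimes_A\cdots\otimes_A A_{i_n})_{\et})$. Finally, descent for the $\i$-topos $\cE$ \cite[Theorem 6.1.3.9]{htt} identifies $\cE\simeq\cE/1$ with the limit $\underset{[n]\in\Delta}{\lim}\ \cE/U_n$ of underlying $\i$-categories taken along the pullback functors, and the computation of colimits in $\Topi$ as exactly such limits \cite[\S 6.3]{htt} then shows $\cE\simeq\operatorname*{colim}_{[n]\in\Delta^{op}}\cE/U_n$ in $\Topi$ --- which, by the previous identifications, is the required equivalence.

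I expect the main obstacle to be the bookkeeping in the last paragraph: arranging that the equivalence $\cE/y(\Spec B)\simeq\Shi(B_{\et})$ is natural enough in $B$ to assemble into an equivalence of simplicial $\i$-topoi, and tracking variances when passing between the descent limit of $\i$-categories and the colimit in $\Topi$. One can also shortcut this last paragraph by invoking the \'etale descent already established for small \'etale $\i$-topoi of higher Deligne--Mumford schemes in \cite{higherdave} (as used in the proof of Lemma \ref{lem:sametopos}), which renders the factorization of $\widetilde F$ through $a$ immediate.
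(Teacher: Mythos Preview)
Your proposal is correct and follows essentially the same route as the paper: left Kan extend along Yoneda, then invoke \cite[Proposition 5.5.4.20 and Theorem 5.1.5.6]{htt} to reduce to checking that \v{C}ech nerves of \'etale covers are sent to equivalences, and finally verify this descent statement by working inside $\cE=\Shi(A_{\et})$.

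The only notable difference is in how the descent step is packaged. You argue directly via \cite[Theorem 6.1.3.9]{htt} and the computation of colimits in $\Topi$ as limits of underlying $\i$-categories, which, as you correctly anticipate, leaves some naturality bookkeeping to arrange the equivalences $\cE/y(\Spec B)\simeq\Shi(B_{\et})$ into a simplicial diagram. The paper sidesteps exactly this issue by observing (via \cite[Example 2.3.8]{dagv}) that all the geometric morphisms in the diagram are \'etale over $\Shi(A_{\et})$, so the whole simplicial object factors through $\mathfrak{Top}^{\et}_\i/\Shi(A_{\et})$; then the equivalence $\mathfrak{Top}^{\et}_\i/\Shi(A_{\et})\simeq\Shi(A_{\et})$ of \cite[Remark 6.3.5.10]{htt} identifies it with the internal \v{C}ech nerve (whose colimit is terminal), and \cite[Theorem 6.3.5.13]{htt} guarantees that the forgetful functor $\mathfrak{Top}^{\et}_\i\to\Topi$ preserves this colimit. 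This packaging handles the coherence you flagged automatically, so it is worth knowing; but the underlying mathematics is the same as yours.
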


\begin{proof}
We start by constructing such a functor out of $\i$-presheaves, which can be accomplished simply by taking the left Kan extension of the functor $$\Affku \stackrel{\Spec_{\et}}{\longlonglongrightarrow} \Htop \to \mathfrak{Top}_\i$$ along the Yoneda embedding $$y:\Affku \hookrightarrow \Pshi\left(\Affku\right),$$ where $$\Htop \to \mathfrak{Top}_\i$$ is the canonical functor which forgets the structure sheaf. Denote this left Kan extension by $L.$ By \cite[Proposition 5.5.4.20 and Theorem 5.1.5.6]{htt}, it suffices to show that $L$ sends each covering sieve $$S_\mathcal{U} \hookrightarrow y\left(\Spec\left(A\right)\right)$$ for $\mathcal{U}=\left(U_i \to \Spec\left(A\right)\right)_i$ an \'etale covering family, to an equivalence. Note however that this covering sieve is the colimit of the \v{C}ech nerve $$N_\mathcal{U}:\Delta^{op} \to \Pshi\left(\Affku\right)$$ of $\mathcal{U}.$ Since $L$ preserves colimits, it thus suffices to show that the canonical map $$\colim L\circ N_{\mathcal{U}} \to L\left(y\left(\Spec\left(A\right)\right)\right)\simeq \Shi\left(A_{\et}\right)$$ is an equivalence. 

The functor $N_\mathcal{U}$ has a canonical lift to an augmented simplicial diagram $$\widehat{N_\mathcal{U}}:\left(\Delta^{op}\right)^{\triangleright}\cong \Delta_{+}^{op} \longlongrightarrow \Pshi\left(\Affku\right)$$ defining the canonical cocone for $N_\mathcal{U}$ with vertex $y\left(\Spec\left(A\right)\right)$ (which corresponds to the inclusion of the subobject $S_\mathcal{U} \hookrightarrow y\left(\Spec\left(A\right)\right)$). At the level of simplicial sets, the formation of right cones is strictly left adjoint to the formation of slice quasicategories, so the map of simplicial sets $\widehat{N_\mathcal{U}}$ is adjoint to a map $$\widetilde{N}_\mathcal{U}:\Delta^{op} \to \Pshi\left(\Affku\right)/y\left(\Spec\left(A\right)\right).$$ Now $L$ induces a colimit preserving functor $$\widetilde{L}:\Pshi\left(\Affku\right)/y\left(\Spec\left(A\right)\right) \to \mathfrak{Top}_\i/\Shi\left(A_{\et}\right)$$ By \cite[Example 2.3.8]{dagv} together with the fact that along representables $L$ agrees with $\i$-sheaves on the small \'etale site, the diagram $\widetilde{L}\circ \widetilde{N}_{\mathcal{U}}$ consists of \'etale geometric morphisms of $\i$-topoi over $\Shi\left(A_{\et}\right)$ and therefore there is a factorization of $\widetilde{L}\circ \widetilde{N}_{\mathcal{U}}$ of the form $$\Delta^{op} \stackrel{F}{\longrightarrow} \mathfrak{Top}^{\et}/\Shi\left(A_{\et}\right) \to \mathfrak{Top}/\Shi\left(A_{\et}\right),$$ where $ \mathfrak{Top}^{\et}$ denotes the $\i$-category of $\i$-topoi and \'etale geometric morphisms, and moreover the composite $$\Delta^{op} \stackrel{F}{\longrightarrow} \mathfrak{Top}^{\mbox{\'et}}/\Shi\left(A_{\et}\right) \to \mathfrak{Top}^{\et}_\i \to \mathfrak{Top}_\i$$ agrees up to equivalence with $L \circ N_{\cU}.$ Note that by \cite[Remark 6.3.5.10]{htt}, there is a canonical equivalence of $\i$-categories $\mathfrak{Top}^{\et}/\Shi\left(A_{\et}\right) \simeq \Shi\left(A_{\et}\right)$ under which $F$ corresponds to the \v{C}ech nerve of the same \'etale cover, except regarded as a simplicial diagram $$\overline{N}_{\cU}:\Delta^{op} \to \Shi\left(A_{\et}\right).$$ The colimit of this diagram is the terminal object. Notice that $\mathfrak{Top}^{\et}/\Shi\left(A_{\et}\right) \to \mathfrak{Top}^{\et}_\i$ preserves colimits and so does $\mathfrak{Top}^{\et}_\i \to \mathfrak{Top}_\i$ by \cite[Theorem 6.3.5.13]{htt}. The result now follows since the terminal object gets sent to $\Shi\left(A_{\et}\right)$ under the composite $$\mathfrak{Top}^{\et}/\Shi\left(A_{\et}\right) \to \mathfrak{Top}^{\et}_\i \to \Topi.$$
\end{proof}

\begin{definition}
Let $F$ be an $\i$-sheaf on $\left(\Affku,\et\right).$ Then the \textbf{small \'etale $\i$-topos of $F$} is $$\Shi\left(F_{\et}\right):=\Shi\left(\left(\blank\right)_{\et}\right)\left(F\right).$$
\end{definition}

We will proceed to justify this definition by showing it agrees with Definition \ref{dfn:smalletaledm} when $F$ is a Deligne-Mumford ($\i$-)stack. First, we will show that the definition does not depend on the ambient \'etale closed subcategory:

\begin{remark}
Suppose that $\mathbf{Aff}^{\cV}_k$ is an essentially small \'etale closed category of affine schemes which contains $\Affku$. Denote by $i$ the inclusion $$i:\Affku \hookrightarrow \mathbf{Aff}^{\cV}_k.$$ Then $i$ induces a restriction functor $$i^*\Shi\left(\mathbf{Aff}^{\cV}_k,\et\right) \to \Shi\left(\Affku,\et\right)$$ which has a fully faithful left adjoint $$i_!:\Shi\left(\Affku,\et\right) \hookrightarrow \Shi\left(\mathbf{Aff}^{\cV}_k,\et\right).$$ Concretely, $i_!$ is the unique colimit preserving functor sending each affine scheme $\Spec\left(A\right)$ to itself. Denote by $$S:\Shi\left(\Affku,\et\right) \to \Topi$$ the colimit preserving functor from Lemma \ref{lem:smalletale}, and similarly denote by $$R:\Shi\left(\mathbf{Aff}^{\cV}_k,\et\right) \to \Topi$$ the corresponding functor for the \'etale closed category $\mathbf{Aff}^{\cV}_k.$ Then we have a canonical equivalence $$S\left(F\right)\simeq R\left(i_!\left(F\right)\right).$$ To see this, note the composition $$\Shi\left(\Affku,\et\right) \stackrel{i_!}{\longrightarrow} \Shi\left(\mathbf{Aff}^{\cV}_k,\et\right) \stackrel{R}{\longrightarrow} \Topi$$ is colimit preserving and agrees with $S$ along representables. It follows that the above composition $$R \circ i_! \simeq S.$$
\end{remark}

We will now justify the notation for the functor $\Shi\left(\left(\blank\right)_{\et}\right)$ by showing it agrees with Definition \ref{dfn:smalletaledm}:

\begin{proposition}
Let $S=\Shi\left(\left(\blank\right)_{\et}\right)$ be the functor from Lemma \ref{lem:smalletale}, and let $\cX$ be a Deligne-Mumford $\i$-stack. Then $S\left(\cX\right)$ is equivalent to the small \'etale $\i$-topos of $\cX,$ in the sense of Definition \ref{dfn:smalletaledm}.
\end{proposition}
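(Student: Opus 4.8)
The plan is to reduce the statement to Lemma \ref{lem:sametopos} by exhibiting $\cX$ as a colimit of affine schemes in a way that both functors respect. First I would recall that, since $\cX$ is a Deligne-Mumford $\i$-stack, it is the functor of points of some Deligne-Mumford scheme $\left(\cE,\cO_\cE\right)$, so by Lemma \ref{lem:sametopos} we already have $\Shi\left(\cX_{\et}\right)\simeq\cE$; thus it suffices to produce a canonical equivalence $S\left(\cX\right)\simeq\cE$. By the definition of Deligne-Mumford scheme there is an \'etale covering family $\left(\Spec_{\et}\left(A_\alpha\right)\to\left(\cE,\cO_\cE\right)\right)_\alpha$ with $A_\alpha\in\Algku$. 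Taking the (iterated) fiber products of the $\Spec\left(A_\alpha\right)$ over $\cX$ in $\Shi\left(\Affku,\et\right)$ — which again lie in the image of $\tilde y$ of affine Deligne-Mumford schemes, hence are themselves representable by affine schemes since \'etale maps are of finite presentation and $\Affku$ is \'etale closed — we obtain the \v{C}ech nerve $N_{\mathcal U}\colon\Delta^{op}\to\Shi\left(\Affku,\et\right)$ of the cover $\bigsqcup_\alpha \Spec\left(A_\alpha\right)\to\cX$. Since this cover is an effective epimorphism of $\i$-sheaves, $\cX$ is the colimit of $N_{\mathcal U}$.

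Next I would apply the functor $S$, which is colimit preserving by Lemma \ref{lem:smalletale}, to conclude $S\left(\cX\right)\simeq\colim S\circ N_{\mathcal U}$. Each term $S\left(\Spec\left(A_\alpha\right)\right)$ is $\Shi\left(\left(A_\alpha\right)_{\et}\right)\simeq\Spec_{\et}\left(A_\alpha\right)$ (forgetting the structure sheaf), and similarly for the higher fiber products; so $S\circ N_{\mathcal U}$ is, up to equivalence, the underlying-$\i$-topos of the \v{C}ech nerve of $\bigsqcup_\alpha\Spec_{\et}\left(A_\alpha\right)\to\left(\cE,\cO_\cE\right)$ in $\Htop$. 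On the other side, $\cE$ is the colimit in $\Topi$ of this same \v{C}ech nerve: indeed, since $\bigsqcup_\alpha\Spec_{\et}\left(A_\alpha\right)\to\cE$ is an \'etale cover, by \cite[Theorem 6.3.5.13]{htt} (the way an object of an $\i$-topos is recovered as the colimit of the \v{C}ech nerve of an effective epimorphism, combined with the equivalence $\mathfrak{Top}^{\et}/\cE\simeq\cE$ of \cite[Remark 6.3.5.10]{htt}) the underlying $\i$-topos $\cE$ is the colimit in $\Topi$ of the \v{C}ech nerve of this cover by \'etale geometric morphisms. Matching the two colimit diagrams term by term — which is exactly the compatibility recorded in the proof of Lemma \ref{lem:smalletale}, namely that along representables $L$ agrees with $\i$-sheaves on the small \'etale site, and that \'etale base change of $\Spec_{\et}$ along affine \'etale maps computes the relevant fiber products (\cite[Example 2.3.8]{dagv}) — we obtain $S\left(\cX\right)\simeq\cE\simeq\Shi\left(\cX_{\et}\right)$.

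The main obstacle I anticipate is the careful identification of the two \v{C}ech nerve diagrams and the verification that the comparison equivalences are natural over $\Delta^{op}$, so that they assemble to an equivalence on colimits; in particular one must check that the fiber products $\Spec\left(A_\alpha\right)\times_{\cX}\Spec\left(A_\beta\right)$, computed in $\Shi\left(\Affku,\et\right)$, are again affine and that $\tilde y$ identifies them with the corresponding fiber products $\Spec_{\et}\left(A_\alpha\right)\times_{\cE}\Spec_{\et}\left(A_\beta\right)$ taken in $\Htop$ (equivalently in $\Dmsch$) — this uses full faithfulness of $\tilde y$ from \cite[Theorem 2.4.1]{dagv} together with the fact that $\Dmsch\hookrightarrow\Shi\left(\Affku,\et\right)$ preserves the relevant limits. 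A clean way to package all of this is to observe that $\tilde y$ and $S$ both arise, up to the forgetful functor $\Htop\to\Topi$, by left Kan extension from $\Affku$ along the Yoneda embedding — $\tilde y$ via $\Spec_{\et}$ into $\Htop$ and then into stacks, $S$ via $\Spec_{\et}$ into $\Htop$ and then $\Htop\to\Topi$ — so that $S$ factors as $\left(\text{forget}\right)\circ\widetilde{\Spec}_{\et}$ where $\widetilde{\Spec}_{\et}\colon\Shi\left(\Affku,\et\right)\to\text{(a suitable category of higher Deligne-Mumford stacks)}$ extends $\Spec_{\et}$; then the proposition follows from Lemma \ref{lem:sametopos} by applying the forgetful functor. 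Either way, once the diagrams are identified the conclusion is immediate from colimit preservation.
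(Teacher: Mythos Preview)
Your overall strategy---write $\cX$ as a colimit of affines and use colimit preservation of $S$---is sound in spirit, but there is a genuine gap in the execution. You assert that the iterated fiber products $\Spec\left(A_\alpha\right)\times_{\cX}\Spec\left(A_\beta\right)\times_{\cX}\cdots$ are again affine schemes, justifying this by ``\'etale maps are of finite presentation and $\Affku$ is \'etale closed.'' This is not correct: \'etale closedness only says that an affine scheme \'etale over something in $\Affku$ stays in $\Affku$; it does not say that an \'etale map \emph{to} an affine scheme has affine source. Concretely, the paper explicitly allows $\cX=B\left(\mathbb{Z}\right)$ as a Deligne-Mumford $\i$-stack (no separation conditions), and for the \'etale cover $\ast\to B\left(\mathbb{Z}\right)$ one has $\ast\times_{B\left(\mathbb{Z}\right)}\ast\simeq\underline{\mathbb{Z}}=\coprod_{\mathbb{Z}}\ast$, which is not quasi-compact and hence not affine. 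So already at the first stage your \v{C}ech nerve leaves the affine world, and you cannot invoke the defining property of $S$ on those terms.

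The paper circumvents this by working one categorical level up: rather than choosing a cover of a single $\cX$, it compares two colimit-preserving functors out of $\mathfrak{DM}\left(k\right)_\i^{\cU,\et}$ into $\Topi$---one coming from $S$, the other from the underlying-$\i$-topos functor on Deligne-Mumford schemes---and then invokes the equivalence $\mathfrak{DM}\left(k\right)_\i^{\cU,\et}\simeq\Shi\left(\mathbf{Aff}^{\cU,\et}\right)$ from \cite{higherdave}. This equivalence is precisely the statement that Deligne-Mumford $\i$-stacks with \'etale morphisms are freely generated under colimits (subject to \'etale descent) by affines, so two colimit-preserving functors agreeing on affines must agree everywhere. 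Your final paragraph gestures at a ``clean packaging'' along these lines, but the crucial input---that $\mathfrak{DM}\left(k\right)_\i^{\cU,\et}$ has this universal property---is exactly what replaces the failed affineness claim, and it is not something you can extract from the \v{C}ech-nerve argument alone.
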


\begin{proof}
Denote by $\mathfrak{DM}\left(k\right)_\i^{\cU,\et}$ the subcategory of Deligne-Mumford $\i$-stacks built out of affines in $\cU,$ where the morphisms are (not necessarily representable) \'etale maps. By \cite[Proposition 5.2.11 and Remark 5.3.11]{higherdave}, it follows that the composition
$$\mathfrak{DM}\left(k\right)_\i^{\cU,\et} \to \mathfrak{DM}\left(k\right)_\i^{\cU} \hookrightarrow \Shi\left(\Affku,\et\right)$$ preserves colimits. 
Hence $$\mathfrak{DM}\left(k\right)_\i^{\cU,\et} \to \mathfrak{DM}\left(k\right)_\i^{\cU} \hookrightarrow \Shi\left(\Affku,\et\right) \stackrel{S}{\longrightarrow} \Topi$$ preserves colimits as well. By \cite[Remark 5.3.11, Lemma 5.1.1, and Proposition 4.3.1]{dagv}, so does the composite $$\mathfrak{DM}\left(k\right)_\i^{\cU,\et} \simeq \Dmschet \to \Topi^{\et},$$ where $\Dmschet$ is the $\i$-category of Deligne-Mumford schemes and their \'etale morphisms, and hence, by \cite[Theorem 6.3.5.13]{htt}, the composite $$\mathfrak{DM}\left(k\right)_\i^{\cU,\et} \simeq \Dmschet \to \Topi^{\et} \to \Topi$$ also preserves colimits. Note that the latter composite sends a Deligne-Mumford $\i$-stack $\cX$ which is the functor of points of a Deligne-Mumford scheme $\left(\cE,\mathcal{O}_\cE\right)$ to the $\i$-topos $\cE.$ So both composites $$\mathfrak{DM}\left(k\right)_\i^{\cU,\et} \simeq \Dmschet \to \Topi^{\et} \to \Topi$$ and $$\mathfrak{DM}\left(k\right)_\i^{\cU,\et} \to \mathfrak{DM}\left(k\right)_\i^{\cU} \hookrightarrow \Shi\left(\Affku,\et\right) \stackrel{S}{\longrightarrow} \Topi$$ are colimit preserving and send an affine scheme $\Spec\left(A\right)$ to $\Shi\left(A_{\et}\right).$ By \cite[Theorem 5.37]{higherdave} (combined with Remark 4.31 of op. cit.) we see that there is a canonical equivalence of $\i$-categories $$\Shi\left(\mathbf{Aff}^{\cU,\et}\right) \simeq \mathfrak{DM}\left(k\right)_\i^{\cU,\et}$$ which sends the sheaf of \'etale points of an affine scheme $\Spec\left(A\right)$ to $\Spec\left(A\right)$ itself. The result now follows from \cite[Proposition 5.5.4.20 and Theorem 5.1.5.6]{htt}, together with Lemma \ref{lem:sametopos} of this article.
\end{proof}

\begin{remark}
The results of this section readily generalize to the settings of derived and spectral algebraic geometry. The proofs are exactly the same, once one replaces the \'etale geometry (\cite[Definition 2.6.12]{dagv}) with the derived \'etale geometry (\cite[Definition 4.3.13]{dagv}) or the spectral \'etale geometry  (\cite[Definition 8.11]{dagvii}) respectively. We presented the results in the setting of non-derived schemes merely to avoid overburdening the reader with new concepts.
\end{remark}

\subsection{\'Etale homotopy type}

We now present the definition of the \'etale homotopy type of a general $\i$-sheaf on the \'etale site of $\Affku$, for some small \'etale closed subcategory of affine $k$-schemes, in the sense of Definition \ref{dfn:etaleclosed}.

\begin{definition}\label{dfn:etalehomotopytype}
The \textbf{\'etale fundamental $\i$-groupoid} functor is the composite
$$\Shi\left(\Affku,\et\right) \stackrel{\Shi\left(\left(\blank\right)_{\et}\right)}{\longlonglongrightarrow} \Topi \stackrel{\Shape}{\longlongrightarrow} \Pro\left(\cS\right),$$ and is denoted by $\Pi^{\et}_\i$. For $F$ an $\i$-sheaf on $\left(\Affku,\et\right),$ its \textbf{\'etale homotopy type} is $\Shape\left(\Shi\left(F_{\et}\right)\right),$ the shape of its small \'etale $\i$-topos.
\end{definition}

We also introduce a slight variant:

\begin{definition}
The \textbf{hyper-\'etale fundamental $\i$-groupoid} functor is the composite 
$$\Shi\left(\Affku,\et\right) \stackrel{\Shi\left(\left(\blank\right)_{\et}\right)}{\longlonglongrightarrow} \Topi \stackrel{\mathbb{H}yp}{\longlongrightarrow} \Topi \stackrel{\Shape}{\longlongrightarrow} \Pro\left(\cS\right),$$ where $\mathbb{H}yp$ is the hypercompletion functor. We denote this composite by $\Pi^{\mathbb{H}\mbox{-}\et}_\i.$ For $F$ an $\i$-sheaf on $\left(\Affku,\et\right),$ its \textbf{hyper-\'etale homotopy type} is $\Shape\left(\Hshi\left(F_{\et}\right)\right),$ the shape of the hypercompletion of its small \'etale $\i$-topos.
\end{definition}

\begin{remark}\label{rmk:hypercompletion functor}
The process of hypercompleting an $\i$-topos is indeed functorial. Let $\LPshi\left(\mathfrak{Top}_\i\right)$ denote the $\i$-category of large presheaves of $\i$-groupoids on the $\i$-category of $\i$-topoi, and consider the inclusion $$q:\mathfrak{Top}^{\mathbb{HC}}_\i \hookrightarrow \mathfrak{Top}_\i$$ of the full subcategory of hypercomplete $\i$-topoi. Then, by \cite[Proposition 6.5.2.13]{htt}, for any $\i$-topos $\cE,$ if $y\left(\cE\right)$ is its associated representable (large) presheaf, the presheaf $q^*y\left(\cE\right)$ on $\mathfrak{Top}^{\mathbb{HC}}_\i$ is representable by the hypercompletion $\widehat{\cE}.$ It follows that there is a canonical natural equivalence making the following diagram commute
$$\xymatrix{\mathfrak{Top}_\i \ar@{-->}[rrd]_-{\mathbb{H}yp} \ar@{^{(}->}[r]^-{y} & \LPshi\left(\mathfrak{Top}_\i\right) \ar[r]^-{q^*} & \LPshi\left(\mathfrak{Top}^{\mathbb{HC}}_\i\right)\\
& & \mathfrak{Top}^{\mathbb{HC}}_\i. \ar@{^{(}->}[u]^-{y}}$$ Moreover, this canonical equivalence component-wise $$y\left(\widehat{\cE}\right) \stackrel{\sim}{\longrightarrow} q^*y\left(\cE\right)$$ under the Yoneda lemma corresponds to the canonical geometric morphism $$\epsilon_{\cE}:\widehat{\cE} \hookrightarrow \cE,$$ and hence $\mathbb{H}yp$ is right adjoint to the canonical inclusion $$\mathfrak{Top}^{\mathbb{HC}} \hookrightarrow \mathfrak{Top}_\i,$$ with counit $$\epsilon: q \circ \mathbb{H}yp \Rightarrow id_{\mathfrak{Top}_\i}.$$
\end{remark}

\begin{remark}
The idea of using the small \'etale site of a scheme to define its \'etale homotopy type is not new, and goes back to Artin and Mazur in \cite{ArtinMazur}. The \'etale homotopy type of Artin and Mazur was further refined by Friedlander in \cite{Friedlander} and generalized to simplicial schemes. Finally, in \cite[Section 3.6]{dagxiii}, Lurie defines the \'etale homotopy type of a spectral Deligne-Mumford stack $\cX$ as the shape of its underlying $\i$-topos.
\end{remark}

\begin{remark}\label{rmk: closely related}
The hyper-\'etale homotopy type of a scheme $X$ is closely related to its \'etale homotopy type as defined in \cite[Definition 9.6 on p. 114]{ArtinMazur}. Lets illustrate this. For simplicity, lets assume that $X$ is locally Noetherian, so that its small \'etale site is locally connected by \cite[I 6.1.9]{EGA1}. Let $Z$ be a space in $\cS.$ Then, as a left exact functor $$\Pi^{\mathbb{H}\mbox{-}\et}_\i\left(X\right):\cS \to \cS,$$ we have $$\Pi^{\mathbb{H}\mbox{-}\et}_\i\left(X\right)\left(Z\right)=\Gamma_{\Hshi\left(X_{\et}\right)}\Delta_{\Hshi\left(X_{\et}\right)}\left(Z\right),$$ that is, it assigns $Z$ the space of sections of the hypersheafification on the constant presheaf with value $Z.$ Hypersheafification of a presheaf $F$ can be constructed in one-step (p. 672 of \cite{htt}) as follows:
$$F^\dagger\left(X\right) = \underset{U^\bullet \to X} \colim \left[\underset{n \in \Delta} \lim F\left(U^n\right)\right],$$ with the colimit ranging over a suitable filtered category of split hypercovers by connected objects in the small \'etale site for $X.$ For such a hypercover $$U^\bullet \to X$$ let $\Pi_0\left(U^n\right)$ be the set of connected components of $U^n$ in the sense of p. 111 of \cite{ArtinMazur}, and denote the corresponding simplicial set by $\pi\left(U^\bullet\right).$ By abuse of notation, we will denote the associated object in $\cS$ by the same symbol. Note that we have $$\pi\left(U^\bullet\right)= \underset{n \in \Delta^{op}} \colim \Pi_0\left(U^n\right).$$ We thus have that $$\Pi^{\mathbb{H}\mbox{-}\et}_\i\left(X\right)\left(Z\right)= \underset{U^\bullet \to X} \colim \left[\underset{n \in \Delta} \lim \prod_{a \in \Pi_0\left(U^n\right)} Z\right],$$ i.e.
\begin{eqnarray*}
\Hom_{\Pro\left(\cS\right)}\left(\Pi^{\mathbb{H}\mbox{-}\et}_\i\left(X\right),j\left(Z\right)\right)&=&  \underset{U^\bullet \to X} \colim \left[\underset{n \in \Delta} \lim \prod_{a \in \Pi_0\left(U^n\right)} Z\right]\\
&\simeq& \underset{U^\bullet \to X} \colim \Hom_{\cS}\left(\underset{n \in \Delta^{op}} \colim \Pi_0\left(U^n\right),Z\right)\\
&\simeq& \underset{U^\bullet \to X} \colim \Hom_{\cS}\left(\pi\left(U^\bullet\right),Z\right)\\
&\simeq& \Hom_{\Pro\left(\cS\right)}\left(\underset{U^\bullet \to X} \lim \pi\left(U^\bullet\right),j\left(Z\right)\right),
\end{eqnarray*}
so we conclude that $\Pi^{\mathbb{H}\mbox{-}\et}_\i\left(X\right)$ can be identified with the pro-space $\underset{U^\bullet \to X} \lim \pi\left(U^\bullet\right).$ Comparing this to the Verdier functor on p. 112 of \cite{ArtinMazur}, we see the only difference between $\Pi^{\mathbb{H}\mbox{-}\et}_\i\left(X\right)$ and the Artin-Mazur \'etale homotopy type of $X$ is that $\Pi^{\mathbb{H}\mbox{-}\et}_\i\left(X\right)$ is a pro-object in the $\i$-category of spaces, whereas the Artin-Mazur \'etale homotopy type is a pro-object in the \emph{homotopy} category of spaces.
\end{remark}

We also note a recent result of Hoyois. First we will introduce some notation. Let $$q:\Set^{\Delta^{op}} \to \cS$$ be the functor sending a simplicial set to its associated $\i$-groupoid. We can realize this concretely e.g. as $$\Set^{\Delta^{op}}=\Psh\left(\Delta\right) \hookrightarrow \Pshi\left(\Delta\right) \stackrel{\colim\left(\blank\right)}{\longlongrightarrow} \cS.$$ Notice that $q$ induces a well-defined functor $$\Pro\left(q\right):\Pro\left(\Set^{\Delta^{op}}\right) \to \Pro\left(\cS\right).$$

\begin{proposition}(\cite[Corollary 3.4]{Hoyois})\label{prop: Hoyois}\\
Let $X$ be a locally connected scheme (e.g. a locally Noetherian scheme). Denote by $\mathfrak{Fr}^{\et}\left(X\right)$  the \'etale homotopy type of $X$, as defined by Friedlander (\cite[Definition 4.4]{Friedlander}). Then $$\Pro\left(q\right)\left(\mathfrak{Fr}^{\et}\left(X\right)\right)\simeq \Shape\left(\Hshi\left(X_{\et}\right)\right),$$ i.e. the pro-space associated to $\mathfrak{Fr}^{\et}\left(X\right)$ agrees with $\Pi^{\mathbb{H}\mbox{-}\et}_\i\left(X\right).$
\end{proposition}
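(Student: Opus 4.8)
The statement to be proved is Proposition~\ref{prop: Hoyois}, which identifies Friedlander's étale homotopy type of a locally connected scheme (after passing to the $\i$-category of spaces via $\Pro(q)$) with the hyper-étale homotopy type $\Pi^{\mathbb{H}\mbox{-}\et}_\i(X) = \Shape(\Hshi(X_{\et}))$.

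\textbf{Proof plan.} The plan is to reduce to the concrete description of $\Pi^{\mathbb{H}\mbox{-}\et}_\i(X)$ worked out in Remark~\ref{rmk: closely related} and then match it termwise with Friedlander's construction. First I would recall that, since $X$ is locally connected, its small étale site $X_{\et}$ has a cofinal (filtered) family of connected objects, so that the one-step hypersheafification formula of \cite{htt} can be taken over split hypercovers $U^\bullet \to X$ by coproducts of connected objects; this is exactly the setup invoked in Remark~\ref{rmk: closely related}. That remark already establishes a canonical equivalence of pro-spaces $\Pi^{\mathbb{H}\mbox{-}\et}_\i(X) \simeq \underset{U^\bullet \to X}{\lim}\, \pi(U^\bullet)$, where $\pi(U^\bullet)$ is the simplicial set $[n] \mapsto \Pi_0(U^n)$, viewed as an object of $\cS$ via $q$, and the limit ranges over the filtered category of split hypercovers by connected objects. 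On the other side, Friedlander's $\mathfrak{Fr}^{\et}(X)$ is by \cite[Definition 4.4]{Friedlander} precisely the pro-object in $\Set^{\Delta^{op}}$ given by $U^\bullet \mapsto \pi(U^\bullet)$ indexed over essentially the same cofiltered category of (rigid) hypercovers. So the content is to show these two indexing categories and the functors on them agree after applying $\Pro(q)$.

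\textbf{Key steps, in order.} (1) Identify the indexing categories: show that Friedlander's category of hypercoverings of $X_{\et}$ (or its rigid/pointed refinement) is cofinal in — hence induces the same pro-object as — the filtered category of split hypercovers by connected objects appearing in the one-step hypersheafification formula. Here one uses that any hypercover is refined by a split one, that any object of the locally connected site is covered by connected ones, and that cofinal functors induce equivalences of pro-objects. (2) Identify the functors: on a split hypercover $U^\bullet \to X$ by connected objects, Friedlander's value is the simplicial set of connected components $n \mapsto \Pi_0(U^n)$, and the computation in Remark~\ref{rmk: closely related} shows $\Pi^{\mathbb{H}\mbox{-}\et}_\i(X)$ assigns to such a hypercover the colimit $\underset{n \in \Delta^{op}}{\colim}\, \Pi_0(U^n)$ in $\cS$, which is exactly $q$ applied to the simplicial set $\pi(U^\bullet)$. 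Thus the two functors to $\Pro(\cS)$ agree after composing Friedlander's with $\Pro(q)$. (3) Assemble: combine (1) and (2) to conclude $\Pro(q)(\mathfrak{Fr}^{\et}(X)) \simeq \underset{U^\bullet}{\lim}\, q(\pi(U^\bullet)) \simeq \Shape(\Hshi(X_{\et}))$, and note the displayed chain of equivalences in Remark~\ref{rmk: closely related} already does the final bookkeeping via the Yoneda lemma for pro-spaces (testing against $j(Z)$ for $Z \in \cS$).

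\textbf{Main obstacle.} The hard part will be step (1): carefully matching Friedlander's indexing category with the category of split hypercovers by connected objects that shows up in Lurie's one-step hypersheafification, and checking the comparison functor between them is cofinal (or initial, in the cofiltered direction). Friedlander uses a rigidified/pointed version of the hypercover category to get strict functoriality at the level of simplicial sets, whereas the $\i$-categorical formula is insensitive to such rigidification; one must verify that forgetting the rigid structure is cofinal and that passing from arbitrary connected hypercovers to split ones is as well. This is precisely the bookkeeping that \cite[Corollary 3.4]{Hoyois} carries out, so I would cite Hoyois for the delicate cofinality comparison and present the rest — the termwise identification in steps (2) and (3) — as the part that follows directly from Remark~\ref{rmk: closely related}. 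One should also be mildly careful that ``locally connected'' is exactly the hypothesis ensuring the site has enough connected objects (for a locally Noetherian scheme this is \cite[I 6.1.9]{EGA1}, as noted), so that both constructions are actually defined and the connected-components functor $\Pi_0$ behaves well.
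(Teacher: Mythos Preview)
The paper does not give its own proof of this proposition: it is stated purely as a citation of \cite[Corollary 3.4]{Hoyois}, with no accompanying argument. So there is nothing to compare your proposal against on the paper's side beyond the bare citation.

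That said, your proposal is a reasonable expansion of what such a proof would look like, and it is essentially correct in outline. You correctly identify Remark~\ref{rmk: closely related} as doing the termwise work (identifying $\Pi^{\mathbb{H}\mbox{-}\et}_\i(X)$ with the cofiltered limit of $q(\pi(U^\bullet))$ over split hypercovers by connected objects), and you correctly isolate the genuine content as the cofinality comparison between Friedlander's rigid hypercover category and the indexing category appearing in the hypersheafification formula. Since you end up citing Hoyois for precisely that step, your argument is in practice not much more than the paper's citation together with the observation that Remark~\ref{rmk: closely related} makes the comparison plausible. If you wanted a self-contained proof you would need to actually carry out the cofinality argument rather than defer it; as written, your proposal is a gloss on the citation rather than an independent proof.
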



\begin{definition}
The \textbf{profinite \'etale fundamental $\i$-groupoid} functor is the composite
$$\Shi\left(\Affku,\et\right) \stackrel{\Shi\left(\left(\blank\right)_{\et}\right)}{\longlonglongrightarrow} \Topi \stackrel{\Shape^{\Prof}}{\longlongrightarrow} \Profs,$$ and is denoted by $\widehat{\Pi}^{\et}_\i$. For $F$ an $\i$-sheaf on $\left(\Affku,\et\right),$ its \textbf{profinite \'etale homotopy type} is $\Shape^{\Prof}\left(\Shi\left(F_{\et}\right)\right),$ the shape of its small \'etale $\i$-topos.
\end{definition}

\begin{remark}
There is no need to introduce a hyper-\'etale variant of the profinite \'etale $\i$-groupoid functor since by Proposition \ref{prop:hypersame}, for each \'etale $\i$-sheaf $F$ we have an induced equivalence of profinite spaces $$\Shape^{\Prof}\left(\Hshi\left(F_{\et}\right)\right) \stackrel{\sim}{\longrightarrow} \Shape^{\Prof}\left(\Shi\left(F_{\et}\right)\right).$$ In particular, this has an effect of ``erasing the difference'' between the \'etale homotopy type and the hyper-\'etale homotopy type of a stack.
\end{remark}

\subsection{A concrete description of the \'etale homotopy type}

Thus far, we have succeeded in generalizing the previously existing definitions of \'etale homotopy type to a definition that makes sense for arbitrary higher stacks on the \'etale site. However, when the stack in question is not Deligne-Mumford, this construction is a bit opaque, since it involves taking the shape of a colimit of $\i$-topoi indexed by the right fibration associated to the stack in question. In this subsection, we will show that nonetheless, the pro-space associated to such a higher stack has a natural concrete description.

Consider the essentially unique geometric morphism $$\Shi\left(\Affku,\et\right) \to \cS$$ to the terminal $\i$-topos. This is represented by an adjoint pair $\Delta^{\et} \dashv \Gamma_{\et},$ and $\Delta^{\et}$ is left exact. Moreover, $\Delta^{\et}$ assigns a space $Z$ the sheafification of the constant presheaf with value $Z,$ and $\Gamma_{\et}$ assigns an $\i$-sheaf $F$ the value $F\left(\Spec\left(k\right)\right).$

Let $F$ be an object of $\Shi\left(\Affku,\et\right).$ Consider the composition of functors
$$\cS \stackrel{\Delta^{\et}}{\longlongrightarrow} \Shi\left(\Affku,\et\right)
 \stackrel{y}{\longhookrightarrow} \Pshi\left(\Shi\left(\Affku,\et\right)\right) \stackrel{ev_F}{\longlongrightarrow} \cS,$$
 where $$ev_F: \Pshi\left(\Shi\left(\Affku,\et\right)\right) \to \cS$$ is the functor evaluating a presheaf $G$ on $\Shi\left(\Affku,\et\right)$ at the object $F.$ Since limits in a functor $\i$-category are computed object-wise, and since $\Delta^{\et}$ is left exact, the above composition is also left-exact, hence a pro-space. Lets denote this pro-space by $l\left(F\right).$ The pro-space $l\left(F\right)$ is given by the simple formula 
\begin{equation}\label{eq:el}
l\left(F\right)\left(Z\right)=\Hom_{\Shi\left(\Affku,\et\right)}\left(F,\Delta^{\et}\left(Z\right)\right).
\end{equation}
This can be formulated more abstractly as follows. Let $\cE$ be an arbitrary $\i$-topos and let $$\Delta^{\cE} \dashv \Gamma_{\cE}$$ denote the essentially unique geometric morphism $e:\cE \to \cS.$ Let $E$ be an object in $\cE$ and let $$\pi_E:\cE/E \to \cE$$ denote the associated \'etale geometric morphism. Then the composite $$\cE/E \stackrel{\pi_E}{\longlongrightarrow} \cE \stackrel{e}{\longrightarrow} \cS$$ is the essentially unique geometric morphism from $\cE/E$ to $\cS,$ so $$\Delta^{\cE/E}\simeq\pi_{E}^*  \circ \Delta^{\cE}.$$ It follows that for $Z$ in $\cS$ we have:
 \begin{eqnarray*}
 \Shape\left(\cE/E\right)\left(Z\right)&\simeq&\Gamma_{\cE/E}\left(\Delta^{\cE/E}\left(Z\right)\right)\\
 &\simeq& \Gamma\left(E \times \Delta^{\cE}\left(Z\right) \to E\right)\\
 &\simeq& \Hom_{\cE}\left(E,\Delta^{\cE}\left(Z\right)\right).
 \end{eqnarray*}
 By \cite[Proposition 6.3.5.14]{htt}, the assignment $E \mapsto \cE/E$ assembles into a colimit preserving functor $$\chi:\cE \to \Topi.$$ By composition, we get a colimit preserving functor $$\cE \stackrel{\chi}{\longrightarrow} \Topi \stackrel{\Shape}{\longlongrightarrow} \Pro\left(\cS\right)$$ sending an object $E$ of $\cE$ to $\Shape\left(\cE/E\right).$ By the above discussion, we see that for $F$ an object of the $\i$-topos $\Shi\left(\Affku,\et\right),$ the pro-space $l\left(F\right)$ is nothing but $\Shape\left(\Shi\left(\Affku,\et\right)/F\right),$ and hence the assignment $F \mapsto l\left(F\right)$ assembles into a colimit preserving functor $$\Shi\left(\Affku,\et\right) \stackrel{\chi}{\longrightarrow} \Topi \stackrel{\Shape}{\longlongrightarrow} \Pro\left(\cS\right).$$

\begin{lemma}
Consider the ringed $\i$-topos $\Spec_{\et}\left(k\right)$ and denote by $$\delta:\cS \to \Shi\left(k_{\et}\right)$$ the inverse image functor of the unique geometric morphism $$\Shi\left(k_{\et}\right) \to \cS$$ from the underlying $\infty$-topos of $\Spec_{\et}\left(k\right)$ to the terminal $\i$-topos of spaces. Let $Z$ be an arbitrary space. Then $\Spec_{\et}\left(k\right)/\delta\left(Z\right)$ is a Deligne-Mumford scheme, whose functor of points is the stack $\Delta^{\et}\left(Z\right).$
\end{lemma}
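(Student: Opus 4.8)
The plan is to reduce both assertions to the canonical presentation of a space as a colimit of copies of the point, exploiting that every functor intervening below preserves colimits. Recall that $\cS$ is freely generated under colimits by the one-point space $*$ \cite[Corollary 5.1.5.8]{htt}, so that every space $Z$ is canonically the colimit $\underset{z\in Z}{\colim}\,*$ of the constant diagram on $*$ indexed by $Z$ itself. Since $\delta$ is the inverse image of a geometric morphism it preserves all colimits, and being left exact it sends the terminal space $*$ to the terminal object $1$ of $\Shi\left(k_{\et}\right)$; hence there is a canonical equivalence $$\delta\left(Z\right)\simeq\underset{z\in Z}{\colim}\,1$$ in $\Shi\left(k_{\et}\right)$. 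Now $1\in\Shi\left(k_{\et}\right)$ is the sheaf represented by the terminal object $\id_{\Spec\left(k\right)}$ of $k_{\et}$, and slicing the ringed $\i$-topos $\Spec_{\et}\left(k\right)$ by it returns $\Spec_{\et}\left(k\right)$ itself, with the identity as structure morphism.

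First I would record a standard fact about structured $\i$-topoi: if $\left(\cX,\cO_\cX\right)$ is a Deligne-Mumford scheme and $E$ is an object of $\cX$, then the \'etale geometric morphism $\pi_E:\cX/E\to\cX$ exhibits $\left(\cX/E,\pi_E^*\cO_\cX\right)$ as a Deligne-Mumford scheme \'etale over $\left(\cX,\cO_\cX\right)$ --- being a scheme for the \'etale geometry is an \'etale-local condition, and an \'etale cover of $\cX$ by affines pulls back, after decomposing the resulting pullbacks into representables of the small \'etale site, to an \'etale cover of $\cX/E$ by affines \cite{dagv,higherdave}. This yields a functor $E\mapsto\left(\cX/E,\pi_E^*\cO_\cX\right)$ from $\cX$ into $\Dmschet$; refining the colimit-preserving functor $\chi:\cX\to\Topi$ of \cite[Proposition 6.3.5.14]{htt} through the equivalence $\Dmschet\simeq\mathfrak{DM}\left(k\right)_\i^{\cU,\et}$ and the colimit-preservation of $\Dmschet\simeq\mathfrak{DM}\left(k\right)_\i^{\cU,\et}\to\mathfrak{Top}^{\et}_\i$ --- exactly as in the proof of the preceding proposition --- one checks that this functor preserves colimits. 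Applying it to the equivalence of the first paragraph gives $$\Spec_{\et}\left(k\right)/\delta\left(Z\right)\;\simeq\;\underset{z\in Z}{\colim}\,\Spec_{\et}\left(k\right)$$ in $\Dmschet$; in particular $\Spec_{\et}\left(k\right)/\delta\left(Z\right)$ is a Deligne-Mumford scheme.

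For the functor of points I would invoke that $\tilde y:\Dmschet\to\Shi\left(\Affku,\et\right)$ preserves colimits \cite[Proposition 5.2.11, Remark 5.3.11]{higherdave}, together with the fact that $\tilde y\left(\Spec_{\et}\left(k\right)\right)$ is the terminal sheaf $1$, since $\Spec_{\et}$ is fully faithful and $\Spec\left(k\right)$ is the terminal object of $\Affku$. Combining this with the previous display gives $$\tilde y\left(\Spec_{\et}\left(k\right)/\delta\left(Z\right)\right)\;\simeq\;\underset{z\in Z}{\colim}\,1\;\simeq\;\Delta^{\et}\left(Z\right),$$ where the last equivalence is again the colimit presentation of $Z$, now transported through the left exact, colimit-preserving inverse image $\Delta^{\et}:\cS\to\Shi\left(\Affku,\et\right)$ of the terminal geometric morphism, which carries $*$ to the terminal sheaf $1$. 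This identifies the functor of points of $\Spec_{\et}\left(k\right)/\delta\left(Z\right)$ with $\Delta^{\et}\left(Z\right)$, as claimed.

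I do not anticipate a deep obstacle here; the lemma is essentially formal once the relevant colimit-preservation statements are lined up. The one point that genuinely requires care is the assertion used in the second paragraph --- that $E\mapsto\left(\cX/E,\pi_E^*\cO_\cX\right)$ assembles into a colimit-preserving functor \emph{valued in} $\Dmschet$, equivalently that colimits of Deligne-Mumford schemes along \'etale morphisms are computed in the underlying $\i$-topos. Making this precise requires dovetailing the structured-$\i$-topos formalism of \cite{dagv} and \cite{higherdave} with \cite[Theorem 6.3.5.13 and Proposition 6.3.5.14]{htt}, and it is here that essentially all of the content of the proof resides.
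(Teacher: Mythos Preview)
Your proof is correct and follows essentially the same approach as the paper's: both arguments construct a colimit-preserving functor $\cS \to \Shi\left(\Affku,\et\right)$ by composing $\delta$, the slicing-to-Deligne-Mumford-schemes functor, and the functor of points $\tilde y$ (restricted to \'etale morphisms), observe that this composite sends $*$ to the terminal object, and conclude it agrees with $\Delta^{\et}$ by the universal property of $\cS$. The paper invokes \cite[Proposition 2.3.10]{dagv} directly for the first claim and \cite[Proposition 2.3.5]{dagv} with \cite[Proposition 5.2.11]{higherdave} for colimit preservation, while you unwind these slightly more explicitly via the colimit presentation $Z\simeq\underset{z\in Z}{\colim}\,*$, but the content is the same.
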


\begin{proof}
It follows from \cite[Proposition 2.3.10]{dagv} that $\Spec_{\et}\left(k\right)/\delta\left(Z\right)$ is a Deligne-Mumford scheme. It therefore suffices to show that its functor of points is $\Delta^{\et}\left(Z\right)$. Consider the composition of functors
$$\cS \stackrel{\delta}{\longrightarrow} \Shi\left(k_{\et}\right) \stackrel{\sim}{\longrightarrow} \mathfrak{DM}\left(k\right)_\i^{\cU,\et}/\Spec_{\et}\left(k\right) \to \mathfrak{DM}\left(k\right)_\i^{\cU,\et} \to \mathfrak{DM}\left(k\right)_\i^{\cU} \hookrightarrow \Shi\left(\Affku,\et\right).$$
By \cite[Proposition 2.3.5]{dagv} and \cite[Proposition 5.2.11]{higherdave}, the composition preserves small colimits. Moreover, it sends the one point space $*$ to $\Spec\left(k\right),$ which is terminal. The functor $\Delta^{\et}$ also has this property, and since the above composite and $\Delta^{\et}$ are both colimit preserving functors out of $\cS=\Pshi\left(*\right),$ they must agree, but this is exactly what we wanted to show, since the composite sends a space $Z$ to the functor of points of $\Spec_{\et}\left(k\right)/\delta\left(Z\right).$
\end{proof}

 \begin{theorem}\label{thm: concrete}
 There is a canonical equivalence of functors $$\Pi^{\et}_\i \stackrel{\sim}{\longrightarrow} \Shape \circ \chi .$$ In particular, for any $\i$-sheaf $F$ on $\left(\Affku,\et\right),$ there is a canonical equivalence of pro-spaces $$\Pi^{\et}_\i\left(F\right) \stackrel{\sim}{\longrightarrow} l\left(F\right),$$ where $l\left(F\right)$ is defined as in equation (\ref{eq:el}).
 \end{theorem}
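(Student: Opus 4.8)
The plan is to show that the two colimit-preserving functors $\Pi^{\et}_\i$ and $\Shape \circ \chi$ from $\Shi\left(\Affku,\et\right)$ to $\Pro\left(\cS\right)$ agree on representables, and then invoke the universal property of presheaf categories to conclude. Both functors are colimit preserving: $\Pi^{\et}_\i$ is the composite of $\Shi\left(\left(\blank\right)_{\et}\right)$ (colimit preserving by Lemma \ref{lem:smalletale}) with $\Shape$ (a left adjoint, hence colimit preserving); and $\Shape \circ \chi$ is colimit preserving by the discussion preceding the statement, since $\chi$ is colimit preserving by \cite[Proposition 6.3.5.14]{htt}. Since $\Shi\left(\Affku,\et\right)$ is a left exact localization of $\Pshi\left(\Affku\right)$, a colimit preserving functor out of it is determined by its restriction along the composite $\Affku \stackrel{y}{\hookrightarrow} \Pshi\left(\Affku\right) \to \Shi\left(\Affku,\et\right)$, i.e. by its values on the sheaves represented by affine schemes $\Spec\left(A\right)$, together with compatibility with the covering sieves — but the latter is automatic once we know both functors are colimit preserving functors on $\Shi\left(\Affku,\et\right)$, so it reduces to comparing values on representables.

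First I would identify $\Shape\left(\chi\left(\Spec\left(A\right)\right)\right)$. By definition $\chi\left(\Spec\left(A\right)\right) = \Shi\left(\Affku,\et\right)/\Spec\left(A\right)$, and one computes, exactly as in the display preceding the lemma, that $\Shape\left(\cE/E\right)\left(Z\right) \simeq \Hom_{\cE}\left(E,\Delta^{\cE}\left(Z\right)\right)$; applied to $E = \Spec\left(A\right)$ in $\cE = \Shi\left(\Affku,\et\right)$ this gives $\Shape\left(\chi\left(\Spec\left(A\right)\right)\right)\left(Z\right) \simeq \Hom_{\Shi\left(\Affku,\et\right)}\left(\Spec\left(A\right),\Delta^{\et}\left(Z\right)\right) \simeq \Delta^{\et}\left(Z\right)\left(\Spec\left(A\right)\right)$ by Yoneda, which is the value of the sheafified constant presheaf with value $Z$ at $\Spec\left(A\right)$. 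On the other side, $\Pi^{\et}_\i\left(\Spec\left(A\right)\right) = \Shape\left(\Shi\left(A_{\et}\right)\right)$, whose value at $Z$ is $\Gamma_{\et,A}\Delta^{\et}_A\left(Z\right) = \Hom_{\Shi\left(A_{\et}\right)}\left(1,\Delta^{\et}_A\left(Z\right)\right)$, the global sections over the small \'etale site of the constant sheaf with value $Z$.

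The key step is to match these two: I would use the preceding Lemma, which asserts that $\Spec_{\et}\left(k\right)/\delta\left(Z\right)$ is a Deligne-Mumford scheme whose functor of points is $\Delta^{\et}\left(Z\right)$, together with its evident generalization with base $\Spec\left(A\right)$ in place of $\Spec\left(k\right)$: namely $\Spec_{\et}\left(A\right)/\delta_A\left(Z\right)$ is a Deligne-Mumford scheme whose functor of points, as a sheaf under $\Spec\left(A\right)$, is the constant sheaf with value $Z$ on the small \'etale site of $A$. Combining this with Lemma \ref{lem:sametopos} (which identifies $\Shi\left(\cX_{\et}\right)$ with the underlying $\i$-topos $\cE$ of the Deligne-Mumford scheme representing $\cX$) and the fact that for a Deligne-Mumford $\i$-stack the functor $\Shi\left(\left(\blank\right)_{\et}\right)$ recovers $\cE$, one reads off that both $\Pi^{\et}_\i$ and $\Shape\circ\chi$, restricted to Deligne-Mumford $\i$-stacks — in particular to affine schemes — send $\Spec\left(A\right)$ to $\Shape\left(\Shi\left(A_{\et}\right)\right)$, naturally in $\Spec\left(A\right)$. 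Spelling out the naturality of this identification over $\Affku$ gives an equivalence of the restrictions of the two functors along $y$, and by the universal property \cite[Theorems 5.1.5.6 and 5.5.4.20]{htt} this extends uniquely to the desired equivalence $\Pi^{\et}_\i \stackrel{\sim}{\to} \Shape\circ\chi$ of colimit preserving functors on all of $\Shi\left(\Affku,\et\right)$, whence $\Pi^{\et}_\i\left(F\right)\simeq l\left(F\right)$ for every $F$.

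The main obstacle I anticipate is the bookkeeping in the previous paragraph: making the identification of $\Shape\left(\chi\left(\Spec\left(A\right)\right)\right)$ with $\Shape\left(\Shi\left(A_{\et}\right)\right)$ genuinely \emph{natural} in $\Spec\left(A\right)$ as a diagram $\Affku \to \Pro\left(\cS\right)$, rather than just a pointwise equivalence. Concretely this means producing a single natural transformation — not merely an objectwise one — which amounts to checking that the lemma on $\Spec_{\et}\left(\blank\right)/\delta\left(Z\right)$ is compatible with the \'etale structure maps $\Spec\left(B\right) \to \Spec\left(A\right)$, i.e. that base change of the constant Deligne-Mumford scheme along an \'etale morphism is computed by pullback of $\i$-topoi; this is exactly the content of \cite[Example 2.3.8]{dagv} already invoked in the proof of Lemma \ref{lem:smalletale}. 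Once that naturality is in hand, the extension to all stacks is formal.
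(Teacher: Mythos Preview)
Your approach is essentially the paper's: reduce to affines by colimit preservation, then identify $\Hom_{\Shi\left(\Affku,\et\right)}\left(\Spec\left(A\right),\Delta^{\et}\left(Z\right)\right)$ with $\Gamma^{A}\Delta^{\et}_A\left(Z\right)$ via the lemma that $\Delta^{\et}\left(Z\right)$ is the functor of points of $\Spec_{\et}\left(k\right)/\delta\left(Z\right)$. The only difference is packaging: where you invoke an ``evident generalization'' of the lemma over base $\Spec\left(A\right)$, the paper instead uses the lemma over $\Spec\left(k\right)$ as stated and then pulls back along $f:\Spec_{\et}\left(A\right)\to\Spec_{\et}\left(k\right)$ via \cite[Remark 2.3.20]{dagv}, obtaining $\Spec_{\et}\left(A\right)/f^*\delta\left(Z\right)$ and identifying its space of sections with $\Hom_{\Shi\left(A_{\et}\right)}\left(1,f^*\delta\left(Z\right)\right)$ using \cite[Proposition 2.3.5]{dagv}; since $f^*\delta\simeq\Delta^{\et}_A$, this yields the desired match, and this pullback computation is exactly what proving your generalized lemma would amount to.
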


\begin{proof}
Since both functors $\Shape \circ \chi$ and $\Pi^{\et}_\i$ are colimit preserving functors $$\Shi\left(\Affku,\et\right) \to \Pro\left(\cS\right),$$ by \cite[Proposition 5.5.4.20 and Theorem 5.1.5.6]{htt}, it suffices to show that both functors agree up to equivalence when restricted to affine schemes. Note that both $\Spec\left(A\right)$ and $\Delta^{\et}\left(Z\right)$ the are functor of points of Deligne-Mumford schemes, and the functor of points construction is a fully faithful embedding of Deligne-Mumford schemes into $\Shi\left(\Affku,\et\right).$ It follows that the canonical map $$\Hom_{\Dmsch}\left(\Spec_{\et}\left(A\right),\Spec_{\et}\left(k\right)/\delta\left(Z\right)\right) \to \Hom_{\Shi\left(\Affku,\et\right)}\left(\Spec\left(A\right),\Delta^{\et}\left(Z\right)\right)\simeq l\left(\Spec\left(A\right)\right)\left(Z\right)$$ is an equivalence of $\i$-groupoids.

By \cite[Remark 2.3.20]{dagv}, the following is a pullback diagram in $\mathfrak{DM}\left(k\right)_\i^{\cU}$:
$$\xymatrix{\Spec_{\et}\left(A\right)/f^*\delta\left(Z\right) \ar[r] \ar[d] & \Spec_{\et}\left(k\right)/\delta(Z)\ar[d]\\
\Spec_{\et}\left(A\right) \ar[r]^-{f} & \Spec_{\et}\left(k\right),}$$ where $f$ is the map whose functor of points is the unique map to $\Spec\left(k\right).$
Since $\Spec_{\et}\left(k\right)$ is the terminal Deligne-Mumford scheme, it follows that $$\Hom_{\Dmsch}\left(\Spec_{\et}\left(A\right),\Spec_{\et}\left(k\right)/\delta\left(Z\right)\right)$$ is equivalent to the space of sections of the \'etale map  $$\Spec_{\et}\left(A\right)/f^*\delta\left(Z\right) \to \Spec_{\et}\left(A\right),$$ and since any section of an \'etale map is \'etale, this is in turn the space of maps in the slice category $$\Hom_{\mathfrak{DM}\left(k\right)_\i^{\cU,\et}/\Spec_{\et}\left(A\right)}\left(id_{\Spec_{\et}\left(A\right)},\Spec_{\et}\left(A\right)/f^*\delta\left(Z\right) \to \Spec_{\et}\left(A\right)\right).$$ Now, by \cite[Proposition 2.3.5]{dagv}, this is equivalent to the space of maps $$\Hom_{\Shi\left(A_{\et}\right)}\left(1,f^*\delta\left(Z\right)\right)=\Gamma^A\left(f^*\delta\left(Z\right)\right).$$ Since $\cS$ is the terminal $\i$-topos, it follows that $f^*\delta\simeq\Delta^{\et}_A,$ (where $\Delta^{\et}_A$ is the inverse image of the unique geometric morphism to $\cS$) and hence 
\begin{eqnarray*}
\Gamma^A\left(f^*\delta\left(Z\right)\right)&\simeq& \Gamma^A\left(\Delta^{\et}_A\left(Z\right)\right)\\
&=&\Shape\left(\Spec\left(A\right)\right)\left(Z\right)\\
&=& \Pi^{\et}_\i\left(\Spec\left(A\right)\right)\left(Z\right).
\end{eqnarray*}

\end{proof}

\section{Cohomology with coefficients in a local system}\label{sec: cohomology}
In this section we give a careful introduction to the concept of cohomology with coefficients in a local system of abelian groups using the modern language of $\i$-categories. We work this out first for the case of spaces, and then for an arbitrary $\i$-topos, and link these definitions with the classical definition of cohomology with twisted coefficients in a topos. The material in this section will play a pivotal role in proving the main theorem of this paper.

\subsection{Topological case}
In this subsection, we will explain how to define the cohomology of a space $X$ with coefficients in a local system of abelian groups by using classifying spaces. The basic idea is not new and goes back to \cite{robinson,coh}, and we benefited greatly from discussion with Achim Krause. In what follows, we formulate cohomology of local systems on spaces in the natural setting of the $\i$-category $\cS$ of spaces.

\subsubsection{Preliminaries on $\i$-groupoids}

Let $X$ be a space in $\cS.$ Regarding $X$ as an $\i$-groupoid (and hence as an $\i$-category), by the proof of \cite[Corollary 5.3.5.4]{htt}, there is a canonical equivalence of $\i$-categories $$\cS/X \simeq \Pshi\left(X\right).$$ Let us unravel this equivalence a little. First note that since $X$ is an $\i$-\emph{groupoid}, $X$ is naturally equivalent to its opposite $\i$-category $X^{op},$ so we have a canonical equivalence $\Pshi\left(X\right)\simeq \Fun\left(X,\cS\right).$ It will be convenient to phrase things in terms of $\Fun\left(X,\cS\right)$ instead:\\

Given an object $f:Y \to X$ of $\cS/X,$ it defines a functor $F:X \to \cS=\iGpd,$ by assigning to each object $x$ of the $\i$-groupoid $X$, the $\i$-groupoid of lifts
$$\xymatrix{& Y \ar[d]^-{f}\\
\ast \ar@{-->}[ur] \ar[r]^-{x} & X.}$$ By abstract nonsense, this is the same as the $\i$-groupoid of lifts in the pullback diagram
$$\xymatrix{\ast \times^{x,f}_{X} Y \ar[r] \ar[d]& Y \ar[d]^-{f}\\
\ast \ar@/^2pc/@{-->}[u] \ar[r]^-{x} & X,}$$
in other words the space $$\Hom\left(*,\ast \times^{x,f}_{X} Y\right) \simeq \ast \times^{x,f}_{X} Y.$$ In particular, if $f:X \to Y$ arises from a continuous map $\tilde f$ of topological spaces, $F\left(x\right)$ is the homotopy fiber of $\tilde f$ over $x.$ Conversely, given an arbitrary functor $F:X \to \iGpd,$ it corresponds to a left fibration $$\pi_F:\int_X F \to X,$$ which since $X$ is a Kan complex (when regarded as a quasicategory) is also a Kan fibration, and hence $\pi_F$ is a map of Kan complexes, corresponding to an object of $\cS/X.$ It follows from the proof of \cite[Corollary 5.3.5.4]{htt} that as an $\i$-category, using the identification $\Fun\left(X,\cS\right)\simeq \Pshi\left(X\right),$ $\int_X F$ is the full subcategory of the slice category $\Pshi\left(X\right)/F$ on those morphisms whose domain is a representable presheaf, and this $\i$-category is an $\i$-groupoid.

\begin{proposition}\label{prop:colim}
If $F:X \to \cS$ is a functor, its associated left fibration $$\pi_F:\int_X F \to X$$ can canonically be identified with the canonical map $$\colim F \to X.$$
\end{proposition}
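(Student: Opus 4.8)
The plan is to exploit the equivalence $\cS/X \simeq \Pshi(X) \simeq \Fun(X,\cS)$ recalled above --- the last equivalence coming from $X \simeq X^{op}$ --- under which the functor $F$ corresponds exactly to the object $\left(\pi_F\colon \int_X F \to X\right)$ of $\cS/X$; this is precisely straightening--unstraightening (see \cite[\S 2.2]{htt}) and is essentially the content of the discussion preceding the proposition. I would then identify the forgetful functor $u\colon \cS/X \to \cS$, $\left(Y \to X\right) \mapsto Y$, with the colimit functor $\colim\colon \Fun(X,\cS) \to \cS$. Granting this identification, one gets at once $\colim F \simeq u\left(\pi_F\colon \int_X F \to X\right) = \int_X F$, so that the only remaining point is to check that the structure map $\colim F \to X$ goes over to $\pi_F$.

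To identify $u$ with $\colim$: writing $p\colon X \to \ast$ for the map to the terminal object, $u$ is $p_!$ (postcomposition with $p$), which is left adjoint to $p^*\colon \cS \to \cS/X$, $Z \mapsto \left(X \times Z \to X\right)$ --- indeed a map $\left(Y \to X\right) \to \left(X \times Z \to X\right)$ over $X$ is the same datum as a map $Y \to Z$. Since the left fibration classified by the constant functor with value $Z$ is $X \times Z \to X$, the functor $p^*$ corresponds under $\cS/X \simeq \Fun(X,\cS)$ to the constant-diagram functor, i.e.\ to restriction along $p$; its left adjoint is left Kan extension along $p$, which is the colimit functor. By uniqueness of left adjoints, $u$ corresponds to $\colim$, and in particular $\colim F \simeq \int_X F$ as spaces.

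For the maps to $X$: the object $\left(\id_X\colon X \to X\right)$ is terminal in $\cS/X$, hence corresponds under the equivalence to the terminal object of $\Fun(X,\cS)$, namely the constant diagram $\mathrm{const}_\ast$. Thus the essentially unique natural transformation $F \to \mathrm{const}_\ast$ corresponds to the essentially unique morphism $\left(\pi_F\colon \int_X F \to X\right) \to \left(\id_X\right)$ in $\cS/X$, which is nothing but $\pi_F$ itself regarded as a morphism over $X$. Applying $u = p_!$ to it returns $\pi_F\colon \int_X F \to X$ as a morphism of $\cS$; in particular $u(\id_X) = X$, so $\colim \mathrm{const}_\ast \simeq X$ simply falls out of the machinery. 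On the other side, applying $\colim$ to $F \to \mathrm{const}_\ast$ yields precisely the canonical map $\colim F \to \colim \mathrm{const}_\ast \simeq X$. Comparing the two, the canonical map $\colim F \to X$ is identified with $\pi_F$, as wanted.

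The step I expect to be the main obstacle is the last one: not the equivalence of total spaces, which is formal once $u$ has been recognized as $\colim$, but the coherence bookkeeping needed to pin down the \emph{map} to $X$, i.e.\ to verify that $u$ of the canonical morphism to the terminal object of $\cS/X$ recovers $\pi_F$ and not some automorphism-twisted variant of it; this requires tracking the straightening--unstraightening equivalence on morphisms, not merely on objects. An alternative to the adjunction argument would be to invoke directly the computation of colimits of $\cS$-valued diagrams as total spaces of their associated left fibrations, as in \cite[\S 3.3]{htt}, which gives an honest equivalence when --- as here --- the indexing simplicial set is a Kan complex; the compatibility with the projection to $X$ would then have to be read off from that statement in much the same way.
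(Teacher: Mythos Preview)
Your proof is correct and follows the same overall strategy as the paper: identify the forgetful functor $u:\cS/X \to \cS$ with the colimit functor under the equivalence $\cS/X \simeq \Fun(X,\cS)$, and then read off the structure map to $X$. The only difference is in how that identification is made. The paper argues that the composite $\Pshi(X)\simeq\cS/X\to\cS$ is colimit-preserving and sends each representable $y(x)$ to $*$, hence by the universal property of presheaf $\i$-categories \cite[Theorem 5.1.5.6]{htt} it must be the functor $F\mapsto\colim F$; the map to $X$ is then handled by a citation to \cite[Proposition 1.2.13.8]{htt}. You instead use uniqueness of left adjoints, noting that both $u$ and $\colim$ are left adjoint to the pullback/constant-diagram functor, and you track the map to $X$ explicitly via the terminal object. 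Both arguments are standard and essentially interchangeable; yours has the mild advantage of being self-contained on the map identification rather than deferring to a reference.
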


\begin{proof}
Consider the composite $$\Pshi\left(X\right) \stackrel{\sim}{\longrightarrow} \cS/X \to \cS,$$ where the functor $\cS/X \to \cS$ is the forgetful functor, which is colimit preserving. Let $x$ be an object of the $\i$-groupoid $X.$ Then the representable presheaf $y\left(x\right)$ gets mapped as
$$y\left(x\right) \mapsto x:* \to X \mapsto *$$ under the above composite. It follows from \cite[Theorem 5.1.5.6]{htt} that the composite must in fact be equivalent to the left Kan extension of the terminal functor $X \to \cS$ (sending everything to the contractible space) along the Yoneda embedding $$y:X \hookrightarrow \Pshi\left(X\right).$$ It follows that the composite is the functor assigning a presheaf $F$ its colimit. The result now follows from \cite[Proposition 1.2.13.8]{htt}.
\end{proof}

\subsubsection{Universal fibrations}

Let $\mbox{Cat}_\i$ be the $\i$-category of small $\i$-categories. Recall that for $\sC$ an $\i$-category, given a functor $$F:\sC \to \mbox{Cat}_\i$$ there is an associated coCartesian fibration
$$\underset{\sC} \int F \to \sC.$$ This is an $\i$-categorical analogue of the classical Grothendieck construction from category theory, which associates a functor $$F:\sD \to \mbox{Cat},$$ with $\sD$ a small category, to its associated cofibered category $$\underset{\sD} \int F \to \sD.$$ This construction is part of an equivalence of $\i$-categories between the functor category $\Fun\left(\sC,\mbox{Cat}_\i\right)$ and the $\i$-category of coCartesian fibrations over $\sC$ (more precisely the $\i$-category associated to the covariant model structure on marked simplicial sets over $\sC$).

\begin{definition}
Let $id:\mbox{Cat}_\i \to \mbox{Cat}_\i$ be the identity functor. The \textbf{universal coCartesian fibration} is its associated coCartesian fibration
$$\cZ:=\underset{\mbox{Cat}_\i} \int{\!\!\!id}  \to \mbox{Cat}_\i.$$
\end{definition}

The universal property of the above coCartesian fibration, which justifies its name, is that if $F:\sC \to \mbox{Cat}_\i$ is a functor, then the following is a pullback diagram
$$\xymatrix{\underset{\sC} \int F \ar[d] \ar[r] & \cZ \ar[d]\\
\sC \ar[r]^-{F} & \mbox{Cat}_\i}$$
(in the $\i$-category $\widehat{\mbox{Cat}}_\i$ of large $\i$-categories). See \cite[Section 3.3.2]{htt}.

Consider now the canonical inclusion $$\cS=\iGpd \hookrightarrow \mbox{Cat}_\i$$ of the $\i$-category of spaces (i.e. $\i$-groupoids) into the $\i$-category of small $\i$-categories. Denote this inclusion by $I.$ Then the associated coCartesian fibration
$$\underset{\cS} \int I \to \cS$$ is in fact a left fibration, since $I$ factors through the identity functor $$id:\cS \to \cS,$$ and can in fact be identified with the left fibration associated to the identity functor of $\cS.$ Also, we can identify it as the fibered product $$\cS \times_{\mbox{Cat}_{\i}} \cZ \to \cS.$$

\begin{definition}
The left fibration $\cZ^{\cS}:=\cS \times_{\mbox{Cat}_{\i}} \cZ \to \cS$ is the \textbf{universal left fibration}.
\end{definition}

It follows immediately by pasting pullback diagrams that if $F:\sC \to \cS$ is any functor, then its associated left fibration fits in a pullback square
$$\xymatrix{\underset{\sC} \int F \ar[d] \ar[r] & \cZ^\cS \ar[d]\\
\sC \ar[r]^-{F} & \cS.}$$

The following lemma follows immediately:

\begin{lemma}\label{lem:pbfib}
Let $f:\sC \to \sD$ be a functor between $\i$-categories and let $G:\sD \to \cS$ be another functor. Then the following diagram is a pullback diagram:
$$\xymatrix{ \int_\sC \left(G \circ f\right) \ar[d] \ar[r] & \int_\sD G \ar[d]\\
\sC \ar[r]^-{f} & \sD.}$$
\end{lemma}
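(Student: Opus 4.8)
The plan is to derive this purely formally from the universal left fibration $\cZ^{\cS}\to\cS$ together with the pasting law for pullback squares; no input beyond the pullback square established above for an arbitrary functor $F:\sC\to\cS$ is needed.

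First I would instantiate that square at the functor $G:\sD\to\cS$, obtaining a pullback diagram
$$\xymatrix{\int_\sD G \ar[d] \ar[r] & \cZ^\cS \ar[d]\\ \sD \ar[r]^-{G} & \cS}$$
in $\widehat{\mbox{Cat}}_\i$, and then instantiate it again at the composite $G\circ f:\sC\to\cS$, obtaining a pullback diagram
$$\xymatrix{\int_\sC(G\circ f) \ar[d] \ar[r] & \cZ^\cS \ar[d]\\ \sC \ar[r]^-{G\circ f} & \cS.}$$
Since the bottom arrow of the second square is literally the composite $\sC\stackrel{f}{\longrightarrow}\sD\stackrel{G}{\longrightarrow}\cS$, I can stack the second square on top of the first along the common edge labelled $G$, producing a rectangle
$$\xymatrix{\int_\sC(G\circ f) \ar[r] \ar[d] & \int_\sD G \ar[r] \ar[d] & \cZ^\cS \ar[d]\\ \sC \ar[r]^-{f} & \sD \ar[r]^-{G} & \cS.}$$
The right-hand inner square is a pullback by the first instantiation, and the outer rectangle is a pullback by the second; by the pasting law for pullback squares in an $\i$-category (see \cite[\S 4.4.2]{htt}), the left-hand inner square is then a pullback as well, and this is exactly the square appearing in the statement.

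The one point that deserves care — and the step I would flag as the only real obstacle — is that the top-left horizontal arrow of the rectangle must be the canonical map of left fibrations induced by $f$, i.e. the image of the tautological triangle $\sC\to\sD\to\cS$ under the functoriality of the unstraightening (Grothendieck) construction, so that the rectangle genuinely commutes with the expected arrows and its outer square really is the one obtained from $G\circ f$. This identification is immediate, however: both the induced map $\int_\sC(G\circ f)\to\int_\sD G$ and the comparison map arising from the universal property of $\cZ^\cS$ are characterized by unstraightening the identity $2$-cell of $G\circ f$ over $f$, hence agree. Everything else is formal.
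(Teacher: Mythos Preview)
Your argument is correct and is exactly what the paper has in mind: the paper gives no explicit proof, merely stating that the lemma ``follows immediately'' from the pullback square classifying left fibrations by the universal left fibration $\cZ^{\cS}\to\cS$. Your use of the pasting law for pullback squares is the intended one-line justification.
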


We will now identify the universal left fibration as a very familiar functor:

\begin{proposition}
The universal left fibration can be canonically identified with the functor $$\cS_{\ast} \to \cS$$ from pointed spaces to spaces which forgets the base point.
\end{proposition}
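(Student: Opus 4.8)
The plan is to realize both the forgetful functor $\cS_{\ast}\to\cS$ and the universal left fibration $\cZ^{\cS}\to\cS$ as left fibrations over $\cS$ and then to compare them via straightening/unstraightening; this reduces the whole statement to computing the functor $\cS\to\cS$ that classifies $\cS_{\ast}\to\cS$.

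First I would note that $\cS_{\ast}=\cS_{\ast/}$ is the undercategory of $\cS$ under its terminal object, so the functor forgetting the basepoint is a left fibration by \cite[Proposition 2.1.2.1]{htt}. On the other hand, $\cZ^{\cS}\to\cS$ is, essentially by definition, the unstraightening of the identity functor $\id_{\cS}$: the pullback square displayed just before Lemma \ref{lem:pbfib}, taken with $\sC=\cS$ and $F=\id_{\cS}$, gives $\int_{\cS}\id_{\cS}\simeq\cZ^{\cS}$. Since straightening furnishes an equivalence between left fibrations over $\cS$ and functors $\cS\to\cS$ \cite[\S 3.2]{htt}, it now suffices to show that $\cS_{\ast}\to\cS$ is classified, up to equivalence, by $\id_{\cS}$.

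Next I would identify the functor classifying the coslice projection. For any $\i$-category $\sC$ and object $c$, the left fibration $\sC_{c/}\to\sC$ is the one corepresented by $c$: under straightening it corresponds to $\Hom_{\sC}(c,-):\sC\to\cS$, with fibre over $x$ the mapping space $\Hom_{\sC}(c,x)$ (this follows from the construction of the Yoneda embedding in \cite[\S 5.1.3]{htt} together with the universal fibration \cite[\S 3.3.2]{htt}). Taking $\sC=\cS$ and $c=\ast$, the classifying functor of $\cS_{\ast}\to\cS$ is $\Hom_{\cS}(\ast,-)$. It remains to see that this is the identity, and indeed, writing $\cS\simeq\Pshi(\mathrm{pt})$ with $\ast$ the representable presheaf, the Yoneda lemma yields a natural equivalence $\Hom_{\cS}(\ast,X)\simeq X$, i.e.\ $\Hom_{\cS}(\ast,-)\simeq\id_{\cS}$. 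Chaining the identifications produces the desired canonical equivalence $\cZ^{\cS}\simeq\cS_{\ast}$ of left fibrations over $\cS$.

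The step I expect to require the most care is the identification of $\cS_{\ast}\to\cS$ with a corepresentable left fibration. It is immediate that $\cS_{\ast}\to\cS$ and $\cZ^{\cS}\to\cS$ have the same fibres --- the space $X$ sits over the object $X$ in both --- but a fibrewise coincidence is not enough: what one needs is an equivalence of left fibrations, equivalently of the classifying functors $\cS\to\cS$, and producing that coherence is exactly what the corepresentability of the coslice projection (and the Yoneda lemma) supplies. Everything else amounts to bookkeeping within the straightening/unstraightening formalism.
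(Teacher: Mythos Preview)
Your argument is correct and in fact more streamlined than the paper's. The paper passes to opposites, regards $\id_{\cS}$ (after a universe enlargement) as a large presheaf $\cJ$ on $\cS^{op}$, and then explicitly computes the total space of the associated right fibration as the full subcategory of $\LPshi(\cS^{op})/\cJ$ on maps out of representables; it then identifies objects with pointed spaces via Yoneda and computes mapping spaces using \cite[Proposition 5.5.5.12]{htt}. You instead invoke the abstract fact that the coslice projection $\sC_{c/}\to\sC$ is the left fibration classified by the corepresentable functor $\Hom_{\sC}(c,-)$ (this is \cite[Proposition 4.4.4.5]{htt}, using that $\sC_{c/}$ has an initial object), and then observe $\Hom_{\cS}(\ast,-)\simeq\id_{\cS}$. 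The paper's approach has the virtue of being self-contained with respect to the identification of Grothendieck constructions used elsewhere in Section~\ref{sec: cohomology}; your approach is shorter and highlights exactly which structural fact is doing the work. One small point: the paper is explicit about the universe change needed to treat $\cS$ as small when invoking straightening; you should mention this too, though it is harmless.
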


\begin{proof}
The opposite functor $$\left(\cZ^\cS\right)^{op} \to \cS^{op}$$ is a right fibration corresponding to the functor $$\left(\cS^{op}\right)^{op}=\cS \stackrel{id}{\longrightarrow} \cS.$$ It suffices to show that the right fibration $$\underset{\cS^{op}} \int id \to \cS^{op}$$ can be canonically identified with the opposite of the functor $$\cS_{\ast} \to \cS.$$ Let $\widehat{\cS}$ denote the $\i$-category of large spaces, i.e. the $\i$-category of spaces in the Grothendieck universe $\cV$ of large sets, and denote by $\cJ$ the composite $$\left(\cS^{op}\right)^{op}=\cS \stackrel{id}{\longrightarrow} \cS \hookrightarrow \widehat{\cS}.$$ After changing universes, $\cS$ may be considered as a small $\i$-category and $\cJ$ as a presheaf on $\cS^{op}.$ By the proof of \cite[Corollary 5.3.5.4]{htt}, the underlying $\i$-category of the right fibration associated to $\cJ$ can be identified with the full subcategory of the slice category of large presheaves $$\LPshi\left(\cS^{op}\right)/\cJ$$ on those objects of the form $$\widehat{x}:y\left(X\right) \to \cJ,$$ where $y\left(X\right)$ is a representable presheaf on $\cS^{op},$ where the fibration sends such an object to $X.$ By the Yoneda lemma, since $$\Hom_{\LPshi\left(\cS^{op}\right)}\left(y\left(X\right),\cJ\right)\simeq \cJ\left(X\right)=X,$$ these objects can be identified with pointed spaces, where a point $$x:* \to X$$ can be identified with a map $$\widehat{x}:y\left(X\right) \to \cJ.$$ By the description of mapping spaces in a slice $\i$-category \cite[Proposition 5.5.5.12]{htt}, if $\left(X,x\right)$ and $\left(Y,y\right)$ are pointed spaces, there is a pullback diagram
$$\xymatrix{\Hom_{\LPshi\left(\cS^{op}\right)/\cJ}\left(\left(X,x\right),\left(Y,y\right)\right) \ar[r] \ar[d] & \Hom_{\LPshi\left(\cS^{op}\right)}\left(y\left(X\right),y\left(Y\right)\right) \ar[d]\\
\ast \ar[r]^-{\widehat{x}} & \Hom_{\LPshi\left(\cS^{op}\right)}\left(y\left(X\right),\cJ\right)}$$
where the right most vertical arrow is induced by composition with the morphism $$y\left(Y\right) \to \cJ$$ corresponding to the base point $y$. By the Yoneda lemma, this diagram can be identified with the following diagram
$$\xymatrix{\Hom_{\LPshi\left(\cS^{op}\right)/\cJ}\left(\left(X,x\right),\left(Y,y\right)\right) \ar[r] \ar[d] & \Hom_{\cS}\left(Y,X\right) \ar^-{ev_y}[d]\\
\ast \ar[r]^-{x} & X,}$$
where $ev_y$ is the map induced by evaluation at the base point $y.$ It follows that the mapping space $\Hom_{\LPshi\left(\cS^{op}\right)/\cJ}\left(\left(X,x\right),\left(Y,y\right)\right)$ can be canonically identified with the mapping space in pointed spaces $\Hom_{\cS_*}\left(\left(Y,y\right),\left(X,x\right)\right).$ The result now follows.
\end{proof}

\subsubsection{Classifying spaces and cohomology with coefficients local systems}
Fix $A$ an abelian group and let $n>0$ be an integer. Denote by $\cS_{K\left(A,n\right)}$ the full subcategory of spaces on the single space $K\left(A,n\right),$ and denote by $B\Aut\left(K\left(A,n\right)\right)$ its maximal sub-Kan-complex, i.e. the $\i$-groupoid obtained by throwing out all the arrows which are not equivalences. Define 
the space $\Aut\left(K\left(A,n\right)\right)$ to be the mapping space $\Hom_{B\Aut\left(K\left(A,n\right)\right)}\left(K\left(A,n\right),K\left(A,n\right)\right).$ Concretely, $\Aut\left(K\left(A,n\right)\right)$ is the space of self homotopy equivalences of $K\left(A,n\right).$ Denote by $\Theta_n$ the canonical inclusion
$$B\Aut\left(K\left(A,n\right)\right) \to \cS_{K\left(A,n\right)} \hookrightarrow \cS.$$ 
Denote by $\theta_n:U_n^A \to B\Aut\left(K\left(A,n\right)\right)$ the left fibration classified by the functor $\Theta_n,$ which is simply a map of spaces.

Denote by $\Aut_*\left(K\left(A,n\right)\right)$ the space of homotopy equivalences of $K\left(A,n\right)$ that preserve the base-point. It is the subcategory of the $\i$-groupoid $$\mathbf{End}_*\left(K\left(A,n\right)\right)=\Hom_{\cS_*}\left(K\left(A,n\right),K\left(A,n\right)\right)$$ of pointed endomorphisms of $K\left(A,n\right)$ on those which are equivalences. By \cite[Theorem 5.1.3.6]{higheralgebra} the $n$-fold loop space functor $$\Omega^n:\cS_{*}^{\ge n} \to \mbox{Mon}^{gp}_{\mathbb{E}_n}\left(\cS\right)$$ from the $\i$-category of pointed $n$-connective spaces to the $\i$-category of grouplike $\mathbb{E}_n$-spaces is an equivalence. Since $K\left(A,n\right)$ is $n$-connective, it follows that 
\begin{eqnarray*}
\mathbf{End}_*\left(K\left(A,n\right)\right)&=& \Hom_{\cS_{*}^{\ge n}}\left(K\left(A,n\right),K\left(A,n\right)\right)\\
&\simeq& \Hom_{\mbox{Mon}^{gp}_{\mathbb{E}_n}}\left(\Omega^n K\left(A,n\right),\Omega^n K\left(A,n\right)\right)\\
&\simeq& \Hom_{\mbox{Mon}^{gp}_{\mathbb{E}_n}}\left(A,A\right).
\end{eqnarray*}
Since $A$ is a discrete group, we have finally that $$\mathbf{End}_*\left(K\left(A,n\right)\right)\simeq \Hom_{\textbf{Grp}}\left(A,A\right).$$ It follows that $$\Aut_*\left(K\left(A,n\right)\right)\simeq \Aut\left(A\right).$$ 
Note also that we have a pullback diagram in $\cS$:
$$\xymatrix{\Aut\left(A\right) \simeq \Aut_\ast\left(K\left(A,n\right)\right) \ar[r] \ar[d] & \Aut\left(K\left(A,n\right)\right) \ar[d]^-{ev_\ast}\\
\ast \ar[r] & K\left(A,n\right).}$$
Unwinding the definitions, since the inverse functor to $\Omega^n$ is $B^n,$ we have that the map above $$\Aut\left(A\right) \to \Aut\left(K\left(A,n\right)\right)$$ sends an automorphism $\varphi:A \stackrel{\sim}{\longrightarrow} A$ to the automorphism $$K\left(\varphi,n\right):K\left(A,n\right) \stackrel{\sim}{\longrightarrow} K\left(A,n\right).$$ Also, via the long exact sequence in homotopy groups from the fibration sequence associated to the above diagram, we conclude that 
$$\pi_0\left(\Aut\left(K\left(A,n\right)\right)\right)\cong \Aut\left(A\right)$$ and the only other non-trivial homotopy group is $$\pi_n\left(\Aut\left(K\left(A,n\right)\right)\right)\cong A.$$ In other words, we have an equivalence of spaces
$$\Aut\left(K\left(A,n\right)\right) \simeq \Aut\left(A\right) \times K\left(A,n\right).$$
In fact, we even have a semi-direct product decomposition $$\Aut\left(K\left(A,n\right)\right) \simeq \Aut\left(A\right)\ltimes K\left(A,n\right).$$

Recall that $\theta_n:U_n^A \to B\Aut\left(K\left(A,n\right)\right)$ is the left fibration classified by the canonical functor $$\Theta_n:B\Aut\left(K\left(A,n\right)\right) \to \cS.$$ Let us compute what $U_n^A$ is explicitly. As an $\i$-category, this is the full subcategory of the slice category $\Pshi\left(BAut\left(K\left(A,n\right)\right)\right)/\Theta_n$ on those maps $G \to \Theta_n$ with $G$ a representable presheaf. But, $B \Aut\left(K\left(A,n\right)\right)$ only has one object, call it $\star$. Denote by $y\left(\star\right)$ its associated presheaf, which sends $\star$ to $\Aut\left(K\left(A,n\right)\right)$. By the Yoneda lemma, we have $$Hom\left(y\left(\star\right),\Theta_n\right)\simeq K\left(A,n\right).$$ It follows that the $\i$-category $U_n^A$ also has a single object, call it  $V$. We can write $$V:y\left(\star\right) \to \Theta_n.$$ The space of maps $\Hom_{U_n^A}\left(V,V\right)$ is the space of maps in the slice category $$\Psh_\infty\left(B\Aut\left(K\left(A,n\right)\right)\right)/\Theta_n.$$ By \cite[Proposition 5.5.5.12]{htt}, we can identify $\Hom_{U_n^A}\left(V,V\right)$ with the fiber of the map
$$\Hom\left(y\left(\star\right),y\left(\star\right)\right) \to \Hom\left(y\left(\star\right),\Theta_n\right)$$ induced by composition with $V.$ By the Yoneda lemma, this is equivalent to the fiber of the map 
$$\Aut\left(K\left(A,n\right)\right) \to K\left(A,n\right).$$ Unwinding the definitions, we see that the above map $$\Aut\left(K\left(A,n\right)\right)\simeq \Aut\left(A\right) \times K\left(A,n\right) \to K\left(A,n\right)$$ is just the first projection. It follows that
\begin{itemize}
\item[1)] $\Hom_{U_n^A}\left(V,V\right) \simeq \Aut\left(A\right),$
\item[2)] The canonical map $\Hom_{U_n^A}\left(V,V\right) \to \Aut\left(K\left(A,n\right)\right)$ induced by the left fibration $$\theta_n:U_n^A \to B \Aut\left(K\left(A,n\right)\right)$$ sends an automorphism $$\varphi:A \stackrel{\sim}{\longrightarrow} A$$ to the automorphism $$K\left(\varphi,n\right):K\left(A,n\right) \stackrel{\sim}{\longrightarrow} K\left(A,n\right).$$
\end{itemize}
From $1)$ we conclude that $U_n^A=K\left(\Aut\left(A\right),1\right).$ Notice that $K\left(\Aut\left(A\right),1\right)$ is a $1$-type, hence a groupoid. Viewing it as a groupoid, it is the groupoid with one object $\star$ such that $$\Hom\left(\star,\star\right)=\Aut\left(A\right).$$ As such, there is a canonical functor into the category of abelian groups $$\chi_A:K\left(\Aut\left(A\right),1\right) \to Ab,$$ sending $\star$ to $A$ and each automorphism of $A$ to itself. Composition with the $n^{th}$ Eilenberg-MacLane functor then yields a functor into spaces 
$$K\left(\Aut\left(A\right),1\right) \stackrel{\chi_A}{\longlongrightarrow} Ab \stackrel{ K\left(\blank,n\right)}{\longlongrightarrow} \cS.$$ This functor sends the single object $\star$ to $K\left(A,n\right)$ and sends each automorphism $$\varphi:A \stackrel{\sim}{\longrightarrow}A$$ to $$K\left(\varphi,n\right):K\left(A,n \right) \stackrel{\sim}{\longrightarrow} K\left(A,n \right),$$ hence by $2),$ there is a factorization
$$\xymatrix{& K\left(\Aut\left(A\right),1\right) \ar[d]^-{\chi_A} \ar@{-->}[ldd]_-{\theta_n}\\
& Ab \ar[d]^-{K\left(\blank,n\right)}\\
B\Aut\left(K\left(A,n\right)\right) \ar[r] & \cS.}$$

\begin{definition}
The map $\theta_n:K\left(\Aut\left(A\right),1\right) \to B\Aut\left(K\left(A,n\right)\right)$ is the \textbf{universal $K\left(A,n\right)$-fibration}.
\end{definition}

The following proposition justifies this terminology:

\begin{proposition}\label{prop: univ KAN fibration}
Let $g:Y \to X$ be any map of spaces whose fibers are all equivalent to $K\left(A,n\right).$ Then there is a pullback diagram
$$\xymatrix@C=2cm{Y \ar[d]_-{g} \ar[r] & K\left(\Aut\left(A\right),1\right) \ar[d]^-{\theta_n}\\
X \ar[r]_-{c_{g}} & B\Aut\left(K\left(A,n\right)\right).}$$
\end{proposition}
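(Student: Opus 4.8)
The plan is to obtain the square by straightening $g$, recognizing the resulting functor as classified by a map into $B\Aut\left(K\left(A,n\right)\right)$, and then pulling back the universal $K\left(A,n\right)$-fibration $\theta_n$ by means of Lemma~\ref{lem:pbfib}.

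First I would straighten $g$. Viewing $X$ as an $\i$-groupoid, the equivalence $\cS/X \simeq \Fun\left(X,\cS\right)$ recalled above sends $g\colon Y \to X$ to a functor $F_g\colon X \to \cS$ whose value at a point $x$ of $X$ is the fiber of $g$ over $x$; by hypothesis $F_g\left(x\right) \simeq K\left(A,n\right)$ for every $x$. Because $X$ is an $\i$-groupoid, $F_g$ factors through the maximal sub-$\i$-groupoid $\cS^{\simeq}$ of $\cS$ (the core functor is right adjoint to the inclusion $\iGpd \hookrightarrow \mbox{Cat}_\i$). The $\i$-groupoid $\cS^{\simeq}$ is the coproduct of its connected components, each of which is the classifying space of the automorphism group of a space, and the component through $K\left(A,n\right)$ is precisely $B\Aut\left(K\left(A,n\right)\right)$ as defined above. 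Since every value of $F_g$ lies in this component, $F_g$ factors as $$X \stackrel{c_g}{\longrightarrow} B\Aut\left(K\left(A,n\right)\right) \stackrel{\Theta_n}{\longrightarrow} \cS$$ for a (classifying) map $c_g$.

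It remains to unwind the consequences. By Lemma~\ref{lem:pbfib}, applied with $f = c_g$ and $G = \Theta_n$, the left fibration $\int_X\left(\Theta_n \circ c_g\right) \to X$ is the pullback along $c_g$ of the left fibration $\int_{B\Aut\left(K\left(A,n\right)\right)}\Theta_n \to B\Aut\left(K\left(A,n\right)\right)$, which by the computation preceding the statement is exactly $\theta_n\colon K\left(\Aut\left(A\right),1\right) \to B\Aut\left(K\left(A,n\right)\right)$. On the other hand $\Theta_n \circ c_g \simeq F_g$, and since $F_g$ is the straightening of $g$, the left fibration $\int_X F_g \to X$ is equivalent, as an object of $\cS/X$, to $g\colon Y \to X$. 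Combining these identifications produces the pullback square in the statement, with $c_g$ the bottom arrow.

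The step requiring the most care is the identification of the component of $\cS^{\simeq}$ through $K\left(A,n\right)$ with the $B\Aut\left(K\left(A,n\right)\right)$ of the text (the maximal Kan subcomplex of the full subcategory $\cS_{K\left(A,n\right)}$), and the observation that no connectivity hypothesis on $X$ is needed: even for disconnected $X$ the pointwise condition on the fibers of $g$ forces $\pi_0\left(X\right) \to \pi_0\left(\cS^{\simeq}\right)$ to be constant at $\left[K\left(A,n\right)\right]$, so $F_g$ still factors through the single component $B\Aut\left(K\left(A,n\right)\right)$.
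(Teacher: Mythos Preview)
Your proof is correct and follows essentially the same route as the paper: straighten $g$ to a functor $X \to \cS$, factor it through $B\Aut\left(K\left(A,n\right)\right)$ via $\Theta_n$, and apply Lemma~\ref{lem:pbfib}. You supply more detail than the paper does on \emph{why} the factorization exists (via the core $\cS^{\simeq}$ and its component decomposition), but the argument is the same.
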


\begin{proof}
Under the equivalence $\cS/X \simeq \Fun\left(X,\cS\right),$ $g:Y \to X$ corresponds to a functor $$G:X \to \cS$$ that factors as
$$X \stackrel{c_g}{\longrightarrow} B\Aut\left(K\left(A,n\right)\right) \stackrel{\Theta_n}{\longlongrightarrow} \cS.$$
As $$\underset{B\Aut\left(K\left(A,n\right)\right)}\int \!\!\!\!\!\!\!\!\!\! \Theta_n \longrightarrow B\Aut\left(K\left(A,n\right)\right)$$ can be canonically identified with $$\theta_n:K\left(\Aut\left(A\right),1\right)  \to B\Aut\left(K\left(A,n\right)\right),$$ the result now follows for Lemma \ref{lem:pbfib}.
\end{proof}

A \textbf{local system} on a space $X$ with coefficients in an abelian group $A$ is usually defined in the connected case as a group homomorphism $$\pi_1\left(X\right) \to \Aut\left(A\right),$$ or in the non-connected case, as an action of the fundamental groupoid $\Pi_1\left(X\right)$ on $A,$ or equivalently, a functor of groupoids $$\Pi_1\left(X\right) \to K\left(\Aut\left(A\right),1\right).$$ This is the same data as a map $$\tau:X \to K\left(\Aut\left(A\right),1\right).$$

\begin{proposition}\label{prop:twistedcoh}
Let $n>0$ be an integer. Given a local system $\tau$ as above, the $n^{th}$-cohomology group of $X$ with coefficients in $\tau$ is in natural bijection with the set of homotopy classes of lifts

$$\xymatrix@R=3cm{ & & K\left(\Aut\left(A\right),1\right) \ar[d]^-{\theta_n}\\
X \ar[r]_-{\tau} \ar@{-->}[rru] & K\left(\Aut\left(A\right),1\right) \ar[r]_-{\theta_n} & B\Aut\left(K\left(A,n\right)\right).}$$
\end{proposition}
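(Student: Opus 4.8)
The plan is to realize both sides as invariants of one and the same fibration over $X$. Recall that the local system $\tau\colon X \to K(\Aut(A),1)$ determines the functor
$$X \stackrel{\tau}{\longrightarrow} K(\Aut(A),1) \stackrel{\chi_A}{\longrightarrow} \mathrm{Ab} \stackrel{K\left(\blank,n\right)}{\longrightarrow} \cS,$$
the twisted Eilenberg--MacLane system $K(\tau,n)$, and that by definition the group $H^n(X;\tau)$ is $\pi_0$ of the space of global sections of $K(\tau,n)$; equivalently, under the equivalence $\cS/X \simeq \Fun(X,\cS)$ of the previous subsection, it is $\pi_0$ of the space of sections of the left fibration $E_\tau \to X$ classified by this functor (by Proposition \ref{prop:colim} this fibration is the canonical map $\colim K(\tau,n) \to X$). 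If instead one starts from the classical cochain-level, or $\mathrm{Ext}$-in-local-systems, definition of twisted cohomology, the first task is to reconcile it with this homotopical description --- this is the one input that is not purely formal, and it amounts to the assertion that twisted Eilenberg--MacLane objects corepresent twisted cohomology, which can be obtained by a Postnikov/obstruction-theoretic induction or, at the level of spaces, from Dold--Kan applied in $\Fun(X,\mathrm{Ab})$.

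Granting the homotopical description, the substance of the proof is the identification of $E_\tau \to X$. By the factorization diagram established just before Proposition \ref{prop: univ KAN fibration}, the functor displayed above coincides with the composite
$$X \stackrel{\theta_n\circ\tau}{\longrightarrow} B\Aut\left(K\left(A,n\right)\right) \stackrel{\Theta_n}{\longrightarrow} \cS.$$
Applying Lemma \ref{lem:pbfib} with $f=\theta_n\circ\tau$ and $G=\Theta_n$, together with the fact that $\int_{B\Aut(K(A,n))}\Theta_n \to B\Aut(K(A,n))$ is by construction precisely $\theta_n\colon K(\Aut(A),1)\to B\Aut(K(A,n))$, yields a pullback square
$$\xymatrix{E_\tau \ar[r] \ar[d] & K\left(\Aut\left(A\right),1\right) \ar[d]^-{\theta_n}\\ X \ar[r]_-{\theta_n\circ\tau} & B\Aut\left(K\left(A,n\right)\right).}$$
Consequently a section of $E_\tau \to X$ is exactly a lift of $\theta_n\circ\tau$ along $\theta_n$, i.e. precisely the data appearing in the lifting diagram of the proposition; taking $\pi_0$ gives the claimed bijection, and naturality in $X$ is automatic since each step is.

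The main obstacle is therefore conceptual rather than computational: one must pin down which notion of twisted cohomology is in play, so that $E_\tau$ is indeed identified with the pullback of $\theta_n$, and one should check that the abelian group structure on $H^n(X;\tau)$ is carried across the bijection. The latter is harmless: since $\tau$ lifts to $K(\Aut(A),1)$, the structure group of $E_\tau \to X$ is reduced to $\Aut(A)$, acting on the fiber $K(A,n)$ through group automorphisms, so $E_\tau \to X$ is a bundle of abelian group objects; its space of sections is an abelian group object in $\cS$, and the identification above is one of group objects, so it descends to an isomorphism of abelian groups on $\pi_0$. Everything else follows formally from the universal property of the left fibration attached to a functor, already packaged in Lemma \ref{lem:pbfib} and Proposition \ref{prop: univ KAN fibration}.
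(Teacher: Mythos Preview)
Your proof is correct and follows essentially the same route as the paper's: identify $H^n(X;\tau)$ with $\pi_0$ of the space of sections of the twisted Eilenberg--MacLane fibration over $X$, then use the factorization through $\Theta_n$ together with Lemma \ref{lem:pbfib} to recognize that fibration as the pullback of $\theta_n$ along $\theta_n\circ\tau$, so that sections are exactly the indicated lifts. The only difference is in how the first identification is justified: the paper detours through $\Pi_1(X)$ in order to invoke \cite[Corollary 4.6]{coh} directly, whereas you work over $X$ from the start and sketch the reconciliation with the classical definition via Dold--Kan in $\Fun(X,\mathrm{Ab})$; the remainder of the argument is identical.
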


\begin{proof}
Let $\tau:X \to K\left(\Aut\left(A\right),1\right)$ be a local system with coefficients in $A.$ Notice that the composite 
\begin{equation}\label{eq:comp}
X \stackrel{\tau}{\longrightarrow} K\left(\Aut\left(A\right),1\right) \stackrel{
\chi_A}{\longlongrightarrow} Ab \stackrel{K\left(\blank,n\right)}{\longlongrightarrow} \cS$$ has a factorization of the form $$X \to \Pi_1\left(X\right) \stackrel{\tau'}{\longrightarrow} Ab  \stackrel{K\left(\blank,n\right)}{\longlongrightarrow} \cS.
\end{equation}
Denote by $L_X\left(\tau,n\right)$ the colimit of $$K\left(\tau',n\right):\Pi_1\left(X\right)\to \cS.$$ There is a canonical map $L_X\left(\tau,n\right) \to \Pi_1\left(X\right)$ and by \cite[Corollary 4.6]{coh}, there is a natural bijection between the set of homotopy classes of lifts
$$\xymatrix@C=2.5cm{& L_X\left(\tau,n\right) \ar[d]\\
X \ar[r] \ar@{-->}[ru]& \Pi_1\left(X\right)}$$
and the $n^{th}$ cohomology group of $X$ with coefficients in $\tau.$
Note that the space of such lifts is canonically homotopy equivalent to the space of sections
$$\xymatrix@C=2.5cm@R=1.5cm{X\times_{\Pi_1\left(X\right)} L_X\left(\tau,n\right) \ar[d] \ar[r] & L_X\left(\tau,n\right) \ar[d]\\
X \ar[r]  \ar@/^26pt/@{-->}[u]& \Pi_1\left(X\right).}$$
By Lemma \ref{lem:pbfib}, it follows that $$X\times_{\Pi_1\left(X\right)} L_X\left(\tau,n\right) \to X$$ can canonically be identified with the colimit of the composite (\ref{eq:comp}). Recall that the composite (\ref{eq:comp}) factors as 
$$X \stackrel{\tau}{\longrightarrow} K\left(\Aut\left(A\right),1\right) \stackrel{\theta_n}{\longlongrightarrow} B\Aut\left(K\left(A,n\right)\right) \stackrel{\Theta_n}{\longlongrightarrow} \cS.$$ Again by Lemma \ref{lem:pbfib}, it follows that the colimit of (\ref{eq:comp}) agrees with the left vertical arrow in the following pullback diagram
$$\xymatrix{X \times_{B\Aut\left(K\left(A,n\right)\right)} K\left(\Aut\left(A\right),1\right) \ar[d] \ar[r] & K\left(\Aut\left(A\right),1\right) \ar[d]^-{\theta_n}\\
X \ar[r]_-{\theta_n \circ \tau} & B\Aut\left(K\left(A,n\right)\right).}$$
Finally, since this is a pullback diagram, the space of sections of this map is homotopy equivalent to the space of lifts as in the statement of the proposition.
\end{proof}

Denote by $\underline{A}$ the underlying set of the abelian group $A,$ and denote by $$\Aut\left(A\right) \ltimes \underline{A},$$ the action groupoid associated to the action of $\Aut\left(A\right)$ on $\underline{A},$ i.e. the groupoid whose set of objects is $\underline{A}$ and whose set of arrows is $\Aut\left(A\right) \times \underline{A},$ where a pair $\left(\varphi,a\right)$ is an arrow from $a$ to $\varphi\left(a\right).$ Denote by $$\theta_0:\Aut\left(A\right) \ltimes \underline{A} \to K\left(\Aut\left(A\right),1\right)$$ the functor sending $\underline{A}$ to the unique object $\star$ and sending a pair $\left(\varphi,a\right)$ to $\varphi.$

\begin{proposition}\label{prop:0th coh}
Given a space $X$ and a local system $\tau:X \to K\left(\Aut\left(A\right),1\right),$ the $0^{th}$-cohomology group of $X$ with coefficients in $\tau$ is in natural bijection with the set of homotopy classes of lifts

$$\xymatrix{ & \Aut\left(A\right) \ltimes \underline{A} \ar[d]^-{\theta_0}\\
X \ar[r]_-{\tau} \ar@{-->}[ru] & K\left(\Aut\left(A\right),1\right).}$$
\end{proposition}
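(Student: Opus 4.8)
The plan is to run the argument of Proposition~\ref{prop:twistedcoh} in the degenerate case $n=0$, where it simplifies considerably. First I would note that $K\left(\blank,0\right)\colon Ab \to \cS$ is the functor sending an abelian group to its underlying set, regarded as a discrete space, so that the composite
$$X \stackrel{\tau}{\longrightarrow} K\left(\Aut\left(A\right),1\right) \stackrel{\chi_A}{\longrightarrow} Ab \stackrel{K\left(\blank,0\right)}{\longrightarrow} \cS$$
factors through the $1$-truncation $X \to \Pi_1\left(X\right)$, its target $K\left(\Aut\left(A\right),1\right)$ being a $1$-type; write $\tau'\colon \Pi_1\left(X\right) \to Ab$ for the resulting local system of abelian groups. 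Exactly as in the cited proof, set $L_X\left(\tau,0\right):=\colim\bigl(K\left(\tau',0\right)\colon\Pi_1\left(X\right)\to\cS\bigr)$, equipped with its canonical map to $\Pi_1\left(X\right)$.

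The one genuinely new computation is that $\theta_0$ is itself a Grothendieck construction: the action groupoid $\Aut\left(A\right)\ltimes\underline{A}$ is canonically the left fibration $\int_{K\left(\Aut\left(A\right),1\right)}\bigl(K\left(\blank,0\right)\circ\chi_A\bigr)$, since its objects are the elements of $\underline{A}$, a morphism $a\to b$ is an automorphism $\varphi$ with $\varphi\left(a\right)=b$, and the projection to $K\left(\Aut\left(A\right),1\right)$ sends $\left(\varphi,a\right)$ to $\varphi$. Granting this, the chain of identifications runs as follows: the space of lifts of $\tau$ along $\theta_0$ is the space of sections of the pullback $\tau^*\theta_0$; by Lemma~\ref{lem:pbfib} and Proposition~\ref{prop:colim}, $\tau^*\theta_0\to X$ is $\int_X\bigl(K\left(\blank,0\right)\circ\chi_A\circ\tau\bigr)\to X$, i.e. the canonical map $\colim\bigl(K\left(\blank,0\right)\circ\chi_A\circ\tau\bigr)\to X$; using the factorization of this composite through $\Pi_1\left(X\right)$ together with Lemma~\ref{lem:pbfib} once more, this map is identified with $X\times_{\Pi_1\left(X\right)}L_X\left(\tau,0\right)\to X$; and by \cite[Corollary~4.6]{coh} the set of homotopy classes of sections of that last map is in natural bijection with $H^0\left(X;\tau\right)$. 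Concatenating these gives the asserted bijection, manifestly natural in $X$ and $\tau$.

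I expect the main obstacle to be only mild bookkeeping: making sure the Grothendieck-construction description of $\theta_0$ is coherent enough to license the two pullback manipulations, and checking that the comparison of \cite{coh} between twisted cohomology and lifts into the twisted Eilenberg--MacLane object is available (or easily deduced) in degree $0$. Should that last point require justification, one can sidestep it altogether: the space of lifts of $\tau$ along $\theta_0$ is the space of sections over $X$ of the local system of discrete sets $K\left(\blank,0\right)\circ\chi_A\circ\tau$, which by the factorization through $\Pi_1\left(X\right)$ is the set of $\tau'$-invariant families of elements of $\underline{A}$ --- and this is exactly $H^0\left(X;\tau\right)$ by definition of twisted zeroth cohomology. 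Since the coefficients are discrete, this mapping space is already $0$-truncated, so passing to homotopy classes of lifts is harmless.
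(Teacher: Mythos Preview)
Your proposal is correct and follows essentially the same approach as the paper: both proofs identify $\theta_0$ as the left fibration classified by $K\left(\blank,0\right)\circ\chi_A$, invoke Lemma~\ref{lem:pbfib} and Proposition~\ref{prop:colim} to relate the space of lifts to sections of $X\times_{\Pi_1\left(X\right)}L_X\left(\tau,0\right)\to X$, and conclude via \cite[Corollary~4.6]{coh}. The only difference is the order in which the pullback manipulations are performed (you pull back to $X$ first and then compare with $L_X\left(\tau,0\right)$, whereas the paper first identifies $L_X\left(\tau,0\right)$ as $\Pi_1\left(X\right)\times_{K\left(\Aut\left(A\right),1\right)}\left(\Aut\left(A\right)\ltimes\underline{A}\right)$ and then compares lifts), and your alternative direct argument bypassing \cite{coh} is a nice addition not present in the paper.
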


\begin{proof}
Consider the composite
$$K\left(\Aut\left(A\right),1\right) \stackrel{\chi_A}{\longlongrightarrow} Ab \stackrel{K\left(\blank,0\right)}{\longlongrightarrow} \cS,$$ where the functor $K\left(\blank,0\right)$ sends an abelian group to its underlying set. It's easy to check by direct calculation that this functor classifies the left fibration $\theta_0.$ The local system $\tau$ has a factorization
$$X \to \Pi_1\left(X\right) \stackrel{\tau'}{\longrightarrow} K\left(\Aut\left(A\right),1\right),$$ and by Lemma \ref{lem:pbfib}, the composite
\begin{equation}\label{eq:longfunctor}
\Pi_1\left(X\right) \stackrel{\tau'}{\longrightarrow} K\left(\Aut\left(A\right),1\right) \stackrel{\chi_A}{\longlongrightarrow} Ab \stackrel{K\left(\blank,0\right)}{\longlongrightarrow} \cS
\end{equation}
classifies the left fibration $$\Pi_1\left(X\right) \times_{K\left(\Aut\left(A\right),1\right) } \Aut\left(A\right) \ltimes \underline{A} \to \Pi_1\left(X\right).$$ By Proposition \ref{prop:colim}, it follows that the colimit of the composite (\ref{eq:longfunctor}) is the fibered product $\Pi_1\left(X\right) \times_{K\left(\Aut\left(A\right),1\right) } \Aut\left(A\right) \ltimes \underline{A},$ and hence one has an identification
$$\Pi_1\left(X\right) \times_{K\left(\Aut\left(A\right),1\right) } \Aut\left(A\right) \ltimes \underline{A} \simeq L_{X}\left(\tau,0\right),$$ using the notation from \cite[Definition 3.1]{coh}. By \cite[Corollary 4.6]{coh} there is a bijection between homotopy classes of lifts
$$\xymatrix@C=2.5cm{& L_X\left(\tau,0\right) \ar[d]\\
X \ar[r] \ar@{-->}[ru]& \Pi_1\left(X\right)}$$
and degree $0$ cohomology classes of $X$ with coefficients in $\tau.$ However, the space of such lifts is naturally homotopy equivalent to the space of lifts
$$\xymatrix@C=2.5cm{&  \Aut\left(A\right) \ltimes \underline{A} \ar^-{\theta_0}[d]\\
X \ar[r]^{\tau} \ar@{-->}[ru]& K\left(\Aut\left(A\right),1\right).}$$
\end{proof}

\begin{lemma}\label{lem:univpb}
Let $n>0$ be an integer. The following is a pullback diagram
$$\xymatrix@R=2cm@C=2cm{B\Aut\left(K\left(A,n\right)\right) \ar[d] \ar[r] & K\left(\Aut\left(A\right),1\right) \ar[d]_-{\theta_{n+1}} \\
K\left(\Aut\left(A\right),1\right) \ar[r]^-{\theta_{n+1}} & B\Aut\left(K\left(A,n+1\right)\right),}$$
where $B\Aut\left(K\left(A,n\right)\right) \to K\left(\Aut\left(A\right),1\right) =\Pi_1\left(B\Aut\left(K\left(A,n\right)\right)\right)$ is the canonical map from $B\Aut\left(K\left(A,n\right)\right)$ to its $1$-truncation. When $n=0,$ the following diagram is a pullback square
$$\xymatrix@R=2cm@C=2cm{B\left(\Aut\left(A\right)\ltimes A\right) \ar[d] \ar[r] & K\left(\Aut\left(A\right),1\right) \ar[d]_-{\theta_{1}} \\
K\left(\Aut\left(A\right),1\right) \ar[r]^-{\theta_{1}} & B\Aut\left(K\left(A,1\right)\right),}$$ where $\Aut\left(A\right)\ltimes A$ is the semi-direct product of groups.
\end{lemma}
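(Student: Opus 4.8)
The plan is to exhibit the displayed square as the pullback of the universal $K(A,n+1)$-fibration $\theta_{n+1}$ along itself, by comparing the functors $K\left(\Aut\left(A\right),1\right)\to\cS$ that the two relevant objects of $\cS/K\left(\Aut\left(A\right),1\right)$ classify. First I would analyse the left edge. By the computation above, $\Aut\left(K\left(A,n\right)\right)\simeq\Aut\left(A\right)\ltimes K\left(A,n\right)$ with $\Aut\left(A\right)$ acting on $K\left(A,n\right)$ in the natural way; for $n\geq 1$ the factor $K\left(A,n\right)$ is the identity component, and the map to $\pi_0$ is a split fibre sequence of grouplike monoids $K\left(A,n\right)\to\Aut\left(K\left(A,n\right)\right)\to\Aut\left(A\right)$, split by $\varphi\mapsto K\left(\varphi,n\right)$. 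Delooping gives a fibre sequence $K\left(A,n+1\right)=BK\left(A,n\right)\to B\Aut\left(K\left(A,n\right)\right)\to K\left(\Aut\left(A\right),1\right)$ whose monodromy is the delooping of the natural action; since $K\left(\blank,n+1\right)=B\circ K\left(\blank,n\right)$ as functors $Ab\to\cS$, this monodromy is exactly $K\left(\blank,n+1\right)\circ\chi_A$. Hence, by Proposition \ref{prop:colim} (straightening), $B\Aut\left(K\left(A,n\right)\right)\to K\left(\Aut\left(A\right),1\right)$ is the object of $\cS/K\left(\Aut\left(A\right),1\right)$ classified by $K\left(\blank,n+1\right)\circ\chi_A$; and since $K\left(A,n+1\right)$ is $n$-connected, this map is a $\pi_1$-isomorphism onto a $1$-type, hence the canonical map to the $1$-truncation. (For $n=0$ one instead deloops the split extension of ordinary groups $A\to\Aut\left(A\right)\ltimes A\to\Aut\left(A\right)$, obtaining $K\left(A,1\right)\to B\left(\Aut\left(A\right)\ltimes A\right)\to K\left(\Aut\left(A\right),1\right)$ classified by $K\left(\blank,1\right)\circ\chi_A$, the map being $B$ of the quotient homomorphism.)

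Next I would recall the commutative triangle established just before the definition of the universal $K\left(A,n\right)$-fibration, which at level $n+1$ says $\Theta_{n+1}\circ\theta_{n+1}\simeq K\left(\blank,n+1\right)\circ\chi_A$ as functors $K\left(\Aut\left(A\right),1\right)\to\cS$, where $\theta_{n+1}\colon K\left(\Aut\left(A\right),1\right)\to B\Aut\left(K\left(A,n+1\right)\right)$ is classified by $\Theta_{n+1}$. Applying Lemma \ref{lem:pbfib} with $f=\theta_{n+1}$ and $G=\Theta_{n+1}$ produces a pullback square whose right vertical arrow is $\int_{B\Aut\left(K\left(A,n+1\right)\right)}\Theta_{n+1}\to B\Aut\left(K\left(A,n+1\right)\right)$, namely $\theta_{n+1}$, whose bottom arrow is also $\theta_{n+1}$, and whose left vertical arrow is the object of $\cS/K\left(\Aut\left(A\right),1\right)$ classified by $\Theta_{n+1}\circ\theta_{n+1}\simeq K\left(\blank,n+1\right)\circ\chi_A$ --- which, by the previous paragraph, is precisely $B\Aut\left(K\left(A,n\right)\right)\to K\left(\Aut\left(A\right),1\right)$. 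This is the asserted pullback square; its top edge is the base change of $\theta_{n+1}$ along $\theta_{n+1}$, so it too has $n$-connected fibre $K\left(A,n+1\right)$ and is again the $1$-truncation map for $n\geq 1$. The case $n=0$ is identical, starting from $\Theta_1\circ\theta_1\simeq K\left(\blank,1\right)\circ\chi_A$ and the fibre sequence for $B\left(\Aut\left(A\right)\ltimes A\right)$ produced in the first step.

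The step I expect to require the most care is the monodromy identification in the first paragraph: one must check that the $\Aut\left(A\right)$-action on the fibre of the delooped component-fibration of $\Aut\left(K\left(A,n\right)\right)$ is literally the coherent action $\varphi\mapsto K\left(\varphi,n+1\right)$, and not some twist of it. This is forced by the semidirect-product structure of $\Aut\left(K\left(A,n\right)\right)$ (so that conjugation by the pointed automorphisms is the natural action), by the identification recorded above that the universal fibration $\theta_n$ sends an automorphism $\varphi$ of $A$ to $K\left(\varphi,n\right)$, and by the compatibility $B\circ K\left(\blank,n\right)\simeq K\left(\blank,n+1\right)$.
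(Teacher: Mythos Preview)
Your argument is correct and follows the same overall reduction as the paper: both proofs use Lemma~\ref{lem:pbfib} together with the factorization $\Theta_{n+1}\circ\theta_{n+1}\simeq K(\blank,n+1)\circ\chi_A$ to identify the pullback $K(\Aut(A),1)\times_{B\Aut(K(A,n+1))}K(\Aut(A),1)\to K(\Aut(A),1)$ with the left fibration classified by $K(\chi_A,n+1)$. The only genuine difference is in how this left fibration is then matched with $B\Aut(K(A,n))\to K(\Aut(A),1)$. The paper computes $\int_{K(\Aut(A),1)}K(\chi_A,n+1)$ directly from the slice-category description of the Grothendieck construction: it has a single object $W$, and a short pullback calculation gives $\Hom(W,W)\simeq\Aut(A)\times K(A,n)\simeq\Aut(K(A,n))$, with the projection to $\Aut(A)$ identifying the fibration as the $1$-truncation map. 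You instead run the identification in the other direction, delooping the split fibre sequence of grouplike monoids $K(A,n)\to\Aut(K(A,n))\to\Aut(A)$ and reading off the monodromy. Your route is more conceptual and explains \emph{why} the answer is $B\Aut(K(A,n))$, at the cost of the coherence check you flag (that the monodromy is literally $\varphi\mapsto K(\varphi,n+1)$); the paper's route is a bare-hands verification that avoids any appeal to general facts about delooping semidirect products, at the cost of being less illuminating. Both are complete; your caution about the monodromy step is well placed but the semidirect-product structure established earlier in the paper does pin it down.
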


\begin{proof}
Recall that the composite $$K\left(\Aut\left(A\right),1\right) \stackrel{\theta_{n+1}}{\longlongrightarrow} B\Aut\left(K\left(A,n+1\right)\right) \to \cS$$ is canonically equivalent to the composite
$$K\left(\Aut\left(A\right),1\right) \stackrel{\chi_A}{\longlongrightarrow} Ab \stackrel{K\left(\blank,n+1\right)}{\longlonglongrightarrow} \cS.$$ Since additionally, $$\theta_{n+1}:K\left(\Aut\left(A\right),1\right) \to B\Aut\left(K\left(A,n+1\right)\right)$$ is the left fibration associated with the canonical functor $$B\Aut\left(K\left(A,n+1\right)\right) \to \cS,$$ it follows from Lemma \ref{lem:pbfib} that we can identify the map $$K\left(\Aut\left(A\right),1\right)  \times_{B\Aut\left(K\left(A,n+1\right)\right)} K\left(\Aut\left(A\right),1\right) \to K\left(\Aut\left(A\right),1\right)$$ with the left fibration
$$\underset{K\left(\Aut\left(A\right),1\right)}\int \!\!\!\!\!\!\!\!\!\! K\left(\chi_A,n+1\right) \longrightarrow K\left(\Aut\left(A\right),1\right).$$ So, as an $\i$-category, we may identify the total space of the above fibration with the full subcategory of the slice category $\Pshi\left(K\left(\Aut\left(A\right),1\right)\right)/K\left(\chi_A,n+1\right)$ on those morphisms of the form $G \to K\left(\chi_A,n+1\right),$ with $G$ a representable presheaf. Since the groupoid $K\left(\Aut\left(A\right),1\right)$ has only one object $\star,$ and since $K\left(A,n+1\right)$ only has one object, it follows that $\underset{K\left(\Aut\left(A\right),1\right)}\int \!\!\!\!\!\!\!\!\!\! K\left(\chi_A,n+1\right)$ likewise has one object $W:y\left(\star\right) \to K\left(\chi_A,n+1\right)$ which corresponds under the Yoneda lemma to the unique object of $K\left(A,n+1\right).$ Now, by \cite[Proposition 5.5.5.12]{htt}, we have a pullback diagram
$$\xymatrix{\Hom\left(W,W\right) \ar[r] \ar[d] & \Hom\left(y\left(\star\right),y\left(\star\right)\right)\simeq \Aut\left(A\right) \ar[d]\\
\ast \ar[r] & \Hom\left(y\left(\star\right),K\left(\chi_A,n+1\right)\right)\simeq K\left(A,n+1\right).}$$
Since colimits are universal in $\cS$, it follows that
\begin{eqnarray*}
\Hom\left(W,W\right) &\simeq & \underset{\Aut\left(A\right)} \coprod \left(* \times_{K\left(A,n+1\right)} *\right)\\
&\simeq &  \underset{\Aut\left(A\right)} \coprod K\left(A,n\right)\\
&\simeq & \Aut\left(A\right) \times K\left(A,n\right)\\
&\simeq & \Aut\left(K\left(A,n\right)\right).
\end{eqnarray*}
It follows that 
$$\underset{K\left(\Aut\left(A\right),1\right)}\int \!\!\!\!\!\!\!\!\!\! K\left(\chi_A,n+1\right) \simeq B\left(\Aut\left(K\left(A,n\right)\right)\right).$$
Unwinding the definitions, we see that on mapping spaces, the left fibration has the effect
$$\Hom\left(W,W\right)\simeq \Aut\left(A\right) \times K\left(A,n\right) \to \Aut\left(A\right)$$
of first projection, and hence the left fibration can be identified with the canonical map 
$$B\Aut\left(K\left(A,n\right)\right) \to K\left(\Aut\left(A\right),1\right) =\Pi_1\left(B\Aut\left(K\left(A,n\right)\right)\right).$$ 

Finally, when $n=0,$ most of the above proof goes through, except of course that we do not have an identification of $\Aut\left(A\right) \times A$ with $\Aut\left(A\right),$ but rather the group structure on $\Aut\left(A\right) \times A$ is that of the semi-direct product.
\end{proof}

Suppose that $n>0.$ Given a map $f:X \to B\Aut\left(K\left(A,n\right)\right),$ we get an induced local system with coefficients in $A$ by considering the composite $$X \stackrel{f}{\longrightarrow} B\Aut\left(K\left(A,n\right)\right) \to \Pi_1\left(B\Aut\left(K\left(A,n\right)\right)\right)=K\left(\Aut\left(A\right),1\right).$$ Denoting by $\tau\left(f\right)$ the induced local system, $f$ itself can be identified with a section
$$\xymatrix@C=2.5cm{& B\Aut\left(K\left(A,n\right)\right)\ar[d]\\
X \ar@{-->}[ru]^-{f} \ar[r]_-{\tau\left(f\right)}& K\left(\Aut\left(A\right),1\right).}$$
However, by  Lemma \ref{lem:univpb}, $f$ can be identified with a section
$$\xymatrix@C=2.5cm{& K\left(\Aut\left(A\right),1\right) \ar[d]^-{\theta_{n+1}}\\
X \ar@{-->}[ru] \ar[r]_-{\theta_{n+1}\circ \tau\left(f\right)}& B\Aut\left(K\left(A,n+1\right)\right).}$$
In light of this, the following two corollary follows immediately from Proposition \ref{prop:twistedcoh}:

\begin{corollary}\label{cor:BAut rep 1}
Let $X$ be a space and let $n>0$ be an integer. Then there is a natural bijection between the set of homotopy classes of maps $$\left[X,B\Aut\left(K\left(A,n\right)\right)\right]$$ and the set of pairs $\left(\tau,\alpha\right),$ with $$\tau\in \left[X,K\left(\Aut\left(A\right),1\right)\right]$$ a local system on $X$ and $$\alpha \in H^{n+1}\left(X,\tau\right),$$ an $\left(n+1\right)^{st}$-cohomology class of $X$ with values in $\tau.$ Moreover, there is a natural bijection between the set of homotopy classes of maps $$\left[X,B\left(\Aut\left(A\right) \ltimes A\right)\right]$$ and the set of pairs $\left(\tau,\alpha\right),$ with $$\tau\in \left[X,K\left(\Aut\left(A\right),1\right)\right]$$ a local system on $X$ and $$\alpha \in H^{1}\left(X,\tau\right),$$ a degree $1$ cohomology class of $X$ with values in $\tau.$
\end{corollary}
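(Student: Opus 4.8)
The plan is to view $[X,B\Aut(K(A,n))]$ as fibered over $[X,K(\Aut(A),1)]$ by the map which on homotopy classes sends $f$ to its induced local system $\tau(f)$, namely the map induced by the $1$-truncation $p\colon B\Aut(K(A,n))\to \Pi_1(B\Aut(K(A,n)))=K(\Aut(A),1)$, and then to identify each fiber with a twisted cohomology set. Since $\Hom(X,\blank)$ preserves pullbacks, for a fixed local system $\tau\colon X\to K(\Aut(A),1)$ the space of maps $f$ equipped with an identification $p\circ f\simeq\tau$ is precisely the space of sections of the pullback $X\times_{K(\Aut(A),1)}B\Aut(K(A,n))\to X$. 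As $\Hom(X,p)$ is surjective on $\pi_0$ --- every $\tau$ lifts, e.g.\ via the trivial twisted class --- the long exact sequence of the fibration $\Hom(X,B\Aut(K(A,n)))\to \Hom(X,K(\Aut(A),1))$ identifies $[X,B\Aut(K(A,n))]$ with the disjoint union over $[\tau]\in[X,K(\Aut(A),1)]$ of $\pi_0$ of these section spaces. It therefore suffices to identify, naturally in $\tau$, the homotopy classes of such sections with $H^{n+1}(X,\tau)$.

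For this I would use Lemma \ref{lem:univpb}, which exhibits $p$ as the pullback of $\theta_{n+1}\colon K(\Aut(A),1)\to B\Aut(K(A,n+1))$ along $\theta_{n+1}$. Pulling that square back further along $\tau$ and pasting pullback squares (Lemma \ref{lem:pbfib}), the space of sections of $X\times_{K(\Aut(A),1)}B\Aut(K(A,n))\to X$ is canonically equivalent to the space of lifts of $\theta_{n+1}\circ\tau\colon X\to B\Aut(K(A,n+1))$ along $\theta_{n+1}$. This is exactly the space of lifts appearing in Proposition \ref{prop:twistedcoh} applied with exponent $n+1>0$, whose set of homotopy classes is naturally $H^{n+1}(X,\tau)$. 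Combining this with the first paragraph gives the first bijection, and its naturality is inherited from that in Proposition \ref{prop:twistedcoh}.

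The second assertion is obtained in the same way from the $n=0$ case of Lemma \ref{lem:univpb}: there the canonical map $B(\Aut(A)\ltimes A)\to K(\Aut(A),1)$ is the pullback of $\theta_1\colon K(\Aut(A),1)\to B\Aut(K(A,1))$ along $\theta_1$, so $[X,B(\Aut(A)\ltimes A)]$ is fibered over $[X,K(\Aut(A),1)]$ with fiber over $\tau$ the homotopy classes of lifts of $\theta_1\circ\tau$ along $\theta_1$, which is $H^1(X,\tau)$ by Proposition \ref{prop:twistedcoh} with exponent $1$. The only genuinely delicate point in the whole argument is the bookkeeping of the first paragraph: making precise that $[X,B\Aut(K(A,n))]$ decomposes as the disjoint union over induced local systems of the corresponding sets of section-classes, i.e.\ that $\Hom(X,p)$ is a fibration of spaces whose $\pi_0$-fibers are computed by a single representative of each class $[\tau]$. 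This is standard homotopy theory once surjectivity on $\pi_0$ is noted, so I do not expect any real obstacle; the substance of the corollary is entirely contained in Lemmas \ref{lem:univpb} and \ref{lem:pbfib} together with Proposition \ref{prop:twistedcoh}.
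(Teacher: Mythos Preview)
Your proposal is correct and follows essentially the same route as the paper: compose with the $1$-truncation $B\Aut(K(A,n))\to K(\Aut(A),1)$ to extract $\tau(f)$, then use Lemma \ref{lem:univpb} to identify the remaining data with a lift along $\theta_{n+1}$, and invoke Proposition \ref{prop:twistedcoh}. The paper's proof is in fact just the paragraph immediately preceding the corollary and the sentence ``the following corollary follows immediately from Proposition \ref{prop:twistedcoh}''; you have spelled out the same argument with slightly more care about the fibration bookkeeping on $\pi_0$, which the paper leaves implicit.
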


\subsection{The $\i$-topos case}
Before turning our attention on how to define cohomology with coefficients in a local system of abelian groups on an $\i$-topos, we make a quick digression into the topic of local connectedness of $\i$-topoi. We will need the theory later to prove the comparison theorem we desire, and it is also relevant in distinguishing between two possibly different notions of a local system.

\subsubsection{Locally connected $\i$-topoi.}

\begin{definition}
An object $E$ in a topos $\cE$ is \textbf{connected} if whenever there is an isomorphism $E \cong U \coprod V$ in $\cE,$ then exactly one of $U$ and $V$ is not an initial object.
\end{definition}

\begin{remark}\label{rmk:connected}
An object $E$ in a topos $\cE$ is connected if and only if the functor $$\Hom_{\cE}\left(E,\blank\right):\cE \to \Set$$ preserves coproducts. (See Proposition \ref{prop: coproducts}.)
\end{remark}

\begin{definition}
A topos $\cE$ is \textbf{locally connected} if and only if every object $E$ in $\cE$ can be written as a coproduct of connected objects. (The initial object is an empty coproduct).
\end{definition}

\begin{lemma}\cite[Lemma C.3.3.6]{Johnstone}
A topos $\cE$ is \textbf{locally connected} if and only if the inverse image functor $$\Delta:\Set \to \cE$$ has a left adjoint $\Pi_0.$
\end{lemma}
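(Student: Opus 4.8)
The plan is to prove the two implications separately, in both cases exploiting the concrete description $\Delta(S)=\coprod_{s\in S}1$ of the constant–sheaf functor, which is valid because $\Delta$, being the inverse image of the (unique) geometric morphism $\cE\to\Set$, preserves all colimits and finite limits, so in particular $\Delta(\{\ast\})=1$ and $\Delta(\emptyset)=\emptyset$.

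\emph{Local connectedness $\Rightarrow$ existence of $\Pi_0$.} Fix $E\in\cE$ and choose, by hypothesis, a decomposition $E=\coprod_{i\in I}E_i$ into connected objects. I would set $\Pi_0(E):=I$ and check that $\Hom_\cE(E,\Delta(-))\colon\Set\to\Set$ is representable by $I$: using the universal property of the coproduct on the source, then Remark \ref{rmk:connected} (each $\Hom_\cE(E_i,-)$ preserves coproducts), and finally that $1$ is terminal, one gets
$$\Hom_\cE\!\left(\coprod_{i\in I} E_i,\ \coprod_{s\in S}1\right)\;=\;\prod_{i\in I}\coprod_{s\in S}\Hom_\cE(E_i,1)\;=\;\prod_{i\in I}S\;=\;\Hom_{\Set}(I,S),$$
naturally in $S$. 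Since this holds for every $E$, the standard pointwise criterion yields a left adjoint $\Pi_0$ to $\Delta$; as a by‑product $\Pi_0(E)$ is, up to canonical isomorphism, independent of the chosen decomposition and functorial.

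\emph{Existence of $\Pi_0$ $\Rightarrow$ local connectedness.} Suppose $\Pi_0\dashv\Delta$ and fix $E\in\cE$; write $P:=\Pi_0(E)$. I would pull the canonical decomposition $\Delta(P)=\coprod_{p\in P}1$ back along the unit $\eta_E\colon E\to\Delta\Pi_0(E)$; since coproducts in a topos are disjoint and stable under pullback, this exhibits $E=\coprod_{p\in P}E_p$ with $E_p:=\eta_E^{\ast}(1_p)$. It then remains to show each $E_p$ is connected and non‑initial, and the key claim is that $\Pi_0(E_p)$ is a one‑element set. To prove this I would apply $\Pi_0$ (which preserves coproducts) to obtain $P=\coprod_p\Pi_0(E_p)$ and run the triangle identity $\epsilon_{P}\circ\Pi_0(\eta_E)=\mathrm{id}_{P}$: by construction $\eta_E$ restricted to $E_p$ factors through $1_p$, so $\Pi_0(\eta_E)$ carries the $p$‑th summand of $P=\coprod_p\Pi_0(E_p)$ into the $p$‑th copy of $\Pi_0(1)$ inside $\Pi_0\Delta(P)=\coprod_p\Pi_0(1)$, while the counit $\epsilon_P$ collapses that $p$‑th copy onto $\{p\}$; matching summands then forces each $\Pi_0(E_p)$ to be a singleton. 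Granting this: $E_p$ is non‑initial because $\Pi_0$ preserves the initial object, so $\Pi_0(\emptyset)=\emptyset\neq\{\ast\}$; and if $E_p=A\sqcup B$, then $\{\ast\}=\Pi_0(A)\sqcup\Pi_0(B)$ forces one factor, say $\Pi_0(B)$, to be empty, whence the adjunction gives $\Hom_\cE(B,\Delta(\emptyset))=\Hom_{\Set}(\emptyset,\emptyset)\neq\emptyset$, i.e. a morphism $B\to\emptyset$, so $B$ is initial by strictness of $\emptyset$ in a topos. Thus $E_p$ is connected and $E$ is a coproduct of connected objects.

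The step I expect to be the main obstacle is exactly the bookkeeping in the converse: pinning down the counit $\epsilon_P\colon\coprod_{p}\Pi_0(1)\to P$ explicitly and chasing the triangle identity carefully enough to conclude that each individual $\Pi_0(E_p)$ is a singleton, rather than merely that the $\Pi_0(E_p)$ reassemble into $P$. Everything else is formal manipulation of the adjunction $\Pi_0\dashv\Delta\dashv\Gamma$ together with the (infinitary) extensivity of a Grothendieck topos.
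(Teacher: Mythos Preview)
The paper does not supply its own proof of this lemma: it is stated with a bare citation to \cite[Lemma C.3.3.6]{Johnstone} and immediately followed by a remark, so there is nothing to compare your argument against on the paper's side.

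Your proof is correct and is essentially the standard one. The forward direction is exactly the computation the paper itself carries out in the subsequent Remark (showing $\Pi_0(U)\cong *$ for $U$ connected), assembled over a coproduct decomposition of a general object. In the converse direction your strategy---pulling back the summands of $\Delta\Pi_0(E)$ along the unit, then using the triangle identity together with naturality of the counit with respect to the inclusions $\{p\}\hookrightarrow P$ to pin each $\Pi_0(E_p)$ down to a singleton---is the expected argument, and the final step (deducing connectedness of $E_p$ from $\Pi_0(E_p)=\{*\}$ via strictness of the initial object) is clean. The ``bookkeeping'' you flag is genuine but you have handled it: the key point is that under the canonical isomorphism $\coprod_p\Pi_0(E_p)\cong P$ the composite $\epsilon_P\circ\Pi_0(\eta_E)=\mathrm{id}_P$ carries the $p$-th summand into $\{p\}$, which in $\Set$ forces $|\Pi_0(E_p)|=1$.
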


\begin{remark}
Let $U$ be a connected object of a locally connected topos $\cE,$ and let $S$ be a set. Then we have:
\begin{eqnarray*}
\Hom\left(\Pi_0\left(U\right),S\right)&\cong& \Hom\left(U,\Delta\left(S\right)\right)\\
&\cong& \Hom\left(U,\underset{s \in S}\coprod 1\right)\\
&\cong& \underset{s \in S} \coprod \Hom\left(U,1\right)\\
&\cong& \underset{s \in S} \coprod *\\
&\cong& S,
\end{eqnarray*}
where the second to last isomorphism follows from Remark \ref{rmk:connected}, and thus $\Pi_0\left(U\right)\cong *.$ It follows that if $E=\underset{i \in I} \coprod U_i$ is a decomposition of $E$ into connected objects, then $$\Pi_0\left(E\right) \cong I,$$ hence the ``set of connected components of $E$'' is well defined up to isomorphism, and isomorphic to $\Pi_0\left(E\right).$
\end{remark}

\begin{definition}
A locally connected topos $\cE$ is \textbf{connected} if and only if the terminal object $1$ is connected. Equivalently, if and only if $$\Pi_0\left(\cE\right):=\Pi_0\left(1\right)\cong *.$$
\end{definition}

The following proposition is standard:

\begin{proposition}
Let $\sC$ be a locally connected Grothendieck site as in \cite[Section 9]{ArtinMazur}, then the topos of sheaves of sets $\Sh\left(\sC\right)$ is locally connected.
\end{proposition}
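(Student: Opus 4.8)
The plan is to reduce, via the Comparison Lemma, to the full subsite of connected objects, on which the constant presheaf is already a sheaf, and then to transport the evident $\pi_0$ functor from presheaves to sheaves. By \cite[Lemma C.3.3.6]{Johnstone} it suffices to produce a left adjoint to the constant-sheaf functor $\Delta\colon\Set\to\Sh\left(\sC\right)$.

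First I would let $\sC_0\subseteq\sC$ be the full subcategory on the connected objects, equipped with the induced topology. The hypothesis that $\sC$ is locally connected in the sense of \cite[Section 9]{ArtinMazur} says precisely that every object of $\sC$ admits a covering family by connected objects, so $\sC_0$ is a topologically dense subsite; hence the Comparison Lemma \cite[III]{SGA4} supplies an equivalence $\Sh\left(\sC\right)\simeq\Sh\left(\sC_0\right)$, and it is enough to prove that $\Sh\left(\sC_0\right)$ is locally connected. From now on every object in sight is connected.

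The crux is the claim that for any set $S$ the constant presheaf $\underline{S}$ on $\sC_0$ is already a sheaf. All of its restriction maps are identities, so $\underline{S}$ is automatically separated, and for an object $U$ with a covering sieve $R$ the sheaf condition reduces to the assertion that the canonical map $S\to\varprojlim_{(V\to U)\in R}\underline{S}(V)$ is a bijection. The target is the limit of the constant diagram with value $S$ over the category of elements $\int R$, which equals $S$ exactly when $\int R$ is connected and nonempty; nonemptiness is clear since $R$ is covering, and I would obtain connectedness as follows. A partition of $\int R$ into connected components induces a decomposition $R=\coprod_\alpha R_\alpha$ into subsieves of $\Hom_{\sC_0}\left(-,U\right)$, each $R_\alpha$ being closed under precomposition because composing further on the right yields a morphism of $\int R$. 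Applying sheafification, which preserves coproducts and monomorphisms, the representable sheaf $\widetilde{U}$ decomposes inside $\Sh\left(\sC_0\right)$ as $\coprod_\alpha\widetilde{R_\alpha}$; since $U$ is connected, exactly one summand is non-initial, and one checks readily that $\widetilde{R_\alpha}=\emptyset$ forces $R_\alpha$ itself to be the empty presheaf. Thus $\int R$ has a single component, proving the claim.

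Granting this, $\Delta\colon\Set\to\Sh\left(\sC_0\right)$ is literally the constant-presheaf functor $\Set\to\Psh\left(\sC_0\right)$, whose familiar left adjoint sends a presheaf $F$ to the colimit $\varinjlim_{\sC_0^{op}}F$, namely the set of connected components of the category of elements of $F$. Restricting this left adjoint along the inclusion $\Sh\left(\sC_0\right)\hookrightarrow\Psh\left(\sC_0\right)$ yields a left adjoint $\Pi_0$ to $\Delta\colon\Set\to\Sh\left(\sC_0\right)$, hence to $\Delta\colon\Set\to\Sh\left(\sC\right)$, so $\Sh\left(\sC\right)$ is locally connected by \cite[Lemma C.3.3.6]{Johnstone}. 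The hard part will be the third paragraph: extracting, from whatever precise form of the \cite[Section 9]{ArtinMazur} definition one adopts, the fact that covering sieves of connected objects are connected categories --- that is, matching ``connectedness of a sieve'' against ``connectedness of the corresponding representable sheaf''; everything else is formal bookkeeping.
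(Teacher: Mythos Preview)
The paper does not actually prove this proposition; it simply labels it ``standard'' and moves on, so there is no argument to compare against. Your outline is one of the standard routes and is essentially correct: reduce via the Comparison Lemma to the dense subsite $\sC_0$ of connected objects, show constant presheaves are already sheaves there, and read off $\Pi_0$ as the restriction of the presheaf colimit functor. (This is the $1$-categorical shadow of exactly the argument the paper runs at the $\infty$-level in Proposition~\ref{prop: locally contractible shape}.)

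Your own caveat in the final paragraph is the only place that needs care. In the step ``since $U$ is connected, exactly one summand is non-initial'' you are implicitly using that the \emph{representable sheaf} $\widetilde{U}$ is a connected object of $\Sh(\sC_0)$, not merely that $U$ is connected in the site. Whether this is tautological or requires a short argument depends on which of the equivalent formulations of ``connected object'' Artin--Mazur take as primitive; in their Section~9 set-up the passage is immediate, but it is worth stating explicitly rather than folding it into ``one checks readily.'' Likewise, the implication $\widetilde{R_\alpha}=\emptyset \Rightarrow R_\alpha=\emptyset$ uses that representable sheaves of (connected, hence non-initial) objects are non-initial, i.e.\ that the empty sieve is never covering on $\sC_0$; again harmless here, but good to make visible. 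With those two points spelled out, the argument is complete.
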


\begin{example}
Let $X$ be a locally connected topological space (in the strong sense that the each point $x$ has a neighborhood basis of connected open subsets). Then the open cover Grothendieck topology on the poset of open subsets $Op\left(X\right)$ is a locally connected site, and hence $\Sh\left(X\right)$ is locally connected. Any sheaf of sets $F$ on $X$ is the sections of a local homeomorphism $L\left(F\right) \to X,$ and such an $F$ is connected if and only if the space $L\left(F\right)$ is.
\end{example}

\begin{example}
Let $X$ be a locally Noetherian scheme. Then its small \'etale site $X_{\et}$ is locally connected (see \cite[I 6.1.9]{EGA1}). It follows that the small \'etale topos $\Sh\left(X_{\et}\right)$ is locally connected. Concretely, a representable sheaf $Y \to X$ in $\Sh\left(X_{\et}\right),$ i.e. an \'etale map from a scheme $Y,$ is connected if and only $Y$ is a connected scheme. More generally, as any \'etale sheaf over a scheme is representable by an \'etale map $P \to X$ from an algebraic space (with no separation conditions), a sheaf $F$ in $\Sh\left(X_{\et}\right)$ corresponding to such a map is connected if and only if the algebraic space $P$ is.
\end{example}

\begin{definition}
An $\i$-topos $\cE$ is \textbf{locally connected} if its underlying topos $\Disc\left(\cE\right)$ of discrete objects is a locally connected topos, where $\Disc\left(\cE\right)$ is the full subcategory of $\cE$ spanned by the $0$-truncated objects.
\end{definition}

\begin{remark}
It might be tempting to think an $\i$-topos is locally connected if and only if the inverse image functor $$\Delta:\iGpd \to \cE$$ has a left adjoint $\Pi_\i.$ However, this is a strictly stronger condition; an $\i$-topos satisfying this property is said to be \textbf{locally $\i$-connected}. For example, a locally connected space $X$ may not have $\Shi\left(X\right)$ locally $\i$-connected, but this will hold if $X$ is locally  contractible.
\end{remark}

\begin{definition}
An object $E$ in an $\i$-topos $\cE$ is \textbf{connected} if whenever there is an equivalence $E \simeq U \coprod V$ in $\cE,$ then exactly one of $U$ and $V$ is not an initial object.
\end{definition}

\begin{lemma}\label{lem:truncated connected}
An object $E$ in an $\i$-topos $\cE$ is connected if and only if its $0$-truncation $\pi_0\left(E\right)$ is connected in $\Disc\left(\cE\right).$
\end{lemma}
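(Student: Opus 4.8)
The plan is to exploit the truncation functor $\tau_{\le 0}\colon \cE \to \Disc(\cE)$, $E \mapsto \pi_0(E)$, which is left adjoint to the inclusion $\Disc(\cE) \hookrightarrow \cE$. As a left adjoint it preserves small colimits, in particular coproducts and the initial object, and its unit $\eta_E\colon E \to \pi_0(E)$ is an effective epimorphism (its $0$-truncation is an equivalence). I will combine this with the standard exactness properties of an $\i$-topos: coproducts are disjoint and universal (stable under base change), effective epimorphisms are stable under base change, the initial object $\emptyset$ is strict (any morphism with target $\emptyset$ has initial source) and an effective epimorphism with initial source has initial target, $0$-truncated objects are closed under coproducts (so the inclusion $\Disc(\cE) \hookrightarrow \cE$ preserves coproducts and the initial objects of $\cE$ and $\Disc(\cE)$ agree), and finally that a connected object is automatically non-initial (apply the definition to $E \simeq E \coprod \emptyset$).

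Suppose first that $\pi_0(E)$ is connected in $\Disc(\cE)$, and write $E \simeq U \coprod V$ in $\cE$. Applying $\tau_{\le 0}$ gives $\pi_0(E) \simeq \pi_0(U) \coprod \pi_0(V)$ in $\Disc(\cE)$, so by connectedness exactly one of the summands is non-initial; say $\pi_0(V) \simeq \emptyset$. Then $\eta_V\colon V \to \pi_0(V) \simeq \emptyset$ together with strictness of $\emptyset$ forces $V \simeq \emptyset$, while $U$ is non-initial since otherwise $\pi_0(U)$ would be initial. Hence exactly one of $U, V$ is non-initial, so $E$ is connected.

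Conversely, suppose $E$ is connected and let $\pi_0(E) \simeq P \coprod Q$ be a coproduct decomposition, which we may read interchangeably in $\Disc(\cE)$ or in $\cE$. Pulling it back along $\eta_E$ and using universality of colimits yields $E \simeq U \coprod V$ with $U = E \times_{\pi_0(E)} P$ and $V = E \times_{\pi_0(E)} Q$. Since $E$ is connected, and hence non-initial, exactly one of $U, V$ is non-initial; say $V \simeq \emptyset$. The base change $V \to Q$ of the effective epimorphism $\eta_E$ along $Q \to \pi_0(E)$ is again an effective epimorphism with initial source, hence $Q \simeq \emptyset$; and $P$ is non-initial, for if $P \simeq \emptyset$ then $U \simeq E \times_{\pi_0(E)} \emptyset \simeq \emptyset$ by strictness, contradicting that $U$ is non-initial. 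Thus exactly one of $P, Q$ is non-initial, i.e. $\pi_0(E)$ is connected in $\Disc(\cE)$.

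The proof is essentially formal. The only point needing care is the converse direction, where one transports a coproduct decomposition of $\pi_0(E)$ back to $E$ along the truncation unit and then uses that $\eta_E$ is an effective epimorphism --- together with strictness of the initial object --- to recognize precisely when a summand of $\pi_0(E)$ is trivial; the rest is bookkeeping with the exactness properties of $\i$-topoi recalled above.
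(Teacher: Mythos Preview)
Your proof is correct and follows essentially the same argument as the paper: in one direction apply $\tau_{\le 0}$ to a coproduct decomposition of $E$, and in the other pull back a coproduct decomposition of $\pi_0(E)$ along the effective epimorphism $E \to \pi_0(E)$ using universality of colimits. You are simply more explicit than the paper about the ambient exactness properties (strictness of $\emptyset$, stability of effective epimorphisms under base change) that make each step work.
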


\begin{proof}
Suppose that $\pi_0\left(E\right)$ is connected, and we have $E \simeq U \coprod V.$ Then since $\pi_0$ is a left adjoint, we have $$\pi_0\left(E\right) \cong \pi_0\left(U\right) \coprod \pi_0\left(V\right).$$ So, without loss of generality, $\pi_0\left(U\right)$ is an initial object, and hence so is $U.$ Hence $E$ is connected.

Conversely, suppose that $E$ is connected and that $$\pi_0\left(E\right)\cong U \coprod V.$$ Then since colimits are universal, $$E \simeq E\times_{\pi_0\left(E\right)} U \coprod E\times_{\pi_0\left(E\right)} V,$$ and hence, without loss of generality, $E\times_{\pi_0\left(E\right)} U$ is an initial object. However, since $$E \to \pi_0\left(E\right)$$ is an epimorphism, it follows that so is $\emptyset=E\times_{\pi_0\left(E\right)} U \to U,$ therefore $U$ is initial.
\end{proof}

\begin{lemma}
Let $\cE$ be a locally connected $\i$-topos. Then any object $E$ can be written as a coproduct of connected objects.
\end{lemma}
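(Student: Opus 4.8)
The plan is to push the coproduct decomposition of $\pi_0(E)$ in the $1$-topos $\Disc(\cE)$ up to $E$ using universality of colimits, and then to recognize the resulting summands as connected via Lemma \ref{lem:truncated connected}.

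First I would fix $E\in\cE$. Writing $\pi_0=\tau_{\le 0}\colon\cE\to\Disc(\cE)$ for the left adjoint to the inclusion (as in the proof of Lemma \ref{lem:truncated connected}), local connectedness of $\cE$ means $\Disc(\cE)$ is a locally connected $1$-topos, so I may choose a decomposition $\pi_0(E)\simeq\coprod_{i\in I}U_i$ with every $U_i$ connected in $\Disc(\cE)$. Let $\eta\colon E\to\pi_0(E)$ be the unit, $\iota_i\colon U_i\hookrightarrow\pi_0(E)$ the coproduct inclusions, and put $E_i:=E\times_{\pi_0(E)}U_i$, with projections $q_i\colon E_i\to E$ and $p_i\colon E_i\to U_i$ (so $\eta\circ q_i=\iota_i\circ p_i$). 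Since colimits are universal in the $\i$-topos $\cE$, the canonical map $\coprod_i E_i\to E$ built from the $q_i$ is an equivalence; thus $E\simeq\coprod_i E_i$ already exhibits $E$ as a coproduct of objects, and it remains only to show each $E_i$ is connected. By Lemma \ref{lem:truncated connected} it suffices to show $\pi_0(p_i)\colon\pi_0(E_i)\to\pi_0(U_i)=U_i$ is an equivalence.

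For that I would note that the composite $\coprod_i E_i\xrightarrow{\sim}E\xrightarrow{\eta}\pi_0(E)=\coprod_i U_i$ restricts on the $i$-th summand to $\eta\circ q_i=\iota_i\circ p_i$, so this composite is exactly the map $\coprod_i p_i$. Now apply $\pi_0$: it preserves coproducts (being a left adjoint) and is the identity on $0$-truncated objects, the equivalence $\coprod_i E_i\xrightarrow{\sim}E$ stays an equivalence, and $\eta$ becomes an equivalence after applying $\pi_0$ (the unit of a reflective localization is inverted by the reflector). Hence $\coprod_i\pi_0(p_i)\colon\coprod_i\pi_0(E_i)\to\coprod_i U_i$ is an equivalence in $\Disc(\cE)$. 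Since coproducts in the $1$-topos $\Disc(\cE)$ are disjoint and universal, pulling this equivalence back along $\iota_i$ and discarding the $j\neq i$ terms (which vanish by disjointness) identifies the pullback with $\pi_0(p_i)\colon\pi_0(E_i)\to U_i$; hence $\pi_0(p_i)$ is an equivalence. Therefore $\pi_0(E_i)\cong U_i$ is connected, so $E_i$ is connected by Lemma \ref{lem:truncated connected}, and $E\simeq\coprod_i E_i$ is the required decomposition.

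I expect the only genuinely fiddly part to be the bookkeeping in that last paragraph — tracking the identifications $\pi_0(\coprod_i E_i)\simeq\coprod_i\pi_0(E_i)$ (and likewise for the $U_i$) carefully enough that the induced equivalence is visibly $\coprod_i\pi_0(p_i)$ summand by summand, and then extracting invertibility of each $\pi_0(p_i)$ from disjointness of coproducts in $\Disc(\cE)$. Everything else is formal: universality of colimits in $\cE$, the fact that $\pi_0$ is a left adjoint that fixes discrete objects, and Lemma \ref{lem:truncated connected}.
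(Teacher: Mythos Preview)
Your proof is correct and follows essentially the same route as the paper: decompose $\pi_0(E)$ into connected pieces $U_i$, pull back along $E\to\pi_0(E)$ using universality of colimits to get $E\simeq\coprod_i E_i$ with $E_i=E\times_{\pi_0(E)}U_i$, and invoke Lemma~\ref{lem:truncated connected}. The only difference is that the paper simply asserts $\pi_0(E_i)\cong U_i$ without comment, whereas you supply an argument for it via $\pi_0$ preserving coproducts, $\pi_0(\eta)$ being an equivalence, and disjointness of coproducts in $\Disc(\cE)$; your extra care here is justified and the argument is sound.
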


\begin{proof}
Let $E$ be an object of a locally connected $\i$-topos. Then by definition, $\pi_0\left(E\right)$ is an object of a locally connected topos, hence we can write $$\pi_0\left(E\right)=\underset{i \in I} \coprod U_i$$ where each $U_i$ is connected. But then, since colimits are universal, it follows that $$E \simeq \underset{i \in I} \coprod E \times_{\pi_0\left(E\right)} U_i.$$ Now, since, $$\pi_0\left(E \times_{\pi_0\left(E\right)} U_i\right) \cong U_i,$$ each E $\times_{\pi_0\left(E\right)} U_i$ is connected by Lemma \ref{lem:truncated connected}.
\end{proof}

\begin{proposition}\label{prop: coproducts}
Let $E$ be an object of a locally connected $\i$-topos $\cE.$ Then $E$ is connected if and only if the functor $$\Hom_{\cE}\left(E,\blank\right):\cE \to \iGpd$$ preserves coproducts.
\end{proposition}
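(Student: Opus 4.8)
The plan is to prove both implications, using Lemma \ref{lem:truncated connected} to reduce to statements about the $0$-truncation whenever possible, and using the crucial fact that in an $\i$-topos colimits are universal (i.e., stable under base change). First I would establish the easy direction: if $\Hom_{\cE}(E,\blank)$ preserves coproducts, then from any decomposition $E \simeq U \coprod V$ we get $\Hom_{\cE}(E,E) \simeq \Hom_{\cE}(E,U) \coprod \Hom_{\cE}(E,V)$; the identity map $\id_E$ lands in exactly one summand, say the one coming from $\Hom_{\cE}(E,U)$, so $\id_E$ factors through $U \hookrightarrow E$. Chasing this through (using that $U \coprod V$ is a coproduct and the two structure maps are monomorphisms with "disjoint images" in the topos sense), one concludes $V$ must be initial. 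This argument does not even need local connectedness.

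For the converse — the substantive direction — suppose $E$ is connected; I want to show $\Hom_{\cE}(E,\blank)$ preserves coproducts. Let $\{F_i\}_{i \in I}$ be a family of objects of $\cE$ with coproduct $F = \coprod_{i} F_i$. The natural comparison map is
$$\coprod_{i \in I} \Hom_{\cE}(E, F_i) \longrightarrow \Hom_{\cE}\left(E, \coprod_{i \in I} F_i\right),$$
and I want to show it is an equivalence of $\i$-groupoids. The key geometric input is that, since colimits are universal, pulling back the coproduct decomposition $F = \coprod_i F_i$ along any map $E \to F$ yields a decomposition $E \simeq \coprod_i (E \times_F F_i)$. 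Because $E$ is connected, exactly one of the summands $E \times_F F_i$ is non-initial, i.e. exactly one equals $E$ (up to equivalence) and the rest are initial; this says precisely that the map $E \to F$ factors through exactly one $F_i$, and moreover the space of such factorizations through that $F_i$ is contractible over the chosen map. Making this precise at the level of mapping spaces (not just $\pi_0$) is the heart of the argument: one should argue that the fibers of the comparison map over any point are contractible. I would do this by taking a map $g : E \to F$, forming the family of pullbacks $E_i := E \times_F F_i$, observing $\coprod_i E_i \simeq E$ with all but one $E_i$ initial (connectedness of $E$, applied via Lemma \ref{lem:truncated connected} after noting $\pi_0$ preserves coproducts and pullbacks along the epimorphism $E \to \pi_0(E)$ are controlled), and then identifying the fiber of the comparison map over $g$ with the space of such pullback data, which is contractible.

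I would phrase the reduction to the $0$-truncated case carefully: local connectedness guarantees (by the preceding lemma) that $E$ decomposes into connected objects, but here we only need the "connected $\Rightarrow$ preserves coproducts" half, and the place local connectedness genuinely enters is in ensuring that the relevant $\pi_0$'s behave like honest sets — specifically that $\pi_0(E)$ lives in a locally connected $1$-topos where "connected" has the elementary meaning of Remark \ref{rmk:connected}, so that Lemma \ref{lem:truncated connected} can be invoked to transfer connectedness of $E$ to a statement about which of the truncated summands $\pi_0(E \times_F F_i)$ are initial. The main obstacle I anticipate is precisely the passage from the set-level statement ("at most one pullback summand is non-initial") to the full $\i$-groupoid statement that the comparison map is an equivalence of spaces and not merely a bijection on $\pi_0$: this requires showing contractibility of the fibers, which I would handle by exhibiting the fiber over $g$ as the space of decompositions of $E$ refining a fixed one, and such spaces of decompositions in an $\i$-topos are discrete (being controlled by maps into $\pi_0$) and in fact contractible once the index $i$ through which $g$ factors has been pinned down by connectedness.
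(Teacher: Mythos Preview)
Your proposal is essentially the same as the paper's: for the easy direction you both look at where $\id_E$ lands in the decomposition $\Hom_{\cE}(E,U)\coprod\Hom_{\cE}(E,V)$, and for the substantive direction you both pull back the coproduct along a given map $E\to F=\coprod_i F_i$, use universality of colimits to write $E\simeq\coprod_i (E\times_F F_i)$, and invoke connectedness of $E$ to see that exactly one summand is non-initial, forcing the map to factor through a single $F_i$.

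Two comments on where you diverge from the paper. First, your invocation of local connectedness and Lemma~\ref{lem:truncated connected} is unnecessary: the paper's argument works directly with the $\i$-categorical definition of ``connected'' (applied to the binary decomposition $E\simeq (E\times_F F_j)\coprod\bigl(\coprod_{i\ne j}E\times_F F_i\bigr)$ for each $j$) and never passes to $\pi_0$ or uses the locally connected hypothesis. Second, your worry about upgrading the argument from a $\pi_0$-bijection to a genuine equivalence of mapping spaces is legitimate---the paper is terse here---but it can be resolved more simply than by analyzing fibers: in an $\i$-topos coproducts are disjoint, so each $F_i\hookrightarrow F$ is a monomorphism and for $i\ne j$ the pullback $F_i\times_F F_j$ is initial. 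Since $E$ is non-initial, it follows that the maps $\Hom_{\cE}(E,F_i)\to\Hom_{\cE}(E,F)$ are monomorphisms of spaces with pairwise disjoint images, hence the comparison map is already a monomorphism and $\pi_0$-surjectivity suffices.
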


\begin{proof}
Let $E$ be connected and let $X=\underset{i \in I} \coprod X_i$ be an object of $\cE.$ Let $$f:E \to X$$ be a map in $\cE.$ Then since colimits are universal, we have $$E \simeq \underset{i \in I} \coprod E\times_X X_i.$$ Fix $j \in I$ and write $$E\simeq E\times_X X_j + \underset{i \ne j} \coprod E\times_X X_i.$$ Since $E$ is connected, 
only one the above factors can be non-initial. Moreover, we cannot have that $E\simeq E\times_X X_j$ is initial for all $j \in I,$ for this would imply that $E$ was initial. Now suppose by way of contradiction that there is $j \ne k$ in $I$ such that $E\simeq E\times_X X_j$ and $E\simeq E\times_X X_k$ are both non-initial. Then since $E$ is connected, $$\underset{i \ne j} \coprod E\times_X X_i$$ is initial, but $$\underset{i \ne j} \coprod E\times_X X_i=E\times_X X_k +
\underset{i \ne j,\\ i \ne k} \coprod E\times_X X_i,$$ and $E\times_X X_k$ is non-initial, which then leads to a contradiction. So $E\times_X X_i$ is non-initial for exactly one $i,$ and hence for this $i,$ $$E\times_X X_i \simeq E.$$ It follows that $f$ factors through the inclusion $X_i \to X.$ Hence $\Hom_{\cE}\left(E,\blank\right)$ preserves coproducts.

Conversely, suppose that $\Hom_{\cE}\left(E,\blank\right)$ preserves coproducts and that $E\simeq U \coprod V.$ Then $$\Hom_{\cE}\left(E,U \coprod V \right) \simeq \Hom_{\cE}\left(E,U \right) \coprod \Hom_{\cE}\left(E,V\right),$$ so the equivalence $$E \stackrel{\sim}{\longlongrightarrow} U \coprod V,$$ must factor through one of the factors, and hence the other factor must be initial.
\end{proof}

\subsubsection{Cohomology with coefficients in local systems in an $\i$-topos}

In this subsection we define local systems on an arbitrary $\i$-topos with coefficients in an abelian group and their associated cohomology groups. This is closely connected with the definition of twisted cohomology in an $\i$-topos; see e.g. \cite[Section 4]{principal}.

\begin{definition}
Let $\cE$ be an $\i$-topos. Let $A$ be an abelian group. Consider the groupoid $K\left(\Aut\left(A\right),1\right),$ and its associated stack in $\cE,$ $\Delta\left(K\left(\Aut\left(A\right),1\right)\right).$ A \textbf{local system with coefficients in $A$} on $\cE$ is a map $$\tau:1 \to \Delta\left(K\left(\Aut\left(A\right),1\right)\right)$$ in $\cE,$ where $1$ is the terminal object.
\end{definition}

Given a local system as above, there is an associated sheaf of abelian groups $\cF_\tau$ classified by $\tau.$ (By a sheaf of abelian groups, we mean an abelian group object in $\Disc\left(\cE\right).$) The main idea is that it is constructed by pulling back a canonical sheaf of abelian groups on $K\left(\Aut\left(A\right),1\right).$ We now explain in detail.

An abelian sheaf on the space $K\left(\Aut\left(A\right),1\right)$ is by definition a  sheaf of abelian groups on the $\i$-topos $\Pshi\left(K\left(\Aut\left(A\right),1\right)\right) \simeq \cS/\left(\Aut\left(A\right),1\right).$ Since $K\left(\Aut\left(A\right),1\right)$ is a groupoid, this is the same as specifying a functor $$K\left(\Aut\left(A\right),1\right) \to Ab,$$ to the category of abelian groups. We have already discussed such a functor $\chi_A$, namely the canonical functor sending $\star$ to $A$ and each automorphism of $A$ to itself. Lets denote this abelian sheaf by $\cF_A.$

We will now show that $\tau$ corresponds canonically to a geometric morphism $$\overline{\tau}:\cE \to \cS/\left(\Aut\left(A\right),1\right),$$ and then we will define $\cF_\tau$ as the pullback sheaf $\overline{\tau}^*\cF_A.$

Indeed, by \cite[Remark 6.3.5.10]{htt}, for any $\i$-topos $\cE,$ there is an equivalence of $\i$-categories
$$\cE \to \mathfrak{Top}_\i^{\et}/\cE$$ between $\cE$ and the $\i$-category of \'etale geometric morphisms over $\cE,$ which sends an object $E \in \cE$ to the canonical \'etale morphism $\cE/E \to \cE.$ Hence $\tau$ corresponds to a section of the \'etale geometric morphism $$\cE/\Delta\left(K\left(\Aut\left(A\right),1\right)\right) \to \cE$$ corresponding to the object $\Delta\left(K\left(\Aut\left(A\right),1\right)\right)$ of $\cE.$ By \cite[Proposition 6.3.5.8]{htt}, there is a pullback diagram in the $\i$-category of $\i$-topoi
$$\xymatrix{\cE/\Delta\left(K\left(\Aut\left(A\right),1\right)\right) \ar[r] \ar[d] & \cS/K\left(\Aut\left(A\right),1\right)\ \ar[d] \\
\cE \ar[r] & \cS,}$$
so the aforementioned section can be identified with a lift 
$$\xymatrix{& \cS/K\left(\Aut\left(A\right),1\right)\ \ar[d] \\
\cE \ar[r] \ar@{-->}[ru]^{\overline{\tau}} & \cS,}$$
and since $\cS$ is the terminal $\i$-topos, we conclude that the data of the local system $\tau$ and the geometric morphism $$\overline{\tau}:\cE \to \cS/K\left(\Aut\left(A\right),1\right)$$ are equivalent, or more precisely:

\begin{proposition}
The construction just explained produces an equivalence of $\i$-groupoids
$$\Hom_{\cE}\left(1,K\left(\Aut\left(A\right),1\right)\right) \simeq \Hom_{\mathfrak{Top}_\i}\left(\cE,\cS/K\left(\Aut\left(A\right),1\right)\right)$$ between local systems with coefficients in $A$ on $\cE$ and geometric morphisms from $\cE$ into $\cS/K\left(\Aut\left(A\right),1\right).$
\end{proposition}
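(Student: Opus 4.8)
The plan is to show that the assignment $\tau \mapsto \overline{\tau}$ described in the paragraphs preceding the statement is an equivalence by assembling the string of equivalences already indicated into a single chain, taking care that everything is natural enough to hold at the level of $\i$-groupoids (mapping spaces) rather than merely on homotopy categories. First I would recall the equivalence of $\i$-categories $\cE \simeq \mathfrak{Top}_\i^{\et}/\cE$ of \cite[Remark 6.3.5.10]{htt}, under which an object $E \in \cE$ corresponds to the \'etale geometric morphism $\pi_E : \cE/E \to \cE$. Applying this with $E = \Delta\left(K\left(\Aut\left(A\right),1\right)\right)$, the mapping space $\Hom_{\cE}\left(1, \Delta\left(K\left(\Aut\left(A\right),1\right)\right)\right)$ is identified with the space of sections of $\pi_E$ in $\mathfrak{Top}_\i^{\et}/\cE$, i.e. with $\Hom_{\mathfrak{Top}_\i^{\et}/\cE}\left(\id_\cE, \pi_E\right)$.

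Next I would use the pullback square of $\i$-topoi from \cite[Proposition 6.3.5.8]{htt},
$$\xymatrix{\cE/\Delta\left(K\left(\Aut\left(A\right),1\right)\right) \ar[r] \ar[d] & \cS/K\left(\Aut\left(A\right),1\right) \ar[d] \\ \cE \ar[r]^-{e} & \cS,}$$
where $e$ is the essentially unique geometric morphism to the terminal $\i$-topos. The universal property of this pullback in $\mathfrak{Top}_\i$ gives a natural equivalence between the space of sections of $\cE/\Delta\left(K\left(\Aut\left(A\right),1\right)\right) \to \cE$ and the space of lifts of $e$ through $\cS/K\left(\Aut\left(A\right),1\right) \to \cS$; since $\cS$ is terminal in $\mathfrak{Top}_\i$, the datum of such a lift is precisely a geometric morphism $\overline{\tau} : \cE \to \cS/K\left(\Aut\left(A\right),1\right)$ with no further constraint (the triangle over $\cS$ commutes automatically). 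This identifies the space of lifts with $\Hom_{\mathfrak{Top}_\i}\left(\cE, \cS/K\left(\Aut\left(A\right),1\right)\right)$, and concatenating the equivalences yields the desired equivalence of $\i$-groupoids.

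I would then check that the composite equivalence is the one produced by the construction in the text, namely that it sends a local system $\tau$ to the geometric morphism $\overline{\tau}$ obtained by the displayed lifting diagram; this is essentially a matter of tracing through how each cited equivalence is defined, so I would state it as immediate rather than belabor it. The main obstacle, such as it is, is not any one of these steps in isolation but making sure that the two cited results from \cite{htt} genuinely compose to an equivalence of mapping \emph{spaces} and not just of sets of equivalence classes --- i.e. that \cite[Proposition 6.3.5.8]{htt} is a pullback in $\mathfrak{Top}_\i$ (so its universal property governs the full mapping space) and that the equivalence $\cE \simeq \mathfrak{Top}_\i^{\et}/\cE$ is an equivalence of $\i$-categories (so it induces equivalences on all mapping spaces). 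Granting these --- which they are, as stated --- the argument is a formal concatenation, and no genuinely hard point arises.
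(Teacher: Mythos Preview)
Your proposal is correct and follows essentially the same approach as the paper: the paper does not give a separate proof after the statement, but rather the argument \emph{is} the construction in the preceding paragraphs, invoking \cite[Remark 6.3.5.10]{htt} for $\cE \simeq \mathfrak{Top}_\i^{\et}/\cE$, then \cite[Proposition 6.3.5.8]{htt} for the pullback square, and finally terminality of $\cS$. Your write-up is slightly more explicit about why these identifications hold at the level of mapping spaces, which is a welcome clarification but not a different argument.
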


The following corollary follows immediately from \cite[Remark 7.1.6.15]{htt}:

\begin{corollary}
There is an equivalence of $\i$-groupoids
$$\Hom_{\cE}\left(1,K\left(\Aut\left(A\right),1\right)\right) \simeq \Hom_{\Pro\left(\cS\right)}\left(\Shape\left(\cE\right),K\left(\Aut\left(A\right),1\right)\right).$$
\end{corollary}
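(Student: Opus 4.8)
The plan is to combine the preceding Proposition with the adjunction that defines the shape functor. Recall from the discussion preceding Definition~\ref{dfn:shape} that the functor $\cS^{pro}/\left(\blank\right):\Pro\left(\cS\right) \to \mathfrak{Top}_\i$, which carries a representable pro-space $j\left(X\right)$ to the slice $\i$-topos $\cS/X$, admits $\Shape$ as a left adjoint by \cite[Remark 7.1.6.15]{htt}. Evaluating this adjunction at the $\i$-topos $\cE$ and at the representable pro-space $j\left(K\left(\Aut\left(A\right),1\right)\right)$ yields a natural equivalence of $\i$-groupoids
$$\Hom_{\Pro\left(\cS\right)}\left(\Shape\left(\cE\right),K\left(\Aut\left(A\right),1\right)\right) \simeq \Hom_{\mathfrak{Top}_\i}\left(\cE,\cS^{pro}/j\left(K\left(\Aut\left(A\right),1\right)\right)\right) = \Hom_{\mathfrak{Top}_\i}\left(\cE,\cS/K\left(\Aut\left(A\right),1\right)\right),$$
where the last identification is precisely the defining behaviour of $\cS^{pro}/\left(\blank\right)$ on representables.

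First I would record that no work is needed to identify $\cS^{pro}/j\left(K\left(\Aut\left(A\right),1\right)\right)$ with $\cS/K\left(\Aut\left(A\right),1\right)$, since this holds by construction. Then I would invoke the preceding Proposition, which supplies a natural equivalence
$$\Hom_{\cE}\left(1,K\left(\Aut\left(A\right),1\right)\right) \simeq \Hom_{\mathfrak{Top}_\i}\left(\cE,\cS/K\left(\Aut\left(A\right),1\right)\right),$$
obtained there by reinterpreting a local system as a section of an \'etale geometric morphism and then, via \cite[Proposition 6.3.5.8]{htt}, as a geometric morphism into the slice $\i$-topos $\cS/K\left(\Aut\left(A\right),1\right)$. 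Composing these two equivalences gives the asserted equivalence.

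The only point demanding any care --- and thus the nearest thing to an obstacle --- is naturality: for the statement to be useful downstream one wants the equivalence to be natural in $\cE$ (and, where meaningful, functorial in $A$). This, however, is automatic, since each of the two constituent equivalences is an instance of an adjunction (respectively representability) equivalence and hence natural in $\cE$ by construction. Consequently the corollary follows immediately, as claimed.
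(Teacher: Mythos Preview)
Your proposal is correct and follows exactly the same approach as the paper: the paper's proof consists solely of the sentence ``follows immediately from \cite[Remark 7.1.6.15]{htt},'' which is precisely your argument of composing the preceding Proposition with the $\Shape \dashv \cS^{pro}/\left(\blank\right)$ adjunction that reference provides. You have simply spelled out what the paper leaves implicit.
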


\begin{example}
If $X$ is a scheme, then a local system on its small \'etale $\i$-topos $\Shi\left(X_{\et}\right)$ is the same a morphism $$\tau:\Pi^{\et}_\i\left(X\right) \to K\left(\Aut\left(A\right),1\right)$$ from its \'etale fundamental $\i$-groupoid to $K\left(\Aut\left(A\right),1\right).$
\end{example}

\begin{definition}
Let $\tau:1 \to K\left(\Aut\left(A\right),1\right)$ be a local system with coefficients in $A$ on $\cE.$ Then the abelian sheaf $\cF_\tau:=\overline{\tau}^*\cF_A$ is the abelian sheaf \textbf{classified by the local system $\tau$.}
\end{definition}

\begin{remark}\label{rmk:pullback set}
By the proof of Proposition \ref{prop:0th coh}, the object in $\cS/K\left(\Aut\left(A\right),1\right)$ corresponding to the underlying sheaf of sets of $\cF_A$ can be identified with the functor of groupoids $$\theta_0:\Aut\left(A\right) \ltimes \underline{A} \to K\left(\Aut\left(A\right),1\right).$$ Moreover, by construction, there is a factorization of $\overline{\tau}$ of the form
$$\cE \stackrel{\cE/\tau}{\longlongrightarrow} \cE/\Delta\left(K\left(\Aut\left(A\right),1\right)\right) \to \cS/K\left(\Aut\left(A\right),1\right).$$ Unwinding the definitions, one sees that the underlying sheaf of sets of $\cF_\tau,$ $\underline{\cF_\tau}$ fits in a pullback diagram
$$\xymatrix{\underline{\cF_\tau} \ar[r] \ar[d] & \Delta\left(\Aut\left(A\right) \ltimes \underline{A}\right) \ar[d]^-{\Delta\left(\theta_0\right)}\\
1 \ar[r]^-{\tau} & \Delta\left(K\left(\Aut\left(A\right),1\right)\right).}$$
\end{remark}

\begin{definition}
Let $A$ be an abelian group and $\cE$ an $\i$-topos. A \textbf{locally constant sheaf with values in $A$} on $\cE$ is an abelian sheaf $\cF$ on $\cE$ such that there are objects $\left(U_i\right)_{i \in I}$ in $\cE$ such that the canonical map $$\underset{i \in I} \coprod U_i \to 1$$ is an epimorphism, and such that the pullback of $\cF$ to each slice topos $\cE/U_i$ is isomorphic to the constant abelian sheaf with value $A.$
\end{definition}

\begin{remark}\label{rmk:ordinary locally constant}
Let $\sC$ be a small category equipped with a Grothendieck topology. Then any object $E$ in $\Shi\left(\sC\right)$ admits an epimorphism from a coproduct of representables, hence a locally constant abelian sheaf in $\Shi\left(\sC\right)$ can be identified with a classical locally constant abelian sheaf on $\sC.$
\end{remark}

\begin{proposition}
Let $\tau:1 \to K\left(\Aut\left(A\right),1\right)$ be a local system in an $\i$-topos $\cE.$ Then the abelian sheaf $\cF_\tau$ classified by $\tau$ is a locally constant sheaf with values in $A.$
\end{proposition}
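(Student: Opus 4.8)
The plan is to obtain the required covering family of $\cE$ from a single object, namely the pullback along $\overline{\tau}^{*}$ of the tautological point of $K(\Aut(A),1)$. Since $K(\Aut(A),1)$ is connected and effective epimorphisms of spaces are exactly the $\pi_{0}$-surjections, the map $p\colon * \to K(\Aut(A),1)$ is an effective epimorphism in $\cS$; regarded as an object of the slice $\i$-topos $\cS/K(\Aut(A),1)$, the canonical map from $p$ to the terminal object $\mathrm{id}_{K(\Aut(A),1)}$ is again an effective epimorphism, since a morphism in a slice $\i$-topos is an effective epimorphism precisely when the underlying morphism is. Put $U := \overline{\tau}^{*}(p) \in \cE$. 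The inverse image of a geometric morphism is left exact and colimit preserving, hence preserves the terminal object and preserves effective epimorphisms (it preserves \v{C}ech nerves and their colimits); therefore $U \to \overline{\tau}^{*}(1) = 1$ is an effective epimorphism in $\cE$, so $\{U \to 1\}$ is a covering family. (That $\cF_{\tau}$ is a genuine abelian sheaf, i.e. a $0$-truncated abelian group object, follows from $\overline{\tau}^{*}$ being left exact, hence preserving $0$-truncatedness by \cite[Proposition 5.5.6.16]{htt} and preserving finite products.)

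It remains to identify $\cF_{\tau}|_{\cE/U}$ with the constant abelian sheaf $\Delta_{\cE/U}(A)$. By \cite[Proposition 6.3.5.8]{htt}, since $U = \overline{\tau}^{*}(p)$ there is a pullback square of $\i$-topoi
$$\xymatrix{\cE/U \ar[r]^-{\beta} \ar[d]_-{\pi_{U}} & \left(\cS/K(\Aut(A),1)\right)/p \ar[d]^-{\pi_{p}}\\
\cE \ar[r]^-{\overline{\tau}} & \cS/K(\Aut(A),1),}$$
in which $\pi_{U}$ and $\pi_{p}$ are the \'etale projections. The iterated slice $\left(\cS/K(\Aut(A),1)\right)/p$ is canonically equivalent, as an $\i$-topos, to $\cS/* \simeq \cS$; since $\cS$ is terminal in $\mathfrak{Top}_\i$, the geometric morphism $\beta$ is the essentially unique one, so $\beta^{*} \simeq \Delta_{\cE/U}$. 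Commutativity of the square gives $\pi_{U}^{*}\circ\overline{\tau}^{*} \simeq \beta^{*}\circ\pi_{p}^{*}$; applying both sides to $\cF_{A}$ and using $\cF_{\tau} = \overline{\tau}^{*}\cF_{A}$ yields
$$\cF_{\tau}\big|_{\cE/U} \;=\; \pi_{U}^{*}\,\overline{\tau}^{*}\,\cF_{A} \;\simeq\; \beta^{*}\,\pi_{p}^{*}\,\cF_{A}.$$
Finally, $\pi_{p}^{*}\cF_{A}$ is the restriction of $\cF_{A}$ along the projection $\pi_{p}$, i.e. the stalk of $\cF_{A}$ at the unique object $\star$ of $K(\Aut(A),1)$; since $\cF_{A}$ is the abelian sheaf corresponding to the functor $\chi_{A}\colon K(\Aut(A),1) \to Ab$ with $\chi_{A}(\star) = A$, we obtain $\pi_{p}^{*}\cF_{A} \simeq A$ as an abelian group object of $\cS$. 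Hence $\cF_{\tau}|_{\cE/U} \simeq \beta^{*}(A) = \Delta_{\cE/U}(A)$, which is exactly what the definition of a locally constant sheaf with values in $A$ requires.

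Everything here is formal once the bookkeeping is set up correctly, so the main obstacle is purely organizational: keeping track of the iterated slice $\i$-topoi, of which inverse image functors are being composed around the pullback square, and above all of the identification $\pi_{p}^{*}\cF_{A}\simeq A$ together with its abelian group structure --- this last point is the only place where the specific construction of $\cF_{A}$ from $\chi_{A}$ is used. A mild alternative, avoiding iterated slices, would be to argue directly with the pullback-of-sets description in Remark \ref{rmk:pullback set}, compute the underlying sheaf of sets of $\cF_{\tau}|_{\cE/U}$ to be constant, and then check separately that the pullback respects the abelian group structure; but the route through \cite[Proposition 6.3.5.8]{htt} is cleaner.
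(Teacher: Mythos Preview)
Your proof is correct and follows the same underlying idea as the paper's: both pull back the canonical point $* \to K(\Aut(A),1)$ to obtain a covering object of $\cE$ (the paper calls it $P_\tau$, you call it $U$; they coincide) and then verify that the restriction of $\cF_\tau$ to the slice over this object is the constant abelian sheaf with value $A$.

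The execution differs, however, in a way worth noting. The paper carries out the identification by an explicit chase of pullback squares of \emph{objects} inside $\cE$, using the description of the underlying sheaf of sets of $\cF_\tau$ from Remark~\ref{rmk:pullback set} (the pullback of $\Delta(\theta_0)$ along $\tau$) together with the computation that $\underline{A}$ is the fiber of $\theta_0$ over~$\star$. Your argument instead works one level up, with the pullback square of \emph{$\i$-topoi} from \cite[Proposition 6.3.5.8]{htt}, identifies the iterated slice $(\cS/K(\Aut(A),1))/p$ with $\cS$, and reads off $\pi_p^*\cF_A \simeq A$ directly from $\chi_A(\star)=A$. Your route is arguably cleaner and handles the abelian group structure in one stroke (since $\overline{\tau}^*$ and $\pi_p^*$ are left exact, hence preserve abelian group objects), whereas the paper first identifies the underlying sheaf of sets and then remarks that the abelian structure comes along. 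Amusingly, the ``mild alternative'' you sketch in your closing paragraph is precisely the route the paper takes.
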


\begin{proof}
Notice that $\Delta\left(K\left(\Aut\left(A\right),1\right)\right)$ is the classifying stack for $\Aut\left(A\right)$-torsors. In particular, the universal $\Aut\left(A\right)$-torsor $$1=\Delta\left(*\right)\stackrel{\Delta\left(\star\right)}{\longlongrightarrow} \Delta\left(K\left(\Aut\left(A\right),1\right)\right)$$ is an epimorphism. Consider the following pullback diagram
$$\xymatrix{P_\tau \ar[r] \ar[d] & 1 \ar[d]^-{\Delta\left(\star\right)}\\
1 \ar[r]^-{\tau} & \Delta\left(K\left(\Aut\left(A\right),1\right)\right).}$$
The map $P_\tau \to 1$ is an epimorphism and by Remark \ref{rmk:pullback set}, we can identify the underlying sheaf of sets of the pullback of $\cF_\tau$ to $\cE/P_\tau$ 
as the map $Q \to P_{\tau}$ in the following pullback diagram
$$\xymatrix{Q \ar[rr] \ar[d] &  &\Delta\left(\Aut\left(A\right) \ltimes \underline{A}\right) \ar[d]^-{\theta_0}\\
P_{\tau} \ar[r] & 1 \ar[r]^-{\tau} & \Delta\left(K\left(\Aut\left(A\right),1\right)\right).}$$
Note that the pullback diagram defining $P_\tau$ in particular commutes, so the above pullback diagram may also be computed as
$$\xymatrix@C=2cm{Q \ar[rr] \ar[d] &  &\Delta\left(\Aut\left(A\right) \ltimes \underline{A}\right) \ar[d]^-{\theta_0}\\
P_{\tau} \ar[r] & 1 \ar[r]^-{\Delta\left(\star\right)} & \Delta\left(K\left(\Aut\left(A\right),1\right)\right).}$$
By the proof of Proposition \ref{prop:0th coh}, and the fact that $\Delta$ preserves finite limits, the following diagram is also a pullback
$$\xymatrix@C=2cm{\Delta\left(\underline{A}\right) \ar[d] \ar[r] & \Delta\left(\Aut\left(A\right) \ltimes \underline{A}\right) \ar[d]^-{\theta_0}\\
\Delta\left(\ast\right) \ar[r]^-{\Delta\left(\star\right)} & \Delta\left(K\left(\Aut\left(A\right),1\right)\right).}$$
It follows that $Q\simeq P_\tau \times \Delta\left(\underline{A}\right) \to P_{\tau},$ i.e. the pullback of $\underline{\cF_{\tau}}$ to $\cE/P_\tau$ is equivalent to the constant sheaf $\Delta^{P_\tau}\left(\underline{A}\right).$ Hence the same is true for the abelian sheaf, i.e. the pullback of $\cF_{\tau}$ to $\cE/P_\tau$ is constant with value $A,$ and hence $\cF_\tau$ is locally constant.
\end{proof}

\begin{remark}
For a general $\i$-topos $\cE$, it is not true that every locally constant sheaf with values in $A$ is classified by a local system $$\tau:1 \to K\left(\Aut\left(A\right),1\right),$$ however it is true if $\cE$ is locally connected. The reason is as follows. Let $\cA$ be any abelian sheaf. We say for an object $E$ of $\cE,$ that an abelian sheaf $\cF$ on $\cE/E$ is \emph{locally isomorphic to $\cA$} (or a twisted form of $\cA$) if there is an epimorphism $$\underset{i} \coprod E_i \to E,$$ such that the restriction of $\cF$ to each $E_i$ is isomorphic to the restriction of $\cA$ to each $E_i.$ It is a classical fact that the groupoid of abelian sheaves on $E$ locally isomorphic to $\cA$ is equivalent to the groupoid of morphism $\Hom\left(E,B\left(\underline{\Aut}\left(\cA\right)\right)\right),$ where $\underline{\Aut}\left(\cA\right)$ is the automorphism sheaf of $\cA,$ c.f. \cite[Chapter III, Section 4]{Milne}. Now consider the constant sheaf $\Delta\left(A\right)$ for $A$ an abelian group. To show that locally constant sheaves with values in $A$ are classified by morphisms into $$\Delta\left(K\left(\Aut\left(A\right),1\right)\right)=\Delta\left(B\Aut\left(A\right)\right),$$
it suffices to show that $$\Delta\left(B\Aut\left(A\right)\right) \simeq B \underline{\Aut}\left(\Delta\left(A\right)\right).$$
Notice that $$B\Aut\left(A\right) \simeq \underset{n \in \Delta^{op}} \colim \Aut\left(A\right)^n,$$ and since $\Delta$ preserves colimits and finite limits we have
$$\Delta\left(B\Aut\left(A\right)\right) \simeq \underset{n \in \Delta^{op}} \colim \Delta\left(\Aut\left(A\right)\right)^n \simeq B\left(\Delta\left(\Aut\left(A\right)\right)\right).$$ So it suffices to show that $$\Delta\left(\Aut\left(A\right)\right) \cong \underline{\Aut}\left(\Delta\left(A\right)\right),$$ when $\cE$ is locally connected. This follows readily from the following observation: Let $S$ be any set, and denote by $\Delta_{\Disc}$ the inverse image functor of the essentially unique geometric morphism of $1$-topoi $$\Disc\left(\cE\right) \to \Set.$$ Then since $\Delta_{\Disc}$ has a left adjoint $\Pi_0,$ it preserves limits and we have
\begin{eqnarray*}
\Delta\left(\Hom\left(S,S\right)\right) &\cong& \Delta\left(\underset{S}\prod S\right)\\
&\cong& \underset{S} \prod \Delta\left(S\right)\\
&\cong& \underline{\mathbf{End}}\left(\Delta\left(S\right)\right).
\end{eqnarray*}
\end{remark}

\begin{definition}
Let $\cE$ be an $\i$-topos, $A$ an abelian group, and $$\tau:1 \to \Delta\left(K\left(\Aut\left(A\right),1\right)\right)$$ a local system on $\cE.$ The \textbf{$n^{th}$  cohomology group of $\cE$ with values in $\tau$} is $$\pi_0\Hom_{\cE}\left(1,K\left(\cF_{\tau},n\right)\right),$$ where $K\left(\cF_{\tau},n\right)$ is the $n^{th}$ Eilenberg-MacLane object of the abelian sheaf $\cF_{\tau}$ classified by $\tau.$
\end{definition}

\begin{remark}
Let $\sC$ be a small category equipped with a Grothendieck topology. Let $\tau$ be a local system on $\Shi\left(\sC\right)$ with values in an abelian group $A.$ By Remark \ref{rmk:ordinary locally constant}, we can identify the abelian sheaf classified by $\tau$ with a classical locally constant sheaf of abelian groups $\cF_{\tau}$ on $\sC.$ Furthermore, by \cite[Remark 7.2.2.17]{htt}, we can identify the $n^{th}$ cohomology group of $\Shi\left(\sC\right)$ with values in $\tau$ as just defined with the $n^{th}$ cohomology group of $\cF_{\tau}$ as computed using classical sheaf cohomology.
\end{remark}

\begin{theorem}\label{thm: classifying space infinity topos}
Let $\cE$ be an $\i$-topos, $A$ and abelian group and $$\tau:1 \to \Delta\left(K\left(\Aut\right),1\right)$$ a local system on $\cE$ with values in $A.$ The $0^{th}$ cohomology group of $\cE$ with coefficients in $\tau$ is isomorphic to 
$$\pi_0\left(\Hom_{\cE/\Delta\left(K\left(\left(\Aut\right),1\right)\right)}\left(\tau,\Delta\left(\theta_0\right)\right)\right),$$
i.e. $\pi_0$ of the space of lifts
$$\xymatrix{ & \Delta\left(\Aut\left(A\right) \ltimes \underline{A}\right) \ar[d]^-{\Delta\left(\theta_0\right)}\\
1 \ar[r]_-{\tau} \ar@{-->}[ru] & \Delta\left(K\left(\Aut\left(A\right),1\right)\right)}$$
equipped with the group structure induced from that of $A.$ Moreover, for $n>0,$ the $n^{th}$ cohomology group of $\cE$ with coefficients in $\tau$ can be identified with
$$\pi_0\left(\Hom_{\cE/\Delta\left(B\Aut\left(K\left(A,n\right)\right)\right)}\left(\Delta\left(\theta_n\right) \circ \tau,\Delta\left(\theta_n\right)\right)\right),$$
i.e. $\pi_0$ of the space of lifts
$$\xymatrix@R=3cm@C=2cm{ & & \Delta\left(K\left(\Aut\left(A\right),1\right)\right) \ar[d]^-{\Delta\left(\theta_n\right)}\\
1 \ar[r]_-{\tau} \ar@{-->}[rru] & \Delta\left(K\left(\Aut\left(A\right),1\right)\right) \ar[r]_-{\Delta\left(\theta_n\right)} & \Delta\left(B\Aut\left(K\left(A,n\right)\right)\right).}$$
\end{theorem}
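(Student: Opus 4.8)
The plan is to reduce the statement to the construction of a single pullback square in $\cE$, and then to build that square by pulling back a universal computation in $\cS/K\left(\Aut\left(A\right),1\right)$ along the geometric morphism classifying $\tau$.

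First I would dispose of the case $n=0$, which is almost immediate. By definition $H^0\left(\cE,\tau\right)=\pi_0\Hom_{\cE}\left(1,\underline{\cF_\tau}\right)$, and Remark~\ref{rmk:pullback set} exhibits $\underline{\cF_\tau}$ as the pullback of $\Delta\left(\theta_0\right):\Delta\left(\Aut\left(A\right)\ltimes\underline{A}\right)\to\Delta\left(K\left(\Aut\left(A\right),1\right)\right)$ along $\tau$. Since $\Hom_{\cE}\left(1,\blank\right)$ preserves pullbacks, this identifies $\Hom_{\cE}\left(1,\underline{\cF_\tau}\right)$ with the fibre of $$\Hom_{\cE}\left(1,\Delta\left(\Aut\left(A\right)\ltimes\underline{A}\right)\right)\longrightarrow\Hom_{\cE}\left(1,\Delta\left(K\left(\Aut\left(A\right),1\right)\right)\right)$$ over $\tau$, which is exactly the mapping space $\Hom_{\cE/\Delta\left(K\left(\Aut\left(A\right),1\right)\right)}\left(\tau,\Delta\left(\theta_0\right)\right)$ of the statement; applying $\pi_0$ gives the bijection. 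Compatibility with the group structure transported from $A$ holds because the square of Remark~\ref{rmk:pullback set} is obtained by applying the finite-limit-preserving functor $\Delta$ to the underlying map of the abelian group object $K\left(\cF_A,0\right)$ of $\cS/K\left(\Aut\left(A\right),1\right)$.

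For $n>0$ the same formal step reduces the theorem to producing a pullback square in $\cE$ $$\xymatrix@C=2.3cm{K\left(\cF_\tau,n\right) \ar[r] \ar[d] & \Delta\left(K\left(\Aut\left(A\right),1\right)\right) \ar[d]^-{\Delta\left(\theta_n\right)}\\ 1 \ar[r]^-{\Delta\left(\theta_n\right)\circ\tau} & \Delta\left(B\Aut\left(K\left(A,n\right)\right)\right),}$$ which I will call $(\star)$: once $(\star)$ is known, applying $\Hom_{\cE}\left(1,\blank\right)$ and the universal property of pullbacks identifies $\Hom_{\cE}\left(1,K\left(\cF_\tau,n\right)\right)$ with the space of lifts of $\Delta\left(\theta_n\right)\circ\tau$ along $\Delta\left(\theta_n\right)$, i.e.\ with $\Hom_{\cE/\Delta\left(B\Aut\left(K\left(A,n\right)\right)\right)}\left(\Delta\left(\theta_n\right)\circ\tau,\Delta\left(\theta_n\right)\right)$, and $\pi_0$ yields the claim. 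To prove $(\star)$ I would first recall that $\tau$ is classified by a geometric morphism $\overline{\tau}:\cE\to\cS/K\left(\Aut\left(A\right),1\right)$ with $\cF_\tau=\overline{\tau}^{*}\cF_A$; since $\overline{\tau}^{*}$ is left exact and colimit preserving it commutes with the formation of Eilenberg--MacLane objects (iterated classifying-object constructions), so $K\left(\cF_\tau,n\right)\simeq\overline{\tau}^{*}K\left(\cF_A,n\right)$. It therefore suffices to establish $(\star)$ in the universal case, $\cE=\cS/K\left(\Aut\left(A\right),1\right)$ with $\tau$ the tautological local system, and then apply $\overline{\tau}^{*}$; here one uses that $\overline{\tau}^{*}$ carries the tautological local system to $\tau$ and intertwines $\Delta_{\cS/K\left(\Aut\left(A\right),1\right)}$ with $\Delta_{\cE}$, because $\cE\xrightarrow{\overline{\tau}}\cS/K\left(\Aut\left(A\right),1\right)\to\cS$ is the terminal geometric morphism and the inverse image of $\cS/K\left(\Aut\left(A\right),1\right)\to\cS$ is $\Delta_{\cS/K\left(\Aut\left(A\right),1\right)}$.

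It then remains to prove $(\star)$ for the tautological local system, which is a concrete computation in $\cS/K\left(\Aut\left(A\right),1\right)\simeq\Fun\left(K\left(\Aut\left(A\right),1\right),\cS\right)$. Under this equivalence $\cF_A$ is the functor $\chi_A$, so $K\left(\cF_A,n\right)$ is $K\left(\chi_A,n\right)$, which by the factorisation established just before Proposition~\ref{prop: univ KAN fibration} equals the composite $$K\left(\Aut\left(A\right),1\right)\xrightarrow{\theta_n}B\Aut\left(K\left(A,n\right)\right)\xrightarrow{\Theta_n}\cS.$$ By Lemma~\ref{lem:pbfib}, the left fibration classified by $\Theta_n\circ\theta_n$ is the pullback of the left fibration classified by $\Theta_n$ along $\theta_n$; and the total space of the latter is $U_n^A=K\left(\Aut\left(A\right),1\right)$ with structure map $\theta_n$, as established in the computation of $U_n^A$ above. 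Hence, as an object of $\cS/K\left(\Aut\left(A\right),1\right)$, $K\left(\cF_A,n\right)$ is the pullback of $\theta_n$ along $\theta_n$, and unwinding the maps identifies this with the pullback of $\Delta\left(\theta_n\right)$ along $\Delta\left(\theta_n\right)\circ\tau$ for the tautological $\tau$ --- that is, with $(\star)$ --- whereupon applying $\overline{\tau}^{*}$ gives $(\star)$ in general. The hard part will be the bookkeeping in this last argument: pinning down $K\left(\cF_A,n\right)$ correctly as a pullback inside $\cS/K\left(\Aut\left(A\right),1\right)$ (keeping track of which of the two projections to $K\left(\Aut\left(A\right),1\right)$ is the structure map, and of the two occurrences of $\theta_n$), and making the transport along $\overline{\tau}^{*}$ precise via the factorisation $\cE\to\cE/\Delta\left(K\left(\Aut\left(A\right),1\right)\right)\to\cS/K\left(\Aut\left(A\right),1\right)$ together with the pullback-of-$\i$-topoi square of \cite[Proposition 6.3.5.8]{htt}, under which one must verify that the universal pullback square is carried to $(\star)$. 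By contrast, the preservation of Eilenberg--MacLane objects by $\overline{\tau}^{*}$ and the universal-fibration inputs from the previous section are routine.
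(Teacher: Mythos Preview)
Your proposal is correct and follows essentially the same route as the paper. For $n=0$ both reduce to Remark~\ref{rmk:pullback set}; for $n>0$ both identify $K\left(\cF_A,n\right)$ in $\cS/K\left(\Aut\left(A\right),1\right)$ with the left fibration classified by $\Theta_n\circ\theta_n$, recognise this via the universal $K\left(A,n\right)$-fibration as the pullback of $\theta_n$ along $\theta_n$, and then transport along $\overline{\tau}^{*}$ to obtain the square $(\star)$. The only cosmetic differences are that the paper packages the ``universal case then pull back'' step in the language of the geometric morphism $\cS/\theta_n:\cS/K\left(\Aut\left(A\right),1\right)\to\cS/B\Aut\left(K\left(A,n\right)\right)$ rather than invoking Lemma~\ref{lem:pbfib} directly, and that it cites \cite[Remark~6.5.1.4]{htt} for the preservation of Eilenberg--MacLane objects by $\overline{\tau}^{*}$ where you argue informally from left exactness and colimit preservation.
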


\begin{proof}
The statement about the $0^{th}$ cohomology group follows immediately from Remark \ref{rmk:pullback set}.

Now suppose that $n>0.$ Recall that $\cF_A$ is the abelian sheaf on $\cS/K\left(\Aut\left(A\right),1\right)$ corresponding to the functor $$\chi_A:K\left(\Aut\left(A\right),1\right) \to Ab,$$ and $\cF_{\tau}$ is by definition $\overline{\tau}^*\cF_A,$ where $\overline{\tau}:\cE \to \cS/K\left(\Aut\left(A\right),1\right)$ is the geometric morphism induced by $\tau.$ Denote by $K\left(\cF_A,n\right)$ the $n^{th}$ Eilenberg-MacLane object of $\cF_A$ in $\cS/K\left(\Aut\left(A\right),1\right).$ By \cite[Remark 6.5.1.4]{htt}, it follows that $$\overline{\tau}^*K\left(\cF_A,n\right) \simeq K\left(\cF_{\tau},n\right).$$

Under the equivalence $$\cS/K\left(\Aut\left(A\right),1\right)\simeq \Fun\left(K\left(\Aut\left(A\right),1\right),\cS\right),$$ $K\left(\cF_A,n\right)$ corresponds to the composite $$K\left(\Aut\left(A\right),1\right) \to Ab \stackrel{K\left(\blank,n\right)}{\longlongrightarrow} \cS,$$ which means that $K\left(\cF_A,n\right)$ in $\cS/K\left(\Aut\left(A\right),1\right)$ is the left fibration classified by the above composite functor. Recall this functor also factors as the composite
$$K\left(\Aut\left(A\right),1\right) \stackrel{\theta_n}{\longrightarrow} B\Aut\left(K\left(A,n\right)\right) \stackrel{\Theta_n}{\longrightarrow} \cS,$$ where $\Theta_n:B\Aut\left(K\left(A,n\right)\right) \to \cS$ is the natural functor which in fact classifies the universal $K\left(A,n\right)$-fibration $$\theta_n:K\left(\Aut\left(A\right),1\right) \to B\Aut\left(K\left(A,n\right)\right).$$ Denote by $$\cS/\theta_n:\cS/K\left(\Aut\left(A\right),1\right) \to \cS/B\Aut\left(K\left(A,n\right)\right)$$ the geometric morphism induced by $\theta_n,$ then regarding $\theta_n$ as an object of $\cS/B\Aut\left(K\left(A,n\right)\right),$ we have a canonical identification
$$\left(\cS/\theta_{n}\right)^*\left(\theta_n\right)\simeq K\left(\cF_A,n\right).$$ And hence $K\left(\cF_{\tau},n\right)$ can be identified with the pullback of $\theta_n$ along the geometric morphism $$\cE \stackrel{\overline{\tau}}{\longrightarrow} \cS/K\left(\Aut\left(A\right),1\right) \stackrel{\cS/\theta_n}{\longlonglongrightarrow} \cS/B\Aut\left(K\left(A,n\right)\right).$$ Unwinding the definitions, this means that we have a pullback diagram in $\cE$
$$\xymatrix@C=2.5cm@R=2cm{K\left(\cF_\tau,n\right) \ar[r] \ar[d] & \Delta\left(K\left(\Aut\left(A\right),1\right)\right) \ar[d]^-{\Delta\left(\theta_n\right)} \\
1 \ar[r]^-{\Delta\left(\theta_n\right) \circ \tau} & \Delta\left(B\Aut\left(K\left(A,n\right)\right)\right).}$$ The result now follows.
\end{proof}

The following corollary is proved in the same was as Corollary \ref{cor:BAut rep 1}:

\begin{corollary}\label{cor:BAut rep 2}
Let $\cE$ be an $\i$-topos, $A$ an abelian group, and $n>0$ be an integer. Then there is a natural bijection between the set of global sections
$$\pi_0\Gamma\left(\Delta\left(B\Aut\left(K\left(A,n\right)\right)\right)\right)$$
and the set of pairs $\left(\tau,\alpha\right),$ with $$\tau \in \pi_0\Gamma\left(K\left(\Aut\left(A\right),1\right)\right)$$ a local system on $\cE$ and $$\alpha \in H^{n+1}\left(\cE,\tau\right),$$ an $\left(n+1\right)^{st}$-cohomology class of $\cE$ with values in $\tau.$ Moreover, there is a natural bijection between the set global sections $$\pi_0\Gamma\left(\Delta\left(B\left(\Aut\left(A\right) \ltimes A\right)\right)\right)$$ and the set of pairs $\left(\tau,\alpha\right),$ with $$\tau\in \pi_0\Gamma\left(\Delta\left(K\left(\Aut\left(A\right),1\right)\right)\right)$$ a local system on $\cE$ and $$\alpha \in H^{1}\left(\cE,\tau\right),$$ a degree $1$ cohomology class of $\cE$ with values in $\tau.$
\end{corollary}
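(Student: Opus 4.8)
The plan is to deduce Corollary \ref{cor:BAut rep 2} from Theorem \ref{thm: classifying space infinity topos} in exactly the way Corollary \ref{cor:BAut rep 1} was deduced from Proposition \ref{prop:twistedcoh} in the topological case; the only new ingredient is that one must first transport the pullback squares of Lemma \ref{lem:univpb} from $\cS$ into $\cE$ along the inverse image functor $\Delta\colon \cS \to \cE$. Since $\Delta$ is left exact, applying it to the two squares of Lemma \ref{lem:univpb} produces, for $n>0$, a pullback square in $\cE$
\[
\xymatrix@C=2cm{\Delta\left(B\Aut\left(K\left(A,n\right)\right)\right) \ar[d] \ar[r] & \Delta\left(K\left(\Aut\left(A\right),1\right)\right) \ar[d]^-{\Delta\left(\theta_{n+1}\right)}\\
\Delta\left(K\left(\Aut\left(A\right),1\right)\right) \ar[r]^-{\Delta\left(\theta_{n+1}\right)} & \Delta\left(B\Aut\left(K\left(A,n+1\right)\right)\right),}
\]
whose left vertical arrow is the image under $\Delta$ of the canonical $1$-truncation map, and, for $n=0$, the analogous square with $\Delta\left(B\Aut\left(K\left(A,n\right)\right)\right)$ replaced by $\Delta\left(B\left(\Aut\left(A\right)\ltimes A\right)\right)$ and $\theta_{n+1}$ by $\theta_1$.

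Granting this, the argument for $n>0$ runs as follows. A global section $s \in \pi_0\Gamma\left(\Delta\left(B\Aut\left(K\left(A,n\right)\right)\right)\right)$, composed with $\Delta$ applied to the canonical map $B\Aut\left(K\left(A,n\right)\right) \to K\left(\Aut\left(A\right),1\right)$, yields a local system $\tau=\tau(s)\colon 1 \to \Delta\left(K\left(\Aut\left(A\right),1\right)\right)$ on $\cE$; working in $\cE/\Delta\left(K\left(\Aut\left(A\right),1\right)\right)$, the section $s$ is then precisely a section over $\tau$ of the left vertical arrow of the displayed square. Using the description of mapping spaces in a slice $\i$-topos (the analogue of \cite[Proposition 5.5.5.12]{htt} used repeatedly in this section) together with the fact that the square is a pullback, the space of such sections is canonically equivalent to the space of lifts of $\Delta\left(\theta_{n+1}\right)\circ\tau$ along $\Delta\left(\theta_{n+1}\right)$. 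By Theorem \ref{thm: classifying space infinity topos} applied to the integer $n+1>0$, the set of path components of this space of lifts is $H^{n+1}\left(\cE,\tau\right)$. The assignment sending $s$ to the pair consisting of $\tau(s)$ and the resulting class in $H^{n+1}\left(\cE,\tau\right)$ is then checked to be a bijection onto the set of pairs $(\tau,\alpha)$, with inverse sending $(\tau,\alpha)$ to the global section it classifies. The second bijection of the corollary is proved identically, replacing the square above by the $n=0$ square of Lemma \ref{lem:univpb} (the one involving $\Aut(A)\ltimes A$) and invoking Theorem \ref{thm: classifying space infinity topos} for the integer $1$. Naturality in $\cE$ is automatic, because $\Delta$, finite limits in $\cE$, the formation of slice $\i$-topoi, Eilenberg--MacLane objects, and hence cohomology with local coefficients, are all compatible with pullback along geometric morphisms.

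I expect the only real obstacle to be careful bookkeeping rather than anything conceptual: one must verify that ``composing with the structure map'' genuinely produces a local system in the sense of the definition (a morphism out of the terminal object into $\Delta\left(K\left(\Aut\left(A\right),1\right)\right)$), and that the identification of ``sections over $\tau$'' with ``lifts through $\Delta\left(\theta_{n+1}\right)$'' invokes the pullback square in the correct direction -- precisely the manipulation carried out in the proofs of Corollary \ref{cor:BAut rep 1} and Proposition \ref{prop:twistedcoh}, now performed one categorical level up inside $\cE$. Since $\Delta$ is left exact, this transfer is formal, which is exactly why the corollary holds ``in the same way''.
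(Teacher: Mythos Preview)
Your proposal is correct and follows exactly the approach the paper intends: the paper's proof consists solely of the remark that the corollary ``is proved in the same way as Corollary \ref{cor:BAut rep 1},'' and your argument is precisely that---transport Lemma \ref{lem:univpb} into $\cE$ via the left exact functor $\Delta$, then repeat the reasoning preceding Corollary \ref{cor:BAut rep 1} with Theorem \ref{thm: classifying space infinity topos} in place of Proposition \ref{prop:twistedcoh}.
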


\section{A profinite comparison theorem}
In this section, we  extend the results of \cite{ArtinMazur} to show that the profinite \'etale homotopy type of any higher stack on the site of affine schemes of finite type over $\mathbb{C}$ agrees with the profinite homotopy type of its underlying topological stack.

We start by recalling some notions and results from Section 3 of \cite{knhom}.

Let $\Top$ be the category of topological spaces and let $\TopCs$ denote the full subcategory on all those spaces which are contractible and locally contractible spaces which are homeomorphic to a subspace of $\mathbb{R}^n$ for some $n.$ Denote by $\TopC$ the following subcategory of topological spaces:

\begin{definition}
A topological space $T$ is in $\TopC$ if $T$ has an open cover $\left(U_\alpha \hookrightarrow T\right)_\alpha$ such that each $U_\alpha$ is an object of $\TopCs.$
\end{definition}

The reason for decorating the notation with ``$\mathbb{C}$'' is that $\TopC$ is a good recipient for the analytification functor from complex schemes. Recall from \cite{toen-vaquie} that there is an analytification functor
$$\left(\blank\right)_{an}:\Sch^{LFT}_\bC \to \Top,$$
from schemes locally of finite type over $\bC$ to topological spaces, and this functor preserves finite limits. When $X$ is a scheme, $X_{an}=X\left(\bC\right)$ is its space of $\bC$-points equipped with the complex analytic topology. $X_{an}$ is locally (over any affine) a triangulated space by \cite{Lo}, so in particular $X_{an}$ is locally contractible, and since $X_{an}$ is locally cut-out of $\mathbb{C}^n$ by polynomials, so it follows that $X_{an}$ is in $\TopC.$

In \cite[Section 20]{No1}, Noohi extends the analytification functor to a left exact functor
$$\left(\blank\right)_{top}:\mathbf{A}\!\mathfrak{lgSt}^{LFT}_{\mathbb{C}} \to \mathfrak{TopSt}$$ from Artin stacks locally of finite type over $\mathbb{C}$ to topological stacks. For $\cX$ an Artin stack, $\cX_{top}$ is called its \emph{underlying topological stack}. In \cite[Theorem 3.1 and Corollary 3.11]{knhom}, we extend this further to a left exact colimit preserving functor
$$\left(\blank\right)_{top}:\Shi\left(\Aff,\mbox{\'et}\right) \to \Hshi\left(\TopC\right)$$
from $\i$-sheaves on the \'etale site of affine schemes of finite type over $\bC,$ to hypersheaves on $\TopC$ (with respect to the open cover topology). For $\cX$ any $\i$-stack on $\left(\Aff,\mbox{\'et}\right),$ we refer to $\cX_{top}$ as its \emph{underlying stack on $\TopC$}.

\begin{remark}\label{rmk: topos small}
Even though $\TopC$ is not a small category, $\Hshi\left(\TopC\right)$ is an $\i$-topos, since there is a canonical equivalence of $\i$-categories
$$\Hshi\left(\TopC\right) \simeq \Hshi\left(\TopCs\right).$$ See \cite[Section 3.1]{knhom}.
\end{remark}

In Section 3.2 of \cite{knhom}, we also prove the following theorem:

\begin{theorem}\cite[Proposition 3.12 and Corollary 3.13]{knhom}
There is a colimit preserving functor $$\Pi_\i:\Hshi\left(\TopC\right) \to \cS$$ which sends space $X$ in $\TopC$, to its underlying weak homotopy type.
\end{theorem}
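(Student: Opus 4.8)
The plan is to construct the functor $\Pi_\i$ as a left Kan extension along a Yoneda-type embedding, mimicking the strategy already used in the excerpt for the functor $L$ in Proposition~\ref{prop: locally contractible shape} and for $\Shi\left(\left(\blank\right)_{\et}\right)$ in Lemma~\ref{lem:smalletale}. Concretely, recall from Remark~\ref{rmk: topos small} that $\Hshi\left(\TopC\right) \simeq \Hshi\left(\TopCs\right)$, so it suffices to produce the functor on $\Hshi\left(\TopCs\right)$, where $\TopCs$ \emph{is} small. Start with the composite $\TopCs \hookrightarrow \Top \stackrel{\Pi_\i}{\longrightarrow} \cS$ sending a contractible, locally contractible space to its weak homotopy type (which is a point), and form its left Kan extension $L = \Lan_y\left(\blank\right)$ along the Yoneda embedding $y:\TopCs \hookrightarrow \Pshi\left(\TopCs\right)$ — i.e.\ the unique colimit preserving functor agreeing with $\Pi_\i$ on representables. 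The functor $L$ has a right adjoint $R$ sending a space $\cG$ to the presheaf $U \mapsto \Hom_{\cS}\left(\Pi_\i U, \cG\right)$, and since every $U \in \TopCs$ is contractible, $R\left(\cG\right)\left(U\right) \simeq \cG$, so $R\left(\cG\right)$ is already the constant presheaf.

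The key step is then to verify that $L$ descends to the hypersheaf category, i.e.\ that $L$ inverts every (hyper)covering sieve. By \cite[Proposition 5.5.4.20 and Theorem 5.1.5.6]{htt}, since $L$ is colimit preserving, it suffices to check that $L$ sends the canonical map $\colim N_{\mathcal U} \to U$ to an equivalence for each covering sieve of the open-cover topology, and more generally inverts hypercovers. As in Proposition~\ref{prop: locally contractible shape}, the relevant input is \cite[Theorem 1.3]{duggerisaksen} (the hyperdescent statement for the weak homotopy type functor on topological spaces): if $V^\bullet \to U$ is a hypercover, then $\colim_{\Delta^{op}} \Pi_\i\left(V^\bullet\right) \simeq \Pi_\i\left(U\right)$. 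Because every space occurring in a cover of an object of $\TopCs$ can itself be refined by objects of $\TopCs$ (as $\TopCs$ is closed under passing to the locally contractible, Euclidean-embeddable pieces), this shows that $L\circ N_{\mathcal U} \to L\left(U\right)$ is a colimit diagram, hence $L$ factors through hypersheafification. The resulting functor $\Pi_\i:\Hshi\left(\TopCs\right)\to\cS$ is colimit preserving (a left adjoint, with right adjoint the restriction of $R$), sends each representable $U\in\TopCs$ to $\Pi_\i\left(U\right)\simeq *$, and along the equivalence of Remark~\ref{rmk: topos small} transports to a colimit preserving functor on $\Hshi\left(\TopC\right)$ sending a general space $X\in\TopC$ (which is a colimit of its contractible open pieces, hence of representables) to $\colim$ of points over the nerve of that cover, which by \cite[Theorem 1.3]{duggerisaksen} is exactly $\Pi_\i\left(X\right)$.

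I expect the main obstacle to be the bookkeeping around the open-cover Grothendieck topology on the non-small category $\TopC$ versus its small model $\TopCs$: one must be careful that the equivalence $\Hshi\left(\TopC\right)\simeq\Hshi\left(\TopCs\right)$ is compatible with the Yoneda embeddings in the sense needed to conclude that the Kan-extended functor still sends a general $X\in\TopC$ to $\Pi_\i\left(X\right)$ rather than merely to the value on some cover. Here one invokes that $\TopCs$ generates $\TopC$ under the open-cover topology (by the very definition of $\TopC$), so that the restriction functor $\Hshi\left(\TopC\right)\to\Hshi\left(\TopCs\right)$ is an equivalence and the left adjoint to restriction sends each $X\in\TopC$ to the colimit over a hypercover by objects of $\TopCs$; applying \cite[Theorem 1.3]{duggerisaksen} once more identifies this colimit with $\Pi_\i\left(X\right)$. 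Everything else — colimit preservation, the adjoint functor argument, the identification on representables — is formal given the tools already assembled in the preceding sections.
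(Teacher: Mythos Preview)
The paper does not prove this theorem; it is quoted from \cite[Proposition 3.12 and Corollary 3.13]{knhom} without argument, so there is no in-paper proof to compare against. Your approach is nonetheless correct and is exactly the global analogue of the argument the paper \emph{does} give for a single locally contractible space in Proposition~\ref{prop: locally contractible shape}: construct $\Pi_\i$ as the left adjoint to the constant-hypersheaf functor by observing that on the small site $\TopCs$ of contractible spaces the candidate right adjoint $R\left(\cG\right):U\mapsto\Hom_{\cS}\left(\Pi_\i U,\cG\right)\simeq\cG$ is the constant presheaf, and that this is already a hypersheaf by \cite[Theorem 1.3]{duggerisaksen}. The identification of the value on a general $X\in\TopC$ with its weak homotopy type --- the point you correctly flag as the only nonformal step --- is handled exactly as you say, by writing $X$ as a hypercover by objects of $\TopCs$ and applying Dugger--Isaksen once more; this is also implicitly what the paper uses in the proof of Lemma~\ref{lem: hypershape homotopy}, where the equivalence $\Shape\left(\Hshi\left(X\right)\right)\simeq j\left(\Pi_\i\left(X\right)\right)$ for $X\in\TopCs$ is again Proposition~\ref{prop: locally contractible shape}.

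One small sharpening: rather than arguing that $L$ ``inverts hypercovers'' directly, it is cleaner (and is what you in fact do in the second half of your first paragraph) to show that $R$ lands in hypersheaves, so that the adjunction $L\dashv R$ restricts to $\Hshi\left(\TopCs\right)\rightleftarrows\cS$; this sidesteps any bookkeeping about which simplicial objects count as hypercovers in the site.
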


The functor $\Pi_\i$ is called the \textbf{fundamental $\i$-groupoid} functor. (This is an extension of the results of \cite{No2}.)

We now state our main result:\\

For any $\i$-sheaf $F$ on $\left(\Aff,\mbox{\'et}\right),$ there is an equivalence of profinite spaces $$\widehat{\Pi}^{\et}_\i\left(F\right) \simeq \widehat{\Pi}_\i\left(F_{top}\right),$$ between the profinite \'etale homotopy type of $F$ and the profinite completion of the homotopy type of the underlying stack $F_{top}$ on $\TopC$ (see Theorem \ref{thm: main}).\\

We will need a few preliminaries:\\

Notice that there is a canonical functor $$\TopC \to \mathfrak{Top}_\i.$$ This factors as the canonical inclusion $$\TopC \hookrightarrow \Top$$ followed by the canonical functor
\begin{eqnarray*}
\Sh:\Top &\to& \mathfrak{Top}\\
T &\mapsto& \Sh\left(T\right),
\end{eqnarray*}
from topological spaces to topoi (which is fully faithful when restricted to sober spaces), followed by the canonical inclusion $$\mathfrak{Top} \hookrightarrow \mathfrak{Top}_\i$$ identifying topoi with $1$-localic $\i$-topoi. Since the poset of open subsets of a topological space has finite limits, it follows from \cite[Proposition 6.4.5.4]{htt} that the total composite sends a topological space $T$ to the $\i$-topos $\Shi\left(T\right)$ of $\i$-sheaves over $T.$ Denote by $$\mathbb{H}\mathrm{yp}:\mathfrak{Top}_\i \to \mathfrak{Top}_\i$$ the hypercompletion functor (see Remark \ref{rmk:hypercompletion functor}).

Recall that by Remark \ref{rmk: topos small}, there is a canonical equivalence of $\i$-categories $$\Hshi\left(\TopC\right) \simeq \Hshi\left(\TopCs\right).$$

The following lemma's proof is completely analogous to that of Lemma \ref{lem:smalletale}. We leave the details to the reader:

\begin{lemma}
There exists a colimit preserving functor
$$\Hshi\left(\blank\right):\Hshi\left(\TopC\right) \to \mathfrak{Top}_\i$$ which sends a representable sheaf $y\left(T\right)$ for $T$ a topological space to the $\i$-topos of hypersheaves on $T.$
\end{lemma}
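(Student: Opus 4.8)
The plan is to imitate the proof of Lemma \ref{lem:smalletale} almost verbatim, replacing the \'etale site of affine schemes by the open-cover site of $\TopCs$. First I would recall, via Remark \ref{rmk: topos small}, that $\Hshi(\TopC)\simeq\Hshi(\TopCs)$, so it suffices to construct a colimit preserving functor out of $\Hshi(\TopCs)$ sending a representable $y(T)$ for $T\in\TopCs$ to $\Shi(T)$. Begin with the composite
$$\TopCs\hookrightarrow\Top\stackrel{\Sh}{\longrightarrow}\mathfrak{Top}\hookrightarrow\mathfrak{Top}_\i,$$
which by \cite[Proposition 6.4.5.4]{htt} (the posets of opens have finite limits) sends $T$ to $\Shi(T)$. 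Take its left Kan extension $L$ along the Yoneda embedding $y:\TopCs\hookrightarrow\Pshi(\TopCs)$; this is automatically colimit preserving, so the only thing to check, by \cite[Proposition 5.5.4.20 and Theorem 5.1.5.6]{htt}, is that $L$ carries each open-cover covering sieve $S_{\cU}\hookrightarrow y(T)$ to an equivalence. Equivalently, writing $N_{\cU}:\Delta^{op}\to\Pshi(\TopCs)$ for the \v{C}ech nerve of an open cover $\cU=(U_i\hookrightarrow T)_i$, whose colimit is $S_{\cU}$, one must show the canonical map $\colim L\circ N_{\cU}\to L(y(T))\simeq\Shi(T)$ is an equivalence.

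To do this I would run the same slice-category argument as in Lemma \ref{lem:smalletale}. Lift $N_{\cU}$ to the augmented diagram $\widehat{N_{\cU}}:\Delta_+^{op}\to\Pshi(\TopCs)$ with vertex $y(T)$; using that right cones are left adjoint to slice quasicategories, this is adjoint to $\widetilde N_{\cU}:\Delta^{op}\to\Pshi(\TopCs)/y(T)$. The functor $L$ induces a colimit preserving $\widetilde L:\Pshi(\TopCs)/y(T)\to\mathfrak{Top}_\i/\Shi(T)$. The key geometric input is that for an inclusion of opens $U\hookrightarrow T$ in $\TopCs$ the induced geometric morphism $\Shi(U)\to\Shi(T)$ is \'etale — indeed $\Shi(U)\simeq\Shi(T)/j_!(*_U)\simeq\Shi(T)/(U\hookrightarrow T)$, since for an open subspace the pushforward-of-sheaves description identifies $\Shi(U)$ with the slice of $\Shi(T)$ over the representable sheaf on $U$ (this is standard; see e.g. the relationship between open subtopoi and \'etale geometric morphisms). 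Hence $\widetilde L\circ\widetilde N_{\cU}$ lands in \'etale geometric morphisms over $\Shi(T)$, giving a factorization through $\mathfrak{Top}^{\et}/\Shi(T)\simeq\Shi(T)$ (by \cite[Remark 6.3.5.10]{htt}), under which the diagram becomes the \v{C}ech nerve $\overline N_{\cU}:\Delta^{op}\to\Shi(T)$ of the cover $\coprod_i y(U_i)\to 1$. Since $\cU$ is an open cover this is an effective epimorphism, so $\colim\overline N_{\cU}$ is the terminal object; and since $\mathfrak{Top}^{\et}/\Shi(T)\to\mathfrak{Top}^{\et}_\i\to\mathfrak{Top}_\i$ preserve colimits (\cite[Theorem 6.3.5.13]{htt}) and send the terminal object of $\Shi(T)$ to $\Shi(T)$, the desired colimit is $\Shi(T)$, as needed.

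Finally I would note that the resulting $L$ descends to $\Hshi(\TopCs)$: localizing $\Pshi(\TopCs)$ at the open-cover hypercovers, $L$ factors through $\Hshi(\TopCs)\simeq\Hshi(\TopC)$ because $L$ already inverts the generating \v{C}ech covers and, being colimit preserving, the Bousfield localization at the saturation (which includes all hypercovers) factors it, using \cite[Proposition 5.5.4.20 and Theorem 5.1.5.6]{htt} once more. This produces the claimed colimit preserving $\Hshi(\blank):\Hshi(\TopC)\to\mathfrak{Top}_\i$ with $\Hshi(\blank)(y(T))\simeq\Shi(T)$; if one prefers the hypercomplete $\i$-topos $\Hshi(T)$ on the nose one composes with $\mathbb{H}\mathrm{yp}$, which changes nothing on profinite shapes by Proposition \ref{prop:hypersame}. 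The main obstacle is the verification that open inclusions in $\TopCs$ induce \'etale geometric morphisms of $\i$-topoi compatibly with the \v{C}ech nerve — i.e. that $\Shi(U)\simeq\Shi(T)/(U\hookrightarrow T)$ naturally in the diagram — since everything else is a word-for-word transcription of Lemma \ref{lem:smalletale}; but this is a classical fact about open subtopoi promoted to the $\i$-categorical setting by \cite[Example 2.3.8]{dagv}-type reasoning together with \cite[Proposition 6.4.5.4]{htt}.
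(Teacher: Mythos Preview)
Your approach is exactly what the paper intends---it simply says the proof is ``completely analogous to that of Lemma~\ref{lem:smalletale}''---and your transcription of the \'etale-slice/\v{C}ech-nerve argument is fine. There is, however, a genuine gap in your final paragraph: the assertion that because $L$ inverts \v{C}ech covers it automatically factors through $\Hshi(\TopCs)$ is false. The strongly saturated class generated by \v{C}ech covering sieves gives the localization $\Shi(\TopCs)$, not $\Hshi(\TopCs)$; hypercovers are \emph{not} in that saturation in general, and \cite[Proposition~5.5.4.20]{htt} only buys you a factorization through $\Shi(\TopCs)$. Likewise, composing afterwards with $\mathbb{H}\mathrm{yp}$ is not innocuous: $\mathbb{H}\mathrm{yp}$ is a right adjoint and need not preserve the colimits you care about, so you cannot conclude the composite is colimit preserving that way.

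The fix is to run the entire argument with $T\mapsto\Hshi(T)$ from the start and to check hyperdescent rather than \v{C}ech descent. For an open inclusion $U\hookrightarrow T$, the object $y(U)\in\Shi(T)$ is $(-1)$-truncated, hence already hypercomplete, and one has $\Hshi(U)\simeq\Hshi(T)/y(U)$; thus open inclusions still give \'etale geometric morphisms at the hypercomplete level, and your slice argument goes through verbatim. Now if $U_\bullet\to y(T)$ is a \emph{hypercover} (not merely a \v{C}ech cover), under the equivalence $\mathfrak{Top}^{\et}_\i/\Hshi(T)\simeq\Hshi(T)$ the diagram $L\circ U_\bullet$ becomes a hypercover of the terminal object in $\Hshi(T)$; since $\Hshi(T)$ is hypercomplete its geometric realization is the terminal object, and \cite[Theorem~6.3.5.13]{htt} finishes as before. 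This shows $L$ inverts all hypercovers, hence factors through $\Hshi(\TopCs)\simeq\Hshi(\TopC)$ as a colimit preserving functor landing on $\Hshi(T)$ for representables, which is precisely the statement.
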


\begin{lemma}\label{lem: hypershape homotopy}
The following diagram commutes up to equivalence:
$$\xymatrix@C=2.5cm{\Hshi\left(\TopC\right) \ar[r]^-{\Hshi\left(\blank\right)} \ar[d]_-{\Pi_\i} & \mathfrak{Top}_\i \ar[d]^-{\Shape}\\
\cS \ar@{^{(}->}[r]^-{j} & \Pro\left(\cS\right).}$$
\end{lemma}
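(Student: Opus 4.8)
The plan is to reduce the statement to the two facts we already have: $\Pi_\i$ and $\Shape\circ\Hshi(\blank)$ are both colimit-preserving functors out of $\Hshi(\TopC)$, so by \cite[Proposition 5.5.4.20 and Theorem 5.1.5.6]{htt} (combined with the equivalence $\Hshi(\TopC)\simeq\Hshi(\TopCs)$ from Remark \ref{rmk: topos small}) it suffices to exhibit a natural equivalence of the two composites on representables, i.e. on objects of the form $y(T)$ for $T\in\TopCs$. For such a $T$ the left–hand composite is, by definition of $\Pi_\i$, the embedding $j(\Pi_\i(T))$ of the weak homotopy type of $T$. The right–hand composite sends $y(T)$ to $\Shape(\Hshi(T))$, the shape of the $\i$-topos of hypersheaves on $T$.

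The first key step is therefore to compute $\Shape(\Hshi(T))$ for $T\in\TopCs$. Every object of $\TopCs$ is locally contractible (indeed contractible), so Proposition \ref{prop: locally contractible shape} applies and gives a canonical equivalence
$$\Shape(\Hshi(T))\simeq j(\Pi_\i T).$$
Moreover, since $T$ is contractible, $\Pi_\i T\simeq *$, so both sides are canonically the terminal pro-space; this rigidifies the comparison and removes any ambiguity in the choice of equivalence. The second key step is to check naturality: one has to produce the equivalence $\Pi_\i\Rightarrow \Shape\circ\Hshi(\blank)$ as a natural transformation of functors $\Hshi(\TopCs)\to\Pro(\cS)$, not merely a pointwise equivalence. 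The cleanest way is to observe that on $\TopCs$ (whose objects are all contractible) both functors, restricted to representables, are canonically equivalent to the constant functor at the terminal object $j(*)$: $\Pi_\i$ because $\Pi_\i(T)\simeq *$ for contractible $T$, and $\Shape\circ\Hshi(\blank)$ by the displayed equivalence together with $\Pi_\i T\simeq *$. A natural transformation between two colimit-preserving functors out of a presheaf $\i$-topos is determined by its restriction to representables, so the canonical identification of both restrictions with the constant functor at $j(*)$ extends uniquely to a natural equivalence of the colimit-preserving extensions; this is again \cite[Theorem 5.1.5.6]{htt}.

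I expect the main obstacle to be purely bookkeeping rather than conceptual: one must be careful that the functor $\Hshi(\blank):\Hshi(\TopC)\to\mathfrak{Top}_\i$ of the preceding lemma genuinely agrees, on a representable $y(T)$, with the $\i$-topos $\Hshi(T)$ to which Proposition \ref{prop: locally contractible shape} applies (this is built into the statement of that lemma, whose proof is declared analogous to Lemma \ref{lem:smalletale}), and that the equivalence $\Hshi(\TopC)\simeq\Hshi(\TopCs)$ identifies the representable sheaf on $T\in\TopCs$ inside $\Hshi(\TopC)$ with the representable inside $\Hshi(\TopCs)$, so that restriction ``to representables'' of either functor makes sense after the reduction of Remark \ref{rmk: topos small}. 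Once these identifications are in place, the fact that $\TopCs$ consists of contractible spaces makes the comparison on representables trivial, and colimit-preservation of both sides propagates it to all of $\Hshi(\TopC)$.
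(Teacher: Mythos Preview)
Your proposal is correct and follows essentially the same route as the paper: reduce to representables in $\TopCs$ via colimit-preservation and \cite[Proposition 5.5.4.20, Theorem 5.1.5.6]{htt}, then invoke Proposition \ref{prop: locally contractible shape} to identify $\Shape(\Hshi(T))$ with $j(\Pi_\i T)$. The one difference is that the paper appeals to the \emph{proof} of Proposition \ref{prop: locally contractible shape} to supply the canonical (hence natural) identification, whereas you exploit the additional fact that objects of $\TopCs$ are actually contractible, so that both restricted functors are canonically equivalent to the constant functor at the terminal pro-space $j(*)$, making naturality automatic; this is a small but legitimate simplification.
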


\begin{proof}
Recall that the there is a canonical equivalence $$\Hshi\left(\TopC\right)\simeq \Hshi\left(\Top^sC\right).$$ Since all the functors in the above diagram preserve colimits, it suffices by \cite[Proposition 5.5.4.20, Theorem 5.1.5.6]{htt} to prove that there is a natural equivalence of functors
$$\Shape \circ \Hshi\left(\blank\right) \circ y \simeq j \circ \Pi_\i \circ y$$ where $$y:\Top^sC \hookrightarrow \Hshi\left(\Top^sC\right)$$ is the Yoneda embedding. Let $T$ be an object of $\Top^sC.$ In particular, $T$ is locally contractible. By the proof of Proposition \ref{prop: locally contractible shape}, we have a canonical identification $$\Shape\left(\Hshi\left(T\right)\right)\simeq j\left(\Pi_\i\left(T\right)\right),$$ and by construction, there is a canonical equivalence $$\Shape\left(\Hshi\left(T\right)\right) \simeq \Shape \circ \Hshi\left(\blank\right)\left(y\left(T\right)\right).$$
\end{proof}

\begin{proposition}\label{prop: main 1}
There is a canonical natural transformation

$$ \xygraph{!{0;(6.5,0):(0,0.15)::}
{\Shi\left(\Aff,\mbox{\'et}\right)}="a" [r] {\mathfrak{Top}_\i}="b"
"a":@/^{1.5pc}/"b"^-{\Hshi\left(\blank\right)\circ \left(\blank\right)_{top}}|(.4){}="l"
"a":@/_{1.5pc}/"b"_-{\Shi\left(\left(\blank\right)_{\et}\right)}
"l" [d(.3)]  [r(0.1)] :@{=>}^{\xi} [d(.5)]} $$

Such that the induced natural transformation
$$\xymatrix@C=2.5cm{\Shape^{\Prof} \circ \Hshi\left(\blank\right)\circ \left(\blank\right)_{top}  \ar@{=>}[r]^-{\Shape^{\Prof}\left(\xi\right)}& \Shape^{\Prof} \circ \Shi\left(\left(\blank\right)_{\et}\right)=\widehat{\Pi}^{\et}_\i}$$ is an equivalence.
\end{proposition}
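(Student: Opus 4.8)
The plan is to build the natural transformation $\xi$ by working on affine schemes and then extending by colimits, and then to verify that $\Shape^{\Prof}(\xi)$ is an equivalence by reducing to a single affine scheme $X = \Spec(A)$ of finite type over $\bC$ — which is the genuine comparison theorem in the spirit of Artin--Mazur.

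\textbf{Construction of $\xi$.} Both $\Hshi(\blank)\circ(\blank)_{top}$ and $\Shi((\blank)_{\et})$ are colimit-preserving functors $\Shi(\Aff,\mbox{\'et}) \to \mathfrak{Top}_\i$ (the first because $(\blank)_{top}$ and $\Hshi(\blank)$ preserve colimits by the cited results of \cite{knhom} and this section, the second by Lemma \ref{lem:smalletale}). Hence, by \cite[Proposition 5.5.4.20, Theorem 5.1.5.6]{htt}, it suffices to produce a natural transformation between their restrictions along the Yoneda embedding $\Affk \hookrightarrow \Shi(\Aff,\mbox{\'et})$, i.e.\ between the functors $A \mapsto \Hshi(X_{an})$ and $A \mapsto \Shi(A_{\et})$, where $X = \Spec(A)$. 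This is exactly the analytification comparison map of topoi: for a separated (and in general arbitrary finite-type) scheme $X$ over $\bC$, there is the classical geometric morphism $\varepsilon: \Sh(X_{an}) \to \Sh(X_{\et})$, which promotes to a geometric morphism of $\i$-topoi $\Shi(X_{an}) \to \Shi(X_{\et})$ (this is item 2) in the proof strategy of the introduction, and is discussed in \cite{SGA4}); composing with the canonical map $\Hshi(X_{an}) \to \Shi(X_{an})$ from the hypercompletion gives $\Hshi(X_{an}) \to \Shi(X_{\et})$, natural in $X$. Left Kan extending produces $\xi$.

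\textbf{Reduction and the comparison argument.} Applying $\Shape^{\Prof}$, we must show $\Shape^{\Prof}(\xi_F)$ is an equivalence for every $\i$-sheaf $F$. Since $\Shape^{\Prof} = i^*\circ\Shape$ and $i^*$, $\Shape$, $\Hshi(\blank)$, $(\blank)_{top}$, $\Shi((\blank)_{\et})$ all preserve colimits, both composites in the statement are colimit preserving, so by the same density argument it is enough to treat $F = \Spec(A)$ for $A$ of finite type over $\bC$. By Proposition \ref{prop:hypersame}, $\Shape^{\Prof}(\Shi(X_{\et})) \simeq \Shape^{\Prof}(\Hshi(X_{\et}))$, so the map to check is $\Shape^{\Prof}(\Hshi(X_{an})) \to \Shape^{\Prof}(\Hshi(X_{\et}))$, i.e.\ that the geometric morphism $\varepsilon$ (at the hypercomplete level) is a profinite homotopy equivalence. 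By Remark \ref{rmk:concprof}, this amounts to showing that for every $\pi$-finite space $V$, the induced map on global sections of the constant stacks $\Gamma_{\et}\Delta^{\et}(V) \to \Gamma_{an}\Delta^{an}(V)$ is an equivalence (item 3) of the strategy). One proves this by induction on the Postnikov tower of $V$: the statement for $V$ discrete is the $\pi_0$-comparison, which follows from GAGA / finiteness of connected components; the inductive step attaches a single $K(A,n)$, $A$ finite abelian, via a pullback square, and one needs that $\varepsilon$ induces an isomorphism on cohomology with coefficients in any locally constant sheaf of finite abelian groups (item 4), the classical comparison theorem of \cite{SGA4}). Here is exactly where Section \ref{sec: cohomology} is used: Theorem \ref{thm: classifying space infinity topos} and Corollary \ref{cor:BAut rep 2} identify, in any $\i$-topos, the set of homotopy classes of global sections of $\Delta(B\Aut(K(A,n)))$ (equivalently, of maps into the relevant classifying stacks) with pairs (local system $\tau$, cohomology class $\alpha \in H^{n+1}(\cE,\tau)$). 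Since $\varepsilon$ is an isomorphism on $\pi_1$ of the shape (profinite fundamental groups agree by GAGA) it matches up local systems, and being a cohomology isomorphism with twisted finite coefficients it matches up the classes $\alpha$; a fibration-sequence / five-lemma argument along the Postnikov tower then upgrades these $\pi_0$-bijections to equivalences of the mapping spaces $\Gamma\Delta(V)$ for all $\pi$-finite $V$. This yields that $\Shape^{\Prof}(\xi_{\Spec A})$ is an equivalence, and the general case follows.

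\textbf{Main obstacle.} The genuinely hard part is the inductive Postnikov-tower step, and specifically making the cohomological comparison of \cite{SGA4} interact correctly with the $\i$-categorical classifying-space description: one must know not only that $H^{n+1}_{\et}(X,\tau) \cong H^{n+1}_{an}(X_{an},\tau)$ as groups, but that these isomorphisms are compatible with the action of automorphisms of the coefficients and with the maps classifying extensions of spaces, so that the square relating $\Gamma_{\et}\Delta^{\et}(V)$ and $\Gamma_{an}\Delta^{an}(V)$ at stage $n+1$ to that at stage $n$ is a map of fiber sequences. This compatibility is precisely what the careful treatment in Section \ref{sec: cohomology} — in particular Theorem \ref{thm: classifying space infinity topos}, which is valid in \emph{any} $\i$-topos — is designed to supply, so that the induction goes through uniformly for $\Shi(X_{an})$ and $\Shi(X_{\et})$ simultaneously. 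A secondary technical point is handling non-separated or non-connected $X$: one reduces to the separated (or affine) case, where the comparison map of topoi is well-behaved, by descent, using that all functors in sight preserve colimits.
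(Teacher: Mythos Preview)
Your proposal is correct and follows essentially the same route as the paper: construct $\xi$ on affines via the classical comparison morphism $\varepsilon:\Shi(X_{an})\to\Shi(X_{\et})$ precomposed with the hypercompletion map, extend by colimits, reduce the profinite-equivalence check to a single finite-type scheme $X$, and then run a Postnikov induction on $\pi$-finite $V$ using the SGA4 cohomology comparison together with the classifying-space interpretation from Section~\ref{sec: cohomology}. Two minor organizational differences are worth noting: the paper applies Proposition~\ref{prop:hypersame} on the analytic side (reducing to $\varepsilon$ between the non-hypercomplete $\i$-topoi) rather than on the \'etale side as you do, and the paper's induction is structured by first reducing to \emph{connected} $X$ so that $\Gamma$ preserves coproducts, then taking $K(G,1)$ for finite $G$ (via an explicit torsor lemma) as the base case rather than discrete $V$; both arrangements lead to the same argument.
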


To prove the above proposition, since all the functors involved are colimit preserving, by \cite[Proposition 5.5.4.20, Theorem 5.1.5.6]{htt} it suffices to prove the result after restricting the functors to affine schemes of finite type over $\bC.$ The affine assumption will not play a role, so we will establish the result for any scheme $X$ of finite type over $\bC.$

Following \cite[expos\'e XI.4]{SGA4}:

Denote by $\Top^{\et}/X_{an}$ the category of local homeomorphisms over $X_{an}.$ Let $$\alpha:X_{\et} \to \Top^{\et}/X_{an}$$ be the restriction of the analytification functor; it sends an \'etale map of schemes $f:Y \to X$ to the local homeomorphism $f_{an}:Y_{an} \to X_{an}.$ Note also that via the \'etal\'e space construction, there is a canonical equivalence of categories $\Top^{\et}/X_{an} \simeq \Sh\left(X_{an}\right).$ Since $\Sh\left(X_{an}\right)$ has enough points, and since $\alpha$ is left exact, it follows that $\alpha$ is flat, and hence by \cite[B3.2.7]{Johnstone}, $\alpha$ induces a geometric morphism $$\varphi:\Sh\left(X_{an}\right) \to \Psh\left(X_{\et}\right).$$ Explicitly, $\varphi^*$ is the left Kan extension $\Lan_y\left(\alpha\right)$ of $\alpha$ along the Yoneda embedding, and for $F$ a sheaf on $X_{an}$ corresponding to a local homeomorphism $LF \to X_{an},$ $\varphi_*\left(F\right)$ evaluated on an \'etale morphism $Y \to X$ is $\Hom_{X_{an}}\left(Y_{an},LF\right).$ The functor $\alpha$ sends \'etale covering families in $X_{\et}$ to families of jointly surjective local homeomorphisms. These are exactly the effective epimorphisms in $\Top^{\et}/X_{an}.$ Hence, identifying $\alpha$ with a functor $X_{\et} \to \Sh\left(X_{an}\right),$ we see that $\alpha$ pulls back sheaves on $\Sh\left(X_{an}\right)$ equipped with the canonical topology to sheaves, since the canonical topology $\Sh\left(X_{an}\right)$ is precisely generated by jointly epimorphic families. It follows that $\varphi_*\left(F\right)$ is always a sheaf, and hence $\varphi^*$ restricts to a left exact colimit preserving functor $$\varepsilon_X^*:\Sh\left(X_{\et}\right) \to \Sh\left(X_{an}\right),$$ hence constitutes a geometric morphism $$\varepsilon_X:\Sh\left(X_{an}\right) \to \Sh\left(X_{\et}\right).$$ Since both $X_{\et}$ and the poset of open subsets of $X_{an}$ have finite limits, this canonically extends to a geometric morphism of $\i$-topoi $$\varepsilon_X:\Shi\left(X_{an}\right) \to \Shi\left(X_{\et}\right)$$ by \cite[Proposition 6.4.5.4]{htt}.

We define $\xi_X$ as the composition
$$\Hshi\left(X_{an}\right) \stackrel{\epsilon_X}{\longlongrightarrow} \Shi\left(X_{an}\right) \stackrel{\varepsilon_X}{\longlongrightarrow} \Shi\left(X_{\et}\right).$$

Suppose that $f:X \to Y$ is a morphism of schemes, and consider the following diagram of categories:
$$\xymatrix{\Sh\left(X_{\et}\right) \ar[r]^-{\varepsilon^*_X} & \Sh\left(X_{an}\right)\\
\Sh\left(Y_{\et}\right) \ar[u]^-{f^*} \ar[r]^-{\varepsilon^*_Y} & \Sh\left(Y_{an}\right) \ar[u]^-{f^*_{an}}.}$$
Since all the functors involved preserve colimits, and since analytification preserves finite limits, it follows that there is a canonical $2$-morphism $$\varepsilon\left(f\right):\varepsilon^*_X \circ f^* \stackrel{\sim}{\Rightarrow} f^*_{an} \circ \varepsilon^*_Y.$$ That is to say, $\varepsilon\left(f\right)$ represents a $2$-morphism in the $\left(2,1\right)$-category $\mathfrak{Top}$ of topoi, making the following diagram commute
$$\xymatrix{\Sh\left(X_{an}\right) \ar[d] \ar[r]^-{\varepsilon_X} & \Sh\left(X_{\et}\right) \ar[d]\\
\Sh\left(Y_{an}\right)  \ar[r]^-{\varepsilon_Y} & \Sh\left(Y_{\et}\right).}$$
Moreover, it is easy to check that the various geometric morphisms $\varepsilon_X$ together with these $2$-morphisms assemble into a lax natural-transformation
$$ \xygraph{!{0;(6.5,0):(0,0.15)::}
{\Aff}="a" [r] {\mathfrak{Top}.}="b"
"a":@/^{1.5pc}/"b"^-{\Sh\left(\blank\right)\circ \left(\blank\right)_{an}}|(.4){}="l"
"a":@/_{1.5pc}/"b"_-{\Shi\left(\left(\blank\right)_{\et}\right).}
"l" [d(.3)]  [r(0.1)] :@{=>}^{\varepsilon} [d(.5)]} $$
(The necessary coherency conditions follow by a similar argument by pasting diagrams.)
By abuse of notation, composition with the canonical inclusion $$\mathfrak{Top} \hookrightarrow \mathfrak{Top}_\i$$ induces a natural transformation
$$ \xygraph{!{0;(6.5,0):(0,0.15)::}
{\Aff}="a" [r] {\mathfrak{Top}}="b"
"a":@/^{1.5pc}/"b"^-{\Sh\left(\blank\right)\circ \left(\blank\right)_{an}}|(.4){}="l"
"a":@/_{1.5pc}/"b"_-{\Shi\left(\left(\blank\right)_{\et}\right).}
"l" [d(.3)]  [r(0.1)] :@{=>}^{\varepsilon} [d(.5)]} $$
Finally, by composing with the counit $$\epsilon: q \circ \mathbb{H}yp \Rightarrow id_{\mathfrak{Top}_\i}$$ of the coreflective subcategory of hypercomplete $\i$-topoi (Remark \ref{rmk:hypercompletion functor}), we get a natural transformation
$$ \xygraph{!{0;(6.5,0):(0,0.15)::}
{\Aff}="a" [r] {\mathfrak{Top}_\i.}="b"
"a":@/^{1.5pc}/"b"^-{\Hshi\left(\blank\right)\circ \left(\blank\right)_{an}}|(.4){}="l"
"a":@/_{1.5pc}/"b"_-{\Shi\left(\left(\blank\right)_{\et}\right)}
"l" [d(.3)]  [r(0.1)] :@{=>}^{\xi} [d(.5)]} $$
Again, since all the functors involved in the satement of Proposition \ref{prop: main 1} are colimit preserving, by \cite[Proposition 5.5.4.20, Theorem 5.1.5.6]{htt} this natural transformation lifts to one of the form
$$ \xygraph{!{0;(6.5,0):(0,0.15)::}
{\Shi\left(\Aff,\mbox{\'et}\right)}="a" [r] {\mathfrak{Top}_\i,}="b"
"a":@/^{1.5pc}/"b"^-{\Hshi\left(\blank\right)\circ \left(\blank\right)_{top}}|(.4){}="l"
"a":@/_{1.5pc}/"b"_-{\Shi\left(\left(\blank\right)_{\et}\right)}
"l" [d(.3)]  [r(0.1)] :@{=>}^{\xi} [d(.5)]} $$
and to prove that it is an equivalence after applying $\Shape^{\Prof}$, it suffices to show that each geometric morphism $\xi_X$ is a profinite homotopy equivalence, when $X$ is a scheme of finite type over $\bC$. Note that by Proposition \ref{prop:hypersame},
$$\epsilon_X:\Hshi\left(X_{an}\right) \to \Shi\left(X_{an}\right)$$ is a profinite homotopy equivalence for all $X$, so it suffices to prove that each geometric morphism $$\varepsilon_X:\Shi\left(X_{an}\right) \to \Shi\left(X_{\et}\right)$$ is a profinite homotopy equivalence as well.

Let us fix a scheme $X$ of finite type over $\bC$ and denote $\varepsilon_X$ from now on by $\varepsilon.$

The main ingredient in showing that $\varepsilon$ is a profinite homotopy equivalence is the following classical result from \cite{SGA4}:

\begin{theorem}\cite[expos\'e XI.4 Theorem 4.3, Theorem 4.4, and expos\'e XVI.4, Theorem 4.1]{SGA4}\label{thm:SGA}
Let $X$ be a scheme of finite type over $\bC.$ Then
\begin{itemize}
\item[1)] The analytification functor $\alpha:X_{\et} \to \Top^{\et}/X_{an}$ induces an equivalence of categories between finite \'etale maps over $X$  and finite covering spaces of $X_{an}.$
\item[2)] $\varepsilon$ induces an isomorphism in cohomology with coefficients in any local system of finite abelian groups.
\end{itemize}
\end{theorem}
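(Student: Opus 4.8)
The plan is to prove the two assertions by separate classical routes: part 1) is the Riemann existence theorem for finite \'etale covers, and part 2) is the \'etale--singular comparison for finite coefficients, which I would deduce from part 1) together with a d\'evissage over the dimension of $X.$

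For part 1), I would first dispatch full faithfulness. Given finite \'etale $X$-schemes $Y$ and $Z,$ an $X$-morphism $Y\to Z$ is the same datum as a section of the finite \'etale projection $Y\times_X Z\to Y$ whose image is open and closed; since $Y\times_X Z\to Y$ is finite \'etale, such sections are unions of connected components of $Y\times_X Z$ projecting isomorphically onto $Y,$ and the analogous description holds for covering spaces of $X_{an}.$ Full faithfulness then reduces to the statement that, compatibly with projections, the analytification induces a bijection on connected components $\Pi_0\left(W\right)\cong \pi_0\left(W_{an}\right)$ for finite \'etale $W\to X$; reducing to $X$ normal and connected, this holds because a connected normal complex variety has connected analytification.

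The genuinely hard content is essential surjectivity, the ``existence'' in Riemann existence: given a finite covering space of $X_{an},$ one must algebraize it. Here I would reduce to $X$ normal and connected, compactify $X$ to a proper scheme $\overline{X}$ with $X=\overline{X}\setminus D,$ extend the covering analytically across $D$ using the Grauert--Remmert theorem on finite analytic coverings of normal complex spaces, apply Serre's GAGA to the resulting finite cover of the proper scheme $\overline{X}$ to render it algebraic, and finally invoke Zariski--Nagata purity to control ramification along $D$ and descend to a genuine finite \'etale cover of $X.$

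For part 2), I would reduce the coefficient statement to the comparison for the constant sheaf $\bZ/n$ by d\'evissage: on a finite \'etale cover supplied by part 1) any local system of finite abelian groups becomes a successive extension of constant sheaves, and the comparison map is compatible with the resulting long exact sequences and with pushforward along finite \'etale maps. The main engine is then Artin's method of good neighborhoods, run by induction on $\dim X.$ The base case is that of curves, where $H^*\left(X_{\et};\bZ/n\right)$ and $H^*\left(X_{an};\bZ/n\right)$ are computed explicitly and agree, the degree-one comparison being precisely part 1) applied to $\bZ/n$-torsors. For the inductive step, a smooth $X$ is Zariski-locally an elementary fibration $f\colon X\to S$ with one-dimensional fibers; comparing the Leray spectral sequences of $f$ in the \'etale and analytic settings, the comparison for the higher direct images $R^q f_*\,\bZ/n$ follows from the curve case applied fiberwise (the proper and smooth base change theorems identify the stalks of $R^q f_*$ with fiber cohomology in both topologies), while the comparison for $S$ is the inductive hypothesis; assembling the two spectral sequences yields the isomorphism for $X.$ The principal obstacle is essential surjectivity in part 1): it is the only step importing analytic information not formally deducible on the algebraic side, requiring GAGA on a compactification, the Grauert--Remmert extension theorem, and a purity argument at the boundary; in part 2) the corresponding pressure point is the curve base case together with checking that smooth and proper base change make the fiberwise comparison uniform enough to match the two Leray spectral sequences, everything else being formal d\'evissage.
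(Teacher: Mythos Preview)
The paper does not prove this theorem at all: it is stated with a citation to SGA4 (expos\'e XI.4 and XVI.4) and used as a black box. There is therefore no ``paper's own proof'' to compare your proposal against. Your outline is a reasonable high-level summary of the classical SGA4 arguments, and nothing in it is obviously wrong as a sketch, but be aware that in the context of this paper you are not expected to reprove these results --- they are imported wholesale as the key classical input driving the comparison.
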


Recall that from Remark \ref{rmk:concprof} that $\varepsilon$ is a profinite homotopy equivalence if and only if for every $\pi$-finite space $V,$ the induced map 
$$\Gamma_{\et}\Delta^{\et}\left(V\right) \to \Gamma_{an}\Delta^{an}\left(V\right)$$ is an equivalence, where
$$\xymatrix@C=2.5cm{\Shi\left(X_{\et}\right) \ar@<-0.5ex>[r]_-{\Gamma_{\et}} & \cS \ar@<-0.5ex>[l]_-{\Delta^{\et}}}$$
and
$$\xymatrix@C=2.5cm{\Shi\left(X_{an}\right) \ar@<-0.5ex>[r]_-{\Gamma_{an}} & \cS \ar@<-0.5ex>[l]_-{\Delta^{an}}}$$
are the unique geometric morphisms to the terminal $\i$-topos $\cS.$
Our method of proof will be to first establish this for connected $\pi$-finite spaces by using induction on Postnikov towers.

First we will need a few lemmas:

\begin{lemma}\label{lem: finite cover}
Let $X$ be a connected scheme of finite type over $\bC,$ and consider the geometric morphism $$\varepsilon:\Sh\left(X_{an}\right) \to \Sh\left(X_{\et}\right).$$ Suppose that $f:Y \to X$ is a finite \'etale map. Then $$\varepsilon_*\left(f_{an}\right)\cong f.$$
\end{lemma}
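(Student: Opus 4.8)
The plan is to identify the asserted isomorphism with the unit of the adjunction $\varepsilon^* \dashv \varepsilon_*$ and to verify that it becomes an isomorphism after pulling back along a suitable finite \'etale cover of $X$. Write $f\colon Y\to X$ also for the representable sheaf it defines in $\Sh(X_{\et})$. Since $\varepsilon^*$ agrees with the analytification functor $\alpha$ on representables, one has $\varepsilon^*f = f_{an}$, so the unit of the adjunction supplies a natural map $\eta\colon f \to \varepsilon_*\varepsilon^*f = \varepsilon_*(f_{an})$; unwinding the description of $\varepsilon_*$ recalled above, for an \'etale map $Y'\to X$ the component $\eta(Y')$ is precisely the analytification map $\Hom_X(Y',Y)\to \Hom_{X_{an}}(Y'_{an},Y_{an})$. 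The goal is to show $\eta$ is an isomorphism of sheaves.

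First I would choose a connected finite \'etale surjection $p\colon U\to X$ which \emph{splits} $f$, i.e.\ with $Y\times_X U\cong \coprod_{s\in S} U$ for a finite set $S$ (with $|S|$ the degree of $f$); such a $U$ exists because $X$ is connected (take a Galois closure of $Y/X$). Since $\{U\to X\}$ is a covering family in $X_{\et}$, the restriction functor $p^*\colon \Sh(X_{\et})\to \Sh(U_{\et})$ is conservative, so it suffices to show $p^*\eta$ is an isomorphism. On the source, $p^*f\cong h_{Y\times_X U}\cong \coprod_{s\in S}h_U$, which is the constant sheaf $\Delta_U(S)$. On the target, using the explicit formula for $\varepsilon_*$ together with the fact that $V\to U\to X$ factors through $U$, for $V\to U$ in $U_{\et}$ one has
$$(p^*\varepsilon_*(f_{an}))(V)\;=\;\Hom_{X_{an}}(V_{an},Y_{an})\;\cong\;\Hom_{U_{an}}\bigl(V_{an},\,Y_{an}\times_{X_{an}}U_{an}\bigr).$$
Here I would invoke that analytification preserves finite limits \emph{and} finite coproducts, so $Y_{an}\times_{X_{an}}U_{an}\cong(Y\times_X U)_{an}\cong\coprod_{s\in S}U_{an}$, and that a connected scheme of finite type over $\bC$ has connected analytification, so that for $V$ connected a morphism $V_{an}\to\coprod_{s\in S}U_{an}$ over $U_{an}$ amounts to a choice of a single $s\in S$. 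Hence $p^*\varepsilon_*(f_{an})$ is also the constant sheaf $\Delta_U(S)$, and tracing through the identifications (the analytification of ``$V\to Y\times_X U$ lands in the copy indexed by $s$'' is ``$V_{an}\to(Y\times_X U)_{an}$ lands in the copy indexed by $s$'') shows that $p^*\eta$ is the identity of $\Delta_U(S)$. Therefore $p^*\eta$, and hence $\eta$, is an isomorphism.

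All the ingredients drawn from elsewhere are classical: the existence of a finite \'etale Galois trivialization over a connected base, the compatibility of analytification with finite limits and finite coproducts, the conservativity of pullback along a cover, and the transfer of connectedness to the analytic topology. The one point I expect to require genuine care is checking that $p^*$ of the \emph{pushforward} $\varepsilon_*(f_{an})$ is computed correctly --- in effect the base-change identity $p^*\varepsilon_{X,*}(f_{an})\cong\varepsilon_{U,*}(p_{an}^*f_{an})$ underlying the fiber-product manipulation above --- but with the explicit description of $\varepsilon_*$ recalled in the text this is a direct computation rather than an appeal to abstract base change. Alternatively, the lemma can be deduced from Theorem \ref{thm:SGA}(1): the computation above shows $\varepsilon_*(f_{an})$ is a locally constant sheaf of finite sets, hence representable by a finite \'etale cover $Z\to X$, and then $\eta$ induces a bijection on $\Hom_{X_{\et}}(A,-)$ for every finite \'etale $A\to X$ by full faithfulness of $\alpha$ on finite \'etale covers, so $\eta$ is an isomorphism by the Yoneda lemma inside the category of finite \'etale covers of $X$.
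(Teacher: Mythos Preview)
Your proof is correct but organized differently from the paper's. The paper computes $\varepsilon_*(f_{an})(g)$ directly for an \emph{arbitrary} \'etale $g:Z\to X$: writing $\Hom_{X_{an}}(g_{an},f_{an})\cong\Hom_{Z_{an}}(\mathrm{id}_{Z_{an}},g^*(f)_{an})$ via the pullback square and noting that $g^*(f)_{an}$ is a finite covering of $Z_{an}$, it invokes Theorem~\ref{thm:SGA}(1) (over $Z$) to identify this with $\Hom_{Z_{\et}}(\mathrm{id}_Z,g^*(f))=\Hom_{X_{\et}}(g,f)$, finishing in two lines. Your main argument instead passes to a trivializing finite \'etale cover $U\to X$ and checks that both $p^*f$ and $p^*\varepsilon_*(f_{an})$ are the constant sheaf $\Delta_U(S)$, using only that analytification preserves finite (co)limits and that connected (locally) finite-type $\mathbb{C}$-schemes have connected analytification; conservativity of $p^*$ then finishes. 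This trades the full categorical equivalence of Theorem~\ref{thm:SGA}(1) for the lighter input of connectedness under analytification, at the cost of a longer descent setup. Your closing alternative---recognizing $\varepsilon_*(f_{an})$ as finite \'etale and invoking Yoneda inside the category of finite \'etale covers via full faithfulness of $\alpha$---is essentially the paper's computation specialized to finite \'etale test objects $g$.
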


\begin{proof}
Let $g:Z \to X$ be any \'etale map from a scheme, i.e. an object of $X_{\et}.$ Then we have that $$\varepsilon_*\left(f_{an}\right)\left(g\right)=\Hom_{\Top^{\et}/X_{an}}\left(g_{an},f_{an}\right).$$ Consider the pullback diagram
$$\xymatrix{Z_{an} \times_{X_{an}} Y_{an} \ar[r] \ar[d]_-{g_{an}^*\left(f_{an}\right)} & Y_{an} \ar[d]^-{f_{an}}\\
Z_{an} \ar[r]^{g_{an}} & X_{an}.}$$
Since analytification preserves finite limits, we have $$Z_{an} \times_{X_{an}} Y_{an}\cong \left(Z\times_X Y\right)_{an}$$ and $g_{an}^*\left(f_{an}\right)=g^*\left(f\right)_{an}.$ Since we have a pullback diagram, 
$$\Hom_{\Top^{\et}/X_{an}}\left(g_{an},f_{an}\right)\cong \Hom_{\Top^{\et}/Z_{an}}\left(id_{Z_{an}},g^*\left(f\right)_{an}\right),$$ and $g^*\left(f\right)_{an}$ is a finite covering projection so, by Theorem \ref{thm:SGA}, $1),$ 
$$\Hom_{\Top^{\et}/Z_{an}}\left(id_{Z_{an}},g^*\left(f\right)_{an}\right)\cong \Hom_{Z_{\et}}\left(id_Z,g^*\left(f\right)\right).$$ Finally, we can identify $\Hom_{Z_{\et}}\left(id_Z,g^*\left(f\right)\right)$ with $\Hom_{X_{\et}}\left(g,f\right),$ by using the pullback diagram for $Z \times_X Y.$ The result now follows.
\end{proof}

\begin{lemma}\label{lem: torsors}
Let $G$ be a finite group, and $X$ a scheme of finite type over $\bC.$ Then the analytification functor $$X_{\et} \to \Top^{\et}/X_{an}$$ induces an equivalence of categories between the category of $G$-torsors on $X$ and the category of principal $G$-bundles over $X_{an}.$
\end{lemma}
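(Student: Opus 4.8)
The plan is to deduce this from Theorem \ref{thm:SGA}(1), which gives the analogous equivalence between finite \'etale maps over $X$ and finite covering spaces of $X_{an}$, by recognizing torsors and principal bundles as extra algebraic/topological structure carried by the underlying finite \'etale map, resp.\ finite covering space, and observing that this structure is expressed entirely through finite limits, which the analytification functor preserves. First I would recall that a $G$-torsor on $X$ (for $G$ finite, hence a finite \'etale group scheme over $\bC$) is the same data as a finite \'etale map $P \to X$ equipped with an action map $a : G \times P \to P$ over $X$ satisfying the torsor condition, namely that the shear map $(a, \mathrm{pr}_2) : G \times P \to P \times_X P$ is an isomorphism, together with the requirement that $P \to X$ be an epimorphism (equivalently $P$ is faithfully flat over $X$, which for finite \'etale maps is the condition of being surjective). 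Dually, a principal $G$-bundle over $X_{an}$ is a finite covering space $Q \to X_{an}$ with a $G$-action $G \times Q \to Q$ over $X_{an}$ whose shear map $G \times Q \to Q \times_{X_{an}} Q$ is a homeomorphism and with $Q \to X_{an}$ surjective.

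The key steps, in order: (i) Note that $G$ finite means $G_{an}$ is just $G$ as a discrete topological group, and analytification of the trivial group scheme cover is the trivial covering space. (ii) Since the analytification functor $(\blank)_{an} : \Sch^{LFT}_\bC \to \Top$ preserves finite limits (as recalled in the excerpt, following \cite{toen-vaquie}), it sends the diagram encoding a $G$-action on a finite \'etale $P \to X$ — i.e.\ the maps $G \times P \to P$, the associativity and unit squares, and the shear isomorphism $G \times P \xrightarrow{\sim} P \times_X P$ — to the corresponding diagram over $X_{an}$, and it carries the isomorphism of the shear map to a homeomorphism since $(G\times P)_{an} \cong G \times P_{an}$ and $(P\times_X P)_{an}\cong P_{an}\times_{X_{an}} P_{an}$. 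So analytification sends $G$-torsors to principal $G$-bundles. (iii) By Theorem \ref{thm:SGA}(1), $\alpha$ restricted to finite \'etale maps is an equivalence onto finite covering spaces; since this equivalence is compatible with fiber products (both sides are the full subcategory on finite \'etale / finite covering objects inside a topos morphism that preserves the relevant finite limits), the action data and the shear-isomorphism condition and the surjectivity condition correspond bijectively across the equivalence. Hence the induced functor on torsors is fully faithful and essentially surjective, i.e.\ an equivalence. (iv) For essential surjectivity one also uses that any principal $G$-bundle over $X_{an}$ has, as underlying covering space, a finite one (fibers are torsors over the finite group $G$), hence lies in the image of $\alpha$, and its action descends to the corresponding finite \'etale map by step (iii).

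The main obstacle I expect is checking step (iii) carefully: one must make sure that the equivalence of Theorem \ref{thm:SGA}(1) genuinely respects fiber products over $X$ and $X_{an}$ — that is, that for finite \'etale $Y, Z$ over $X$ one has $\alpha(Y\times_X Z)\simeq \alpha(Y)\times_{X_{an}}\alpha(Z)$ as covering spaces, not merely as sheaves. This follows because the fiber product of finite covering spaces is again a finite covering space, and $\varepsilon^*$ (hence $\alpha$, being its composite with Yoneda) preserves finite limits; but one should state this explicitly. Once the equivalence is known to be monoidal with respect to these fiber products, the transport of the torsor axioms is formal. A secondary, minor point is verifying that ``surjective'' on the algebraic side matches ``surjective'' on the analytic side for finite \'etale maps — this is immediate since a finite \'etale map is surjective iff all its fibers are nonempty iff the corresponding covering space has nonempty fibers, and $\alpha$ preserves the point $\Spec\bC \to X$ versus a point of $X_{an}$.
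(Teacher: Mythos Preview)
Your proposal is correct and follows essentially the same strategy as the paper: both deduce the lemma from Theorem~\ref{thm:SGA}(1) together with the fact that analytification preserves finite limits, so that the diagrams encoding a $G$-action and the shear isomorphism transport across the equivalence. The only minor difference is in the bookkeeping for essential surjectivity: the paper makes the inverse functor explicit as $\varepsilon_*$ restricted to finite covers (via Lemma~\ref{lem: finite cover}, which shows $\varepsilon_*(f_{an})\cong f$ for $f$ finite \'etale) and then checks that $\varepsilon_*$ carries $\Delta_{an}(G)$-torsors to $\Delta_{\et}(G)$-torsors, whereas you argue more abstractly that any equivalence of categories compatible with the relevant fiber products automatically transports the torsor axioms. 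Your route is slightly cleaner in that it avoids invoking the auxiliary Lemma~\ref{lem: finite cover}, but the content is the same.
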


\begin{proof}
Consider the geometric morphism $$\varepsilon:\Sh\left(X_{an}\right) \to \Sh\left(X_{\et}\right).$$ Suppose that $\cP$ is a $G$-torsor on $X,$ with underlying map $P \to X,$ which is a finite \'etale map. The analytification functor sends this to $P_{an} \to X_{an}$ which is a finite covering map, and since the analytification functor preserves finite limits, $P_{an}$ inherits a fiber-wise $G$-action in such a way that it makes it a principal $G$-bundle. It follows from Theorem \ref{thm:SGA}, $1),$ that this construction produces a fully faithful functor from $G$-torsors on $X$ to principal $G$-bundles over $X_{an}.$ It suffices to show this is essentially surjective. Suppose that $\cP'$ is  is a principal $G$-bundle with underlying finite covering space $\pi':P' \to X_{an}.$ By Theorem \ref{thm:SGA}, $1),$ we may assume this finite covering space is of the form $\pi_{an}$ for $\pi:Q \to X$ a finite \'etale map, and by Lemma \ref{lem: finite cover}, we may further assume that  $\pi=\varepsilon_*\left(\pi'\right).$ Denote by $\Delta_{an}\left(G\right)$ the constant sheaf of groups, and regard $\cP'$ as a $\Delta_{an}\left(G\right)$-torsor, i.e. as a sheaf $E_P$ in $\Sh\left(X_{an}\right)$ equipped with a $\Delta_{an}\left(G\right)$-action such that the canonical map $$\Delta_{an}\left(\underline{G}\right) \times E_P \to E_P \times E_P$$ is an isomorphism, where  $\underline{G}$ denotes the underlying set of $G.$ Note that $\Delta^{\et}\left(\underline{G}\right)$ is representable by the finite \'etale map $$pr_2:\underline{G} \times X \to X,$$ and the analytification of this map is $$pr_2:\underline{G} \times X_{an} \to X_{an},$$ which is the \'etal\'e space of $\Delta_{an}\left(\underline{G}\right).$ It follows from Lemma \ref{lem: finite cover} and the fact that $\varepsilon_*$ preserves limits that $$\varepsilon_*\left(\Delta\left(G\right)\right)\cong\Delta_{\et}\left(G\right).$$ Moreover, $\varepsilon_*$ carries $\Delta_{an}\left(G\right)$-torsors in $\Sh\left(X_{an}\right)$ to $\Delta^{\et}\left(G\right)$-torsors in $\Sh\left(X_{\et}\right).$ The result now follows.
\end{proof}

\begin{lemma}\label{lem: K(A,n)}
Let $f:\cE \to \cF$ be a geometric morphism of $\i$-topoi and let $\cA$ be an abelian sheaf in $\cF.$ Suppose that for all $n \ge 0,$ $f$ induces an isomorphism in sheaf cohomology groups
$$H^n\left(\cF,\cA\right) \stackrel{\sim}{\longrightarrow} H^n\left(\cE,f^*\cA\right).$$
Then for all $n,$ then induced map $$\Gamma_{\cF}\left(K\left(\cA,n\right)\right) \to \Gamma_{\cE}\left(K\left(f^*\cA,n\right)\right)$$ is an equivalence of $\i$-groupoids.
\end{lemma}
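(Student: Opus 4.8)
The plan is to prove this by induction on $n$, using the fact that an Eilenberg--MacLane object $K(\cA,n)$ can be built from $K(\cA,n-1)$ via a delooping construction, together with the standard relationship between global sections of $K(\cA,n)$ and sheaf cohomology. The key input is the identity $\pi_0\Gamma_{\cF}(K(\cA,n)) \cong H^n(\cF,\cA)$, and more precisely that $\pi_i\Gamma_{\cF}(K(\cA,n)) \cong H^{n-i}(\cF,\cA)$ for $0 \le i \le n$ (and vanishes otherwise), which follows from \cite[Remark 7.2.2.17]{htt} together with the fact that $\Gamma_{\cF}$ preserves the $n$-truncatedness of $K(\cA,n)$ and the loop-space relation $\Omega K(\cA,n) \simeq K(\cA,n-1)$.

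First I would record the base case $n=0$: here $K(\cA,0) = \cA$ (viewed as a discrete sheaf), and $\Gamma_{\cF}(\cA) = \Hom_{\cF}(1,\cA)$ is a discrete space, namely $H^0(\cF,\cA)$, so the hypothesis for $n=0$ gives the claim directly. Next, for the inductive step, I would observe that $\Gamma_{\cE}$ and $\Gamma_{\cF}$ are right adjoints hence preserve limits, so they commute with the formation of loop spaces; since $\Omega K(f^*\cA,n) \simeq K(f^*\cA,n-1)$ and similarly over $\cF$, one gets that the map $\Gamma_{\cF}(K(\cA,n)) \to \Gamma_{\cE}(K(f^*\cA,n))$ loops down to the map $\Gamma_{\cF}(K(\cA,n-1)) \to \Gamma_{\cE}(K(f^*\cA,n-1))$, which is an equivalence by the inductive hypothesis. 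This shows the map in question induces isomorphisms on all homotopy groups $\pi_i$ for $i \ge 1$ (compatibly with basepoints, using that $\Gamma$ preserves the zero section). It remains to handle $\pi_0$: since $K(\cA,n)$ is connected for $n>0$, both $\Gamma_{\cF}(K(\cA,n))$ and $\Gamma_{\cE}(K(f^*\cA,n))$ have $\pi_0$ equal to $H^n(\cF,\cA)$ and $H^n(\cE,f^*\cA)$ respectively, and the map between them is the comparison map, which is an isomorphism by hypothesis. A map of spaces inducing isomorphisms on all homotopy groups (at every basepoint) is an equivalence, so we conclude.

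The one subtlety I would be careful about is that a priori $\Gamma_{\cF}(K(\cA,n))$ need not be connected as a space even though $K(\cA,n)$ is a connected object of $\cF$ — indeed $\pi_0$ of the global sections is exactly $H^n$, which is generally nonzero. So the argument genuinely needs two inputs: the isomorphism on $\pi_0$ (which is the degree-$n$ cohomology comparison, the top of the hypothesis) and the isomorphism on higher homotopy (which, via looping, reduces to the lower-degree cohomology comparisons). I would phrase the looping reduction cleanly by noting that for a pointed space $W$ one has $\pi_{i+1}(W) \cong \pi_i(\Omega W)$, apply this with $W = \Gamma_{\cF}(K(\cA,n))$ and use $\Gamma_{\cF}(\Omega K(\cA,n)) \simeq \Omega \Gamma_{\cF}(K(\cA,n))$ (as $\Gamma_\cF$ is left exact), so that the full induction only ever needs the hypothesis $H^m(\cF,\cA) \xrightarrow{\sim} H^m(\cE,f^*\cA)$ for $m \le n$.

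The main obstacle — really the only thing requiring care rather than being formal — is making the basepoint bookkeeping precise: to invoke Whitehead's theorem one needs the map to be a bijection on $\pi_0$ and an isomorphism on $\pi_i$ based at \emph{every} component, not just the distinguished one. For $i=0$ this is the cohomology comparison. For $i \ge 1$ one uses that $K(\cA,n)$ is a group object (an infinite loop space in the sheaf of spectra sense, or at least a grouplike $\mathbb{E}_1$-object), so its space of global sections is a grouplike $\mathbb{E}_1$-space; translation then identifies the homotopy groups at any basepoint with those at the identity, reducing to the looping argument above. I would state this observation explicitly, since it is what legitimizes passing freely between "isomorphism on cohomology in all degrees" and "equivalence on global sections."
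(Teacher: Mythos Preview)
Your proof is correct and essentially identical to the paper's. The paper computes $\pi_k\Gamma_{\cF}(K(\cA,n))$ directly as $H^{n-k}(\cF,\cA)$ by using that $\Gamma_{\cF}$ commutes with $\Omega^k_e$ (left exactness) and $\Omega^k_e K(\cA,n)\simeq K(\cA,n-k)$, then invokes the grouplike $\mathbb{E}_\infty$-structure on $\Gamma_{\cF}(K(\cA,n))$ to reduce to the canonical basepoint; your inductive packaging and your explicit basepoint-translation remark are the same argument with the loop applied one step at a time rather than $k$-fold at once.
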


\begin{proof}
First of all, it follows immediately from \cite[Remark 6.5.1.4]{htt} that $$f^*K\left(\cA,n\right)=K\left(f^*\cA,n\right),$$ which explains the induced map above; it is induced by the functor $f^*$ (since $f^*$ preserves the terminal object). Notice that for any abelian sheaf $\cB$ on an $\i$-topos $\cX,$ for $n>0,$ $K\left(\cB,n\right)$ has the structure of a grouplike $\mathbb{E}_\i$-object in $\cX,$ and consequently, $$\Gamma_{\cX}\left(K\left(\cB,n\right)\right)=\Hom_{\cX}\left(1,K\left(\cB,n\right)\right)$$ is a grouplike $\mathbb{E}_\i$-space. Let $e:1 \to K\left(\cB,n\right)$ be the group unit. We can identify $e$ with a map from the one-point space
$$* \stackrel{e}{\longrightarrow} \Hom_{\cX}\left(1,K\left(\cB,n\right)\right).$$ Let $$*\stackrel{g}{\longrightarrow} \Hom_{\cX}\left(1,K\left(\cB,n\right)\right)$$ be any other base point of the space $\Hom_{\cX}\left(1,K\left(\cB,n\right)\right).$ Denote by $\left[g\right]$ the image of $g$ in $\pi_0\left( \Hom_{\cX}\left(1,K\left(\cB,n\right)\right)\right).$ Note that $\pi_0\left( \Hom_{\cX}\left(1,K\left(\cB,n\right)\right)\right)$ is a group, since the $\mathbb{E}_\i$-space $\Hom_{\cX}\left(1,K\left(\cB,n\right)\right)$ is grouplike. Now, for any $n >0,$ multiplication with $\left[g\right]$ induces an isomorphism $$\pi_n\left(\Hom_{\cX}\left(1,K\left(\cB,n\right)\right),e\right) \stackrel{\sim}{\longrightarrow} \pi_n\left(\Hom_{\cX}\left(1,K\left(\cB,n\right)\right),g\right).$$ From this discussion, it suffice to prove in our situation, that $$\Gamma_{\cF}\left(K\left(\cA,n\right)\right) \to \Gamma_{\cE}\left(K\left(f^*\cA,n\right)\right)$$ induces an isomorphism on $\pi_0$ and on all higher homotopy groups \emph{at the canonical base point $e$.} So it suffices to show that for all $k>0$ the induced map
$$\pi_0\Omega^k_{e}\left(\Gamma_{\cF}\left(K\left(\cA,n\right)\right)\right) \to \pi_0\Omega^k_{e}\left(\Gamma_{\cE}\left(K\left(f^*\cA,n\right)\right)\right)$$ is an isomorphism.

Notice that the canonical base point $e$ is in the image of the global sections functor $\Gamma_{\cF},$ i.e.
$$e=\Gamma_{\cF}\left(e\right):\Gamma_{\cF}\left(1\right) \to \Gamma_{\cF}\left(K\left(\cA,n\right)\right),$$ and since $\Gamma_{\cF}$ preserves finite limits, it then follows that $$\Omega^k_{e}\left(\Gamma_{\cF}\left(K\left(\cA,n\right)\right)\right)\simeq \Gamma_{\cF}\left(\Omega^k_{e}\left(\left(K\left(\cA,n\right)\right)\right)\right).$$ When $k \le n,$ $\Omega^k_{e}\left(\left(K\left(\cA,n\right)\right)\right)\simeq K\left(\cA,n-k\right)$ and for $k>n,$ it's the terminal object. Consequently, we have that
$$\pi_k\left(\Gamma_{\cF}\left(K\left(\cA,n\right)\right),e\right) \cong \pi_0\Omega^k_{e}\left(\Gamma_{\cF}\left(K\left(\cA,n\right)\right)\right) \cong H^{n-k}\left(\cF,\cA\right)$$ for $k \le n,$ and otherwise is zero, and similarly for $\pi_k\left(\Gamma_{\cE}\left(K\left(f^*\cA,n\right)\right),e\right).$ The result now follows.
\end{proof}

\begin{proposition}\label{prop: main theorem connected}
Let $X$ be a connected scheme of finite type over $\bC.$ Then $$\varepsilon:\Shi\left(X_{an}\right) \to \Shi\left(X_{\et}\right)$$ is a profinite homotopy equivalence.
\end{proposition}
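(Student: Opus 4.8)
The plan is to verify the criterion of Remark \ref{rmk:concprof}: for every $\pi$-finite space $V$ we must show that the canonical map
\[
\Gamma_{\et}\Delta^{\et}\left(V\right) \to \Gamma_{an}\Delta^{an}\left(V\right)
\]
induced by $\varepsilon$ is an equivalence, where $\Delta^{\et} \dashv \Gamma_{\et}$ and $\Delta^{an} \dashv \Gamma_{an}$ implement the essentially unique geometric morphisms of $\Shi\left(X_{\et}\right)$ and $\Shi\left(X_{an}\right)$ to $\cS$. First I would reduce to connected $V$. Since $X$ is connected, $X_{an}$ is connected (a classical fact), and both $\i$-topoi are locally connected: $\Shi\left(X_{\et}\right)$ because $X$ is Noetherian, so $X_{\et}$ is a locally connected site, and $\Shi\left(X_{an}\right)$ because $X_{an}$ is locally contractible. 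Thus their terminal objects are connected, so by Proposition \ref{prop: coproducts} both $\Gamma_{\et}$ and $\Gamma_{an}$ commute with the finite coproduct decomposition of $V$ into connected components (and $\Delta^{\et},\Delta^{an}$ preserve coproducts as left adjoints), reducing us to the case $V$ connected.

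Now I would argue by induction up the finite Postnikov tower $* = V_0 \leftarrow V_1 \leftarrow \cdots \leftarrow V_m = V$, proving at each stage that $\Gamma_{\et}\Delta^{\et}\left(V_k\right) \to \Gamma_{an}\Delta^{an}\left(V_k\right)$ is an equivalence. For $k=1$ we have $V_1 = K\left(G,1\right)$ with $G := \pi_1\left(V\right)$ a finite group, so $\Gamma_{\et}\Delta^{\et}\left(V_1\right)$ is the space of $G$-torsors on $X_{\et}$, $\Gamma_{an}\Delta^{an}\left(V_1\right)$ is the space of principal $G$-bundles on $X_{an}$, and the comparison map is the analytification functor, an equivalence by Lemma \ref{lem: torsors}. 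For the inductive step set $A := \pi_k\left(V\right)$, a finite abelian group; the map $V_k \to V_{k-1}$ has all fibres equivalent to $K\left(A,k\right)$, so by Proposition \ref{prop: univ KAN fibration} it is a pullback of the universal $K\left(A,k\right)$-fibration $\theta_k : K\left(\Aut\left(A\right),1\right) \to B\Aut\left(K\left(A,k\right)\right)$ along a classifying map $V_{k-1} \to B\Aut\left(K\left(A,k\right)\right)$. Because $\Delta^{\et},\Delta^{an},\Gamma_{\et},\Gamma_{an}$ all preserve finite limits, applying them produces a pullback square of spaces, and $\varepsilon$ carries the square over $X_{\et}$ to that over $X_{an}$ (the comparison is a natural transformation of finite-limit-preserving functors $\cS\to\cS$). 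It therefore suffices to see that $\varepsilon$ induces equivalences on the three remaining corners: on $\Gamma\Delta\left(V_{k-1}\right)$ by the inductive hypothesis; on $\Gamma\Delta\left(K\left(\Aut\left(A\right),1\right)\right)$ again by Lemma \ref{lem: torsors}, applied to the finite group $\Aut\left(A\right)$; and on $\Gamma\Delta\left(B\Aut\left(K\left(A,k\right)\right)\right)$, which is treated separately.

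For that last corner I would show, for all finite abelian $A$ and $n>0$, that $\Gamma_{\et}\Delta^{\et}\left(B\Aut\left(K\left(A,n\right)\right)\right)\to\Gamma_{an}\Delta^{an}\left(B\Aut\left(K\left(A,n\right)\right)\right)$ is an equivalence. By Lemma \ref{lem:univpb}, the $1$-truncation map $B\Aut\left(K\left(A,n\right)\right)\to K\left(\Aut\left(A\right),1\right)$ is the pullback of $\theta_{n+1}$ along $\theta_{n+1}$; hence, applying $\Delta^{\et}$ and pulling back along an arbitrary local system $\tau : 1 \to \Delta^{\et}\left(K\left(\Aut\left(A\right),1\right)\right)$ on $\Shi\left(X_{\et}\right)$, the proof of Theorem \ref{thm: classifying space infinity topos} identifies the resulting object with $K\left(\cF_\tau,n+1\right)$, where $\cF_\tau$ is the abelian sheaf classified by $\tau$ (a locally constant sheaf with finite values $A$). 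Therefore $\Gamma_{\et}\Delta^{\et}\left(B\Aut\left(K\left(A,n\right)\right)\right)$ is the total space of a map to the space of local systems $\Gamma_{\et}\Delta^{\et}\left(K\left(\Aut\left(A\right),1\right)\right)$ whose homotopy fibre over $\tau$ is $\Gamma_{\et}\left(K\left(\cF_\tau,n+1\right)\right)$, and similarly over $X_{an}$, with $\varepsilon$ respecting these fibrations. The induced map on bases is an equivalence by Lemma \ref{lem: torsors}; and since $\varepsilon^*$ is compatible with the classifying construction, so that $\varepsilon^*\cF_\tau \cong \cF_{\varepsilon^*\tau}$ and $\varepsilon^* K\left(\cF_\tau,n+1\right) \simeq K\left(\varepsilon^*\cF_\tau,n+1\right)$, the induced map on the fibre over $\tau$ is $\Gamma_{\et}\left(K\left(\cF_\tau,n+1\right)\right)\to\Gamma_{an}\left(K\left(\varepsilon^*\cF_\tau,n+1\right)\right)$, an equivalence by Lemma \ref{lem: K(A,n)} together with Theorem \ref{thm:SGA}\,2) ($\cF_\tau$ being a local system of finite abelian groups). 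A map of fibrations which is an equivalence on the base and on every fibre is an equivalence, which completes the induction and the proof.

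The main obstacle is precisely this last step, and it is where the apparent circularity of a naive Postnikov induction is broken: controlling a $\pi$-finite space of truncation level $k$ requires controlling $B\Aut\left(K\left(A,k\right)\right)$, which itself has a homotopy group in degree $k+1$, so it cannot be handled by induction on Postnikov length; instead one must analyse $B\Aut\left(K\left(A,n\right)\right)$ directly by fibring its space of global sections over the space of local systems and feeding the classical comparison for étale cohomology with finite (possibly twisted) coefficients, Theorem \ref{thm:SGA}\,2), into Lemma \ref{lem: K(A,n)}. Identifying the relevant fibres as Eilenberg--MacLane objects $K\left(\cF_\tau,n+1\right)$ uniformly in the two $\i$-topoi is exactly what the material of Section \ref{sec: cohomology} was developed to supply, and is the technical heart of the argument.
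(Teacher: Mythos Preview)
Your proposal is correct and follows essentially the same approach as the paper's proof: reduce to connected $V$ via Proposition \ref{prop: coproducts}, do a Postnikov induction using the pullback square of Proposition \ref{prop: univ KAN fibration}, and handle the problematic corner $B\Aut\left(K\left(A,n\right)\right)$ separately by fibring over the space of local systems, identifying the fibres as $K\left(\cF_\tau,n+1\right)$ via Lemma \ref{lem:univpb} and Theorem \ref{thm: classifying space infinity topos}, and invoking Theorem \ref{thm:SGA}\,2) together with Lemma \ref{lem: K(A,n)}. The only cosmetic difference is ordering: the paper first assembles the class $\sC$ of spaces for which the comparison holds, establishes closure under finite limits, places $K\left(G,1\right)$ and $B\Aut\left(K\left(A,n\right)\right)$ into $\sC$, and only then reduces to connected $V$ and runs the induction, whereas you interleave these steps; your explicit remark on why the induction is not circular is a nice clarification of a point the paper leaves implicit.
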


\begin{proof}
It suffices to prove that for every $\pi$-finite space, the induced map $$\Gamma_{\et}\Delta^{\et}\left(V\right) \to \Gamma_{an}\Delta^{an}\left(V\right)$$ is an equivalence of $\i$-groupoids. Denote by $\sC$ the full subcategory of $\cS$ spanned by all spaces $V$ for which the above map is an equivalence. Note that since the functors $\Gamma_{\et},\Delta^{\et},\Gamma_{an},\Delta^{an}$ all preserve finite limits, if follows that $\sC$ is closed under finite limits in $\cS.$ Note by Theorem \ref{thm:SGA}, $2),$ together with Lemma \ref{lem: K(A,n)} it follows in particular that $\sC$ contains all Eilenberg-MacLane spaces of the form $K\left(A,n\right),$ with $A$ a finite abelian group. Also, it follows from Lemma \ref{lem: torsors} that $\sC$ contains all $K\left(G,1\right)$ for all finite groups $G.$

Fix a finite abelian group $A$ and let $n >0$ be an integer. We claim that $B\Aut\left(K\left(A,n\right)\right)$ is also in $\sC.$ Let us establish this claim. We have already seen that the canonical map $$\Gamma_{\et}\Delta^{\et}\left(K\left(\Aut\left(A\right),1\right)\right) \to \Gamma_{an}\Delta^{an}\left(K\left(\Aut\left(A\right),1\right)\right)$$ is an equivalence of $\i$-groupoids. Consider the canonical map $$\psi:B\Aut\left(K\left(A,n\right)\right) \to K\left(\Aut\left(A\right),1\right)$$ induced by identifying $K\left(\Aut\left(A\right),1\right)$ as the $1$-truncation of $B\Aut\left(K\left(A,n\right)\right).$ It suffices to prove that for every base point $$\tau:* \to \Gamma_{\et}\Delta^{\et}\left(K\left(\Aut\left(A\right),1\right)\right),$$ the induced maps between the (homotopy) fiber of $$\Gamma_{\et}\Delta^{\et}\left(\psi\right)$$ over $\tau$ and the (homotopy) fiber of $$\Gamma_{an}\Delta^{an}\left(\psi\right)$$ over $\varepsilon^*\tau$ is an equivalence of $\i$-groupoids. Let $F_n,$ denote the fiber of $$\Gamma_{\et}\Delta^{\et}\left(\psi\right)$$ over $\tau,$ i.e. the pullback
$$\xymatrix{F_n \ar[r] \ar[d] & \Hom_{\Shi\left(X_{\et}\right)}\left(1,\Delta^{\et}\left(B\Aut\left(K\left(A,n\right)\right)\right)\right) \ar[d]^-{\Gamma_{\et}\Delta^{\et}\left(\psi\right)}\\
\ast \ar[r]^-{\tau} & \Hom_{\Shi\left(X_{\et}\right)}\left(1,\Delta^{\et}\left(K\left(\Aut\left(A\right),1\right)\right)\right).}$$
By \cite[Proposition 5.5.5.12]{htt}, we have a canonical identification $$F_n\simeq \Hom_{\Shi\left(X_{\et}\right)/\Delta^{\et}\left(K\left(\Aut\left(A\right),1\right)\right)}\left(\tau,\Delta^{\et}\left(\psi\right)\right).$$ 
The latter space of maps is the space of lifts
$$\xymatrix{1 \times_{\Delta^{\et}\left(K\left(\Aut\left(A\right),1\right)\right)} \Delta^{\et}\left(B\Aut\left(K\left(A,n\right)\right)\right)  \ar[d] \ar[r]& \Delta^{\et}\left(B\Aut\left(K\left(A,n\right)\right)\right) \ar[d]^-{\Delta^{\et}\left(\psi\right)}\\
1 \ar@{-->}[ru] \ar[r]^-{\tau} & \Delta^{\et}\left(K\left(\Aut\left(A\right),1\right)\right)}$$
which is canonically equivalent to the space $$\Gamma^{\et}\left(1 \times_{\Delta^{\et}\left(K\left(\Aut\left(A\right),1\right)\right)} \Delta^{\et}\left(B\Aut\left(K\left(A,n\right)\right)\right)\right).$$
Since $\Delta^{\et}$ preserves finite limits, it follows from Lemma \ref{lem:univpb} that the following is a pullback diagram in $\Shi\left(X_{\et}\right)$:
$$\xymatrix@R=2cm@C=2cm{\Delta^{\et}\left(B\Aut\left(K\left(A,n\right)\right)\right) \ar[d]_-{\Delta^{\et}\left(\psi\right)} \ar[r] & \Delta^{\et}\left(K\left(\Aut\left(A\right),1\right)\right) \ar[d]^-{\Delta^{\et}\left(\theta_{n+1}\right)} \\
\Delta^{\et}\left(K\left(\Aut\left(A\right),1\right)\right) \ar[r]^-{\Delta^{\et}\left(\theta_{n+1}\right)} & \Delta^{\et}\left(B\Aut\left(K\left(A,n+1\right)\right)\right).}$$
In light of this, by the pullback square at the end of the proof of Theorem \ref{thm: classifying space infinity topos}, we have a canonical identification $$1 \times_{\Delta^{\et}\left(K\left(\Aut\left(A\right),1\right)\right)} \Delta^{\et}\left(B\Aut\left(K\left(A,n\right)\right)\right) \simeq K\left(\cF_{\tau},n+1\right),$$ where $\cF_{\tau}$ is the abelian sheaf classified by the local system $\tau.$ In summary, we have that the fiber of $\Gamma_{\et}\Delta^{\et}\left(\psi\right)$ over $\tau$ can canonically be identified with $\Gamma_{\et}\left(K\left(\cF_{\tau},n+1\right)\right).$ Hence the induced map between fibers can be identified with the induced map $$\Gamma_{\et}\left(K\left(\cF_{\tau},n+1\right)\right) \to \Gamma_{an}\left(K\left(\cF_{\varepsilon^*\tau},n+1\right)\right).$$ By Theorem \ref{thm:SGA}, $2),$ for all $n,$ the induced map 
$$H^n\left(\Shi\left(X_{\et}\right),\cF_{\tau}\right) \stackrel{\sim}{\longrightarrow} H^n\left(\Shi\left(X_{an}\right),\cF_{\varepsilon^*\tau}\right)$$ is an isomorphism. The claim now follows from Lemma \ref{lem: K(A,n)}.

Since $X$ is connected, it follows that so is $X_{an},$ and hence the terminal objects of both $\Shi\left(X_{an}\right)$ and $\Shi\left(X_{\et}\right)$ are connected, by Lemma \ref{lem:truncated connected}. It follows then from Proposition \ref{prop: coproducts} that both $\Gamma_{\et}$ and $\Gamma_{an}$ preserve coproducts, and hence $\sC$ is closed under coproducts in $\cS.$ This reduces our job to checking that $$\Gamma_{\et}\Delta^{\et}\left(V\right) \to \Gamma_{an}\Delta^{an}\left(V\right)$$ is an equivalence for all \emph{connected} $\pi$-finite spaces. 

Lets prove by induction on $n$ that $\sC$ contains all connected $n$-truncated $\pi$-finite spaces. We have already established that this holds for $n=1.$ Suppose by hypothesis that $n \ge 2$ and $\sC$ contains all $\left(n-1\right)$-truncated connected $\pi$-finite spaces. We wish to show that $\sC$ contains all $n$-truncated connected $\pi$-finite spaces. Let $Z$ be such a space. Denote by $Z_{n-1}$ the $\left(n-1\right)^{st}$-truncation of $Z.$ Then $Z \to Z_{n-1}$ has fiber $K\left(\pi_n\left(Z\right),n\right).$ Let $A$ be the abelian group $\pi_n\left(Z\right).$ Then by Proposition \ref{prop: univ KAN fibration}, we have a pullback square
$$\xymatrix{Z \ar[d] \ar[r] & K\left(\Aut\left(A\right),1\right) \ar[d]^-{\theta_n}\\
Z_{n-1} \ar[r] & B\Aut\left(K\left(A,n\right)\right).}$$ Since $\sC$ is stable under finite limits, it now follows that $Z$ is in $\sC$ as well.
\end{proof}

\begin{proposition}
Let $X$ be a scheme of finite type over $\bC.$ Then $$\varepsilon:\Shi\left(X_{an}\right) \to \Shi\left(X_{\et}\right)$$ is a profinite homotopy equivalence.
\end{proposition}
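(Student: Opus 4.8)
The plan is to bootstrap from the connected case, Proposition \ref{prop: main theorem connected}, by splitting $X$ into its connected components. Since a scheme of finite type over $\bC$ is Noetherian, it has only finitely many connected components $X_1,\dots,X_k$; each $X_i$ is an open and closed subscheme of $X$, hence again a connected scheme of finite type over $\bC$, and $X\cong X_1\coprod\cdots\coprod X_k$.

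Next I would check that this decomposition propagates through the functors in sight. An \'etale map $U\to X$ amounts to a tuple of \'etale maps $U\times_X X_i\to X_i$, and \'etale covering families decompose accordingly, so the small \'etale site of $X$ splits as a product and $\Shi\left(X_{\et}\right)\simeq\prod_{i=1}^k\Shi\left(\left(X_i\right)_{\et}\right)$; likewise $X_{an}\cong\coprod_i\left(X_i\right)_{an}$ gives $\Shi\left(X_{an}\right)\simeq\prod_{i=1}^k\Shi\left(\left(X_i\right)_{an}\right)$. Under these equivalences the terminal geometric morphisms decompose componentwise: $\Delta^{\et}$ corresponds to $Z\mapsto\left(\Delta^{\et}_i Z\right)_i$ and $\Gamma_{\et}$ to $\left(F_i\right)_i\mapsto\prod_i\Gamma^i_{\et}F_i$, and similarly on the analytic side. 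Applying the naturality of $\varepsilon$ for scheme morphisms established during its construction (where the relevant $2$-cells $\varepsilon(f)$ were invertible) to the open immersions $X_i\hookrightarrow X$, one identifies $\varepsilon_X$ with $\prod_{i=1}^k\varepsilon_{X_i}$.

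Then I would invoke Remark \ref{rmk:concprof}: it suffices to show that for every $\pi$-finite space $V$ the canonical map $\Gamma_{\et}\Delta^{\et}\left(V\right)\to\Gamma_{an}\Delta^{an}\left(V\right)$ is an equivalence. By the previous step this map is the finite product over $i$ of the maps $\Gamma^i_{\et}\Delta^{\et}_i\left(V\right)\to\Gamma^i_{an}\Delta^{an}_i\left(V\right)$ induced by $\varepsilon_{X_i}$, and each of these is an equivalence of $\i$-groupoids by Proposition \ref{prop: main theorem connected} since $X_i$ is connected. A finite product of equivalences is an equivalence, so we are done.

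The only point requiring a little care is the identification $\varepsilon_X\simeq\prod_i\varepsilon_{X_i}$, i.e.\ the compatibility of $\varepsilon$ with the product decompositions of the \'etale and analytic $\i$-topoi; everything else is a formal consequence of Proposition \ref{prop: main theorem connected} together with the fact that global sections of a finite product of $\i$-topoi is the product of the global sections.
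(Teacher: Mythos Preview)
Your proposal is correct and follows essentially the same approach as the paper: reduce to the connected case (Proposition \ref{prop: main theorem connected}) by decomposing $X$ into its connected components and checking the criterion of Remark \ref{rmk:concprof} componentwise. The paper's argument is slightly more direct in that it does not explicitly identify $\Shi\left(X_{\et}\right)$ and $\Shi\left(X_{an}\right)$ as finite products of $\i$-topoi nor decompose $\varepsilon_X$ as $\prod_i\varepsilon_{X_i}$; instead it simply computes $\Gamma_{\et}\Delta^{\et}\left(V\right)\simeq\prod_\alpha\Gamma^\alpha_{\et}\Delta^\alpha_{\et}\left(V\right)$ (and likewise on the analytic side) by using that the terminal object is the coproduct $\coprod_\alpha X_\alpha$ and that $\Hom$ turns coproducts into products --- so in particular the finiteness of the set of components, while true, is never actually used (an arbitrary product of equivalences is still an equivalence).
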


\begin{proof}
Let $X=\underset{\alpha} \coprod X_{\alpha},$ with each $X_{\alpha}$ a connected scheme. For any space $V,$ 
\begin{eqnarray*}
\Gamma^{\et}\Delta^{\et}\left(V\right)&=& \Hom_{\Shi\left(X_{\et}\right)}\left(\underset{\alpha}\coprod X_{\alpha},\Delta^{\et}\left(V\right)\right)\\
&\simeq&\underset{\alpha} \prod \Hom_{\Shi\left(X_{\et}\right)}\left(X_{\alpha},\Delta^{\et}\left(V\right)\right)\\
&\simeq& \underset{\alpha} \prod \Hom_{\Shi\left(\left(X_\alpha\right)_{\et}\right)}\left(1,\Delta_\alpha^{\et}\left(V\right)\right)\\
&\simeq & \underset{\alpha} \prod \Gamma_\alpha^{\et}\Delta_\alpha^{\et}\left(V\right).
\end{eqnarray*}
The analytification of $X$ is $$X_{an}=\underset{\alpha} \coprod \left(X_{\alpha}\right)_{an}$$ and each $X_{\alpha}$ is connected as a topological space. By analogous reasoning as above we have $$\Gamma^{an}\Delta^{an}\left(V\right) \simeq \underset{\alpha} \prod \Gamma_\alpha^{an}\Delta_\alpha^{an}\left(V\right).$$ The result now follows from Proposition \ref{prop: main theorem connected}.
\end{proof}

This establishes the proof of Proposition \ref{prop: main 1}. We now prove our main theorem:

\begin{theorem}\label{thm: main}
The following diagram commutes up to equivalence:
$$\xymatrix@C=2.5cm@R=2cm{\Shi\left(\Aff,\mbox{\'et}\right) \ar[r]^-{\widehat{\Pi}^{\et}_\i} \ar[d]_-{\left(\blank\right)_{top}} & \Profs\\
\Hshi\left(\TopC\right) \ar[r]^-{\Pi_\i} & \cS. \ar[u]_-{\widehat{\left(\blank\right)}}}$$
In particular, for any $\i$-sheaf $F$ on $\left(\Aff,\mbox{\'et}\right),$ there is an equivalence of profinite spaces $$\widehat{\Pi}^{\et}_\i\left(F\right) \simeq \widehat{\Pi}_\i\left(F_{top}\right),$$ between the profinite \'etale homotopy type of $F$ and the profinite completion of the homotopy type of the underlying stack $F_{top}$ on $\TopC$.
\end{theorem}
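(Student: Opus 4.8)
The plan is to deduce Theorem \ref{thm: main} formally from Proposition \ref{prop: main 1} and Lemma \ref{lem: hypershape homotopy}, together with the definition of the profinite completion functor $\widehat{\left(\blank\right)}$ as the composite $\cS \stackrel{j}{\hookrightarrow} \Pro\left(\cS\right) \stackrel{i^*}{\longrightarrow} \Profs$, and the identity $\Shape^{\Prof} = i^* \circ \Shape$.

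First I would rewrite the composite obtained by going down, across, and up the square, namely $\widehat{\left(\blank\right)} \circ \Pi_\i \circ \left(\blank\right)_{top}$. By the very definition of $\widehat{\left(\blank\right)}$, this functor is equivalent to $i^* \circ j \circ \Pi_\i \circ \left(\blank\right)_{top}$. By Lemma \ref{lem: hypershape homotopy} there is a canonical equivalence of functors $j \circ \Pi_\i \simeq \Shape \circ \Hshi\left(\blank\right)$ from $\Hshi\left(\TopC\right)$ to $\Pro\left(\cS\right)$; precomposing with $\left(\blank\right)_{top}$ and postcomposing with $i^*$ yields a canonical equivalence
$$\widehat{\left(\blank\right)} \circ \Pi_\i \circ \left(\blank\right)_{top} \;\simeq\; i^* \circ \Shape \circ \Hshi\left(\blank\right) \circ \left(\blank\right)_{top} \;=\; \Shape^{\Prof} \circ \Hshi\left(\blank\right) \circ \left(\blank\right)_{top}.$$
Next I would invoke Proposition \ref{prop: main 1}: applying $\Shape^{\Prof}$ to the natural transformation $\xi$ constructed there gives an equivalence
$$\Shape^{\Prof}\left(\xi\right) : \Shape^{\Prof} \circ \Hshi\left(\blank\right) \circ \left(\blank\right)_{top} \stackrel{\sim}{\longrightarrow} \Shape^{\Prof} \circ \Shi\left(\left(\blank\right)_{\et}\right) = \widehat{\Pi}^{\et}_\i.$$
Composing this with the equivalence of the previous step produces a canonical equivalence of functors $\widehat{\left(\blank\right)} \circ \Pi_\i \circ \left(\blank\right)_{top} \simeq \widehat{\Pi}^{\et}_\i$, which is exactly the asserted commutativity of the diagram; evaluating on an $\i$-sheaf $F$ then gives $\widehat{\Pi}^{\et}_\i\left(F\right) \simeq \widehat{\Pi}_\i\left(F_{top}\right)$.

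So Theorem \ref{thm: main} is essentially formal once Proposition \ref{prop: main 1} and Lemma \ref{lem: hypershape homotopy} are in hand; the bookkeeping amounts only to pasting the stated natural equivalences. The genuine obstacle lies upstream, inside Proposition \ref{prop: main 1}: since every functor appearing there is colimit preserving, \cite[Proposition 5.5.4.20 and Theorem 5.1.5.6]{htt} reduces the claim to schemes $X$ of finite type over $\bC$, where one must show the comparison geometric morphism $\varepsilon_X : \Shi\left(X_{an}\right) \to \Shi\left(X_{\et}\right)$ is a profinite homotopy equivalence.

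That last point is handled in Proposition \ref{prop: main theorem connected} by a Postnikov-tower induction: the class of spaces $V$ for which $\Gamma_{\et}\Delta^{\et}\left(V\right) \to \Gamma_{an}\Delta^{an}\left(V\right)$ is an equivalence is closed under finite limits and (over a connected base) coproducts, contains all $K\left(G,1\right)$ and $K\left(A,n\right)$ for finite $G$ and finite abelian $A$ by Lemma \ref{lem: torsors}, Lemma \ref{lem: K(A,n)}, and the classical comparison results of \cite{SGA4}, and then captures every connected $n$-truncated $\pi$-finite space by climbing the Postnikov tower via the universal $K\left(A,n\right)$-fibration $\theta_n$ of Section \ref{sec: cohomology}. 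That inductive step---interpreting twisted cohomology classes via classifying spaces inside an arbitrary $\i$-topos---is the heart of the matter, and it is what Section \ref{sec: cohomology} was built to supply.
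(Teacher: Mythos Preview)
Your proof is correct and follows essentially the same route as the paper's own argument: both deduce the theorem formally by combining Proposition~\ref{prop: main 1}, Lemma~\ref{lem: hypershape homotopy}, the identity $\Shape^{\Prof}=i^*\circ\Shape$, and the definition $\widehat{(\blank)}=i^*\circ j$. Your additional paragraph summarizing the upstream content of Proposition~\ref{prop: main 1} and Proposition~\ref{prop: main theorem connected} is accurate but goes beyond what is needed for the proof of Theorem~\ref{thm: main} itself.
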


\begin{proof}
By Proposition \ref{prop: main 1}, we have an equivalence
$$\xymatrix@C=2.5cm{\Shape^{\Prof} \circ \Hshi\left(\blank\right)\circ \left(\blank\right)_{top}  \ar@{=>}[r]^-{\Shape^{\Prof}\left(\xi\right)}_-{\mbox{\resizebox{!}{0.08in}{$\sim$}}}& \widehat{\Pi}^{\et}_\i.}$$ Note that by definition we have 
$$\Shape^{\Prof}= i^* \circ \Shape,$$ so  
$$Shape^{\Prof} \circ \Hshi\left(\blank\right)\circ \left(\blank\right)_{top}=i^* \circ Shape \circ \Hshi\left(\blank\right)\circ \left(\blank\right)_{top}.$$ By Lemma \ref{lem: hypershape homotopy} we have an equivalence $$Shape \circ \Hshi\left(\blank\right)\ \simeq j \circ \Pi_\i.$$ Furthermore, by definition the profinite completion functor $$\widehat{\left(\blank\right)}:\cS \to \Prof$$ is $i^* \circ j,$ so finally 
$$\Shape^{\Prof} \circ \Hshi\left(\blank\right)\circ \left(\blank\right)_{top}\simeq \widehat{\left(\blank\right)} \circ \Pi_\i \circ \left(\blank\right)_{top}.$$
\end{proof}

\begin{corollary} \label{cor: main}
Let $\cX$ be an Artin stack locally of finite type over $\bC,$ then there is an equivalence of profinite spaces $$\widehat{\Pi}^{\et}_\i\left(\cX\right) \simeq \widehat{\Pi}_\i\left(\cX_{top}\right),$$ between the profinite \'etale homotopy type of $\cX$ and the profinite completion of the homotopy type of the underlying topological stack $\cX_{top}$.
\end{corollary}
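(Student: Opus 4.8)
The plan is to deduce Corollary~\ref{cor: main} as a direct specialization of Theorem~\ref{thm: main}. The key point is that an Artin stack $\cX$ locally of finite type over $\bC$ determines, via its functor of points, an $\i$-sheaf on $\left(\Aff,\mbox{\'et}\right)$: although $\cX$ is defined with respect to the smooth (or fppf) topology, a smooth surjection admits \'etale-local sections, so the smooth topos and the \'etale topos of $\cX$ agree, and $\cX$ descends to an object $F_{\cX} \in \Shi\left(\Aff,\mbox{\'et}\right)$. (More precisely, one uses that an $n$-geometric stack in the sense of \cite{TV2} is in particular an \'etale sheaf, since smooth maps are \'etale-locally split.) First I would record this identification, noting that it is compatible with the two functors we care about: on the one hand $F_{\cX}$, regarded as an \'etale sheaf, has profinite \'etale homotopy type $\widehat{\Pi}^{\et}_\i\left(F_{\cX}\right)$, which we take as the definition of $\widehat{\Pi}^{\et}_\i\left(\cX\right)$; on the other hand, the functor $\left(\blank\right)_{top}$ of \cite{knhom} restricted to Artin stacks agrees with Noohi's underlying-topological-stack functor of \cite{No1}, and the functor $\Pi_\i$ of \cite{knhom} restricted to topological stacks agrees with the Noohi--Coyne refinement of \cite{No3} of the classifying-space homotopy type functor of \cite{No2}. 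Hence $\Pi_\i\left(\left(F_{\cX}\right)_{top}\right)$ computes the homotopy type of $\cX_{top}$, and $\widehat{\Pi}_\i\left(\cX_{top}\right)$ is its profinite completion.

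With these identifications in place, the corollary follows immediately by evaluating the commuting square of Theorem~\ref{thm: main} at the object $F_{\cX}$: the square gives
$$\widehat{\Pi}^{\et}_\i\left(F_{\cX}\right) \simeq \widehat{\left(\blank\right)}\left(\Pi_\i\left(\left(F_{\cX}\right)_{top}\right)\right),$$
and translating both sides through the identifications above yields
$$\widehat{\Pi}^{\et}_\i\left(\cX\right) \simeq \widehat{\Pi}_\i\left(\cX_{top}\right),$$
as desired. I would also remark that exactly the same argument applies verbatim to any $n$-geometric stack locally of finite type over $\bC$ in the sense of \cite{TV2}, since the only property of $\cX$ used is that it is an \'etale sheaf of finite-type-presentability and that $\left(\blank\right)_{top}$ and $\Pi_\i$ extend the classical constructions.

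The main obstacle, such as it is, is the bookkeeping needed to verify the compatibility of the $\i$-categorical functors with the classical $2$-categorical ones---that $\left(\blank\right)_{top}$ on $\Shi\left(\Aff,\mbox{\'et}\right)$ restricts on Artin stacks to Noohi's functor, and that $\Pi_\i$ on $\Hshi\left(\TopC\right)$ restricts on topological stacks to Noohi's $ho$ (refined to $\cS$). Both of these are established in \cite{knhom} (this is precisely the content of the extension results quoted in the introduction: \cite[Theorem 3.1, Corollary 3.11, Proposition 3.12, Corollary 3.13]{knhom}), so for the purposes of this paper they may be invoked directly; the argument here is then genuinely formal. The one subtlety worth flagging explicitly is the passage from the smooth to the \'etale topology for Artin stacks, which is standard (\'etale-local splitting of smooth surjections) but should be stated so that the reader sees why an Artin stack really does define an object of $\Shi\left(\Aff,\mbox{\'et}\right)$.
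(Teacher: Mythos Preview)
Your proposal is correct and takes essentially the same approach as the paper: the paper gives no separate proof of this corollary, treating it as an immediate specialization of Theorem~\ref{thm: main}, which is precisely what you do. Your additional bookkeeping about the compatibility of $\left(\blank\right)_{top}$ and $\Pi_\i$ with Noohi's constructions is correct and appropriately sourced to \cite{knhom}; the only quibble is that your discussion of ``descending from the smooth topology'' is slightly overcomplicated---an Artin stack is by definition (or trivially, depending on conventions) already a sheaf for the \'etale topology, since \'etale covers are in particular fppf covers, so no appeal to \'etale-local sections of smooth maps is needed to place $\cX$ in $\Shi\left(\Aff,\mbox{\'et}\right)$.
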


\begin{example}
Consider the moduli stack $\mathcal{M}_{g,n}$ of proper smooth curves of genus $g$ with $n$ marked points, and let $\Gamma_{g,n}$ be the mapping class group of a surface of genus $g$ with $n$ marked points. Fix an embedding $$\overline{\mathbb{Q}}\hookrightarrow \mathbb{C}.$$ Then it was shown in \cite{Oda} that the homotopy type of the analytification of $$\mathcal{M}_{g,n} \otimes \overline{\mathbb{Q}}$$ is that of $B\Gamma_{g,n}.$ It follows that there is an equivalence of profinite spaces $$\widehat{\Pi}^{\et}_\i\left(\mathcal{M}_{g,n} \otimes \overline{\mathbb{Q}}\right) \simeq \widehat{B\Gamma_{g,n}}.$$ An analogous result was shown in \cite{Oda} using the machinery of \'etale homotopy type of Friedlander, and the notion of profinite completion of Artin-Mazur. Similarly, it follows from \cite{compact} that $$\widehat{\Pi}^{\et}_\i\left(\overline{\mathcal{M}_{g,n}} \otimes \overline{\mathbb{Q}}\right) \simeq \widehat{B\mathcal{CL}_{g,n}},$$ where $\overline{\mathcal{M}_{g,n}}$ is the Deligne-Mumford compactification, and $\mathcal{CL}_{g,n}$ is the Charney-Lee category.
\end{example}

\begin{example}
Consider the moduli stack of elliptic curves $\mathcal{M}_{ell}$, and fix an embedding $$\overline{\mathbb{Q}}\hookrightarrow \mathbb{C}.$$ Then it is shown in \cite{polarised} than the homotopy type of the analytification is that of $BSL\left(2,\mathbb{Z}\right),$ from which it follows that $$\widehat{\Pi}^{\et}_\i\left(\mathcal{M}_{ell} \otimes \overline{\mathbb{Q}}\right) \simeq \widehat{BSL\left(2,\mathbb{Z}\right)}.$$ As above, an analogous result was shown in the same paper, but using the machinery of  \'etale homotopy type of Friedlander, and the notion of profinite completion of Artin-Mazur.
\end{example}

\begin{example}
Let $G$ be an algebraic group over $\mathbb{C}$. Then
 $$\widehat{\Pi}^{\et}_\i\left(BG\right) \simeq \widehat{BG_{an}}.$$
\end{example}

\begin{example}
Let $X$ be a fine saturated log scheme locally of finite type over $\mathbb{C},$ and let $\sqrt[\i]{X}$ be its infinite-root stack in the sense of \cite{TV}. (This is a pro-object in algebraic stacks). It was shown in \cite{knhom} that the homotopy type of the underlying (pro-)topological stack, after profinite completion agrees with the Kato-Nakayama space $X_{log}$ of $X$ in the sense of \cite{KN}. It thus follows that $$\widehat{\Pi}^{\et}_\i\left(\sqrt[\i]{X}\right) \simeq \widehat{X_{log}}.$$ As the Kato-Nakayama space is only defined for log schemes over $\mathbb{C},$ this suggests that the infinite root stack could be a suitable replacement for it in positive characteristics.
\end{example}

\bibliography{profinite}
\bibliographystyle{hplain}
\end{document}